\let\phi\varphi
\title  {$\SU(3)$ instanton homology for webs and foams}
\author {P. B. Kronheimer and T. S. Mrowka}
\address {Harvard University, Cambridge MA 02138 \\
	Massachusetts Institute of Technology, Cambridge MA 02139}       
\begin{document}
	
\maketitle

\tableofcontents

\section{Introduction}

\subsection{Statement of a result}

In \cite{KM-Tait} and \cite{KM-triangles}, the authors introduced an
instanton homology for webs and foams, based on orbifold connections
with structure group $\SO(3)$. In this context, a \emph{web} $K$ in an
oriented 3-manifold $Y$ is an embedded trivalent graph, and the pair
$(Y,K)$ is interpreted as describing a 3-dimensional orbifold whose
local groups are elementary abelian of order $2$ or $4$ at the edges
and vertices of $K$ respectively. Similarly a \emph{foam} in an
oriented $4$-manifold $X$ is a singular 2-complex $\Sigma\subset X$
with particularly restricted singular structure, such that the pair
$(X,\Sigma)$ determines a $4$-dimensional orbifold with local groups
that now include the elementary abelian group of order $8$ at isolated
points of $\Sigma$. We refer to these orbifolds as 3- and
4-dimensional \emph{bifolds} respectively. They give rise to a
cobordism category in which the objects are closed, oriented
3-dimensional bifolds, and the morphisms are oriented 4-dimensional
bifolds with boundary. See \cite{KM-Tait}.

The instanton homology of \cite{KM-Tait} is constructed using the
spaces of orbifold $\SO(3)$ connections over these bifolds, with the
restriction that the action of the local group at an edge of a web, or
at a facet of a foam, must be the nontrivial action of the cyclic group
$\{\pm 1\}$ on the fiber $\R^{3}$. We refer to these as \emph{bifold}
$\SO(3)$ connections.

The present paper explores how the constructions and results of
\cite{KM-Tait} and \cite{KM-triangles} evolve when the structure group
$\SO(3)$ in the gauge theory is replaced by $\SU(3)$. The local models
for orbifold $\SU(3)$ connections that we consider arise naturally
from the $\SO(3)$ models via the inclusion
\begin{equation}\label{eq:r}
            \r : \SO(3)\to \SU(3),
\end{equation}
and we refer to the $\SU(3)$ connections with these models again as
bifold connections. See section~\ref{sec:bifolds} for details.

To construct the $\SO(3)$ instanton homology in \cite{KM-Tait}, an
additional connected-sum construction was used, as a device to avoid
reducible connections. There is more than one version of such a
construction, but with this understood, what is obtained in
\cite{KM-Tait} is a functor $\Jsharp$ from the category of closed
3-dimensional bifolds and 4-dimensional bifold cobordisms to the
category of finite-dimensional $\F$-vector spaces, where $\F$ is the
2-element field. Using $\SU(3)$ in place of $\SO(3)$, we will
similarly define an instanton homology group $\Lsharp(Y,K)$ for webs
$K$ in closed 3-dimensional manifolds $Y$, or equivalently for
$3$-dimensional bifolds. This $\SU(3)$ instanton homology group is
functorial for foams, or $4$-dimensional bifolds, just as in the
$\SO(3)$ case. The connected-sum construction that we will use to
avoid reducible connections will involve summing with an orbifold
whose singular locus is a Hopf link in $S^{3}$, but this will not be
of ``bifold'' type: the orbifold stabilizer along the singular locus
will be cyclic of order $3$, not $2$. This use of a ``trifold'' will
be confined to this particular role only. The details of this
construction are given in section~\ref{subsec:atoms}.

Among the results we obtain is the following theorem.

\begin{theorem}\label{thm:planar-tait}
    If $K\subset \R^{2} \subset S^{3}$ is a planar web, then the dimension
    of the $\SU(3)$ instanton homology $\Lsharp(S^{3},K)$, as
    a vector space over the field $\F$ of two elements, is equal to the
    number of Tait colorings of $K$.
\end{theorem}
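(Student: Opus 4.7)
The plan is to follow the template set in \cite{KM-Tait}, where the analogous dimension equality for the $\SO(3)$ functor $\Jsharp$ was established through a combination of a local exact triangle, the computation of small model cases, and induction on the complexity of the planar web. Since the number of Tait colorings satisfies the same deletion-type recursion and agrees with $\Lsharp$ on the base cases, the dimension equality should follow by the same scheme once the $\SU(3)$ gauge theory is in place.

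The first step would be to compute $\Lsharp(S^{3},K)$ for the theta graph --- the simplest non-trivial planar web --- and to verify that its dimension is $3$, matching the three Tait colorings. This reduces to enumerating gauge-equivalence classes of irreducible flat $\SU(3)$ bifold connections on the corresponding $3$-bifold (with the auxiliary trifold connected-sum of section~\ref{subsec:atoms} in place) and to checking that the Floer differential vanishes for dimensional reasons. The second step is the edge-resolution exact triangle: for any edge $e$ of a planar web $K$, one should obtain a triangle
\[
\Lsharp(S^{3}, K_{0}) \to \Lsharp(S^{3}, K) \to \Lsharp(S^{3}, K_{1}) \to
\]
where $K_{0}, K_{1}$ are the two planar resolutions of $e$. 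The triangle is induced by a standard foam cobordism local to $e$, and its exactness reduces to gauge-theoretic input about $\SU(3)$ bifold moduli spaces on that cobordism, in direct parallel with the $\SO(3)$ argument of \cite{KM-Tait}.

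Given the base case and the triangle, induction on the number of vertices of $K$ combined with the classical relation $\mathrm{Tait}(K) = \mathrm{Tait}(K_{0}) + \mathrm{Tait}(K_{1})$ for edge resolution yields the bound $\dim \Lsharp(S^{3},K) \leq \mathrm{Tait}(K)$. Equality then requires ruling out non-trivial connecting maps; over $\F$ this is typically achieved by a mod-$2$ grading argument, or by producing $\mathrm{Tait}(K)$ explicit linearly independent generators of the Floer chain complex directly from the combinatorics of colorings.

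The principal obstacle will be the edge-resolution exact triangle. The $\SU(3)$ bifold moduli spaces have different formal dimensions than their $\SO(3)$ counterparts, and the bubbling analysis near trivalent vertices and tetrahedral points of foams involves $\SU(3)$-specific phenomena coming from the representation $\r$ of equation~\eqref{eq:r}. The trifold connected-sum (with cyclic stabilizer of order $3$ rather than $2$) further complicates matters: one must verify that the trifold stabilization genuinely removes $\SU(3)$ reducibles on the relevant summed bifolds, and that the resulting chain-level maps assemble into an honest exact triangle rather than merely a complex.
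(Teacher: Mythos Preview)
There are several genuine gaps in this proposal.

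First, the template you cite does not exist. The paper explicitly notes (in the Remark following Theorem~\ref{thm:planar-tait}) that the corresponding statement for $\Jsharp$ is only a \emph{conjecture} in \cite{KM-Tait}; it was not proved there. So there is no $\SO(3)$ argument to transport.

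Second, the theta computation is off: the theta graph has $3!=6$ Tait colorings, and indeed Proposition~\ref{prop:theta-web} shows $\dim\Lsharp(\theta)=6$, not $3$.

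Third, and more seriously, the recursion $\mathrm{Tait}(K)=\mathrm{Tait}(K_{0})+\mathrm{Tait}(K_{1})$ for ``the two planar resolutions of an edge'' is not a valid identity for Tait colorings of trivalent graphs, and the exact triangles available here (Theorem~\ref{thm:octahedron-statement}) do not relate $K$ to two simpler planar webs in a way that matches such a relation. The genuine combinatorial identity is the four-term Tutte relation of section~\ref{subsec:euler}, and the triangles only yield inequalities of dimensions unless you can show connecting maps vanish --- which is exactly the hard part, and for which a mod-$2$ grading alone is insufficient (the maps in Figure~\ref{fig:L-octahedron-odd-even} are not all of one parity).

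The paper's actual mechanism is quite different and depends essentially on the specific trifold atom. The edge operators $u_{e}=\sigma_{1}(e)$ satisfy the Xie relation $u_{e}^{3}+u_{e}=0$ (Lemma~\ref{lem:Xie-rel-mod-2}), which over $\F$ factors with coprime factors and forces a simultaneous eigenspace decomposition
\[
\Lsharp(K)=\bigoplus_{s}V(K;s),
\]
where $s$ ranges over subsets of the edge set. The dot-migration relations at vertices (Lemma~\ref{lem:dot-migration}) then force $V(K;s)=0$ unless $s$ is a $1$-set (perfect matching). Finally, Lemma~\ref{lem:modify-s} shows $V(K;s)$ depends only on the relative $\Z/2$ homology class of $s$ in $(Y,C)$, so for a planar web one can replace $s$ by $\emptyset$ (if $s$ is even) and reduce by excision to an unlink, giving $\dim V(K;s)=2^{n(s)}$. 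Summing over even $1$-sets recovers the Tait count. None of this structure --- the cubic relation with its particular constant term, the resulting eigenspace decomposition, or the $1$-set invariance lemma --- appears in your outline, and it is precisely what makes the $\SU(3)$ case tractable where the $\SO(3)$ case remains open.
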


\begin{remark}
The corresponding statement for planar webs in $\SO(3)$ homology
$\Jsharp$ is stated as a conjecture in \cite{KM-Tait}. 
\end{remark}

The proof of Theorem~\ref{thm:planar-tait} depends in an important way
on the specific connected sum construction (the trifold) that is used
to avoid reducible connections in the construction of $\Lsharp(K)$.
A significant proportion of the proof, however, rests on properties
of this bifold instanton homology that are less sensitive to this
particular choice. These are the skein exact triangles and the
octahedral diagram described in
Theorem~\ref{thm:octahedron-statement}.

The second main ingredient in the proof of
Theorem~\ref{thm:planar-tait} is an application of the decomposition
of the $\SU(3)$ instanton homology that arises from the eigenspace
decompositions for commuting operators associated to the edges of a
web. It is here that the choice of the trifold to avoid reducibles is
relevant. The argument here is very similar to the argument used by the
authors in \cite{KM-deformation}.

\begin{remark}
Although we work exclusively over the field $\F$ of two elements in
this paper, the authors have no evidence that one cannot define
$\Lsharp$ also over the integers, at least as a projective functor. In
particular, all the moduli spaces that are used to define $\Lsharp(K)$
itself (rather than the maps arising from cobordisms) are orientable.
The authors hope to return to this in a subsequent paper. 
\end{remark}

\subsection{Outline}
\label{subsec:outline}

Section~\ref{sec:bifolds} sets up the gauge theory that is needed to
define the $\SU(3)$ instanton homology. The construction of $\Lsharp$
itself is then given in section~\ref{sec:floer}.

The skein exact triangles and the octahedral diagram of
Theorem~\ref{thm:octahedron-statement} are discussed and proved in
section~\ref{sec:exact-triangle}. This is exactly parallel to a
corresponding result for the $\SO(3)$ case \cite{KM-triangles} and the
proof is essentially unchanged here. It rests, in particular, on the
identification of the $\SU(3)$ moduli spaces for some particular
closed foams, discussed in section~\ref{subsec:RP2-foams}. Although
the descriptions of these moduli spaces in
section~\ref{subsec:RP2-foams} mirror the results from the $\SO(3)$
case closely, it is interesting that the details look rather different
(for example because the dimension formula for the moduli spaces has
changed).

The eigenspace decomposition of the $\SU(3)$ instanton homology is
introduced in section~\ref{sec:edge-decomp}, and it is used together
with the skein exact triangles to complete the proof of
Theorem~\ref{thm:planar-tait}.

Unlike the $\SO(3)$ homology $\Jsharp(K)$, the $\SU(3)$ homology
$\Lsharp(K)$ admits a relative $\Z/2$ grading. This is discussed in
section~\ref{sec:absolute}, where it is shown that this relative
grading can be made an absolute $\Z/2$ grading by choosing an extra
piece of framing data for $K$. With this understood, one can consider
the Euler number of $\Lsharp$ as an integer invariant of webs. This
invariant is described and calculated in section~\ref{subsec:euler}.

The remaining sections of the paper include the calculation of further
examples, and a discussion of related questions.

\subparagraph{Acknowledgements.} The work of the first author was
supported by the National Science Foundation through NSF grants
DMS-2005310 and DMS-2304877. The work of the second author was
supported by NSF grant DMS-2105512 as well as by the Institute of
Theoretical Studies of ETH, the Department of Mathematics and the
Mathematics Research Center at Stanford University, and the Department
of Mathematics and the Minerva Foundation's Visitor Program at
Princeton University during the second author's sabbatical. Both
authors were supported by a Simons Foundation Award \#994330,
\emph{Simons Collaboration on New Structures in Low-Dimensional
Topology}.

\section{Bifolds and \texorpdfstring{$\SU(3)$}{SU(3)} instantons}
\label{sec:bifolds}

\subsection{Orbifolds and bifolds}

As in \cite{KM-Tait}, we consider a restricted class of oriented $3$-
and $4$-dimensional orbifolds which we call \emph{bifolds}. We will
typically use the notation $Y$ or $X$ for a manifold of dimension $3$
or $4$, and write $\check Y$ or $\check X$ for a bifold. We ask that
at each point $x$ in a bifold there is a neighborhood $U$ and an
orbifold chart
\begin{equation}\label{eq:orbichart}
 \phi: \tilde U \to U
\end{equation}
identifying $U$ with the quotient $\tilde U / H_{x}$, where the local
group $H_{x}$ is elementary abelian of order $2$, $4$, or (in
dimension 4 only) order $8$. This condition determines the
3-dimensional models completely and means that the singular set of the
bifold is a trivalent graph. In dimension $4$, the models allowed are
the product of the $3$-dimensional models with $\R$ if the order of
$H_{x}$ is $2$ or $4$. If the order is $8$, then the group $H_{x}$
should act on the $4$-dimensional tangent space $T_{x}\tilde U$ as the
group of diagonal matrices with entries $\pm 1$ and determinant $1$.

With these restrictions understood, our bifolds in dimensions $3$ and
$4$ have an underlying topological space which is also a manifold. The
singular set (the set of points with non-trivial local group $H_{x}$)
is a singular subcomplex of codimension $2$, referred to in this
context as a \emph{web} or \emph{foam} respectively. Note that the
edges of a web are not oriented, and the 2-dimensional facets of a
foam may be non-orientable. In a foam, the set of points where the
local stabilizer $H_{x}$ has order $4$ form arcs, which we call
\emph{seams}. Three facets of the foam locally meet along a seam.
At points $x$ where $H_{x}$ has order $8$, the singular set of the
orbifold locally has the structure of a cone on the $1$-skeleton of a
tetrahedron, and we call such points \emph{tetrahedral points}.

Our bifolds (of dimension either three or four) will always be
oriented, and will always be equipped with an orbifold Riemannian
metric: these will both be tacitly implied sometimes in the
exposition.

\subsection{Bifold bundles and connections}

An orbifold Riemannian metric can be conveniently described as as a
smooth Riemannian metric on the non-singular part of the bifold with
the requirement that the pull-back of the metric extends to a smooth
metric on the domain $\tilde U$ of each orbifold chart
\eqref{eq:orbichart}. In the same spirit, we define an orbifold bundle
with connection on a bifold $\check X$ to be a smooth bundle with
connection, $(E,A)$, on the smooth part of $\check X$ with the
requirement that the pull-back $(\tilde E, \tilde A)$ via $\phi$
should admit an extension as a smooth bundle with connection on the
entire domain $\tilde U$. Once the chart \eqref{eq:orbichart} is
given, the extension $(\tilde E, \tilde A)$ is unique up to canonical
isomorphism in this setting, and it therefore does not need to be
included as part of the data. Phrasing the definition this way, we are
required to equip every orbifold vector bundle with a connection,
something which we will often omit from mentioning. Orbifold
connections in any Sobolev class can be defined this way, by reference
to an underlying $C^{\infty}$ orbifold connection.

Given an orbifold bundle, there is a well-defined action of the
local group $H_{x}$ on the fiber $\tilde E_{x}$ at every point. In
this paper we will be concerned with the case that $E$ is a complex
hermitian bundle of rank $3$, and we restrict the local models by
requiring that, at each point $x$ where the the local group $H_{x}$
has order $2$, the action of the non-trivial element on the fiber
$\tilde E_{x} \cong \C^{3}$ is by the element
\[
\begin{pmatrix}
    1 & 0 & 0\\
    0 & -1 &0\\
    0 & 0 &-1
\end{pmatrix}
\]
in some basis of $\tilde E_{x}$. This requirement determines what the
group action on the fiber must be at points where the local group has
order $4$ or $8$. These actions on $\tilde E_{x} \cong \C^{3}$ are
the complexifications of the required actions in the $\SO(3)$ case
treated in \cite{KM-Tait}. We will refer to a hermitian orbifold
bundle satisfying these conditions as a \emph{bifold} bundle with a
bifold connection. When necessary, we will refer to real, or $\SO(3)$,
bifold bundles to distinguish the case considered in \cite{KM-Tait}.

\subsection{The configuration space}
\label{subsec:config}

Given a bifold $\check Y$ or $\check X$, closed for now and of
dimension $3$ or $4$, we consider the space of all pairs $(E,A)$
defining bifold bundles with connection. We allow $A$ to have Sobolev
class $L^{2}_{l}$ for a suitable choice of $l$, and we write
$\bonf_{l}(\check Y)$ or $\bonf_{l}(\check X)$ for the space of
isomorphism classes of such pairs.

As usual, the local model for $\bonf_{l}$ at an element $[E,A]$ has
the form $\mathcal{S}/\Gamma$, where $\mathcal{S}$ is a Hilbert space
and $\Gamma$ is the automorphism group of $(E,A)$. The group $\Gamma$
can be identified with the group of parallel sections of the bundle of
groups $\SU(E)$, which always contains a subgroup $\Z/3$ arising from
the parallel sections with values in the center of $\SU(E)$. If the
bifold is connected, then $\Gamma$ is isomorphic to a subgroup of
$\SU(3)$ and the possibilities for $\Gamma$ are:
\begin{enumerate}
    \item $\Gamma = Z(\SU(3))$, the center, a cyclic group of order
    $3$;
    \item $\Gamma \cong S^{1}$, which occurs only if $E$ has a parallel
    direct sum decomposition as $E_{1}\oplus E_{2}$ where $E_{1}$ has
    rank $1$ and $E_{2}$ is irreducible;
    \item $\Gamma \cong T^{2}$, the maximal torus of $\SU(3)$, which
    occurs only if $E=L_{1}\oplus L_{2}\oplus L_{3}$, a sum of three
    orbifold line bundles, preserved by the connection;
   \item $\Gamma  = S(U(1)\times U(2))\cong U(2)$ which occurs only if
   $E=L_{1}\oplus L_{2}\oplus L_{3}$ and $L_{2}\cong L_{3}$ as bundles
   with connection; or
   \item $\Gamma=\SU(3)$, which occurs only if the singular
   locus of the bifold is empty and $(E,A)$ is either trivial or has
   holonomy contained in the center, $\Z/3$.
    \end{enumerate}
In the first case, we say that $(E,A)$ is \emph{irreducible}. In all other
cases, $(E,A)$ is \emph{reducible}.
Dropping the Sobolev subscript, we will usually write $\bonf$ for the
space of bifold connections and $\bonf^{*}\subset\bonf$ for the
subspace of irreducibles.

For a bifold connection $(E,A)$ on a 4-dimensional bifold, we
write $\kappa$ as usual for the 4-dimensional Chern-Weil integral
\[
        \kappa = \frac{1}{8\pi^{2}}\int_{\check X}\tr (F\wedge F),
\]
where $F$ is the curvature. The constant is normalized so that the
integral is $$\kappa = c_{2}(E)[\check X]$$ in the closed case. The
characteristic classes here are defined in an orbifold sense and
$\kappa$ is not necessarily an integer.

\subsection{Classification of bifold bundles}
\label{subsec:classify}

Let a closed, oriented bifold $\check X$ of dimension four be given,
and let $(E,A)$ represent an element of $\bonf(\check X)$. If
$x\in\check X$ is a point which is not an orbifold point, then we can
modify $E$ by forming a connected sum at $x$ with a bundle $I$ on
$S^{4}$, with non-zero Chern class. In this way (with inevitable
choices for extending the connection $A$) we obtain a new bifold
bundle $(E',A')$ whose action $\kappa'$ differs from
$\kappa=\kappa(E)$ by an integer, the second Chern class
$c_{2}(I)(S^{4})$. We refer to this topological change as ``adding (or
subtracting) instantons''. If $f\in \check X$ is a point on a
two-dimensional facet of the singular set, where the local orbifold
group has order $2$, then there is a similar construction, forming an
orbifold connected sum with a bifold bundle $I$ on $S^{4}/(\Z/2)$.
This changes $\kappa$ by a multiple of $1/2$. We refer to this as
``adding (or subtracting) half-instantons'' to $E$. (Note that a
bifold bundle on $S^{4}/(\Z/2)$ has two characteristic numbers,
referred to in \cite{KM-gtes1} for example as the instanton and
monopole numbers.)

\begin{proposition}\label{prop:classify}
    Given 
    two bifold connections $(E,A)$ and $(E',A')$ in
    $\bonf(\check X)$,  we can obtain $E'$ from $E$ by adding
    or subtracting instantons and half-instantons (and we require the
    former only if the bifold is a manifold).
\end{proposition}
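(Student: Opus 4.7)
The plan is to show that $E$ can be obtained from $E'$ by (half-)instanton moves by first identifying the discrete topological invariants of a bifold bundle, matching them via the two available operations, and then running a standard obstruction-theoretic argument to produce an isomorphism.

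The topological invariants of a bifold bundle on $\check X$ that I would extract are: an integer $n$ recording the $c_{2}$ of $E$ on the smooth part of $\check X$; and, for each connected component $f$ of the $2$-dimensional stratum of the singular set $\Sigma$, a ``monopole number'' $m_{f}$ measuring the equivariant extension of $E$ across $f$ (equivalently, the monopole number of the bifold bundle induced on a transverse $S^{4}/(\Z/2)$, in the sense recalled from \cite{KM-gtes1}). The eigenline decompositions required by the local groups at seams and at tetrahedral points are determined by the decompositions on the adjacent facets, so they introduce no new invariants.

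With this list in hand, I would first perform half-instanton moves at an interior point of each facet $f$ in turn---using the fact that an orbifold bundle on $S^{4}/(\Z/2)$ has independent instanton and monopole characteristic numbers---to arrange $m_{f}(E) = m_{f}(E')$ for every facet. Then I would apply ordinary instanton moves at a smooth point to correct the remaining difference in $c_{2}$. Once all invariants agree, I would choose an orbifold CW decomposition of $\check X$ compatible with the stratification of $\Sigma$ and construct an isomorphism $E\to E'$ inductively on skeleta. The local bifold structure groups that appear are closed subgroups of $\SU(3)$ (the full group, $S(U(1)\times U(2))$, the maximal torus, and their centers), all of which have vanishing $\pi_{1}$ and $\pi_{2}$; the only possibly nonzero obstruction therefore lies in $H^{4}(\check X;\pi_{3}(\SU(3))) = H^{4}(\check X;\Z)$, and it equals $c_{2}(E) - c_{2}(E')$, which is zero after the reduction.

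The step I expect to require the most care is the first: verifying that a half-instanton move at a facet $f$ changes $m_{f}$ by a prescribed integer while leaving every $m_{f'}$ with $f'\ne f$ unchanged, so that the monopole numbers at different facets can be matched independently. This rests on a local computation of the orbifold connected sum with a bifold bundle on $S^{4}/(\Z/2)$ and, in particular, on checking that the move does not secretly alter invariants at adjacent seams or tetrahedral points. Once this independence is in place, the remaining obstruction-theoretic step is routine.
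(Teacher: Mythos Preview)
Your proposal has a genuine error: the groups $S(U(1)\times U(2))\cong U(2)$ and the maximal torus $T^{2}$ do \emph{not} have vanishing $\pi_{1}$; in fact $\pi_{1}(U(2))=\Z$ and $\pi_{1}(T^{2})=\Z^{2}$. The non-vanishing $\pi_{1}(U(2))$ is precisely the source of the half-instanton ambiguity: the obstruction to extending an isomorphism over a $2$-cell lying in a facet takes values in this $\Z$, and clearing it is exactly what a half-instanton move does. So your final obstruction-theoretic step, as written, does not go through; you would need to argue separately that the $2$-cell obstructions on facets vanish once your monopole numbers $m_{f}$ have been matched, and this is not automatic (facets may be non-orientable or have complicated topology, so the relation between your global $m_{f}$ and the local $2$-cell obstructions needs work).

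The paper's approach avoids this by reversing the logic. Rather than first matching invariants and then proving isomorphism, it simply builds an isomorphism cell by cell from the deepest strata outward---tetrahedral points, then seams (where the commutant $T^{2}$ is connected, so $\pi_{0}=0$ handles the $1$-cells), then $1$-skeleta of facets---and \emph{interprets} each remaining obstruction directly as a move: the $\pi_{1}(U(2))=\Z$ obstruction on each $2$-cell of a facet is a half-instanton, and the $\pi_{3}(\SU(3))=\Z$ obstruction on each $4$-cell is an instanton. This sidesteps entirely your worry about whether a move on one facet disturbs invariants elsewhere, and it makes no attempt to package the facet data into a single integer $m_{f}$. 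The paper also notes, for the parenthetical clause, that two half-instantons on the same facet can be arranged to equal one instanton nearby, so instanton moves are unnecessary whenever $\Sigma\ne\emptyset$; your write-up omits this point.
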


\begin{proof}
    This is an obstruction theory argument. The local requirements of
    our $\SU(3)$ bifold bundles determine the local structure at all
    the tetrahedral points, so we may choose an isomorphism between
    $E$ and $E'$ in the neighborhood of these points. Along the seams,
    the action of the orbifold monodromy on the fibers of the vector
    bundles has commutant the maximal torus of $\SU(3)$, and because
    this is a connected group, the isomorphism between the two bifold
    bundles can be extended from the vertices along the seams. After a
    further extension along the 1-skeletons of the facets of the
    foams, the two bundles have been identified in a neighborhood of
    the singular set, except on 2-cells in the facets, where the
    difference between them is the addition of half-instantons. The
    remaining difference between them will be on the interiors of the
    4-cells: that is, by the addition of instantons. Furthermore, the
    addition of two half-instantons on the same facet can be done in
    such a way that the effect is the same as adding an instanton
    nearby. (See \cite{KM-gtes1}.) So the only the addition of
    half-instantons is eventually needed if the singular set is
    non-empty.
\end{proof}

\begin{corollary}\label{cor:kappa-half}
    If $[E,A]$ and $[E', A']$ are two elements of
    $\bonf(\check X)$ on a closed oriented bifold $\check X$, then
    $\kappa(E')-\kappa(E)$ is a multiple of $1/2$.
\end{corollary}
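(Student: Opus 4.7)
The plan is to deduce the corollary directly from Proposition~\ref{prop:classify} together with an accounting of how the Chern–Weil integral $\kappa$ changes under the two allowed topological moves (adding/subtracting instantons, and adding/subtracting half-instantons).

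First I would fix representatives $(E,A)$ and $(E',A')$ and invoke Proposition~\ref{prop:classify} to produce a finite sequence of moves converting $E$ into $E'$. Writing $\kappa(E')-\kappa(E)$ as a telescoping sum along this sequence, it suffices to control the change in $\kappa$ across a single move. Because $\kappa$ is the Chern–Weil integral $(1/8\pi^{2})\int \tr(F\wedge F)$, and a connected-sum construction localizes the curvature change to a small ball (or orbifold ball) around the point where the sum is formed, the change equals the corresponding integral on the summand $I$ glued in.

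Next I would identify those local contributions. Adding an instanton corresponds to summing with a bundle $I$ on $S^{4}$ with non-trivial $c_{2}$, contributing $c_{2}(I)[S^{4}]\in\Z$ to $\kappa$. Adding a half-instanton corresponds to an orbifold connected sum at a point on a facet with a bifold bundle on $S^{4}/(\Z/2)$; as noted in the preceding discussion (and computed in \cite{KM-gtes1}), the second Chern number of such a bundle, evaluated in the orbifold sense, takes values in $\tfrac{1}{2}\Z$. Thus each step shifts $\kappa$ by an element of $\tfrac{1}{2}\Z$.

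Summing these contributions over the finite sequence of moves gives $\kappa(E')-\kappa(E)\in\tfrac{1}{2}\Z$, which is the claim. There is really no obstacle: the only point that warrants a moment's care is the half-integrality of the orbifold $c_{2}$ of a bifold bundle on $S^{4}/(\Z/2)$, but this is standard and already implicit in the preceding paragraph defining the half-instanton move.
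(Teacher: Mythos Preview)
Your proposal is correct and follows essentially the same approach as the paper: the corollary is deduced directly from Proposition~\ref{prop:classify} together with the observation (already made in the paragraph preceding that proposition) that adding an instanton changes $\kappa$ by an integer and adding a half-instanton changes it by a multiple of $1/2$. The paper's proof is simply the one-line remark that this follows from the classification, which is exactly what you have spelled out.
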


\begin{remark}
     The corollary is in contrast to the situation with $\SO(3)$
     described in \cite{KM-Tait}, where the difference can be an odd
     multiple of $1/8$, as occurs for example when the singular set
     (the foam) is the suspension of the 1-skeleton of a tetrahedron.
\end{remark}

\subsection{The inclusion of \texorpdfstring{$\SO(3)$}{SO(3)}}
\label{sec:includeSO3}

Let $(E_{r}, A_{r})$ be an $\SO(3)$ bifold connection on $\check X$,
in the sense of \cite{KM-Tait}, and let $\kappa_{r}$ be its action,
normalized as usual to yield the second Chern class of a lift to
$\SU(2)$, if it exists. From the inclusion $\r: \SO(3)\to \SU(3)$, we
obtain an $\SU(3)$ bifold connection $(E, A)$ with trivial
determinant.

\begin{lemma}\label{lem:times-4}
    The action $\kappa(E,A)$ is related to $\kappa_{r}(E_{r}, A_{r})$
    by $\kappa=4\kappa_{r}$.
\end{lemma}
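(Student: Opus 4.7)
I would prove Lemma~\ref{lem:times-4} by a pointwise Chern--Weil computation, reducing the statement to a standard Lie-algebraic identity.

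The $\SU(3)$ action is defined in the excerpt by $\kappa(E,A)=\frac{1}{8\pi^{2}}\int_{\check X}\tr(F_{A}\wedge F_{A})$, with the trace taken in the defining representation of $\mathfrak{su}(3)$ on $\C^{3}$. By the normalization of $\kappa_{r}$ in \cite{KM-Tait}, wherever a local $\SU(2)$ lift $A_{2}$ of $A_{r}$ is available, the $\SO(3)$ action is computed by the same formula but with trace in the defining representation of $\mathfrak{su}(2)$ on $\C^{2}$. Since the singular locus has codimension two and both Chern--Weil integrands are canonically defined $4$-forms on the smooth part of $\check X$, it suffices to establish the pointwise identity
\[
   \tr_{\C^{3}}(F_{A}\wedge F_{A}) \;=\; 4\,\tr_{\C^{2}}(F_{A_{2}}\wedge F_{A_{2}}).
\]

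To verify this, I would split it into two observations at the level of Lie algebras. First, the inclusion $d\r\colon\mathfrak{so}(3)\hookrightarrow\mathfrak{su}(3)$ realizes $\mathfrak{so}(3)$ as the space of real antisymmetric $3\times 3$ matrices inside $\mathfrak{su}(3)$, so that for any $X\in\mathfrak{so}(3)$ one has the tautology $\tr_{\C^{3}}(X^{2})=\tr_{\R^{3}}(X^{2})$. Second, the covering $\SU(2)\to\SO(3)$ induces a Lie algebra isomorphism $\operatorname{ad}\colon\mathfrak{su}(2)\xrightarrow{\sim}\mathfrak{so}(3)$, under which the trace form on $\mathfrak{so}(3)$ in its defining representation on $\R^{3}$ pulls back to the Killing form of $\mathfrak{su}(2)$. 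The standard identity $B_{\mathfrak{su}(2)}(Y,Y)=4\tr_{\C^{2}}(Y^{2})$ --- equivalently, the second Dynkin index of the adjoint representation of $\mathfrak{su}(2)$ is $4$ --- then gives $\tr_{\R^{3}}(X^{2})=4\tr_{\C^{2}}(Y^{2})$ for $X=\operatorname{ad}(Y)$. Applying these two facts to $X=F_{A_{r}}$ and $Y=F_{A_{2}}$ yields the displayed pointwise identity, and integrating gives $\kappa=4\kappa_{r}$.

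The only genuine content is the Lie-algebraic factor of $4$. The bookkeeping subtleties are minor: local $\SU(2)$ lifts of $A_{r}$ exist on any simply connected smooth neighborhood, which is all the pointwise argument requires, and the factor of $4$ matches the ratio $B_{\mathfrak{su}(2)}/\tr_{\C^{2}}$ that one always encounters when comparing Chern numbers of an $\SU(2)$ bundle with the Pontryagin numbers of its associated $\SO(3)$ bundle.
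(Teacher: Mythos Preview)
Your proof is correct. The paper's proof is a one-liner in characteristic class language: it writes $\kappa_{r}=-\tfrac{1}{4}p_{1}(E_{r})[\check X]$ (the standard relation between $c_{2}$ of an $\SU(2)$ bundle and $p_{1}$ of its adjoint $\SO(3)$ bundle) and then observes $p_{1}(E_{r})=-c_{2}(E_{r}\otimes\C)=-c_{2}(E)$, so $\kappa=c_{2}(E)[\check X]=4\kappa_{r}$. Your argument is the Chern--Weil unwinding of exactly these two facts: the identity $\tr_{\R^{3}}=\tr_{\C^{3}}$ on $\mathfrak{so}(3)\subset\mathfrak{su}(3)$ is the pointwise form of $p_{1}(E_{r})=-c_{2}(E_{r}\otimes\C)$, and the Killing form identity $B_{\mathfrak{su}(2)}=4\tr_{\C^{2}}$ is the pointwise form of $p_{1}(\mathrm{ad}\,\tilde E)=-4c_{2}(\tilde E)$. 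So the two approaches are the same computation at different levels of abstraction; yours has the advantage of being self-contained and making the factor of $4$ visible, while the paper's version is quicker for a reader who already has the characteristic class relations at hand.
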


\begin{proof}
    For the $\SO(3)$ bundle we have $\kappa_{r} =
    -4p_{1}(E_{r})[\check X]$. On
    the other hand $p_{1}(E_{r})$ is (by definition)
    $-c_{2}(E_{r}\otimes \C)$, which is $-c_{2}(E)$. 
\end{proof}

    Note that the topological classification of $\SO(3)$ bifold
    bundles is more complicated than the $\SU(3)$ case, just as the
    Stiefel-Whitney class $w_{2}(E_{r})$ (in the case of a manifold)
    disappears on passing to $E = E_{r}\otimes \C$. Let us write
    $\bonf_{r}(\check X)$ for the space of $\SO(3)$ bifold connections
    and $\r : \bonf_{r}(\check X) \to \bonf(\check X)$ for the map
    given by complexification.

\begin{lemma}\label{lem:involution-fixed}
    On the space of irreducible $\SU(3)$ bifold connections, the
    involution $\sigma : \bonf^{*}(\check X) \to \bonf^{*}(\check X)$
    given by complex conjugation has fixed point set which is the
    injective image of\/ $\r: \bonf^{*}_{r}(\check X) \to
    \bonf^{*}(\check X)$.
\end{lemma}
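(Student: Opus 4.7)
The plan is to verify the two inclusions separately and then check that $\r$ is injective on the irreducible locus. The forward inclusion is immediate: if $(E,A) = \r(E_r,A_r)$ then $E = E_r \otimes_{\R} \C$ comes with the canonical antilinear involution $\tau_0 = \mathrm{id}_{E_r} \otimes c$, where $c$ denotes complex conjugation on the scalar factor. This $\tau_0$ preserves the hermitian structure and the connection, exhibiting $[\bar E, \bar A] = [E,A]$ and placing the class in the fixed set of $\sigma$.

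For the reverse inclusion, I would start with a $\sigma$-fixed class $[E,A] \in \bonf^*$ and pick an isomorphism of bifold connections $\tau:(E,A) \to (\bar E, \bar A)$, viewed as an antilinear automorphism of $E$ preserving the hermitian form, the trivialization of $\det E$, and the connection. The key observation is that $\tau^{2}$ is complex linear and preserves $A$, so irreducibility forces it into $\Gamma = Z(\SU(3)) = \Z/3$; write $\tau^{2} = \zeta \cdot \mathrm{id}$ with $\zeta^{3} = 1$. Computing $\tau^{3}$ two ways — as $\tau^{2} \circ \tau = \zeta \tau$ on the one hand, and as $\tau \circ \tau^{2} = \bar\zeta \tau$ on the other, the antilinearity of $\tau$ introducing the complex conjugate — forces $\zeta = \bar\zeta$, hence $\zeta = 1$. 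So $\tau$ is an antilinear involution, the fixed subbundle $E_r = E^{\tau}$ is a real orthogonal rank-$3$ bundle with $E_r \otimes \C = E$, and $A$ descends to an $\SO(3)$ connection $A_r$ on $E_r$. Compatibility of $(E_r, A_r)$ with the $\SO(3)$ bifold local models of \cite{KM-Tait} follows because at each orbifold chart $\tau$ must commute with the local group $H_{x}$, and any antilinear involution on $\C^{3}$ commuting with the prescribed $H_{x}$-action is conjugate, within the centralizer, to the standard real structure on $\C^{3} = \R^{3} \otimes \C$.

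Injectivity of $\r$ on $\bonf^*_r$ I would handle by a similar scalar adjustment. Given an $\SU(3)$ bifold isomorphism $\phi : E_r \otimes \C \to E'_r \otimes \C$ between the complexifications of two irreducible $\SO(3)$ bifold connections, with canonical real structures $\tau, \tau'$, the composition $\tau'^{-1} \circ \phi \circ \tau \circ \phi^{-1}$ is a complex-linear connection-preserving automorphism of $E'_r \otimes \C$, so equals $\zeta \cdot \mathrm{id}$ for some $\zeta \in \Z/3$. Replacing $\phi$ by $\lambda \phi$ with $\lambda \in \Z/3$ rescales this obstruction by $\lambda^{2}$, and since $\lambda \mapsto \lambda^{2}$ is a bijection on the odd-order group $\Z/3$, one can arrange $\zeta = 1$. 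Then $\phi$ intertwines $\tau$ with $\tau'$ and restricts to an $\SO(3)$ bifold isomorphism $(E_r, A_r) \to (E'_r, A'_r)$.

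The main obstacle I expect is the step pinning $\zeta$ down to $1$: it exploits both that the center of $\SU(3)$ has order $3$ (so a real cube root of unity must equal $1$) and that this order is odd (so squaring on $\Z/3$ is a bijection, which is what drives the injectivity argument). The analogous statements for $\SU(2)$ or $U(n)$ would fail or require genuine modification on both counts.
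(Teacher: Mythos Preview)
Your proof is correct and follows essentially the same line as the paper's: both use irreducibility to force $\tau^{2}$ into the center $\Z/3$, then use the interplay of antilinearity with that scalar to conclude $\tau^{2}=1$, and both handle injectivity by rescaling the isomorphism by a cube root of unity to make it commute with the real structures. (One small slip: replacing $\phi$ by $\lambda\phi$ actually multiplies your obstruction by $\lambda^{-2}$ rather than $\lambda^{2}$, but on $\Z/3$ these coincide, so the conclusion is unaffected.)
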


\begin{remark}
    In line with the comment above, the map $\r$ is not injective on
    $\pi_{0}$.
\end{remark}

\begin{proof}[Proof of the lemma.]
     If $[E,A]\in \bonf(\check X)$ is fixed by $\sigma$, then there is
     a special-unitary isomorphism $u : E \to \bar E$ respecting the
     connections $A$ and $\bar A$. Regarding $u$ as a conjugate-linear
     map $E\to E$, we consider $u^{2}$, which must belong to the
     automorphism group $\Gamma$ for $(E,A)$. Because $(E,A)$ is
     irreducible, the element $u^{2}$ is a scalar automorphism: it is
     multiplication by an element of the center of $\SU(3)$, the
     cyclic group of order $3$. On the other hand, the fact that
     $u^{2}$ commutes with the conjugate-linear map $u$ means that the
     set of eigenvalues of $u^{2}$ is invariant under complex
     conjugation. We must therefore have $u^{2}=1$. This means that
     $u$ is a real structure on the complex vector bundle $E$, and $E$
     together with its connection are the complexification of the
     fixed subbundle.

     To establish that $\r$ is injective, suppose that $E$ and $E'$
     are two $\SO(3)$ irreducible bifold bundles with connection, and
     that the complexifications are isomorphic as $\SU(3)$ bundles
     with connection by a map $v: E\otimes \C \to E' \otimes \C$. If
     $J$ denotes complex conjugation then $ v^{-1} J v J$ is an
     automorphism of $E\otimes \C$, which must be multiplication by a
     cube root of unity $\omega$ (possibly $1$). Replacing $v$ by
     $\omega^{2}v$ we obtain an isomorphism $E\otimes \C \to E'
     \otimes \C$ which commutes with complex conjugation and is
     therefore an isomorphism of the $\SO(3)$ bifold bundles.
\end{proof}

\begin{remark}
    Without the hypothesis that $(E,A)$ is irreducible, it is possible
    that $[E,A]$ is fixed by the involution even though $(E,A)$ is not
    the complexification of an $\SO(3)$ bifold connection: the
    remaining possibility is that the structure group of $(E,A)$
    reduces to $\{1\}\times \mathrm{Sp}(1)=\{1\}\times \SU(2)\subset
    \SU(3)$, or a subgroup thereof. This can happen only when the
    singular set has no seams, because the $V_{4}$ monodromy at the
    seams is not a subgroup of $\{1\}\times \SU(2)$. In this case,
    when the singular set is a surface, such a bifold bundle has the
    form $\C \oplus E'$, where $E'$ has structure group $\SU(2)$ and
    has holonomy $-1$ along the link of the singular set. 
\end{remark}

\subsection{Anti-self-dual bifold connections}

Let $\check X$ be a closed, oriented bifold of dimension $4$.
Inside the space of bifold connections $\bonf(\check X)$, there is
the moduli space
\[
                M(\check X) \subset \bonf(\check X)
\]
consisting of those connections with $F^{+}=0$, where $F$ as above is
the curvature. At a solution $[E,A]$, there is the usual deformation
complex describing the local structure of the moduli space, and the
index of that complex is the \emph{formal dimension} of $M(\check X)$
at $[E,A]$.

In the following proposition, we use the notation and definitions from
\cite{KM-Tait}. In particular $X$ denotes the underlying 4-manifold of
$\check X$ and $\Sigma$ denotes the singular set of the orbifold. The
self-intersection number $\Sigma\ccdot \Sigma$ is defined as in
\cite[Definition 2.5]{KM-Tait}, and may be a half-integer. The term
$t$ is the number of tetrahedral points (the 4-valent vertices of the
graph formed by the seams).

\begin{proposition}\label{prop:dimension}
    The formal dimension of the moduli space $M(\check X)$ at $[E,A]$ is given
    by the formula
    \[
               d = 12 \kappa(E) - 8 (b^{+}(X) - b^{1}(X)+1)
                + \Sigma\ccdot \Sigma + 2\chi(\Sigma) - t.                                                          
    \]
\end{proposition}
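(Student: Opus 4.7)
The formal dimension $d$ is by definition the index of the ASD deformation complex
\[
0 \to \Omega^{0}(\check X; \mathfrak{su}(E)) \xrightarrow{d_{A}} \Omega^{1}(\check X; \mathfrak{su}(E)) \xrightarrow{d^{+}_{A}} \Omega^{+}(\check X; \mathfrak{su}(E)) \to 0,
\]
an elliptic complex on the orbifold $\check X$. My plan is to compute this index by leveraging the corresponding $\SO(3)$ dimension formula already established in \cite{KM-Tait}, rather than running a full orbifold (Kawasaki) calculation from scratch.

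The first step is to reduce to connections of the form $A = \r(A_{r})$ coming from an $\SO(3)$ bifold connection. By Proposition~\ref{prop:classify}, any two bifold bundles differ by adding or subtracting instantons and half-instantons, so it suffices to verify the formula at one representative per topological class and then check that the right-hand side transforms correctly under these operations. A standard excision argument for the index, applied on the model pieces $S^{4}$ and $S^{4}/(\Z/2)$, shows that adding an instanton shifts $(\kappa, d)$ by $(1,12)$ and adding a half-instanton on a facet shifts it by $(1/2, 6)$. Both shifts match the change in the claimed formula because $\Sigma \ccdot \Sigma$, $\chi(\Sigma)$ and $t$ are unaffected. Hence one is free to take the reference $(E,A) = \r(E_{r}, A_{r})$.

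For such $A$, the adjoint bundle splits $\SO(3)$-equivariantly as
\[
\mathfrak{su}(E) \;=\; \mathfrak{so}(E_{r}) \oplus V(E_{r}),
\]
where $V$ is the irreducible $5$-dimensional real representation of $\SO(3)$ on traceless symmetric $3\times 3$ matrices. Because every orbifold monodromy lies in $\SO(3) \subset \SU(3)$, the splitting is preserved by $d_{A}$ and the complex splits as a direct sum, giving
\[
d_{\SU(3)}(A) \;=\; d_{\SO(3)}(A_{r}) + d_{V}(A_{r}).
\]
The first summand is supplied by the dimension formula of \cite{KM-Tait}, and combined with Lemma~\ref{lem:times-4} contributes part of the total. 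The second, $d_{V}(A_{r})$, is the index of the ASD complex twisted by the rank-$5$ bifold bundle $V(E_{r})$, which is computed stratum-by-stratum by Kawasaki's orbifold index theorem: the bulk term is a rational multiple of $p_{1}(E_{r})[X]$ fixed by the Dynkin index of the spin-$2$ representation, and the corrections at facets, seams and tetrahedral points are obtained by decomposing $V$ under the action of the local stabilizer and integrating the G-signature and G-Euler densities over each stratum. Summing the two indices produces $12\kappa - 8(b^{+} - b^{1} + 1) + \Sigma \ccdot \Sigma + 2\chi(\Sigma) - t$.

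The main obstacle will be the local bookkeeping at the tetrahedral points, where three commuting involutions act simultaneously on $V$ and on the normal tangent space; the combined $\eta$-contribution of $\mathfrak{so}(E_{r})$ and $V(E_{r})$ must assemble to exactly $-t$, while the facet and seam contributions must combine to $\Sigma \ccdot \Sigma + 2\chi(\Sigma)$. Once this local model computation is verified — most cleanly by performing it on the small collection of orbifold $4$-spheres $S^{4}/H$ corresponding to the possible stabilizers $\Z/2$, $(\Z/2)^{2}$ and $(\Z/2)^{3}$ — the rest of the proof reduces to assembly of known terms.
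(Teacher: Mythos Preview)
Your approach is correct in outline and ultimately rests on the same key step as the paper's proof: an excision argument followed by explicit verification on the model orbifold $S^{4}/V_{8}$ (the case $H=(\Z/2)^{3}$ in your list). The paper, however, does not split $\mathfrak{su}(E)=\mathfrak{so}(E_{r})\oplus V(E_{r})$; it runs excision directly on the full $\SU(3)$ deformation complex, handling the cases of empty singular set, orientable surface, non-orientable surface (via the orientation double cover), and finally seams and tetrahedral points by checking the formula on $S^{4}/V_{8}$ with the flat $V_{4}$-connection. Your decomposition is valid but does not buy you anything: the index of the complex twisted by $V(E_{r})$ is no easier to compute than the full $\SU(3)$ index, and you end up having to do exactly the same local verification on the same model spaces. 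Note also that $V(E_{r})$ is an orbifold vector bundle but not a ``bifold bundle'' in the technical sense of the paper (the involution acts with $+1$-eigenspace of rank $3$, not rank $1$), so you cannot invoke any bifold-specific index formula for that summand; you really are forced back to the general orbifold index theorem or to excision. The paper's direct route is therefore more economical.
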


\begin{proof}
   We use an excision argument, which can be closely modeled on the
   proof of the corresponding result for the $\SO(3)$ case in
   \cite[Proposition~2.6]{KM-Tait}. If the singular set is empty, this
   is the formula for the dimension of the instanton moduli space from
   \cite{AHS}. If the singular set is a non-empty orientable
   2-manifold (i.e.~has no seams), then the formula is the same as
   that in \cite{KM-yaft}. For the case that the singular set is a
   non-orientable surface, it is sufficient to verify one case for
   each possible Euler number of a connected non-orientable surface.
   For this we can take $\check X$ to be the total space of a 2-sphere
   bundle with orientable total space over a non-orientable surface
   $\Sigma$. We may construct this 2-sphere bundle as the fiberwise
   compactification of a real $2$-plane bundle, and the singular set
   will be taken to be a copy of $\Sigma$ arising as the zero section.
   The terms on both sides of the dimension formula are multiplicative
   under (unbranched) finite covers, so we can pass to the double
   cover of this bifold in which the singular set $\Sigma$ lifts to
   its orientable double cover, so reducing to the case that the
   singular surface is orientable.

    It remains to consider the case that $\Sigma$ has seams and
    possibly tetrahedral points. The argument in
    \cite[Proposition~2.6]{KM-yaft} needs essentially no modification
    except for a final step. As in the earlier paper, we must verify
    the dimension formula explicitly for the case that $\check X$ is
    $S^{4}/V_{8}$, where $V_{8}\subset \SO(4)\subset \SO(5)$ is the
    elementary abelian 2-group, acting so that the singular set
    $\Sigma$ is the suspension of the tetrahedral web, with $t=2$. We
    can take $(E,A)$ to be a flat bifold bundle with monodromy group
    $V_{4}$. The automorphism group $\Gamma$ for this bifold bundle is
    the maximal torus $T^{2}$ and index the $d$ of the deformation
    complex is $-2$. On the other hand, we have
    \[
             \begin{aligned}
                12 \kappa(E) - 8 (b^{+}(X) - b^{1}(X)+1)
                + \Sigma\ccdot \Sigma + 2\chi(\Sigma) - t &= 0  - 8 + 0
                + 2\times 4 - 2\\
                &=-2
             \end{aligned}
    \]
    also, so the result is proved in this remaining case.
\end{proof}

\begin{corollary}
    If $[E, A]$ and $[E',A']$ are two elements in $M(\check X)$ for a
    closed 4-dimensional bifold $\check X$, then the formal dimensions
    $d$ and $d'$ of the moduli space at these two points differ by a
    multiple of $6$.
\end{corollary}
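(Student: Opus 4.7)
The plan is to read off the corollary directly from the dimension formula in Proposition~\ref{prop:dimension} combined with the integrality result in Corollary~\ref{cor:kappa-half}. The key observation is that the formula
\[
    d = 12\kappa(E) - 8(b^{+}(X) - b^{1}(X) + 1) + \Sigma \ccdot \Sigma + 2\chi(\Sigma) - t
\]
has only one term that depends on the choice of bifold connection, namely $12\kappa(E)$. The remaining terms are topological invariants of the underlying bifold $\check X$ (its underlying 4-manifold $X$, its singular foam $\Sigma$, and the count $t$ of tetrahedral points), and therefore cancel when we compute $d' - d$.

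Thus I would write $d' - d = 12\bigl(\kappa(E') - \kappa(E)\bigr)$, and then invoke Corollary~\ref{cor:kappa-half}, which asserts that $\kappa(E') - \kappa(E) \in \tfrac{1}{2}\Z$. Writing $\kappa(E') - \kappa(E) = n/2$ for some integer $n$, we obtain $d' - d = 6n$, which is the desired statement.

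There is no real obstacle here: the corollary is a direct consequence of the two preceding results. The only thing worth noting is that the factor $12$ in the dimension formula is sharp in the sense that half-instantons (which change $\kappa$ by a half-integer and are needed whenever the singular set is nonempty, by Proposition~\ref{prop:classify}) produce exactly a change of $6$ in the formal dimension, while whole instantons produce a change of $12$. Hence the divisibility by $6$ cannot be improved in general, while divisibility by $12$ would hold in the manifold case.
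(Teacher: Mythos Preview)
Your proof is correct and follows exactly the paper's own argument: subtract the two dimension formulas so that only the $12\kappa$ term survives, then apply Corollary~\ref{cor:kappa-half} to conclude that $d'-d = 12(\kappa'-\kappa)\in 6\Z$. The additional remark about sharpness (half-instantons changing $d$ by $6$) is accurate and not in the paper, but is a harmless elaboration.
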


\begin{proof}
    By Corollary~\ref{cor:kappa-half}, the actions $\kappa$ and
    $\kappa'$ differ by a multiple of $1/2$. The other terms in the
    dimension formula are the same.
\end{proof}

We can be more precise about the dimension mod $6$ in the above
corollary. The next proposition gives a formula for the formal
dimension mod $6$ in terms only of the topology of the bifold,
without referring directly to $\kappa$.

\begin{proposition}\label{prop:dim-mod-6}
    Let $\check X$ be a closed, oriented $4$-dimensional bifold with
    underlying 4-manifold $X$ and foam $\Sigma \subset X$. Then at any
    $[E,A]\in M(\check X)$, the formal dimension of the moduli space
    mod $6$ is given by
    \[
            \dim M(\check X) = - 2\Bigl( b^{+}(X) - b^{1}(X) + 1\Bigr)
            + 2\Bigl( - \Sigma
            \ccdot \Sigma
            + \chi(\Sigma) + t\Bigr), \pmod 6.
    \]
\end{proposition}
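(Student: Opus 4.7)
The plan is to subtract the proposed formula from the formula of Proposition~\ref{prop:dimension}, observe that the resulting mod-$6$ congruence depends only on $\check X$ by the preceding corollary, and then verify it by reducing to a complexified $\SO(3)$ connection.

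Subtracting the proposed right-hand side from the dimension formula of Proposition~\ref{prop:dimension}, the claim is equivalent to
\[
12\kappa(E) + 3\bigl(\Sigma\ccdot\Sigma - t\bigr) \equiv 0 \pmod 6.
\]
By the corollary immediately preceding this proposition, replacing $(E,A)$ by any other bifold connection changes $\kappa$ by a multiple of $1/2$ and hence changes $12\kappa$ by a multiple of $6$; the remaining terms $\Sigma\ccdot\Sigma$ and $t$ are already topological invariants of $\check X$. So the congruence depends only on $\check X$, and it suffices to verify it for one bifold connection on each bifold.

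For this verification I would take $(E,A) = \r(E_r, A_r)$ to be the complexification of an $\SO(3)$ bifold connection, in the sense of Section~\ref{sec:includeSO3}. Lemma~\ref{lem:times-4} then gives $12\kappa = 48\kappa_r$, and the $\SO(3)$ dimension formula of \cite{KM-Tait} forces $\kappa_r \in \tfrac{1}{8}\Z$; hence $12\kappa \in 6\Z$, so $12\kappa \equiv 0 \pmod 6$. The claim therefore reduces to the purely topological statement $\Sigma\ccdot\Sigma \equiv t \pmod 2$. This last statement is checked on the same list of model bifolds used to prove Proposition~\ref{prop:dimension}: trivially when the singular set is empty or an orientable closed surface (giving $t = 0$ with $\Sigma\ccdot\Sigma$ having the appropriate parity in each explicit model considered there); by passage to the orientable double cover in the non-orientable-surface case, under which both $\Sigma\ccdot\Sigma$ and $t$ double; and for the tetrahedral model $\check X = S^{4}/V_{8}$ at the end of Proposition~\ref{prop:dimension}, where $t = 2$ and $\Sigma\ccdot\Sigma = 0$, so the congruence holds trivially.

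The main obstacle is verifying the parity identity $\Sigma\ccdot\Sigma \equiv t \pmod 2$ in sufficient generality: one must check that the excision and covering arguments used in Proposition~\ref{prop:dimension} preserve this mod-$2$ congruence when reducing the general case to the short list of models. A slightly longer alternative, which bypasses the $\SO(3)$ reduction entirely, is to verify the mod-$6$ congruence directly on each model case using a flat or explicitly chosen bifold connection and computing $12\kappa$ by hand; either route reduces the proof to the same case analysis already carried out in the proof of Proposition~\ref{prop:dimension}.
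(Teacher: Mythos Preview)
Your reduction to the congruence $12\kappa + 3(\Sigma\ccdot\Sigma - t) \equiv 0 \pmod 6$ is correct, and so is the observation that it suffices to check this for a complexified $\SO(3)$ connection. The gap is in the next step, where you split the congruence into two separate claims: $12\kappa \equiv 0 \pmod 6$ and $\Sigma\ccdot\Sigma \equiv t \pmod 2$. Neither of these holds in general. The paper reminds you, just before this proposition, that $\Sigma\ccdot\Sigma$ can be a half-integer; when it is, the $\SO(3)$ dimension formula $d_r = 8\kappa_r - 3(b^{+}-b^{1}+1) + \tfrac{1}{2}\Sigma\ccdot\Sigma + \chi(\Sigma) - t/2$ does \emph{not} force $8\kappa_r\in\Z$, so your conclusion $\kappa_r\in\tfrac{1}{8}\Z$ fails. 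A concrete counterexample is the foam $\Psi_{3}\subset S^{4}$ from section~\ref{subsec:RP2-foams}: there $\Sigma\ccdot\Sigma = 1/2$, $t=3$, and the smallest $\SO(3)$ solution has $\kappa_r = 1/32$, giving $12\kappa = 48\kappa_r = 3/2$. Both of your sub-claims fail here, yet $12\kappa + 3(\Sigma\ccdot\Sigma - t) = 3/2 - 15/2 = -6$, so the combined congruence still holds.

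The fix is not to split. From the integrality of $d_r$ and the relation $8\kappa_r = 2\kappa$ you get directly
\[
2\kappa \in \Z - \tfrac{1}{2}\Sigma\ccdot\Sigma + \tfrac{t}{2},
\]
hence $12\kappa \equiv -3\,\Sigma\ccdot\Sigma + 3t \pmod 6$, which is exactly the congruence you need. This is what the paper does: it substitutes this expression for $12\kappa$ back into the dimension formula and simplifies. Your case-analysis route via the model bifolds of Proposition~\ref{prop:dimension} is not needed, and indeed the parity statement you were trying to verify on those models is simply false in the cases where $\Sigma\ccdot\Sigma$ is a half-integer.
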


Before proving the proposition, we recall that $\Sigma 
\ccdot  \Sigma$
may be a half-integer for a foam. In particular, the above formula
does not imply that the dimension is even. However, we do have the
following immediate corollary.

\begin{corollary}\label{cor:dimension-parity}
    In the above situation, we have
    \[
              \dim M(\check X) = 2 \Sigma
           \ccdot \Sigma,
             \pmod 2.
    \]\qed
\end{corollary}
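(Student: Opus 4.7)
The plan is simply to reduce the mod-$6$ congruence of Proposition~\ref{prop:dim-mod-6} modulo $2$. This is legitimate because $2$ divides $6$, so any identity mod $6$ immediately implies the corresponding identity mod $2$. The right-hand side of that formula can be written as
\[
-2\bigl(b^{+}(X) - b^{1}(X) + 1\bigr) \,-\, 2\Sigma\ccdot\Sigma \,+\, 2\chi(\Sigma) \,+\, 2t,
\]
and every summand carries an explicit factor of $2$.

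Three of these terms, namely $-2(b^{+}(X) - b^{1}(X) + 1)$, $2\chi(\Sigma)$, and $2t$, are twice an ordinary integer (built from Betti numbers, the Euler characteristic of the foam, and the count of tetrahedral points), so each is $\equiv 0 \pmod 2$. The only remaining contribution comes from $-2\Sigma\ccdot\Sigma$. Although $\Sigma\ccdot\Sigma$ itself may be a half-integer, as emphasized in the remark preceding the corollary, the quantity $2\Sigma\ccdot\Sigma$ is always an integer and so has a well-defined parity. Using $-1 \equiv 1 \pmod 2$, we have $-2\Sigma\ccdot\Sigma \equiv 2\Sigma\ccdot\Sigma \pmod 2$, and the claimed congruence follows.

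There is no genuine obstacle here: the entire content of the corollary is already packaged inside Proposition~\ref{prop:dim-mod-6}, and the argument is a one-line reduction. The only minor point worth flagging before writing the proof is that $2\Sigma\ccdot\Sigma$ is always an integer, so that the statement is even meaningful; this is immediate from the definition of the foam self-intersection pairing in \cite[Definition~2.5]{KM-Tait}. A geometric reading of the result is that $\dim M(\check X)$ is odd precisely when $\Sigma\ccdot\Sigma$ is a proper half-integer, i.e.\ when the non-orientable part of the foam contributes a genuine half-integer to the self-intersection.
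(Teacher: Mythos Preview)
Your argument is correct and is exactly the one the paper intends: the corollary is marked as immediate (with a \qed\ and no separate proof), being nothing more than the reduction of Proposition~\ref{prop:dim-mod-6} modulo~$2$, where the three integer terms drop out and only $2\Sigma\ccdot\Sigma$ survives.
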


\begin{proof}[Proof of Proposition~\ref{prop:dim-mod-6}]
    We exploit the fact that there exist $\SO(3)$ bifold connections
    on $\check X$, so there is a non-empty inclusion $\bonf_{r}(\check
    X) \to \bonf(\check X)$. Let $[E_{r}, A_{r}]$ be an element of
    $\bonf_{r}(\check X)$, and let $d_{r}$ be the formal dimension of
    the moduli space for this component of $\bonf_{r}(\check X)$: that
    is, the index of the orbifold operator $d^{*}  + d^{+}$ coupled to
    $(E_{r}, A_{r})$. From \cite{KM-Tait}, we have the formula
    \[
            d_{r} = 8 \kappa_{r}  - 3(b^{+}-b^{1} + 1) +
            (1/2)\Sigma\ccdot \Sigma + \chi(\Sigma) - t/2.
    \]
    In particular, the right-hand side is an integer. Now let $[E,A]$
    be the image of $[E_{r}, A_{r}]$ in $\bonf(\check X)$ and let
    $\kappa$ be its topological energy. By
    Lemma~\ref{lem:times-4}, we can express $8 \kappa_{r}$ as
    $2\kappa$, so the fact that $d_{r}$ is an integer tells us 
    \[
          12 \kappa = -6\Bigl(  - 3(b^{+}-b^{1} + 1) +
            (1/2)\Sigma\ccdot \Sigma + \chi(\Sigma) - t/2\Bigr) , \pmod
            {6\Z},
    \]
    or just,
      \[
          12 \kappa =  
           -3\, \Sigma\ccdot \Sigma  + 3 t, \pmod
            {6\Z}.
    \]
    If we substitute this formula for $12\kappa$ into the dimension
    formula in Proposition~\ref{prop:dimension} and simplify the
    result modulo $6$, we obtain the result stated in the proposition:
    \[
    \begin{aligned}
    \dim M_{\kappa} &= 12\kappa   - 8 (b^{+}(X) - b^{1}(X)+1) +
            \Sigma\ccdot\Sigma + 
            2 \chi(\Sigma) - t \\
                   &=   (-3 \,\Sigma\ccdot \Sigma  + 3 t)
                - 8 (b^{+}(X) - b^{1}(X)+1)    + \Sigma\ccdot\Sigma + 2
            \chi(\Sigma) - t, \pmod {6}\\
            &=  - 2 (b^{+}(X) - b^{1}(X)+1) - 2 \, \Sigma\ccdot\Sigma
            + 2 \chi(\Sigma) + 2t , \pmod {6}.
            \end{aligned}
    \]
\end{proof}

\subsection{Uhlenbeck compactness}

Let $\check X$ be a 4-dimensional bifold (possibly non-compact) and
let $A_{n}$ be a sequence of anti-self-dual connections in a bifold
bundle $E\to \check X$. We have the usual statement of Uhlenbeck's
compactness theorem in this setting: provided only that there is a
uniform bound on the Chern-Weil integrals $\kappa(A_{n})$, there is a
subsequence $A_{n'}$ which, after applying gauge transformations,
converges on compact subsets of $\check X\setminus \{x_{1}, \dots,
x_{l}\}$ to a connection $A^{*}$ in a bifold bundle $E^{*}$.
Furthermore, the limit $(E^{*},A^{*})$ has removable singularities
after gauge transformation, at each of the points $x_{i}$.

The extra information we need to add to this general form of the
compactness theorem is a statement of how much action $\kappa$ is lost
in the bubbles at the points $x_{i}$. That is, if $U_{i}$ is
neighborhood of $x_{i}$ whose closure is compact and disjoint from the
other $x_{j}$, we must consider the difference
\[
    \delta_{i}=\lim \kappa(A_{n'}|_{U_{i}}) - \kappa(A^{*}|_{U_{i}}).
\]

\begin{lemma}
    In the above situation, the loss of action $\delta_{i}$ in the
    bubble $x_{i}$ is a multiple of $1/2$ if $x_{i}$ belongs to the
    singular set of the bifold, and is an integer otherwise.
\end{lemma}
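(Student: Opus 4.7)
The plan is to identify $\delta_{i}$ with the total action of ASD bifold connections on closed bifold compactifications of the local model at $x_{i}$, and then invoke Corollary~\ref{cor:kappa-half}.

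After shrinking $U_{i}$ we may assume $U_{i}$ is isometric to $\tilde B/H_{x_{i}}$, with $\tilde B\subset\R^{4}$ a small ball and $H_{x_{i}}\in\{1,\Z/2,V_{4},V_{8}\}$; here $H_{x_{i}}=1$ precisely when $x_{i}$ is a smooth point of $\check X$. A standard rescaling and bubble-tree argument, carried out in the bifold setting, extracts finitely many finite-energy ASD bifold connections $A^{\infty}_{j}$ on local models $\R^{4}/H_{j}$ with $H_{j}\in\{1,\Z/2,V_{4},V_{8}\}$ (every $H_{j}=1$ when $x_{i}$ is smooth), whose actions sum to the lost energy:
\[
    \delta_{i}=\sum_{j}\kappa(A^{\infty}_{j}).
\]
By the bifold version of Uhlenbeck's removable-singularity theorem, each $A^{\infty}_{j}$ extends across the point at infinity to a bifold connection of the same action on the closed bifold $\check S_{j}$ obtained as the one-point compactification of $\R^{4}/H_{j}$ (namely $S^{4}/H_{j}$, with a second bifold point placed at infinity).

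It therefore suffices to show that every bifold connection on the closed bifold $\check S:=S^{4}/H$ has $\kappa$ a multiple of $1/2$ when $H\in\{\Z/2,V_{4},V_{8}\}$, and an integer when $H=1$. By Corollary~\ref{cor:kappa-half}, on each $\check S$ the residue of $\kappa$ modulo $1/2$ is independent of the choice of bifold connection, so it suffices to exhibit a single reference bifold connection with $\kappa$ of the correct form. For $H=1$ the trivial connection on $S^{4}$ has $\kappa=0$. For the singular cases, the orbifold fundamental group of $\check S$ is $H$, and the $\SU(3)$-action of $H$ on $\C^{3}$ prescribed by the bifold condition determines a flat bifold connection on $\check S$ with $\kappa=0$.

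The main technical obstacle is the bifold-equivariant formulation of the analytic pieces --- weak compactness and removable singularity --- at the singular points, particularly at a tetrahedral point where $H_{x_{i}}=V_{8}$. Pulling back through the orbifold chart $\phi$, these reduce to standard $H_{x_{i}}$-equivariant statements for $\SU(3)$ connections on a ball in $\R^{4}$, and follow the pattern laid out for the $\SO(3)$ case in \cite{KM-Tait, KM-gtes1}.
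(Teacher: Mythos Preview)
Your proof is correct and takes essentially the same approach as the paper, which simply refers to the classification results of section~\ref{subsec:classify}; you have made the bubble-tree argument and the reduction to closed model bifolds $S^{4}/H$ explicit. One small correction: in the smooth case $H=1$, Corollary~\ref{cor:kappa-half} alone only yields $\kappa\in\tfrac{1}{2}\Z$, so for integrality you should instead invoke the standard fact that $c_{2}(E)[S^{4}]\in\Z$, or equivalently the clause in Proposition~\ref{prop:classify} that only full instantons are needed when the singular set is empty.
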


\begin{proof}
    This follows from the results of section~\ref{subsec:classify}.
\end{proof}

\begin{corollary}
    We have an inequality
    \[
            \kappa(A^{*}) \le \limsup \kappa_{A_{n'}} - \sum_{1}^{l}
            \delta_{i}
    \]
    where $\delta_{i}$ is a non-negative multiple of $1/2$ or of $1$,
    according as $x_{i}$ is or is not on the singular set of the
    bifold. In case $\check X$ is compact, this is an equality.
\end{corollary}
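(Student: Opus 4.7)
The plan is to reduce the statement to two standard features of the anti-self-dual Chern--Weil integrand together with the previous lemma. Recall that $\kappa = (1/8\pi^{2})\int \tr(F\wedge F)$, and that for any anti-self-dual connection the integrand is pointwise $|F|^{2}\, d\mathrm{vol}/(8\pi^{2})$ (up to a positive constant absorbed by the trace sign), so the integral is monotone in the domain and in particular non-negative. I will use this twice.

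First I would establish non-negativity of each $\delta_{i}$. Choose disjoint precompact neighborhoods $U_{i}$ of the bubble points $x_{i}$ so small that the closure contains no other $x_{j}$. Because $A_{n'}\to A^{*}$ smoothly on $U_{i}\setminus\{x_{i}\}$ (after gauge transformations), we have $\kappa(A^{*}|_{U_{i}\setminus \bar V}) = \lim \kappa(A_{n'}|_{U_{i}\setminus \bar V})$ for every small closed neighborhood $V$ of $x_{i}$ with $\bar V\subset U_{i}$. Subtracting, and then letting $V$ shrink to $\{x_{i}\}$, gives
\[
\delta_{i} = \lim \kappa(A_{n'}|_{U_{i}}) - \kappa(A^{*}|_{U_{i}})
           = \lim_{V} \lim_{n'} \kappa(A_{n'}|_{V}) \ge 0,
\]
where the final inequality uses that each $A_{n'}$ is anti-self-dual and hence has non-negative Chern--Weil integral over any open set. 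Combined with the preceding lemma, this shows that $\delta_{i}$ is a non-negative multiple of $1/2$ or of $1$ according to whether or not $x_{i}$ lies on the singular set.

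Next I would obtain the global inequality by additivity. For the neighborhoods $U_{i}$ chosen as above, write
\[
    \kappa(A_{n'}) = \sum_{i=1}^{l} \kappa(A_{n'}|_{U_{i}})
         + \kappa\bigl(A_{n'}|_{\check X\setminus\bigcup_{i} U_{i}}\bigr).
\]
On $\check X\setminus \bigcup U_{i}$ we have smooth convergence on compacta, so the last term tends to $\kappa(A^{*}|_{\check X\setminus\bigcup U_{i}})$, which by monotonicity is $\le \kappa(A^{*})$. Taking $\limsup$ over $n'$ and rearranging using the definition of $\delta_{i}$ yields
\[
    \limsup \kappa(A_{n'}) \ge \sum_{i} \bigl(\delta_{i} + \kappa(A^{*}|_{U_{i}})\bigr)
                                  + \kappa(A^{*}|_{\check X\setminus\bigcup U_{i}})
                              \ge \kappa(A^{*}) + \sum_{i}\delta_{i},
\]
which is the asserted inequality.

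Finally, when $\check X$ is compact, there is no action escaping to infinity: the complement $\check X\setminus \bigcup U_{i}$ has compact closure, so the convergence of $\kappa(A_{n'}|_{\check X\setminus \bigcup U_{i}})$ to $\kappa(A^{*}|_{\check X\setminus \bigcup U_{i}})$ is exact, and letting the $U_{i}$ shrink to the bubble set converts the last inequality into an equality. The main (and essentially only) subtlety is verifying non-negativity of $\delta_{i}$, since the preceding lemma gives only the arithmetic type; this is handled by the anti-self-duality argument above, after which everything follows by routine additivity of the Chern--Weil integral.
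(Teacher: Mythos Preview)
Your argument is correct and is precisely the standard unpacking of this corollary; the paper gives no proof at all, treating the statement as an immediate consequence of the preceding lemma together with routine Uhlenbeck compactness. One small wording issue: in the non-compact case you say the complement term ``tends to'' $\kappa(A^{*}|_{\check X\setminus\bigcup U_{i}})$, but strictly you only have $\liminf_{n'}\kappa(A_{n'}|_{\check X\setminus\bigcup U_{i}})\ge \kappa(A^{*}|_{\check X\setminus\bigcup U_{i}})$ via Fatou on the non-negative density $|F|^{2}$ --- this is exactly the inequality you need, so the argument stands, and your final display could read with equality in the second step since $\sum_{i}\kappa(A^{*}|_{U_{i}})+\kappa(A^{*}|_{\check X\setminus\bigcup U_{i}})=\kappa(A^{*})$.
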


\begin{remark}
    Note that the result here is different from the results in the
    case of $\SO(3)$ bifold bundles \cite{KM-Tait}. In the $\SO(3)$
    case, the value of the $\SO(3)$ instanton number $\kappa_{r}$ can
    drop by $1/4$ or $1/8$ when bubbles occur at points $x_{i}$ on the
    seam of the foam or a tetrahedral point respectively. Note that
    this is consistent with the inclusion $\r$ of the $\SO(3)$
    solutions in the space of $\SU(3)$ solutions, because of the
    factor of 4 in Lemma~\ref{lem:times-4}.
\end{remark}

\section{Instanton homology for bifold connections}
\label{sec:floer}

\subsection{Atoms and a trifold}
\label{subsec:atoms}

The space of bifold connections $\bonf(\check Y)$ on a
three-dimensional bifold $\check Y$ may contain flat \emph{reducible}
connections, which obstruct a straightforward construction of
instanton homology for $\check Y$. To remedy this situation, as in
similar situations in \cite{KM-yaft,KM-unknot,KM-Tait}, we modify the
construction by forming a connected sum of the bifold $\check Y$ with
another orbifold (an \emph{atom}) on which there are no reducible
connections. It is convenient further if the atom can be chosen so
that there is exactly one (irreducible) flat connection modulo gauge.

In the current context, there are several possible choices for what to
use as the atom, but we choose one which is convenient for our
purposes. Our particularly choice requires modifying our framework
slightly in two ways.

First, the orbifold we choose for the atom is not a bifold, because
the local stabilizers will have order $3$, not order $2$. Consider
$S^{3}$ with coordinates $(z_{1},z_{2})$ as the unit sphere in
$\C^{2}$ and let the group $V_{9}=C_{3} \times C_{3}$ act on $S^{3}$
by multiplying $z_{1}$ and $z_{2}$ by cube roots of unity. The
quotient orbifold is topologically $S^{3}$ and the singular locus is a
pair of circles forming a Hopf link in $S^{3}$. Since the stabilizers
have order $3$, we refer to $S^{3}/V_{9}$ as a \emph{trifold}. For
future reference, we give this trifold a name and write
\[
       H_{3} = S^{3}/V_{9}.
\]

The second modification is that our orbifold bundle will not be quite
an orbifold $\SU(3)$ bundle over $H_{3}$. We start instead by
describing an orbifold $\PSU(3)$ bundle. Let $\Gg$, and $\Gh$ denote
the standard generators of $C_{3}\times C_{3}$. There is a central
extension,
\[
          \Z \to G_{\Z} \to V_{9}
\]
described by generators
$\Gg, \Gh, \gamma$, with $\gamma$ central,
$[\Gg, \Gh]=\gamma$ and
$\Gg^{3}=\Gh^{3}=1$. There is a
representation $\rho:G_{\Z}\to \SU(3)$ given on generators by
\begin{equation}\label{eq:rep-rho}
     \rho:\Gg\mapsto \begin{bmatrix}
                           \omega &  0  & 0\\
                            0 &  1 & 0 \\
                            0 & 0 & \omega^{-1}
                            \end{bmatrix} \quad
     \rho:\Gh\mapsto \begin{bmatrix}
                           0 &  1  & 0\\
                            0 & 0  & 1 \\
                            1 &  0 & 0
                            \end{bmatrix}\quad
         \rho: \gamma \mapsto \begin{bmatrix}
                           \omega &  0  & 0\\
                            0 & \omega  & 0 \\
                            0 &  0 & \omega
                            \end{bmatrix}   
\end{equation}
where $\omega=e^{2\pi i /3}$. Since $\gamma$ is central, this
representation descends to a homomorphism $V_{9} \to \PSU(3)$. The
quotient of the trivial b$\PSU(3)$-bundle $\PSU(3)\times S^{3}$ by
$V_{9}$ is an orbifold principal bundle over the trifold $H_{3}$. It
comes with a flat orbifold connection from the trivial connection over
$S^{3}$.

Now let $w$ be an arc in $H_{3}$ joining the two circles of the Hopf
link. For example we can take the path $(z_{1}, z_{2})= (\cos(\theta),
\sin(\theta))$ for $\theta$ in $[0,\pi/2]$. The complement of
$w\subset H_{3}$ has orbifold fundamental group isomorphic to $G_{\Z}$
in such a way that the map $\pi_{1}(H_{3}\setminus w )\to
\pi_{1}(H_{3})$ is the quotient map $G\to V_{9}$, so our flat
$\PSU(3)$ bundle over $H_{3}$ lifts to a flat $\SU(3)$ bundle on the
complement of $w$. This flat connection has monodromy $\rho(\gamma) =
\omega \mathbf{1}$ on the link of the arc $w$. Since $\rho(\gamma)$
has order $3$, the monodromy of this flat connection is the subgroup
$G\subset \SU(3)$ which is a central extension
\begin{equation}\label{eq:central-extension-3}
          \Z/3 \to G \to V_{9}
\end{equation}
whose center is generated by an element $\bar\gamma = [\Gg,
\Gh]$ of order $3$.

We can now consider the space $\bonf_{l}^{w}(H_{3})$, elements of
which consist of data of the following sort:
\begin{itemize}
    \item  an orbifold $\PSU(3)$ connection $(E',A')$ on $H_{3}$ of
    Sobolev class $L^{2}_{l}$;
   \item a lift of $(E,A)$ to an $\SU(3)$ orbifold connection on the
   complement of the arc $w$;
   \item local models for $(E,A)$ at orbifold points of
   $H_{3}\setminus w$ are required to have the local stabilizer
   $C_{3}$ acting with three distinct eigenvalues $\{\omega, 1,
    \omega^{-1}\}$ on the fibers $\tilde E_{x}$ in the orbifold
    charts;
    \item and the asymptotic monodromy of $A$ around the
    oriented link of the arc $w$ is $\omega \mathbf{1}$. 
\end{itemize}

\begin{lemma}\label{lem:atomic}
    In the configuration space $\bonf_{l}^{w}(H_{3})$, there is a
    unique flat connection $(E,A)$. It is non-degenerate and it has
    trivial automorphism group.
\end{lemma}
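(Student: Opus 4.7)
The plan is to identify flat elements of $\bonf_{l}^{w}(H_{3})$ with conjugacy classes of representations $\rho : G_{\Z} \to \SU(3)$ subject to the constraints built into the definition of $\bonf_{l}^{w}$, and to read off the three assertions of the lemma by elementary representation theory.

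Such a $\rho$ must satisfy $\rho(\gamma) = \omega\mathbf{1}$ (from the asymptotic monodromy condition on the link of $w$) and have $\rho(\Gg)$, $\rho(\Gh)$ each conjugate to $\mathrm{diag}(\omega, 1, \omega^{-1})$ (from the local orbifold models at the two circles of the Hopf link). Gauge equivalence is simultaneous conjugation in $\SU(3)$ modulo the centre, the underlying structure group being $\PSU(3)$. For existence and uniqueness I would set $g = \rho(\Gg)$ and $h = \rho(\Gh)$, so the relation $[\Gg,\Gh] = \gamma$ becomes $ghg^{-1} = \omega h$. After conjugating one may take $g = \mathrm{diag}(\omega, 1, \omega^{-1})$; the $\omega$-eigenspace of $\mathrm{Ad}(g)$ is then spanned by $e_{12}, e_{23}, e_{31}$, so $h = a e_{12} + b e_{23} + c e_{31}$ with $\det h = abc = 1$. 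The remaining diagonal conjugations in the centraliser of $g$ act freely and transitively on such $h$ and normalise it to the cyclic permutation matrix $\rho(\Gh)$ of \eqref{eq:rep-rho}.

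The resulting $\rho$ is the unique irreducible three-dimensional representation of the Heisenberg-type central extension $G$ of \eqref{eq:central-extension-3} with central character $\bar\gamma \mapsto \omega$, since its dimension equals $\sqrt{|G/Z(G)|}$. By Schur's lemma the centraliser of $\rho(G)$ in $\SU(3)$ is $Z(\SU(3))$, whose image in $\PSU(3)$ is trivial; this gives the trivial automorphism group. For non-degeneracy I would identify the $H^{1}$ of the deformation complex at the flat connection with the group cohomology $H^{1}(V_{9}; \mathfrak{su}(3)_{\bar\rho})$, where $V_{9}$ acts by the adjoint of $\bar\rho$, and note that this vanishes because $|V_{9}| = 9$ is invertible in characteristic zero. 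Equivalently, a standard Heisenberg-group computation shows that $\rho \otimes \rho^{*}$ is the regular representation of $V_{9}$, so $\mathfrak{sl}(3, \C)$ decomposes into the eight nontrivial characters of $V_{9}$ and has no invariants in particular.

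The step requiring the most care is the reduction of the analytic deformation complex on the trifold $H_{3}$, with the prescribed behaviour of connections along $w$ and at the orbifold points, to the algebraic group cohomology of $V_{9}$ on $\mathfrak{su}(3)$; once this identification is in place, the cohomological vanishing and hence non-degeneracy are immediate.
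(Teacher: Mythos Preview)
Your proof is correct and takes a genuinely different route from the paper's. The paper observes that the complement of the singular Hopf link in $H_{3}$ is $T^{2}\times(0,1)$, with $w$ meeting the torus in a point, and identifies the flat connection with the unique projectively flat $U(3)$ connection of degree $1$ on $T^{2}$; uniqueness, irreducibility, and rigidity are then cited from \cite{KM-yaft}. Your approach instead computes directly with representations of the Heisenberg-type group $G$, which is more self-contained and makes the role of the central extension transparent. The two are closely related: $G_{\Z}$ is the fundamental group of the once-punctured torus, and your unique irreducible $\rho$ is exactly the algebraic incarnation of the degree-one projectively flat bundle. One small point: your ``Equivalently'' sentence establishes $H^{0}(V_{9};\mathfrak{su}(3))=0$, not $H^{1}=0$; the latter really does need your first argument (that $9$ is invertible), or else the further remark that $H^{*}(V_{9};\chi)=0$ for each nontrivial character $\chi$. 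The identification of the analytic $H^{1}$ with group cohomology that you flag as delicate does go through: since the monodromy along $w$ is central, the adjoint bundle extends over $w$ as a flat bundle on the closed orbifold $H_{3}$, whose orbifold universal cover is $S^{3}$; the transfer (or the Cartan--Leray spectral sequence, using $H^{1}(S^{3})=0$) then gives $H^{1}(H_{3};\mathrm{ad}\,\bar\rho)\cong H^{1}(V_{9};\mathfrak{su}(3))$.
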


\begin{proof}
     The complement of the singular set in $H_{3}$ is the product of a
     2-torus with an open interval, and $w$ meets the torus in a
     point. On the torus, our $\PSU(3)$ bundle is familiar as the
     adjoint bundle of the unique projectively flat $U(3)$ connection
     with degree $1$ on $T^{2}$. See, for example, \cite{KM-yaft}.
 \end{proof}

\begin{remarks}
(1) There is a completely analogous construction of an orbifold
$\PSU(N)$ bundle over an orbifold $H_{N} = S^{3}/(C_{N} \times
C_{N})$, having an $\SU(N)$ lift on the complement of an arc joining
the two circles in the singular set. The case $N=2$ appears in
\cite{KM-unknot}.

(2) As in \cite{KM-unknot}, one can consider a more general cobordism
category whose objects are orbifolds with bifold and trifold
singularities, adorned with arcs and circles $w$. The locus $w$ is
allowed to have endpoints on the trifold loci, but not on the bifold
loci. Supplying $w$ is equivalent to supplying a line bundle $\Lambda$
with $c_{1}(\Lambda)$ dual to $w$, and we can regard the construction
as studying connections in the adjoint bundle of a $U(3)$ bundle with
determinant $\Lambda$. From this point of view, taking an explicit
representative $w$ aids in the discussion of functoriality. We will
not need to consider this general situation further in this paper,
because the trifold locus and the arc $w$ are appear only in the atom.
\end{remarks}

Given now a 3-dimensional \emph{bifold} $\check Y$, we form the
connected sum $\check Y\# H_{3}$ at a smooth point (not on the
orbifold locus). To do this canonically, we need to choose a basepoint
$y_{0} \in Y$, and a framing of $T_{y_{0}}Y$. We also need a one-off
choice of basepoint in $H_{3}$, which we ask not to be on $w$. On the
connected sum $\check Y\# (H_{3} \setminus w)$ we then consider
orbifold bundles obtained by summing a bifold bundle on $\check Y$ to
our model $\SU(3)$ trifold bundle on $H_{3}\setminus w$. We then have
a space of Sobolev connections $\bonf_{l}^{w}(\check Y \# H_{3})$,
with the same local models as before. Note that the $\SU(3)$ orbifold
connection is still defined only on the complement of $w$.

We introduce the abbreviations
\[
            \check Y^{\sharp} = \check Y \# H_{3},
\]
and
\[
        \bonf^{\sharp}(\check Y^{\sharp}) = \bonf_{l}^{w}(\check Y \#
        H_{3}).
\]

\begin{lemma}
    This space of bifold connections contains no reducible flat
    connections.
\end{lemma}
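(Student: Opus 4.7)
The plan is to use Lemma~\ref{lem:atomic} to pin down the holonomy of any flat connection when restricted to the $H_{3}$ summand, and then to observe that the representation $\rho$ of \eqref{eq:rep-rho} already acts irreducibly on $\C^{3}$. Schur's lemma will then force the full holonomy on $\check Y^{\sharp}$ to have centralizer equal to $Z(\SU(3))$, placing any flat connection in case (1) of the classification in section~\ref{subsec:config}.

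More concretely, suppose $[E,A]\in \bonf^{\sharp}(\check Y^{\sharp})$ is flat. Choose a small ball $B\subset H_{3}$ away from both $w$ and the singular locus, serving as the site of the connected sum. Restricting $(E,A)$ to $H_{3}\setminus B$ yields a flat bifold-type connection satisfying the local models and monodromy condition that define $\bonf_{l}^{w}(H_{3})$; since $\partial B \cong S^{2}$ is simply connected, the orbifold fundamental group of $(H_{3}\setminus w)\setminus B$ agrees with that of $H_{3}\setminus w$, which is $G_{\Z}$. By Lemma~\ref{lem:atomic}, this restriction is gauge-equivalent to the model flat connection with holonomy the composition $G_{\Z} \twoheadrightarrow G \xrightarrow{\rho} \SU(3)$.

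Next I would verify that $\rho(G) \subset \SU(3)$ acts irreducibly on $\C^{3}$. The matrix $\rho(\Gg)$ is diagonal with three distinct eigenvalues $\omega,1,\omega^{-1}$, so any $\rho(\Gg)$-invariant subspace is a direct sum of the three coordinate lines. But $\rho(\Gh)$ permutes these coordinate lines in a single $3$-cycle, so no proper nontrivial sum of them is preserved; hence no proper nontrivial subspace of $\C^{3}$ is invariant under $\rho(G)$.

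Combining these two steps, the holonomy of $(E,A)$ already contains the irreducible subgroup $\rho(G)\subset \SU(3)$, so by Schur's lemma its centralizer in $\SU(3)$ is the center $Z(\SU(3))$. Referring back to the classification of stabilizers in section~\ref{subsec:config}, this means $\Gamma = Z(\SU(3))$, i.e.\ $(E,A)$ is irreducible. The only delicate point is the first step: one must confirm that the flat connection produced by restriction really lies in the configuration space to which Lemma~\ref{lem:atomic} applies (the right local models at the $C_{3}$-orbifold points, the correct $\omega\mathbf{1}$ monodromy around $w$, and the unchanged orbifold fundamental group after puncturing at $B$). Once this bookkeeping is in hand, irreducibility of $\rho$ is an elementary calculation and the conclusion follows immediately.
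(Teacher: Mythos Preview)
Your proof is correct and follows essentially the same approach as the paper: restrict to the atom $H_{3}$ and invoke Lemma~\ref{lem:atomic}. The paper's proof is a single sentence observing that any flat connection must already be irreducible on the atom (since by Lemma~\ref{lem:atomic} the unique flat connection there has trivial automorphism group); you unpack this by explicitly verifying the irreducibility of $\rho$ and applying Schur's lemma, but the content is the same.
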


\begin{proof}
   This follows from Lemma~\ref{lem:atomic}, because a flat connection
   must already be reducible on the atom $H_{3}$.
\end{proof}

\subsection{Defining instanton homology}

We now have what we need to set up the definition of the $\SU(3)$
instanton homology $\Lsharp$. The models for this construction are in
\cite{KM-yaft} and \cite{KM-Tait}. The first of those papers deals
with structure group $\SU(N)$ in general, but the singular locus there
was always a submanifold, not a web or foam. The generalization to
webs and foams was done in \cite{KM-Tait}, though only for $\SO(3)$.

In outline, given an oriented closed bifold $\check Y$ of dimension
$3$, we form the connected sum with the atom $H_{3}$ at a basepoint,
and consider the Chern-Simons functional on the space
$\bonf^{\sharp}(\check Y^{\sharp})$, perturbed by a holonomy
perturbation so that the critical points are non-degenerate and the
intersection of the unstable and stable manifolds of critical points
$\alpha$ and $\beta$ (the trajectory spaces $M(\alpha,\beta)$) are
transverse. We then define an $\F$-vector space $CL^{\sharp}(\check
Y)$ whose basis is the set of critical points, and define a
differential $\partial$ on $CL^{\sharp}(\check Y)$ whose matrix
entries count the number of components in trajectory spaces
$M(\alpha,\beta)$ of dimension $1$. The instanton homology
$\Lsharp(\check Y)$ is the homology of this complex.

In order for the moduli spaces used in the construction of the
differential to have the necessary compactness properties, a
monotonicity condition is needed. This is discussed in detail in
\cite{KM-yaft} for arbitrary compact structure group. Given critical
points $\alpha$ and $\beta$, the trajectory space $M(\alpha,\beta)$
has components of different dimensions, depending on the action
$\kappa$. If we write $M_{\kappa}(\alpha,\beta)$ for the components of
action $\kappa$, then the monotonicity condition states that the
dimension of a component of $M(\alpha,\beta)$ depends only on the
action, and is therefore constant on each $M_{\kappa}(\alpha,\beta)$.
The monotonicity condition is a constraint on the admissible local
models for orbifold connections in general. In the case of $\SU(3)$,
it can be stated as the condition
\[
        \dim M_{\kappa}(\alpha, \beta) - \dim M_{\kappa'}(\alpha,
        \beta) = 12(\kappa-\kappa').
\]
Connections in these two moduli spaces differ topologically by gluing
in instantons and monopoles, and in the case that the this happens on
the bifold locus of $H_{3}$, the required monotonicity is a
consequence of Proposition~\ref{prop:dimension}. For the case of
gluing monopoles on the trifold locus in $H_{3}$, the monotonicity
condition holds as an a particular case of the classification in
\cite{KM-yaft}. (See \cite[section 2.5]{KM-yaft} for the case of the
special unitary groups.)

\begin{remark}
    The case of a bifold singularity where the action the order-2
    local stabilizer on the $\SU(3)$ fiber is $\mathrm{diag}(1,-1,-1)$
    does not appear in \cite{KM-yaft}. A closely related case does
    appear, and this is the case that asymptotic monodromy is given by
    $\mathrm{diag}(e^{2\pi i /3},e^{-\pi i /3}, e^{-\pi i /6} )$,
    which differs from the $\mathrm{diag}(1,-1,-1)$ by an element of
    the center of $\SU(3)$. Since they are the same in the adjoint
    group, the local analysis of these two cases are essentially the
    same. In the setting of that \cite{KM-yaft}, in the bifold case,
    the element $\mathrm{diag}(-1,1,-1)$ would be written as
    \[
         \exp 2\pi  i \,\mathrm{diag}(\lambda_{1}, \lambda_{2},
         \lambda_{3}),
    \]
    where $(\lambda_{1}, \lambda_{2}, \lambda_{3}) = (1/2, 0, -1/2)$.
    These eigenvalues do not satisfy a constraint which is required in
    the setting of \cite{KM-yaft}, namely that the eigenvalues
    $\lambda_{k}$ lie in an interval of length strictly less than $1$.
    We have an interval of length $1$ exactly, which can be
    interpreted as placing this diagonal matrix on the far wall of the
    Weyl alcove (see \cite[section 2.7]{KM-yaft}).
\end{remark}

In general, two trajectory spaces $ M_{\kappa}(\alpha, \beta)$ and
$M_{\kappa'}(\alpha, \beta)$ will have action $\kappa-\kappa'$ which
is a multiple of $1/6$, because gluing monopoles on the bi- and
trifold loci contribute multiples of $1/2$ and $1/3$ respectively. The
dimensions of the trajectory spaces therefore differ by a multiple of
$2$.

\begin{corollary}
    The complex defining the $\SU(3)$ instanton homology
    $\Lsharp(\check Y)$
    has a relative $\Z/2$ grading. \qed
\end{corollary}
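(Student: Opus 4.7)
The plan is to exhibit an explicit relative $\Z/2$ grading on the generators of $CL^{\sharp}(\check Y)$ and check it is well-defined and compatible with $\partial$. For any two critical points $\alpha,\beta$ I would set
\[
    \mathrm{gr}(\alpha)-\mathrm{gr}(\beta) \ = \ \dim M_{\kappa}(\alpha,\beta) \pmod 2,
\]
for any $\kappa$ for which the trajectory space is non-empty; by transitivity (concatenation of trajectories) this determines a well-defined relative $\Z/2$ grading on the generating set as soon as one checks the right-hand side does not depend on $\kappa$.

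Independence of $\kappa$ is exactly the content of the discussion preceding the corollary. First, by the monotonicity identity $\dim M_{\kappa}(\alpha,\beta)-\dim M_{\kappa'}(\alpha,\beta)=12(\kappa-\kappa')$, it suffices to show that $\kappa-\kappa'$ is always a multiple of $1/6$. This is a combination of two ingredients: on the bifold part of $\check Y^{\sharp}$ the topological difference between two lifts is detected by instantons and half-instantons (Proposition~\ref{prop:classify}), contributing to $\kappa$ in multiples of $1/2$; and on the trifold atom $H_3$ the gluing classification of \cite{KM-yaft} shows that monopoles at a trifold locus contribute in multiples of $1/3$. Thus $\kappa-\kappa'\in (1/6)\Z$ and $12(\kappa-\kappa')\in 2\Z$, so the parity of $\dim M_{\kappa}(\alpha,\beta)$ is intrinsic.

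Compatibility with the differential is then automatic: the matrix entry of $\partial$ from $\alpha$ to $\beta$ counts (mod $2$) the zero-dimensional unparametrized trajectory space $\breve M(\alpha,\beta)$, i.e.\ components of $M(\alpha,\beta)$ of formal dimension $1$. If this count is nonzero then $\dim M_{\kappa}(\alpha,\beta)$ is odd for some $\kappa$, hence odd for every $\kappa$ by well-definedness, so $\mathrm{gr}(\alpha)-\mathrm{gr}(\beta)=1 \pmod 2$. Therefore $\partial$ lowers the relative grading by $1$ and the corollary follows.

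The only substantive content is the divisibility $\kappa-\kappa'\in(1/6)\Z$, and this has already been isolated in the paragraph preceding the corollary; there is no further obstacle, so the proof amounts to packaging these observations together with the definition of the Floer differential.
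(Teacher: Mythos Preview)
Your proposal is correct and matches the paper's approach exactly: the corollary is stated with a \qed\ because it follows immediately from the preceding paragraph, which observes that $\kappa-\kappa'\in(1/6)\Z$ (from the $1/2$ and $1/3$ contributions on the bifold and trifold loci) together with monotonicity forces the dimensions of any two trajectory spaces $M_{\kappa}(\alpha,\beta)$ and $M_{\kappa'}(\alpha,\beta)$ to differ by a multiple of $2$. You have simply spelled out the standard packaging of this fact into a relative $\Z/2$ grading compatible with $\partial$, which the paper leaves implicit.
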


\begin{remark}
    In section~\ref{sec:absolute} we will examine what choices needs
    to be made to specify an absolute $\Z/2$ grading, at least for
    webs in $\R^{3}$. 
\end{remark}

The following is a consequence of Lemma~\ref{lem:atomic} and the
definitions.

\begin{lemma}\label{lem:S3}
    For the $3$-sphere (as a bifold whose singular locus is empty), we
    have $\Lsharp(S^{3})=\F$. \qed
\end{lemma}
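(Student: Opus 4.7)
The plan is to read the definition of $\Lsharp$ directly for $\check Y = S^{3}$ and appeal to Lemma~\ref{lem:atomic}. Because the singular locus of $S^{3}$ is empty, forming the connected sum at a smooth basepoint simply returns the atom:
\[
        \check Y^{\sharp} \;=\; S^{3}\# H_{3} \;=\; H_{3},
\]
and the configuration space $\bonf^{\sharp}(\check Y^{\sharp})$ of the construction is exactly $\bonf^{w}_{l}(H_{3})$.

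Next I would recall Lemma~\ref{lem:atomic}, which says that $\bonf^{w}_{l}(H_{3})$ contains a \emph{unique} flat connection $(E_{0}, A_{0})$, that this connection is non-degenerate, and that its automorphism group is trivial. Non-degeneracy means that the Hessian of the Chern--Simons functional at $[E_{0}, A_{0}]$ is invertible, so no holonomy perturbation is actually needed: the Chern--Simons functional on $\bonf^{\sharp}(H_{3})$ already has a Morse-type critical set consisting of a single irreducible, non-degenerate point. The trivial automorphism group ensures there is no stabilizer obstruction when declaring $[E_{0}, A_{0}]$ to be a generator of the chain complex.

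Consequently, the complex $CL^{\sharp}(S^{3})$ has exactly one generator over $\F$, namely $[E_{0}, A_{0}]$. The differential on $CL^{\sharp}(S^{3})$ counts trajectories between distinct critical points, and with only one generator present this differential is automatically zero. Therefore
\[
        \Lsharp(S^{3}) \;=\; H_{*}\bigl(CL^{\sharp}(S^{3}), \partial\bigr) \;=\; \F,
\]
as claimed. There is no real obstacle to overcome beyond checking that the perturbation scheme of the preceding subsection can be taken trivially (or arbitrarily small) when the unperturbed critical set is already non-degenerate, which is the usual situation in the constructions of \cite{KM-yaft} and \cite{KM-Tait}.
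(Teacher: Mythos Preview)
Your proof is correct and matches the paper's approach exactly: the paper states that the lemma ``is a consequence of Lemma~\ref{lem:atomic} and the definitions'' and marks it with a \qed, and your argument simply unwinds that remark. There is nothing to add.
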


As usual, we often regard $\Lsharp$ as an invariant of knots, links
and webs in $\R^{3}$:

\begin{notation}\label{notn:webs}
    If $K\subset\R^{3}$ is a spatial web, we write $\Lsharp(K)$ for
    the the $\SU(3)$ bifold Floer homology $\Lsharp(S^{3}, K)$, where
    $S^{3}$ is taken to have its framed basepoint at infinity.
\end{notation}
\subsection{Functoriality}

The extension of the definition of $\Lsharp(\check Y)$ to a functor on
a suitable cobordism category of webs and foams is now quite standard.
Just as we require a framed basepoint $y_{0}\in \check Y$ at which to
form the connected sum $\check Y \# H_{3}$ with the atom, so our
cobordisms are required to be oriented 4-dimensional bifolds equipped
a framed arc joining the basepoints at the two ends. (See
\cite{KM-yaft} again, or \cite{KM-unknot} for example.) In this way, a
bifold cobordism $\check X$ from $\check Y_{0}$ to $\check Y_{1}$,
equipped with such an arc, gives rise to an orbifold cobordism $\check
X^{\sharp}$ from $\check Y_{0}^{\sharp}$ to $\check Y_{1}^{\sharp}$.
Now attach cylindrical ends to $\check X^{\sharp}$, choose a
cylindrical-end metric, and a holonomy perturbation $\pi$ for the
anti-self-duality equations, as in \cite{KM-yaft, K-higherrank}. Given
non-degenerate critical points for the perturbed Chern-Simons
functional $\alpha_{0}$, $\alpha_{1}$ on $\check Y^{\sharp}_{0}$ and
$\check Y^{\sharp}_{1}$, we have moduli spaces of perturbed ASD
connections,
\begin{equation}\label{eq:moduli-space-X}
                 M(\alpha_{0}, \check X^{\sharp}, \alpha_{1})
\end{equation}
as in \cite{KM-yaft}. They are cut out transversely by the equations
for generic choice of perturbation $\pi$.
There is then a linear
map
\[
        \Lsharp (\check X) : \Lsharp ( \check
Y_{0})\to  \Lsharp (\check Y_{1}),
\]
 defined by counting solutions of the perturbed equations in
 zero-dimensional components of these moduli spaces.

With this construction understood, we obtain a functor $\Lsharp$ to
the category of $\F$-vector spaces, whose source category has objects
the closed, connected, 3-dimensional bifolds with framed basepoint and
whose morphisms are isomorphism classes of connected, 4-dimensional
bifold cobordisms containing framed arcs connecting the basepoints on
the two boundary components. We refer to this category sometimes as
$\Cat_{0}^{\sharp}$, following \cite{KM-Tait}, so we have a functor,
\[
       \Lsharp : \Cat_{0}^{\sharp} \to \mathrm{Vect}(\F).
\]

\begin{notation} \label{not:rel-invariant}
The empty 3-dimensional bifold is not an object in the category
$\Cat_{0}^{\sharp}$, so if $\check X$ is a 4-dimensional bifold with a
single oriented boundary component $\check Y$, then it does not
directly define a morphism. However, given a framed basepoint $y_{0}$
in the non-singular part of $\check Y$, we may remove a ball from a
collar neighborhood of $\check Y$, adjacent to $y_{0}$, to obtain a
cobordism $\check X'$ from $S^{3}$ to $\check Y$. Join $y_{0}$ to a
point on the 3-sphere by a standard framed arc in the collar, and
$\check X'$ becomes a morphism in $\Cat^{\sharp}_{0}$ from $S^{3}$ to
$\check Y$. As notation, we allow ourselves to define $\Lsharp(\check
X)$ as
\begin{equation}\label{eq:rel-invariant}
         \begin{aligned}
            \Lsharp(\check X) &= \Lsharp(\check X')(1) \\
                              &\in \Lsharp(\check Y),
         \end{aligned}
\end{equation}
where the element $1$ on the right is the generator of
$\Lsharp(S^{3})=\F$ (Lemma~\ref{lem:S3}).
\end{notation}

\subsection{Extending functoriality with dots}

We can extend the category $\Cat_{0}^{\sharp}$ to a category
$\Cat^{\sharp}$ by decorating our bifold cobordisms $\check X$ with
``dots''. We outline the construction in this subsection, following
standard models.

Given a 4-dimensional orbifold $\check X$, let $\bonf^{*}(\check X)
\subset \bonf(\check X)$ be the space of irreducible $\SU(N)$ orbifold
bundles modulo equivalence, of Sobolev class $L^{2}_{l}$ for suitable
$l$, with any specified local models at the orbifold points. There is
a universal $\PSU(N)$ bundle $\Ad\PP \to \bonf^{*}(\check X)\times
\check X$ (which may or may not lift to an $\SU(N)$ bundle $\PP \to
\bonf^{*}(\check X)$).

Given a point $x\in \check X$, we obtain by restriction a $\PSU(N)$
bundle $\Ad\PP_{x}\to\bonf^{*}(\check X)$. If $c \in
H^{*}(B\PSU(N);R)$ is a characteristic class of $\PSU(N)$ bundles,
then we obtain a cohomology class $c(\Ad\PP_{x}) \in
H^{*}(\bonf^{*}(\check X)$. If $x$ lies on the orbifold locus of
$\check X$, then the structure group of $\Ad\PP_{x}$ is reduced. This
is because we can identify $\Ad\PP_{x}$ as the basepoint bundle coming
from the basepoint $\tilde x$ above $x$ in the orbifold chart, on
which the local stabilizer group $H_{x}$ acts, so there is a reduction
of structure group from $\PSU(N)$ to the subgroup which is the
centralizer of this representation of $H_{x}$. In this situation, we
can use for $c$ a characteristic class of this subgroup. For our
bifold $\SU(3)$ case, if $x$ lies in a facet of the orbifold locus
where $H_{x}$ has order $2$, the corresponding reduction is to the
subgroup
\begin{equation}\label{eq:Q-reduction}
         Q =  P(S(U(1)\times U(2)))\subset \PSU(3),
\end{equation}
which is a group isomorphic to $U(2)/(\Z/3)$. (We use $P$ in this
context to mean the quotient by the center of $\SU(3)$ and $S$ to
denote the elements of determinant $1$.) If $x$ lies on a seam of the
bifold, then the reduction is to
\[
         P(\mathbb{T})\subset \PSU(3),
\]
where $\mathbb{T}$ is the maximal torus of $\SU(3)$.

Because we are working with mod $2$ coefficients, we are interested
primarily in characteristic classes $c$ with coefficients in $\F$, the
field of $2$ elements. The Chern classes of $\SU(N)$ bundles with mod
$2$ coefficients are pulled back from characteristic classes of
$\PSU(N)$ bundles when $N$ is odd, because the fiber of the map
$B\SU(N)\to B\PSU(N)$ is $B(\Z/N)$, which has trivial mod 2
cohomology. The classes that concern us are, when $x$ is not on the
orbifold locus, the mod $2$ Chern class
\begin{equation}\label{eq:point-class-c2}
           c_{2}(\Ad \PP_{x}) \in H^{4}(\bonf^{*}(\check X) ; \F),
\end{equation}
and when $x$ is on a facet, the classes
\begin{equation}\label{eq:bifold-class-ci}
           c_{i}(Q_{x}) \in H^{2i}(\bonf^{*}(\check X) ; \F)
\end{equation}
for $q=1,2$. (Here $Q_{x}$ is the reduction of $\Ad \PP_{x}$ to the
subgroup $Q \cong U(2)/(\Z/3)$, and $c_{i}$ are the mod $2$ Chern
classes of $U(2)$ bundles, regarded as pulled back from $BQ$.)

More concrete descriptions of the 4-dimensional characteristic classes
can be given as follows. Given a principal $\PSU(3)$ bundle $\Ad\PP$,
let $V$ be the associated real vector bundle with fiber $\su(3)$. The
class \eqref{eq:point-class-c2} can then be interpreted via the
equality
\[
            c_{2}(\Ad \PP_{x}) = w_{4}(V)\in H^{4}(\bonf^{*}(\check X) ; \F),
\]
which can be verified using the splitting principal. If $x$ is a
bifold point, then the action of the element of order $2$ in $H_{x}$
decomposes the adjoint bundle into the $\pm 1$ eigenspaces,
\[
                V = V_{+} \oplus V_{-}
\]
where $V_{+}$ is the bundle of Lie algebras $\u(2)$ associated to the
reduction $Q_{x}$ and $V_{-}$ is its complement. We can then interpret
the mod 2 Chern classes \eqref{eq:bifold-class-ci} as
\[
\begin{aligned}
c_{1}(Q_{x}) &= w_{2}(V_{-})\in H^{2}(\bonf^{*}(\check X) ; \F),\\
c_{2}(Q_{x}) &= w_{4}(V_{-}) +w_{2}(V_{-})^{2} \in
H^{4}(\bonf^{*}(\check X) ; \F).
\end{aligned}
\]
We give these classes names,
\[
\begin{aligned}
    \nu=\nu(x)  &= c_{2}(\Ad \PP_{x}) \\
    \sigma_{1}(x) &= c_{1}(Q_{x}) \\
    \sigma_{2}(x) &= c_{2}(Q_{x})
\end{aligned}
\]
in mod $2$ cohomology.

As usual, if $U\subset \check X$ is an open set containing $x$, and if
\begin{equation}\label{eq:restriction-r}
           r : \bonf^{**}(\check X) \to \bonf^{*}(U)
\end{equation}
is the restriction map from the set $\bonf^{**}(\check X)$ of
connections whose restriction is irreducible, then the classes
$\nu(x)$ or $\sigma_{i}(x)$ on $\bonf^{**}(\check X)$ are pulled back
from $\bonf^{*}(U)$.

We apply these constructions after summing with the atom $H_{3}$, so
we consider the cobordism $\check X^{\sharp}$ from $\check
Y_{0}^{\sharp}$ to $\check Y_{1}^{\sharp}$ and the moduli spaces
\eqref{eq:moduli-space-X} on the cylindrical-end manifold. Given
points $x_{1},\dots, x_{m}$ in $\check X$, away from the arc of
basepoints where the atom is attached, and given any bound $\Delta$,
we can choose disjoint neighborhoods $U_{1}$, \dots, $U_{m}$ of these
points such that all of the components of the moduli spaces
\eqref{eq:moduli-space-X} of dimension at most $\Delta$ are in the
domain $\bonf^{**}$ of the restriction maps \eqref{eq:restriction-r}
to each of the $U_{k}$. For each $x_{k}$, let there be given one of
the above mod 2 classes, $\mu_{k}$, equal to either $\nu(x_{k})$ or
$\sigma_{i}(x_{k})$. We take closed subsets
$\cV_{k}\subset\bonf^{*}(U_{k})$ stratified by submanifolds of the
Hilbert manifold, of finite codimension, and representing the dual of
the classes $\mu_{i}$. We have two requirements of these:
\begin{enumerate}
\item
    We wish all strata in the intersections $\cV_{k_{1}}\cap \dots \cap
    \cV_{k_{p}}$ to be transverse, and transverse to the restriction
    map $r$ from the moduli spaces
    $M(\alpha_{0},\check X^{\sharp}, \alpha_{1})$.
\item
    We need the above intersections to be closed under suitable
    limits, as arise in the Uhlenbeck compactness theorem.
\end{enumerate}

To elaborate on the second condition, consider a sequence of solutions
\[
          [E_{i}, A_{i}] \in M(\alpha_{0},\check X^{\sharp}, \alpha_{1})
\]
in which bubbling occurs. This means that there are finitely many
points $b_{j}$ such that the connections converge, after gauge
transformation, on compact subsets of the cylindrical end manifold
disjoint from $\{b_{j}\}$. If none of the bubble points $b_{j}$ lie in
the neighborhood $U_{k}$, and if all $[E_{i}, A_{i}]$ belong to
$\cV_{k}$ (on restriction to $U_{k}$), then we require the weak limit
$[E,A]$ also to belong to $\cV_{k}$. In the presence of holonomy
perturbations (whose effects are not local), the convergence can only
be assumed to be in the topology of $L^{p}_{1}$ connections, for all
$p$ \cite{KM-yaft}. So to achieve the second condition, we require
that $\cV_{k}$ be closed in the $L^{p}_{1}$ topology for some $p$.

To achieve the first condition, we simply need a sufficiently large
supply of sections of the vector bundles associated to the principal
bundle $\Ad \PP_{x}$ or $Q_{x}$ over $\bonf^{*}(U)$. These are
constructed in the standard way using local trivializations and
cut-off functions. Our definition of $\bonf^{*}(U)$ used to Sobolev
class $L^{2}_{l}$ for expediency, but there is a smooth map of Banach
manifolds $\bonf^{*}(U) \to \bonf^{*}_{p,1}(U)$ where the latter is
defined using $L^{p}_{1}$ connections for $p > 2$. If $p$ is even,
then radial cut-off functions defined using the $L^{p}_{1}$ norm are
smooth, so we can construct our stratified subsets $\cV$ in
$\bonf^{*}_{p,1}(U)$ and then pull back to $\bonf^{*}(U)$, allowing us
to fulfill both of the above requirements.

We are now able to define the decorated category $\Cat^{\sharp}$ and the
extension of the functor $\Lsharp$. A morphism in $\Cat^{\sharp}$ will be
a morphism in $\Cat_{0}^{\sharp}$ (an oriented bifold cobordism together
with an arc joining the basepoints), enriched with a finite collection
of points $x_{1}, \dots x_{m}$ and for each point a choice of
corresponding mod 2 cohomology class: either $\nu(x_{k})$ if $x_{k}$
is not on the singular set, or $\sigma_{i}(x_{k})$ for $i=1$ or $i=2$
if $x_{k}$ belongs to a facet of the foam. The distinguished points
are required to be disjoint from the arc. Writing $\mu_{k}$ as a
generic symbol for either $\nu(x_{k})$ or $\sigma_{i}(x_{k})$, we will
write our morphism as
\[
             \bX = (\check X, \mu_{1}, \dots, \mu_{m}).
\]

To extend the definition of $\Lsharp$ to such morphisms, we choose
representatives $\cV_{k}$ for the classes $\mu_{k}$ satisfying the
transversality and compactness requirements above, and define the
matrix entries of
\begin{equation}\label{eq:Lsharp-deco}
 \Lsharp(\bX) : \Lsharp(\check Y_{0}) \to \Lsharp(\check Y_{1})
\end{equation}
at the chain level by counting
elements of the zero-dimensional components of the transverse
intersection
\[
  M(\alpha_{0}, \check X^{\sharp}, \alpha_{1}) \cap \cV_{1}\cap \dots \cap
    \cV_{m}
\] 
(where the restriction maps are implied).

As a standard special case, we can consider the case that $\check
X^{\sharp} = \R\times \check Y^{\sharp}$. In this case, the same
construction gives us operators on $\Lsharp(Y)$: we have an operator
\begin{equation}\label{eq:nu-op}
             \nu : \Lsharp(\check Y) \to \Lsharp(\check Y)
\end{equation}
corresponding to $\nu(x)$ for $x$ not on the singular set, and we
have, for each edge $e$ of the web $K\subset \check Y$, operators
\begin{equation}\label{eq:sigma-ops}
        \sigma_{1}(e), \sigma_{2}(e):
            \Lsharp(\check Y) \to \Lsharp(\check Y)
\end{equation}
corresponding to any chosen points on the facet $\R\times e$ of the
product foam. These operators commute.

\subsection{The excision property}

We will use an excision property of our $\SU(3)$ instanton homology
groups for webs and foams. In the $\SU(2)$ or $\SO(3)$ case, this is
essentially Floer's excision theorem \cite{Braam-Donaldson}, and was
applied to $\Isharp$ in \cite{KM-unknot}. (See \cite[Corollary
5.9]{KM-unknot} for the closest parallel to the version stated here.)
The proof adapts to $\SU(3)$ or $\SU(N)$, as discussed in
\cite{Daemi-Xie}.

In our context, let $\check Y_{1}$ and $\check Y_{2}$ be two
3-dimensional bifolds, and let $\check Y$ be their connected sum,
formed at a non-singular point. There are standard bifold cobordisms
from $\check Y_{1}^{\sharp} \sqcup \check Y_{2}^{\sharp}$ to $\check
Y^{\sharp} \sqcup (S^{3})^{\sharp}$, and vice versa. We have:

\begin{proposition}\label{prop:excision}
    The excision cobordisms give mutually inverse isomorphisms,
    \[
    \begin{aligned}
         \Lsharp(\check Y_{1})\otimes \Lsharp(\check Y_{2}) \to\null
          &\Lsharp(\check Y)\otimes \Lsharp(S^{3}) \\
          =\null  &\Lsharp(\check Y),
    \end{aligned}
    \]
    where the last equality results from Lemma~\ref{lem:S3}. These
    isomorphisms are natural in the following sense. Given morphisms
    $\bX_{1}$ and $\bX_{2}$ in $\Cat^{\sharp}$ and the morphism $\bX$
    obtained by summing the two manifolds along the basepoint arcs,
    the excision isomorphism intertwines $\Lsharp(\bX_{1}) \otimes
    \Lsharp(\bX_{1})$ with $\Lsharp(\bX)$. \qed
\end{proposition}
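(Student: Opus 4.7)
My plan is to adapt the Floer-type excision argument of Braam--Donaldson, in its $\SU(N)$ refinement by Daemi--Xie, to the present $\SU(3)$ bifold/trifold setting. The underlying geometric observation is that both sides of the claimed isomorphism are related by cutting along 2-spheres and regluing: inside $\check Y^{\sharp} \sqcup (S^{3})^{\sharp}$ one has a separating 2-sphere $S$ in $\check Y^{\sharp}$ realizing the connected-sum decomposition $\check Y = \check Y_{1}\#\check Y_{2}$, and a separating 2-sphere $S'$ in $(S^{3})^{\sharp}$ bounding a $3$-ball complementary to the atom; cutting and regluing the four resulting pieces produces $\check Y_{1}^{\sharp} \sqcup \check Y_{2}^{\sharp}$.

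First I would construct the excision cobordism $W$ from $\check Y_{1}^{\sharp} \sqcup \check Y_{2}^{\sharp}$ to $\check Y^{\sharp} \sqcup (S^{3})^{\sharp}$. The spheres $S$ and $S'$ can be chosen disjoint from the bifold locus, from the trifold locus of the atoms, and from the basepoint arcs used to attach the atoms. Then $W$ is obtained from the product cylinder by removing collar neighborhoods of $S$ and $S'$ at the top and gluing in $S^{2}\times P$, where $P$ is a planar surface with four boundary circles matching the four boundary components of the removed collars in the crossed pattern dictated by the regluing. The inverse cobordism $\bar W$ is obtained by an identical construction with the roles of the two boundaries reversed. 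Since the excision region avoids all orbifold singularities, $W$ and $\bar W$ are legitimate morphisms in $\Cat^{\sharp}_{0}$ once the framed basepoint arcs are inserted, and so define maps between the relevant $\Lsharp$ groups.

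Next I would verify that $\Lsharp(\bar W) \circ \Lsharp(W)$ and $\Lsharp(W) \circ \Lsharp(\bar W)$ are the identity. Topologically the composition $\bar W\circ W$ differs from the product $I$-cobordism on $\check Y_{1}^{\sharp} \sqcup \check Y_{2}^{\sharp}$ only in a neighborhood of the two excision spheres, where it contains a copy of $S^{2}\times\Sigma$ for some genus-zero surface $\Sigma$ with four boundary circles. Stretching the neck along the separating spheres reduces the count of ASD solutions in the relevant moduli spaces to an analysis on the cylindrical pieces $S^{2}\times\R$, where the only flat $\SU(3)$ bundle contributing is the trivial one, since $\pi_{1}(S^{2})=0$ and the spheres avoid the singular loci. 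Combined with the monotonicity and transversality framework established in the preceding subsections, this yields the identity on $\Lsharp$ exactly as in the smooth $\SU(N)$ argument of Daemi--Xie.

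The main obstacle is to verify that the neck-stretching analysis carries through in the presence of the trifold atoms: one must check that bubbling phenomena along the trifold locus of either $H_{3}$ cannot contribute spurious terms under stretching, and that the monotonicity statement of the preceding subsection is compatible with the rigid irreducible flat connection on each $H_{3}$ (Lemma~\ref{lem:atomic}). Since the atoms are disjoint from the excision region and the flat connection on each $H_{3}$ is isolated, non-degenerate and has trivial stabilizer, this reduces to a routine bookkeeping exercise that the analysis of Daemi--Xie accommodates. Finally, for naturality, any pair of decorated morphisms $\bX_{1}$ and $\bX_{2}$ in $\Cat^{\sharp}$ can be isotoped so that their dots, foam singularities, and basepoint arcs lie away from the excision region; the same neck-stretching argument, applied fiberwise to the composite cobordism, then shows that the excision isomorphism intertwines $\Lsharp(\bX_{1})\otimes\Lsharp(\bX_{2})$ with $\Lsharp(\bX)$.
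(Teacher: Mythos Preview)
Your approach is essentially the same as the paper's: the proposition is stated with a \qed and no proof, the preceding paragraph simply pointing to Floer's excision theorem (Braam--Donaldson) and its $\SU(N)$ adaptation by Daemi--Xie, together with the closest parallel in \cite{KM-unknot}. Your outline of the excision cobordism and neck-stretching argument is a reasonable expansion of what those references contain; one small imprecision is that the stretching in the $4$-dimensional cobordism is along copies of the $3$-manifold $S^{2}\times S^{1}$ rather than along $S^{2}\times\R$, and the reducibility of flat connections there is handled not by $\pi_{1}(S^{2})=0$ but by the presence of the atoms on the adjacent pieces, which is the point you correctly identify as the main thing to check.
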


The excision property is often used to understand t he relationship
between morphisms $\Lsharp(\bX)$ and $\Lsharp(\bX')$ when $\bX'$ is
obtained from $\bX$ by removing a closed subset of the interior and
replacing it with something different. A general version is the
following. We consider morphisms
\[
        \bX_{1}, \dots , \bX_{r}
\]
in $\Cat^{\sharp}$ from $\check Y_{0}$ to $\check Y_{1}$. We suppose
that these have the form
\[
                 \bX_{i} = \bX' \cup_{\check Q} \mathbf{P}_{i},
\]
where each $\mathbf{P}_{i}$ is a decorated bifold with boundary
$\check Q$, and $\bX'$ is a decorated bifold cobordism from $\check
Y_{0}$ to $\check Y_{1}$ having an additional boundary component
$-\check Q$.

\begin{proposition}\label{prop:excision-local}
    Let $\alpha_{i}\in \Lsharp(\check Q)$ be the element defined by
    $\mathbf{P}_{i}$, and suppose that
     \[
               \sum_{i=1}^{r} \alpha_{i} = 0 \in \Lsharp(\check Q).
    \]
    Then
    \[
                \sum_{i=1}^{r}\Lsharp(\bX_{i})=0
    \]
    as linear maps from $\Lsharp(\check Y_{0})$ to $\Lsharp(\check Y_{1})$.
\end{proposition}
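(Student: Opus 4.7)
The plan is to factor each map $\Lsharp(\bX_i)$ through a single linear map depending only on $\bX'$, with the only $i$-dependence being an insertion of $\alpha_i\in\Lsharp(\check Q)$; once this factorization is in place, the vanishing of $\sum_i\alpha_i$ immediately forces $\sum_i\Lsharp(\bX_i)=0$ by linearity.

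To set this up, I would first reinterpret $\bX'$ as a morphism in $\Cat^{\sharp}$ whose source is the bifold $\check Y_{0}\#\check Q$, formed at the framed basepoint of $\check Y_{0}$ and a chosen framed basepoint on $\check Q$. Concretely, one enlarges $\bX'$ by attaching a $1$-handle (with a chosen framed interior arc) that joins the $-\check Q$ boundary component to the $-\check Y_{0}$ component; this gives a legitimate connected cobordism $\widetilde{\bX}'$ in $\Cat^{\sharp}$ from $\check Y_{0}\#\check Q$ to $\check Y_{1}$. By the naturality clause of Proposition~\ref{prop:excision}, the induced map fits into a commutative diagram in which
\[
\Lsharp(\widetilde{\bX}')\;\colon\;\Lsharp(\check Y_{0})\otimes\Lsharp(\check Q)\;\longrightarrow\;\Lsharp(\check Y_{1})
\]
after identifying $\Lsharp(\check Y_{0}\#\check Q)$ with $\Lsharp(\check Y_{0})\otimes\Lsharp(\check Q)$ via the excision isomorphism.

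Next I would establish the composition formula
\[
\Lsharp(\bX_{i})(x)\;=\;\Lsharp(\widetilde{\bX}')(x\otimes\alpha_{i})
\]
for every $x\in\Lsharp(\check Y_{0})$. Here $\alpha_{i}=\Lsharp(\mathbf{P}_{i})(1)$ in the sense of Notation~\ref{not:rel-invariant}: one removes a small ball from $\mathbf{P}_{i}$ near the chosen basepoint of $\check Q$, converting $\mathbf{P}_{i}$ into a morphism from $S^{3}$ to $\check Q$. The composition formula is then the standard gluing/functoriality statement for $\Lsharp$: the bifold obtained by composing $\mathbf{P}_{i}$ (as a morphism $S^{3}\to\check Q$) with $\widetilde{\bX}'$ along the $\check Q$ factor agrees, after trivial $S^{3}$ cancellations, with $\bX_{i}$, and functoriality of $\Lsharp$ for composition of morphisms in $\Cat^{\sharp}$ gives the displayed identity. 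The naturality clause of Proposition~\ref{prop:excision} is again used to ensure the excision isomorphism intertwines the composition on the connected-sum side with the tensor product of individual morphisms.

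Granting the composition formula, the conclusion is immediate:
\[
\sum_{i=1}^{r}\Lsharp(\bX_{i})(x)\;=\;\Lsharp(\widetilde{\bX}')\!\left(x\otimes\sum_{i=1}^{r}\alpha_{i}\right)\;=\;\Lsharp(\widetilde{\bX}')(x\otimes 0)\;=\;0.
\]
The main obstacle is the composition formula, rather than the final arithmetic: it requires verifying that the stretching-the-neck argument for gluing cobordisms along $\check Q$ is compatible with the atom–and–arc framework, so that the relative invariant $\alpha_{i}$ produced from $\mathbf{P}_{i}$ via Notation~\ref{not:rel-invariant} is really the element inserted by the gluing. Both of these compatibilities are handled by the naturality part of Proposition~\ref{prop:excision}, which is exactly why the excision property is invoked in the argument.
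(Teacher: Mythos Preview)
Your proposal is correct and follows precisely the approach the paper has in mind: the paper states Proposition~\ref{prop:excision-local} without proof, introducing it as ``a general version'' of how the excision property is used, so your argument supplies the expected details---factoring $\Lsharp(\bX_i)$ through a fixed map $\Lsharp(\check Y_0)\otimes\Lsharp(\check Q)\to\Lsharp(\check Y_1)$ via the naturality clause of Proposition~\ref{prop:excision} and then using linearity in the $\Lsharp(\check Q)$ slot.
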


\subsection{Dot relations}

We collect here the relations which the various point-classes satisfy.
As a general principle, a relation between cohomology classes
$\mu(x_{i})$ in $H^{d}(\bonf(\check X); \F)$ gives rise directly to a
relation between the corresponding operators on $\Lsharp$, provided
that the dimensions of the moduli spaces which are involved are small
enough that bubbling does not interfere with the compactness
arguments. In practice, this applies when $d\le 4$, because bubbles
will occur in open sets containing $x_{i}$ for moduli spaces of
dimension $6$ or more. When $d$ is larger, an analysis of the
contributions from bubbling is required.

\paragraph{The 4-dimensional point class.}

We begin with with the class $\nu= c_{2}(\Ad \PP_{x})$ for $x$ in the
non-singular locus of the bifold.

\begin{lemma}\label{lem:nu-is-1}
    For any $\check Y$, we have $\nu=1$ for the operator
    \eqref{eq:nu-op} on $\Lsharp(\check Y)$.
\end{lemma}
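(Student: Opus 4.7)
The plan is to reduce the general statement to an explicit computation on the atom $H_{3}$. By construction, $\bonf^{\sharp}(\check Y^{\sharp})$ is defined via connected sum with $H_{3}$ at a non-singular basepoint, and for any two non-singular points $x, x'$ in a connected bifold, the classes $\nu(x)$ and $\nu(x')$ agree after pullback through the restriction maps \eqref{eq:restriction-r}, because the universal $\PSU(3)$ bundle extends continuously and $\nu$ is defined via the associated bundle. Consequently, one is free to slide the evaluation point $x$ into the $H_{3}$ summand of $\check Y^{\sharp}$, provided the operator is unchanged up to chain homotopy — a standard consequence of the transversality and compactness setup used to define \eqref{eq:Lsharp-deco} from points in the interior of a cobordism.

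Next, I would apply a neck-stretching argument on the cylindrical cobordism $\R\times\check Y^{\sharp}$ along the small sphere where the connected sum with $H_{3}$ is performed. Combined with the naturality of the excision isomorphism of Proposition~\ref{prop:excision}, this identifies the operator $\nu$ on $\Lsharp(\check Y)$ with $\mathrm{id}_{\Lsharp(\check Y)}\otimes\nu$ acting on $\Lsharp(\check Y)\otimes\Lsharp(S^{3})$, where the second factor is generated by the unique flat connection $\alpha_{0}$ supplied by Lemma~\ref{lem:atomic}. So the proof reduces to the assertion that $\nu$ acts as the identity on $\Lsharp(S^{3})=\F$.

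For this computation, the 0-dimensional chain-level contribution to $\nu$ on $\Lsharp(S^{3})$ comes from pairing the codimension-$4$ cycle dual to $\nu(x)=c_{2}(\Ad\PP_{x})=w_{4}(V)$ with the $4$-dimensional component of the trajectory moduli space $M(\alpha_{0},\R\times H_{3}^{\sharp},\alpha_{0})$. By the monotonicity formula $\dim M_{\kappa}-\dim M_{\kappa'}=12(\kappa-\kappa')$ and the fact that the minimal positive action jump on the trifold locus is $\kappa=1/3$, this $4$-dimensional moduli space is the first one above the zero-dimensional one consisting of $\alpha_{0}$ itself; it is the moduli space of anti-self-dual connections on $\R\times H_{3}$ with a single unit of monopole charge concentrated on the trifold circles. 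An explicit identification of this moduli space, analogous to the $\SU(N)$ model calculations in \cite{KM-yaft}, together with the computation of the adjoint $\su(3)$-bundle's Stiefel--Whitney class over it, yields the pairing value $1\in\F$.

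The main obstacle will be the identification of the $4$-dimensional moduli space on $\R\times H_{3}$ together with the evaluation of $w_{4}(V_{x})$ over it. Here the central representation $\rho$ of \eqref{eq:rep-rho} plays a key role, since it fixes the holonomy structure of the adjoint bundle near $x$ and determines the isotropy under which the monopole bubbling is performed; the asymptotic monodromy $\omega\mathbf{1}$ around the arc $w$ ensures that the relevant gluing parameters live in an explicit compact model whose fundamental class pairs with $w_{4}(V_{x})$ to give $1$ modulo $2$. Once this is in place, the chain-level identity $\nu(\alpha_{0})=\alpha_{0}$ and hence the lemma follow.
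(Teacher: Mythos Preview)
Your reduction to the case $\check Y=S^{3}$ via excision is correct and is exactly the route the paper takes. The identification of the relevant moduli space as the $4$-dimensional component $M_{1/3}(\alpha_{0},\R\times H_{3},\alpha_{0})$ with action $\kappa=1/3$ is also right, as is the recognition that the answer is the pairing of $c_{2}$ of the basepoint bundle with (a suitable compactification of) this moduli space.

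The gap is that you have not actually carried out this evaluation, and the hints you give for how it might go are not on target. The appeal to ``$\SU(N)$ model calculations in \cite{KM-yaft}'' does not apply here: the bifold monodromy $\mathrm{diag}(-1,1,-1)$ lies on the far wall of the Weyl alcove and is explicitly outside the framework of that paper, and in any case the atom $H_{3}$ and its arc $w$ are a different geometric situation. Your final paragraph, invoking the role of $\rho$ and the monodromy $\omega\mathbf{1}$ to assert that ``gluing parameters live in an explicit compact model'' pairing to $1$, is not an argument: nothing in the setup produces such a compact model automatically, and the value of $\nu$ is not determined by the holonomy data alone.

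What the paper actually does (in an appendix) is use the ADHM construction. The cylinder $\R\times H_{3}$ is conformally $(\R^{4}\setminus\{0\})/V_{9}$, so the $\kappa=1/3$ moduli space is the space of $G$-equivariant charge-$3$ instantons on $\R^{4}$, where $G$ is the order-$27$ Heisenberg group acting on $E_{\infty}\cong\C^{3}$ by $\rho$. The equivariant ADHM equations reduce to scalar equations and identify the moduli space with $\C^{*}\times\C^{*}$; its compactification $\hat M$ is $S^{4}$, and the basepoint bundle extends. By passing to an auxiliary $\CP^{2}$ one reads off $c(\mathbb{E})=(1-h^{2})^{-3}$, hence $c_{2}=3h^{2}$ and $\nu=3$, which is $1$ in $\F$. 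This explicit computation is the substance of the lemma; without it (or Xie's alternative via the Fourier--Mukai transform), the proof is incomplete.
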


\begin{proof}
    With rational coefficients and a different atom, the relation
    $\nu=3$ was proved by Xie in \cite{Xie}, and the lemma can be
    deduced from that result by using and excision argument to show
    independence of the choice of atom. (A reminder here that our
    coefficients are mod 2, so $3=1$.) Alternatively, and more
    directly, the excision theorem in the form of
    Proposition~\ref{prop:excision} with $\check Y_{2}=S^{3}$, shows
    that it is enough to verify the lemma for the case $\check
    Y=S^{3}$, which we will do in an appendix by examining the
    relevant moduli space directly using the ADHM construction.
\end{proof}

\begin{corollary}
    Let $\bX_{0}=(\check X, \mu_{1}, \dots,\mu_{m})$ be any morphism
    in $\Cat^{\sharp}$, and let \[\bX_{1}=(\check X, \mu_{1},\dots,
    \mu_{m}, \nu(x))\] be obtained from $\bX_{0}$ by adding a single
    class $\nu(x)$, where $x$ is a non-singular point of the bifold.
    Then $\Lsharp(\bX_{0}) = \Lsharp(\bX_{1})$. \qed
\end{corollary}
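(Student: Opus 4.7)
My plan is to reduce the statement, via the local excision principle of Proposition~\ref{prop:excision-local}, to an equality in $\Lsharp(S^{3}) = \F$ that is already furnished by Lemma~\ref{lem:nu-is-1}.

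Since $x$ is a non-singular point of $\check X$, I would choose a small open $4$-ball $B$ around $x$, with boundary $\check Q \cong S^{3}$, disjoint from all the other decoration points $x_{1},\dots,x_{m}$ and from the basepoint arc. Cutting $\check X$ along $\check Q$ writes it as $\check X = \check X' \cup_{\check Q} B$, where $\check X'$ is a bifold cobordism from $\check Y_{0}$ to $\check Y_{1}$ with an additional boundary component $-\check Q$, carrying all the decorations $\mu_{1},\dots,\mu_{m}$. With this decomposition the two morphisms of interest take the form $\bX_{i} = \check X' \cup_{\check Q} \mathbf{P}_{i}$ for $i=0,1$, where $\mathbf{P}_{0} = B$ is undecorated and $\mathbf{P}_{1} = (B, \nu(x))$.

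The next step is to compute the relative invariants $\alpha_{0},\alpha_{1}\in\Lsharp(S^{3}) = \F$ defined by $\mathbf{P}_{0}$ and $\mathbf{P}_{1}$ in the sense of Notation~\ref{not:rel-invariant}. After removing a small collar ball as in that notation, $\mathbf{P}_{0}$ becomes the trivial cylinder cobordism on $S^{3}$, so $\alpha_{0}$ is the generator $1\in\Lsharp(S^{3})$ (Lemma~\ref{lem:S3}). The same operation turns $\mathbf{P}_{1}$ into the trivial cylinder decorated by a single $\nu(x)$, which by construction realizes the operator $\nu$ of \eqref{eq:nu-op} on $\Lsharp(S^{3})$. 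Functoriality therefore gives $\alpha_{1} = \nu(\alpha_{0})$, and Lemma~\ref{lem:nu-is-1} identifies this with $\alpha_{0}$. Over $\F$ this yields $\alpha_{0} + \alpha_{1} = 0$.

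Finally, I would invoke Proposition~\ref{prop:excision-local} applied to the two fillings $\mathbf{P}_{0}$ and $\mathbf{P}_{1}$ of $\check Q$. Since their boundary elements sum to zero, it concludes that $\Lsharp(\bX_{0}) + \Lsharp(\bX_{1}) = 0$ as maps $\Lsharp(\check Y_{0})\to \Lsharp(\check Y_{1})$, which in characteristic two is the desired identity. The substantive ingredient here is Lemma~\ref{lem:nu-is-1}, which we are assuming; the rest of the argument is a purely formal application of local excision, so the only minor obstacle is the trivial general-position matter of arranging the ball $B$ around $x$ to be disjoint from the finitely many other decoration points and from the basepoint arc.
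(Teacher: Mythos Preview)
Your argument is correct. The paper marks the corollary with a bare \qed, indicating that it regards the statement as immediate from Lemma~\ref{lem:nu-is-1}; the intended one-line proof is presumably to slide the point $x$ into a collar of $\check Y_{0}$ so that $\bX_{1}$ factors as the composite of the $\nu$-decorated cylinder on $\check Y_{0}$ with $\bX_{0}$, whence $\Lsharp(\bX_{1}) = \Lsharp(\bX_{0})\circ\nu = \Lsharp(\bX_{0})$ by functoriality and the lemma.

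Your route through Proposition~\ref{prop:excision-local} is a mild variant of the same idea: rather than pushing $x$ to a boundary collar, you localise in a small ball around $x$ and compare two fillings of its $S^{3}$ boundary. This is a perfectly valid alternative and has the minor advantage of not requiring any isotopy of $x$; the price is that it invokes a slightly heavier statement (the local excision principle) where direct composition in $\Cat^{\sharp}$ already suffices. Either way the only substantive input is Lemma~\ref{lem:nu-is-1}, exactly as you say.
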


\paragraph{Relations for dot-migration.}

\begin{lemma}\label{lem:dot-migration}
    Let $e_{1}$, $e_{2}$, $e_{3}$ be three edges of the web $K\subset
    \check Y$ incident at a single vertex. (The edges need not be
    distinct.) Then the corresponding operators $\sigma_{1}(e_{i})$
    satisfy the following relations.
\begin{enumerate}
\item
    $\sigma_{1}(e_{1}) + \sigma_{1}(e_{2}) + \sigma_{1}(e_{3})=0$;
 \item
    $\sigma_{1}(e_{1}) \sigma_{1}(e_{2}) +
    \sigma_{1}(e_{2})\sigma_{1}(e_{3})
    +\sigma_{1}(e_{3})\sigma_{1}(e_{1})  = 1$;
    \item
    $\sigma_{1}(e_{1}) \sigma_{1}(e_{2})\sigma(e_{3})=0$.
\end{enumerate}
    
\end{lemma}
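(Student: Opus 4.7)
The strategy is to localize to a neighborhood of the vertex $v$ where $e_1,e_2,e_3$ meet, and to use the reduction of the universal $\PSU(3)$ bundle $\Ad\PP$ along the seam. Near the seam $\R\times v$ the orbifold stabilizer is the Klein four-group $V_4\subset\SU(3)$ acting by diagonal involutions of determinant $+1$, so the structure group of $\Ad\PP$ reduces to $P(\mathbb{T})$, the image of the maximal torus of $\SU(3)$. Locally this yields three orbifold line bundles $L_1,L_2,L_3$ (defined up to a simultaneous cube-root-of-unity twist) with $L_1\otimes L_2\otimes L_3$ trivial. Writing $\ell_i=c_1(L_i)\in H^2(\bonf^*;\F)$—well-defined mod $2$ because the twisting ambiguity has $3c_1=0$—we obtain the fundamental identity $\ell_1+\ell_2+\ell_3=0\pmod 2$.

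With this setup I would identify each operator. On facet $e_i$ the $Q$-reduction has its $U(2)$-factor acting on the $-1$-eigenspace of the facet stabilizer, which at the seam is $L_j\oplus L_k$ for $\{i,j,k\}=\{1,2,3\}$. Hence
\[
\sigma_1(e_i)\;=\;c_1(Q_{x_i})\;=\;\ell_j+\ell_k\;=\;\ell_i \pmod 2,
\]
and Relation~(1) is immediate from $\ell_1+\ell_2+\ell_3=0$. For Relation~(2) the same reduction gives
\[
\sum_{i<j}\sigma_1(e_i)\,\sigma_1(e_j)\;=\;\sum_{i<j}\ell_i\ell_j\;=\;c_2(L_1\oplus L_2\oplus L_3)\;=\;\nu(x)
\]
for a nearby non-singular point $x$, since $c_2$ of the universal $\SU(3)$ bundle computed through the torus reduction is exactly this elementary symmetric expression. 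This is a degree-$4$ cohomological identity, which by the remark at the start of the subsection transfers directly to an operator identity on $\Lsharp$ (the safe bubbling range is $d\le 4$), and Lemma~\ref{lem:nu-is-1} then yields $\sum_{i<j}\sigma_1(e_i)\sigma_1(e_j)=\nu=1$.

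The main obstacle is Relation~(3). At the cohomology level the triple product equals $\ell_1\ell_2\ell_3=c_3(L_1\oplus L_2\oplus L_3)$, which is nonzero in $H^6$ of the torus classifying space, and the class sits in degree $6$, above the safe bubbling threshold. I would prove the operator identity via the local excision principle of Proposition~\ref{prop:excision-local}: excise a small ball neighborhood $\check B$ of $v$ whose boundary $\check Q$ is an $S^3$ carrying the trivalent link of $v$, decorate $\check B$ with the three dots $\sigma_1(e_i)(x_i)$ for $x_i$ close to $v$, and show that the resulting relative invariant in $\Lsharp(\check Q)$ vanishes by direct enumeration of the moduli spaces on this local model. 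Equivalently, one must enumerate the bubble types near the vertex—interior instantons (integer action loss), facet half-instantons (multiples of $1/2$), and seam monopoles (multiples of $1/3$, as tracked by the corollary of the Uhlenbeck compactness lemma)—and verify that the bubble-corrected count cancels the naive nonzero term coming from $\ell_1\ell_2\ell_3$. This bubble bookkeeping at the vertex is the step I expect to consume the bulk of the work.
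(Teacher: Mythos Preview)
Your treatment of parts (1) and (2) is essentially the paper's argument: reduce to the maximal torus at the seam $\R\times v$, identify $\sigma_1(e_i)$ with $c_1(L_i)$, and read off the symmetric-function identities for $c_1,c_2,c_3$ of the universal bundle; since these are cohomology relations in degree $\le 4$ they transfer to operator identities without bubble correction, and $c_2(\Ad\PP_x)=\nu=1$ finishes (2).

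For part (3) you correctly identify the problem --- the degree-$6$ relation may acquire a bubble correction --- and you correctly invoke excision to localize to a ball whose boundary is the theta web. But from there your plan diverges from the paper and heads into territory the paper deliberately avoids. The paper explicitly declines to carry out the ``bubble bookkeeping at the vertex'' that you describe as the bulk of the work. Instead it takes an indirect route: one first computes the closed theta-foam evaluations $\Lsharp(\Theta(l_1,l_2,l_3))$ (Proposition~\ref{prop:theta-evaluation}), using only part~(1) of this lemma and the Xie relation $\sigma_1(e)^3+\sigma_1(e)=0$; these closed evaluations then determine $\Lsharp$ of the theta web completely as a module (it is cyclic with the three advertised relations, including $u_1u_2u_3=0$), and the operator identity $\sigma_1(e_1)\sigma_1(e_2)\sigma_1(e_3)=0$ on a general $\Lsharp(\check Y)$ follows by the excision principle. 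No direct analysis of degree-$6$ wall-crossing is ever performed.

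One factual slip: there are no ``seam monopoles of action $1/3$''. The $1/3$ quantum of action loss occurs only on the trifold locus of the atom $H_3$; bubbles on the bifold singular set (facets, seams, tetrahedral points alike) lose action in multiples of $1/2$. This does not affect the excision reduction, but it would matter if you actually tried to carry out the direct bubble enumeration you propose.
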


\begin{proof}
    At a vertex of the web, the $\SU(3)$ structure group is reduced to
    the maximal torus $S(U(1)\times U(1) \times U(1))$, and the three
    associated line bundles $L_{1}$, $L_{2}$, $L_{3}$ are the same
    line bundles associated to the three reductions to $S(U(1) \times
    U(2))$ at the three edges. So, for the ordinary cohomology classes
    $\sigma_{1}(e_{i}) = c_{1}(Q_{x_{i}})$ we have the relations
\begin{enumerate}
\item
    $\sigma_{1}(e_{1}) + \sigma_{1}(e_{2}) +
    \sigma_{1}(e_{3})=c_{1}(\Ad \PP_{x})$;
 \item
    $\sigma_{1}(e_{1}) \sigma_{1}(e_{2}) +
    \sigma_{1}(e_{2})\sigma_{1}(e_{3})
    +\sigma_{1}(e_{3})\sigma_{1}(e_{1})  = c_{2}(\Ad \PP_{x})$;
    \item
    $\sigma_{1}(e_{1}) \sigma_{1}(e_{2})\sigma(e_{3})=c_{3}(\Ad \PP_{x})$
\end{enumerate}
as classes in $H^{*}(\bonf^{\sharp}(\check Y); \F)$. As explained in
the remarks at the beginning of this section, the first two of these
relations in ordinary cohomology become relations for the operators
directly, because the cohomology classes here have degree $2$ and $4$.
Since $c_{2}(\Ad \PP_{x}) = \nu$ and $\nu=1$ by
Lemma~\ref{lem:nu-is-1}, this proves the first two formulae. A direct
approach to the third formula requires consideration of a moduli space
with non-compactness due to bubbles. We postpone the proof of this
last formula until after the proof of
Proposition~\ref{prop:theta-evaluation}, where an indirect argument is
given.
\end{proof}

\paragraph{The Xie relation.}

The following relation is central to the proof of
Theorem~\ref{thm:planar-tait}. It should be contrasted to the
situation with the $\SO(3)$ homology, $\Jsharp$. For the latter, there
is an operator $u$ associated to each edge of web, and these operators
satisfy $u^{3}=0$ \cite{KM-Tait}. By introducing a deformation of
$\Jsharp$ using a system of local coefficients, this relation gets
altered and takes the form $u^{3} + Pu=0$ for a certain element $P$ in
the coefficient ring. (See \cite{KM-deformation}.) In the $\SU(3)$
homology $\Lsharp$, with coefficients $\F$, we have a parallel result.

\begin{lemma}\label{lem:Xie-rel-mod-2}
    For any bifold $\check Y$ and edge $e$ of the embedded web, we
    have
    \[
           \sigma_{1}(e)^{3} + \sigma_{1}(e) = 0,
    \]
    as operators on $\Lsharp(\check Y)$. 
\end{lemma}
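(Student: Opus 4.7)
The plan is to deduce the cubic relation as a direct algebraic consequence of Lemma~\ref{lem:dot-migration}, together with a short reduction of the case of an edge without an incident vertex to that of an edge with one.

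Suppose first that the edge $e$ shares an endpoint with a trivalent vertex $v$ of $K$, and let $e = e_{1}, e_{2}, e_{3}$ denote the three edges meeting at $v$ (with repetitions allowed if $e$ loops back at $v$). Writing $a = \sigma_{1}(e_{1})$, $b = \sigma_{1}(e_{2})$, $c = \sigma_{1}(e_{3})$, these three endomorphisms of $\Lsharp(\check Y)$ pairwise commute, since each is built from a cohomology class evaluated at a point on a distinct facet of the product foam. Lemma~\ref{lem:dot-migration} then supplies the three elementary symmetric identities
\[
    a + b + c = 0, \qquad ab + bc + ca = 1, \qquad abc = 0
\]
in $\F$, so that the monic polynomial with roots $a, b, c$ is
\[
    (t-a)(t-b)(t-c) \;=\; t^{3} - (a+b+c)\, t^{2} + (ab+bc+ca)\, t - abc \;=\; t^{3} + t.
\]
Substituting $t = a$ yields $a^{3} + a = 0$, which is the claimed relation. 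Concretely: from $b+c = a$ one gets $ab + ac = a^{2}$, so the second relation gives $bc = 1 + a^{2}$; multiplying by $a$ and invoking $abc = 0$ forces $a^{3} + a = 0$.

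The remaining case is that $e$ is a circle component of $K$ with no trivalent vertex. The plan here is to introduce a pair of vertices on $e$ by attaching a short foam $1$-handle (a bridge) to the product foam in the cylinder $[0,1] \times \check Y$, producing a bifold cobordism to a new web $K'$ in $\check Y$ in which the image $e'$ of $e$ meets a trivalent vertex. The first case applies on $\Lsharp(\check Y, K')$, yielding $\sigma_{1}(e')^{3} + \sigma_{1}(e') = 0$. The excision principle of Proposition~\ref{prop:excision-local}, applied inside a tubular neighborhood of $e$ (with bifold-torus boundary), then propagates this vanishing back to $\sigma_{1}(e)$ on $\Lsharp(\check Y, K)$.

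The main obstacle is this last step: the cobordism maps induced by the bridge attachment are not manifestly injective, so the excision argument requires a careful identification of the local plug as a zero element in the Floer group of the boundary bifold. This can be circumvented by a direct moduli-space computation on a model bifold containing only a circular bifold locus, along the lines of Xie's treatment of the analogous $\SU(N)$ relation in \cite{Xie}. In either route, the essential content is the algebraic step carried out in the first paragraph, where the cubic identity emerges as the universal polynomial with roots satisfying the elementary symmetric relations $(0,1,0)$ in $\F$.
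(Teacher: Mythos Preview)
Your algebraic step in the first paragraph is fine, but the argument is circular within the paper's logical structure. You invoke all three parts of Lemma~\ref{lem:dot-migration}, and in particular the relation $\sigma_{1}(e_{1})\sigma_{1}(e_{2})\sigma_{1}(e_{3})=0$. But look at the proof of that lemma: only the first two relations (in degrees $2$ and $4$) are established there, while the third (degree $6$, where bubbling enters) is explicitly deferred until after Proposition~\ref{prop:theta-evaluation}, whose proof in turn relies on Lemma~\ref{lem:Xie-rel-mod-2}. In fact your own computation shows the circularity directly: from $b+c=a$ and $ab+bc+ca=1$ one gets $bc=1+a^{2}$, hence $abc=a+a^{3}$, so the third dot-migration relation and the Xie relation are \emph{equivalent} given the first two. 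Your first paragraph is thus a reformulation of the statement, not a proof of it.

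The paper's route is genuinely different: it works on a single edge, using the universal relation $\sigma^{3}-c_{1}(P)\sigma^{2}+c_{2}(P)\sigma-c_{3}(P)=0$ for a $U(1)\times U(2)$ reduction of an $\SU(3)$ bundle, together with Xie's analysis showing that the bubble contribution to the corresponding operator identity vanishes. One then checks that $c_{1}$ and $c_{3}$ give zero operators (by excision and a grading argument on $\Lsharp(S^{3})$) and $c_{2}$ gives $\nu=1$. This argument handles all edges uniformly, including circle components with no vertex, and does not presuppose the degree-$6$ dot-migration relation. Your own discussion of the vertex-free case effectively concedes this: you end by appealing to exactly the moduli-space computation from \cite{Xie} that the paper uses as its main argument.
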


\begin{proof}
    When the characteristic classes are interpreted in rational
    cohomology, a relation
    \begin{equation}\label{eq:Xie-relation}
    \sigma_{1}(e)^{3} + \nu(x)
    \sigma_{1}(e)=0
    \end{equation} is
    established in \cite{Xie} for the case that the foam is a smooth
    2-manifold without seams and the bifold $\check Y$ is
    ``admissible'' (without the need to introduce an atom). The proof
    adapts to coefficients mod 2 and is local, so the argument remains
    valid for foams in our present context. We indicate the argument,
    for future use.

    When an $\SU(3)$ bundle $P$ has a reduction of structure group to
    $S(U(1)\times U(2))$, the first Chern class $\sigma$ of the $U(1)$
    bundle satisfies a relation
    \begin{equation}\label{eq:general-Xie}
             \sigma^{3} - c_{1}(P) \sigma^{2} + c_{2}(P)\sigma -
             c_{3}(P)=0. 
    \end{equation}
    When these classes are interpreted as operators on the Floer
    homology of an admissible bifold, there is \emph{a priori} an
    additional term coming from a bubbling phenomenon, but in
    \cite{Xie} it is shown that this term is zero. (It comprises two
    canceling contributions $1$ and $-1$.) So the above relation holds
    for the operators $\sigma=\sigma_{1}(e)$ and the operators arising
    as $c_{i}(\Ad\PP_{x})$ for a point $x$ not in the singular set.

    The operator coming from $c_{2}(\Ad\PP_{x})$ is the operator $\nu$
    in Lemma~\ref{lem:nu-is-1}, which is $1$. The operators from
    $c_{1}$ and $c_{3}$ are both zero in our setting. One can see this
    (as in the previous lemma) by using excision to reduce to the case
    of $S^{3}$. In this special case, the bifold locus is empty and
    the Floer complex for $\Lsharp(S^{3})$ therefore has a relative
    mod-4 grading because the only monopole bubbles to consider are on
    the trifold locus of the atom $H_{3}$. Since $\Lsharp(S^{3})$ is
    non-zero in only one grading mod 4, the operators from $c_{1}$ and
    $c_{3}$, which have degree $2$ mod $4$, must be zero. So the
    relation \eqref{eq:general-Xie} reduces to the one in the lemma.
\end{proof}

The next lemma is similar but simpler.

\begin{lemma}
    For any bifold $\check Y$ and edge $e$ of the embedded web, we
    have
    \[
           \sigma_{2}(e)  = \sigma_{1}(e)^{2} + 1,
    \]
    as operators on $\Lsharp(\check Y)$. 
\end{lemma}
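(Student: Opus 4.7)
The approach is a Whitney-sum computation at a bifold point $x$ on the facet $\R\times e$, combined with the relation $\nu=1$ from Lemma~\ref{lem:nu-is-1}. Because the stabilizer at $x$ acts on the fiber of the bifold bundle as $\mathrm{diag}(1,-1,-1)$, the universal bundle restricted to $\{x\}\times\bonf^{**}$ admits a global reduction to $S(U(1)\times U(2))\subset\SU(3)$, decomposing as $L_{1}\oplus E_{2}$ with $L_{1}\otimes\det E_{2}$ trivial. Under the isomorphism $S(U(1)\times U(2))\cong U(2)$ given by $(u,U)\mapsto U$, the bundle $E_{2}$ is a $U(2)$-lift of the $Q$-bundle $Q_{x}$, so in mod-$2$ cohomology
\[
    \sigma_{1}(e)=c_{1}(E_{2}),\qquad \sigma_{2}(e)=c_{2}(E_{2}).
\]
The determinant condition forces $c_{1}(L_{1})+c_{1}(E_{2})=0$, hence $c_{1}(L_{1})\equiv\sigma_{1}(e)\pmod 2$.

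The Whitney sum formula then gives
\[
    c_{2}(L_{1}\oplus E_{2})
    =c_{1}(L_{1})\,c_{1}(E_{2})+c_{2}(E_{2})
    \equiv \sigma_{1}(e)^{2}+\sigma_{2}(e)\pmod 2.
\]
Because the fiber $K(\Z/3,1)$ of $B\SU(3)\to B\PSU(3)$ has trivial mod-$2$ cohomology, the left-hand side coincides with $c_{2}(\Ad\PP_{x})=\nu(x)$ as a class in $H^{4}(\bonf^{**};\F)$, and the operator this induces on $\Lsharp(\check Y)$ is the operator $\nu$ of Lemma~\ref{lem:nu-is-1}, which equals $1$.

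The resulting cohomology relation $\sigma_{1}(e)^{2}+\sigma_{2}(e)=1$ lies in degree $4$, so by the principle stated at the start of this subsection it descends directly to an operator relation without any bubbling analysis, yielding $\sigma_{2}(e)=\sigma_{1}(e)^{2}+1$. The one point that needs a word of comment, namely that $\nu$ is officially defined using a non-singular point whereas the computation above is carried out at a facet point, is harmless: the class $c_{2}(\Ad\PP)$ is a globally defined element of $H^{4}(\bonf^{**}\times\check X;\F)$ whose pullback to $\bonf^{**}$ under $x\mapsto(\cdot,x)$ varies continuously with $x$, so the operator is independent of whether $x$ lies on the singular set.
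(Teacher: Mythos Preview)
Your proof is correct and follows essentially the same approach as the paper: both use the Whitney-sum identity $c_{2}(L_{1}\oplus E_{2})=c_{1}(L_{1})c_{1}(E_{2})+c_{2}(E_{2})$ at a facet point to obtain $\sigma_{2}=\sigma_{1}^{2}+c_{2}(P)$ in mod-$2$ cohomology, then invoke $\nu=1$ and observe that the degree-$4$ relation passes to operators without bubbling corrections. Your explicit remark that $c_{2}(\Ad\PP_{x})$ is independent of whether $x$ lies on the singular set is a point the paper leaves implicit.
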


\begin{proof}
    As in the proof of the previous lemma, we consider the classes
    $c_{i}(P)$ for an $\SU(3)$ bundle with reduction to
    $Q=S(U(1)\times U(2)) \cong U(2)$, and the classes
    $\sigma_{1}=c_{1}(Q)$ and $\sigma_{2}=c_{2}(Q)$ which can be seen
    to satisfy the relation
    \[
            \sigma_{2} = \sigma_{1}^{2} + c_{2}(P)
    \]
    mod $2$. This continues to hold for the corresponding operators.
    (There is no bubbling to be considered for this relationship
    between 4-dimensional classes.) Since $c_{2}(P)=1$ again as
    operators on $\Lsharp$, this proves the lemma.
\end{proof}

Returning to the category $\Cat^{\sharp}$, we see from these lemmas that
the relations allow us to dispense with dots decorated by
$\sigma_{2}(x)$
or by $\nu(x)$. We can more simply consider a slight modification of
our definition of $\Cat^{\sharp}$ in which there is only one sort of dot,
always lying on facets, corresponding the classes $\sigma_{1}(x)$:

\begin{notation}\label{notation:dots}
    By a foam \emph{with dots} we shall mean, unless the context
    requires otherwise, a foam carrying dots $x$ on the facets, each
    label led with the class $\sigma_{1}(x)$.
\end{notation}

\section{Calculations for some closed foams}

\subsection{The setup for evaluation of closed foams}

If $\Sigma \subset \R^{4}$ is a closed foam decorated with dots, then
we may regard it as a decorated cobordism from $S^{3}$ to $S^{3}$,
which is a morphism in $\Cat^\sharp$. The functor $\Lsharp$ then
assigns to $\Sigma$ a linear map from $\Lsharp(S^{3})$ to itself,
i.e~simply a value in $\F=\{0,1\}$ because of Lemma~\ref{lem:S3}:
\[
     \Lsharp(\Sigma) \in \{0,1\}.
\]

To unwrap the this a little, let us first look at the morphism $\bX$
in $\Cat^{\sharp}$ corresponding to $\Sigma$. From the definition, the
cylinder $\R\times (S^{3})^{\sharp}$ is $\R\times H^{3}$, where
$H^{3}$ is the trifold whose singular set is the Hopf link. The
morphism $\bX$ is obtained by placing the foam $\Sigma$, with its
decoration of dots, in a 4-ball in this cylinder $\R\times H^{3}$. We
write
\[
           \bX = (\check X, \mu_{1}, \dots, \mu_{l})
\]
as before, where $\check X$ is the underlying orbifold and the
$\mu_{k}$ are the decorations. In keeping with
Notation~\ref{notation:dots}, we will have $\mu_{k}=\sigma_{1}(x_{k})$
for some $x_{k}$ on a facet.

In $\bonf(H_{3})$, there is a unique (non-degenerate and irreducible)
critical point $\alpha_{0}$ for the Chern-Simons functional and we
have moduli spaces
\begin{equation}\label{eq:closed-foam-moduli}
                M_{\kappa}(\alpha_{0}, \check X^{\sharp}, \alpha_{0})
\end{equation}
on the cylindrical-end manifold, where $\kappa$ is the action. The
dimension formula for this moduli space can be deduced from the
formula in the closed case, Proposition~\ref{prop:dimension}, and is
\begin{equation}\label{eq:dim-closed-foam}
            \dim M_{\kappa} = 12\kappa + \Sigma\ccdot\Sigma + 2 \chi(\Sigma) - t,
\end{equation}
where $t$ is the number of tetrahedral points. The action $\kappa$ is
non-negative, and is an integer linear combination of $1/2$ and $1/3$.
So $\kappa\in (1/6)\Z$. When bubbling occurs for a sequence of
connections in this moduli space, the change in the action is an
integer linear combination of $1/2$ and $1/3$ \emph{with non-negative
coefficients}, because that is the minimum charge for a bubble on the
bifold locus or trifold locus respectively. The following lemma is a
consequence.

\begin{lemma}
    If $\kappa$ is less than $1/3$, then the moduli space
    \eqref{eq:closed-foam-moduli} is compact.  \qed
\end{lemma}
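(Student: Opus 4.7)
The plan is to invoke the Uhlenbeck compactness theorem and rule out the two possible sources of non-compactness for a sequence $[E_n,A_n] \in M_\kappa(\alpha_0, \check X^\sharp, \alpha_0)$: interior bubbling at finitely many points, and energy sliding off to infinity along the two cylindrical ends (which manifests as broken trajectories through critical points of the perturbed Chern-Simons functional on $H_3$). If neither can occur, then after passing to a subsequence and applying gauge transformations we get convergence in the moduli space itself, giving compactness.

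For interior bubbling, the preceding discussion shows that any bubble point contributes a loss of action that is a non-negative integer combination of $1/2$ (a bubble on the bifold locus or a full instanton) and $1/3$ (a monopole bubble on the trifold locus of $H_3$). In particular, the smallest strictly positive value of such a bubble loss is $1/3$. If bubbling occurred for our sequence, the weak limit $[E^*, A^*]$ would have action $\kappa(A^*) \le \kappa - 1/3 < 0$, contradicting $\kappa(A^*) \ge 0$. So bubbling is impossible under the hypothesis $\kappa < 1/3$.

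For the behavior at the ends, by Lemma~\ref{lem:atomic} the unique critical point over $H_3$ (up to gauge) is $\alpha_0$, so a broken trajectory would be a concatenation of finitely many non-constant trajectories from $\alpha_0$ to itself. Each such non-constant trajectory on $\R \times H_3$ has strictly positive action, and by the same classification of topological energies (interpreted now as the action change along the cylinder), this action is again a positive integer combination of $1/2$ and $1/3$; in particular each factor carries action at least $1/3$. Since the total action is preserved in the limit and is equal to $\kappa < 1/3$, no non-constant factor can split off at either end.

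With both obstructions eliminated, the standard Uhlenbeck argument (as formulated in the preceding subsection on Uhlenbeck compactness, together with the convergence theory along cylindrical ends from \cite{KM-yaft}) gives a subsequence converging in $M_\kappa(\alpha_0, \check X^\sharp, \alpha_0)$, which is the desired compactness. The only mildly delicate point is the end analysis, but since $\alpha_0$ is the unique critical point and the quantization of action loss at the ends is identical to the quantization at interior bubbles, the argument is uniform and the hypothesis $\kappa < 1/3$ rules out both phenomena simultaneously.
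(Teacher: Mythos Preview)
Your argument is correct and matches the paper's reasoning. The paper treats the lemma as an immediate consequence of the sentence preceding it (the quantization of bubble charges as non-negative integer combinations of $1/2$ and $1/3$), and you have simply spelled out both the interior bubbling and the cylindrical-end breaking explicitly, which the paper leaves implicit. One small inaccuracy: on the pure cylinder $\R\times H_{3}$ there is only the trifold locus, so non-trivial trajectories there have action in $(1/3)\Z$ rather than general combinations of $1/2$ and $1/3$; but the conclusion (minimum positive value $1/3$) is unaffected.
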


For the evaluation $\Lsharp(\bX)$ to be non-zero, it is necessary that
the dimension of the transverse intersection
\[
     M_{\kappa}(\alpha_{0}, \check X^{\sharp}, \alpha_{0}) \cap \cV_{1} \cap
     \dots \cap \cV_{l}   
\]
is zero for some $\kappa$, where $\cV_{k}$ represents the class $\mu_{k}$. This means
that
\begin{equation}\label{eq:dim-zero}
   12\kappa + \Sigma\ccdot\Sigma + 2 \chi(\Sigma) - t = \sum_{1}^{l}
   \deg \mu_{k}.
\end{equation}
When this occurs, $\Lsharp(\Sigma)$ is defined by counting the points
in this transverse intersection, provided that the moduli space is
regular (cut out transversely by the equations). We are concerned with
the case that $\mu_{k}=\sigma_{1}(x_{k})$ is the 2-dimensional class,
so the above condition is
\begin{equation}\label{eq:dim-zero-l}
   12\kappa + \Sigma\ccdot\Sigma + 2 \chi(\Sigma) - t = 2l.
\end{equation}

When the moduli space $M_{\kappa}$ is compact, the number of points in
the transverse intersection is simply the ordinary evaluation of the
mod 2 cohomology class \[ \mu_{1} \cupprod \dots \cupprod \mu_{l} \in
H^{d}(\bonf(\check X); \F)\] on the fundamental class of the moduli
space.

When $\kappa=0$, the moduli space $M_{\kappa}(\alpha_{0}, \check
X^{\sharp}, \alpha_{0})$, without perturbation, is the moduli space of
flat connections, and in this case regularity of the moduli space (as
a moduli space of ASD connection) is equivalent to regularity of the
flat connections, which in turn is easily verified in any particular
case. The moduli space of flat connections is also the space of
homomorphisms from the orbifold fundamental group of $\check X$ to
$\SU(3)$.

\subsection{The 2-sphere}

\begin{proposition}\label{prop:sphere-with-dots}
    Let $S(l)$ denote the unknotted $2$-sphere with $l$ dots, as a foam
    in $\R^{4}$ or $S^4$. Then
    \[
    \Lsharp(S(l))=
    \begin{cases}
        0,&\text{$l=0$ or $l$ odd},\\
        1,&\text{$l\ge 2$ and even.}
    \end{cases}
\]    
\end{proposition}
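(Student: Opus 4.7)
The plan is to combine the dimension-counting setup from the preceding subsection with the operator relation of Lemma~\ref{lem:Xie-rel-mod-2} to reduce the computation to a single flat moduli space calculation. For the unknotted $2$-sphere $\Sigma\cong S^{2}$ in $\R^{4}$ we have $\chi(\Sigma)=2$, $\Sigma\ccdot\Sigma=0$ (the normal bundle of an unknotted $S^{2}$ in $\R^{4}$ is trivial), and no seams or tetrahedral points ($t=0$). The dimension formula \eqref{eq:dim-closed-foam} therefore reduces to $\dim M_{\kappa}=12\kappa+4$, and the zero-dimensional condition \eqref{eq:dim-zero-l} becomes $12\kappa=2l-4$, i.e.\ $\kappa=(l-2)/6$.

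For $l=0$ and $l=1$ this equation admits no non-negative solution in $\kappa$, so no component of the relevant moduli space has the required dimension after cutting down by the $\cV_{k}$; hence $\Lsharp(S(0))=\Lsharp(S(1))=0$.

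For $l=2$ the only contributing component is the flat moduli space $M_{0}$, whose formal dimension is $4$. I would identify this space explicitly. On the atom factor $\R\times H_{3}$ the flat connection is unique with trivial automorphism group by Lemma~\ref{lem:atomic}. On a small $4$-ball neighborhood of the sphere, flat bifold $\SU(3)$-connections correspond to conjugacy classes of homomorphisms from the orbifold fundamental group $\Z/2$ (generated by the meridian of $S^{2}$) into $\SU(3)$ sending the meridian to a matrix in the prescribed conjugacy class of $\mathrm{diag}(1,-1,-1)$. This conjugacy class is $\SU(3)/S(U(1)\times U(2))\cong\mathbb{CP}^{2}$. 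A standard orbit argument for the connect sum (exploiting that the atom's flat connection has trivial stabilizer) confirms that $M_{0}\cong\mathbb{CP}^{2}$, and regularity follows since the combined connection is irreducible. For any $x\in S^{2}$, the class $\sigma_{1}(x)=c_{1}(L_{1})$, where $L_{1}$ is the $+1$-eigenbundle for the meridian action, restricts to $\mathbb{CP}^{2}$ as the first Chern class of the tautological line bundle, i.e.\ the mod-$2$ hyperplane class $h$. The evaluation $h^{2}[\mathbb{CP}^{2}]=1$ then yields $\Lsharp(S(2))=1$.

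For $l\geq 3$, I would cluster all dots at a single point $p$ on the sphere and decompose $S(l)$ along a small cylindrical neck through $p$, writing it as (cap)\,$\circ$\,(cylinder with $l$ dots on its single facet)\,$\circ$\,(cap). On the middle cylinder the dots act as the operator $\sigma_{1}^{l}$ on the $\Lsharp$ of the unknot cross-section. Lemma~\ref{lem:Xie-rel-mod-2} gives $\sigma_{1}^{3}=\sigma_{1}$, so $\sigma_{1}^{l}=\sigma_{1}$ for odd $l\geq 1$ and $\sigma_{1}^{l}=\sigma_{1}^{2}$ for even $l\geq 2$. Functoriality of $\Lsharp$ then yields $\Lsharp(S(l))=\Lsharp(S(1))=0$ for odd $l\geq 3$ and $\Lsharp(S(l))=\Lsharp(S(2))=1$ for even $l\geq 4$. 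I expect the main obstacle to be the base case $l=2$: identifying the flat moduli space as $\mathbb{CP}^{2}$, verifying regularity, and computing the pullback of $\sigma_{1}$ to the hyperplane class; once that is in hand the Xie relation handles all $l\geq 3$ uniformly.
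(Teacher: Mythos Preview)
Your proposal is correct and follows essentially the same approach as the paper: dimension counting rules out $l<2$, the $l=2$ case is the evaluation of $\sigma_{1}^{2}$ on the flat moduli space $M_{0}\cong\CP^{2}$ with $\sigma_{1}$ the tautological class, and Lemma~\ref{lem:Xie-rel-mod-2} handles all $l\ge 3$. The only cosmetic difference is that the paper phrases the last step directly as the recurrence $\Lsharp(S(l+3))=\Lsharp(S(l+1))$ for $l\ge 0$, whereas you factor through the cylinder over the unknot; these are equivalent by functoriality.
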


\begin{proof}
    The dimension constraint \eqref{eq:dim-zero-l} requires $12\kappa
    + 4 = 2l$, so the evaluation is zero for $l=0$ and $l=1$, and for
    $l=2$ it coincides with the evaluation of the class
    $\sigma_{1}^{2}$ on the 4-dimensional moduli space of flat
    connections. This moduli space is regular, and is a copy of
    $\CP^{2}$. The class $\sigma_{1}$ is the first Chern class of the
    tautological line bundle, so the evaluation of $\sigma_{1}^{2}$ is
    $1$. This deals with the cases where $l < 3$. For larger $l$, we
    note that Lemma~\ref{lem:Xie-rel-mod-2} gives
    \[
                    \Lsharp(S(l + 3)) = \Lsharp(S(l+1))                    
    \]
    for all $l\ge 0$. This is sufficient to complete the proof.
\end{proof}

\subsection{The theta foam}

We consider next the theta foam, consisting of three standard disks in
$S^{4}$ meeting in a circular seam. We write $\Theta(l_{1}, l_{2},
l_{3})$ for this foam decorated with $l_{i}$ dots on the $i$'oh disk.

\begin{proposition}\label{prop:theta-evaluation}
    For the theta foam with dots, $\Theta(l_{1}, l_{2}, l_{3})$, we
    have
    \[
                 \Lsharp( \Theta(l_{1}, l_{2}, l_{3})) = 1
     \]
    if $(l_{1}, l_{2},l_{3}) = (0,1,2)$ or some permutation
    thereof, or more generally if
    \[
                (l_{1}, l_{2}, l_{3}) = (0, 1 + 2m, 2+2n)
    \]
    for non-negative integers $n$ and $m$. 
\end{proposition}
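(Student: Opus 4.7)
The plan is two steps: (a) reduce any triple $(0, 1+2m, 2+2n)$ to the base triple $(0,1,2)$ via Lemma~\ref{lem:Xie-rel-mod-2}; (b) verify $\Lsharp(\Theta(0,1,2)) = 1$ by direct moduli-space analysis. The $S_3$-symmetry of the theta foam under permutation of its three disks immediately reduces the ``or some permutation'' clause to the case $l_1 = 0$.

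For step (a), suppose $l\ge 3$ dots lie on facet $i$. Place all $l$ dots inside a small $2$-disk $D$ lying in the interior of facet $i$, and choose a $4$-ball $B^4\subset S^4$ meeting $\Theta$ in exactly $D$. Let $S^3 = \partial B^4$ and $U = \partial D$, an unknot in $S^3$. The theta foam then factors as a composition of two morphisms in $\Cat^{\sharp}$ across the bifold $(S^3, U)$: the cap piece $\mathbf{P}_l$ (the disk $D$ with $l$ dots in $B^4$, viewed as a morphism from $(S^3, \emptyset)$ to $(S^3, U)$), and the complement $\bX'$. Via Notation~\ref{not:rel-invariant} the cap contributes
\[
    \mathbf{P}_l = \sigma_1(U)^{\,l}\cdot \mathbf{P}_0 \in \Lsharp(S^3, U).
\]
Lemma~\ref{lem:Xie-rel-mod-2} applied to the edge $U$ gives $\sigma_1(U)^{3} = \sigma_1(U)$, hence $\mathbf{P}_l = \mathbf{P}_{l-2}$ for every $l\ge 3$. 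Proposition~\ref{prop:excision-local} transports this identity back to the closed foam, yielding
\[
    \Lsharp(\Theta(\ldots, l, \ldots)) = \Lsharp(\Theta(\ldots, l-2, \ldots))
\]
whenever $l\ge 3$. Iterating $m$ times on facet $2$ and $n$ times on facet $3$ reduces $(0, 1+2m, 2+2n)$ to $(0, 1, 2)$.

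For step (b), the dimension formula~\eqref{eq:dim-closed-foam} with $\chi(\Theta) = 3$, $t = 0$, and total dot-degree $2 \cdot 3 = 6$ requires $12\kappa + \Theta\ccdot\Theta = 0$ in order for a moduli space $M_\kappa(\alpha_0, \check X^{\sharp}, \alpha_0)$ to contribute. The orbifold fundamental group of $(S^4, \Theta)$ is the Klein four-group $V_4$: Alexander duality gives $H_1(S^4\setminus\Theta) = \Z^2$, making $\pi_1$ abelian, and the seam relation $a_1 a_2 a_3 = 1$ together with $a_i^2 = 1$ then cuts this down to $V_4$. The flat bifold $\SU(3)$ representations taking each $a_i$ to an element conjugate to $\mathrm{diag}(1,-1,-1)$ are simultaneously diagonalizable, so the flat moduli consists of a single reducible point with stabilizer $T^2$. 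The evaluation $\Lsharp(\Theta(0,1,2))$ is the count of the zero-dimensional transverse intersection of $M_\kappa$ with representatives of the classes $\sigma_1(e_2)$ and $\sigma_1(e_3)^2$. The cleanest route is via the inclusion $\r$ of Section~\ref{sec:includeSO3}: Lemma~\ref{lem:involution-fixed} identifies the fixed locus of complex conjugation with the image of the $\SO(3)$ theta-foam moduli space, and Lemma~\ref{lem:times-4} relates the actions, allowing the count to be imported from the corresponding $\SO(3)$ calculation in \cite{KM-Tait}.

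The main obstacle is step (b). Because the flat connection is reducible with positive-dimensional stabilizer $T^2$, its formal dimension is not $6$, and the full $6$-dimensional moduli space must be built by gluing monopole bubbles on the facets (raising $\kappa$ by multiples of $1/2$) so as to match the degree of $\sigma_1(e_2)\sigma_1(e_3)^{2}$. Controlling this gluing and isolating a single transverse intersection point modulo~$2$ is the technical heart of the argument. The $\SO(3)$ route is the most efficient, since the analogous gluing and point-count have already been carried out in \cite{KM-Tait}; what then remains is to verify via Lemma~\ref{lem:involution-fixed} that there are no additional strictly complex contributions mod~$2$.
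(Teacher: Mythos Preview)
Your step~(a) is fine and is exactly what the paper does: the Xie relation $\sigma_1^3=\sigma_1$ (Lemma~\ref{lem:Xie-rel-mod-2}) gives $\Lsharp(\Theta(l_1+3,l_2,l_3))=\Lsharp(\Theta(l_1+1,l_2,l_3))$ for $l_1\ge 0$, which reduces $(0,1+2m,2+2n)$ to $(0,1,2)$.

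Your step~(b) has a genuine error. You analyze the flat bifold connections on $(S^4,\Theta)$ and correctly find a single conjugacy class of $V_4$-representations with stabilizer the maximal torus. But the evaluation $\Lsharp(\Theta(0,1,2))$ is computed on $\check X^\sharp$, which by the setup of section~4.1 is the cylinder $\R\times H_3$ with $\Theta$ inserted in a ball. The atom $H_3$ is present precisely to kill reducibles: the unique critical point $\alpha_0$ on $H_3$ is irreducible, so every connection in $M_\kappa(\alpha_0,\check X^\sharp,\alpha_0)$ has only the central $\Z/3$ as automorphisms. Concretely, once the flat connection is pinned down on the atom side, the remaining data in the ball is a bifold representation $V_4\to\SU(3)$ \emph{not} taken modulo conjugation. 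The space of such representations is the flag manifold $\SU(3)/T$, of real dimension~$6$. This matches the formal dimension from \eqref{eq:dim-closed-foam}, namely $12\cdot 0 + 0 + 2\cdot 3 - 0 = 6$, and $M_0$ is regular (unobstructed as a space of flat connections). No monopole bubbling is involved at all.

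The base case is then the ordinary cohomological pairing
\[
\Lsharp(\Theta(0,1,2)) \;=\; \bigl\langle\, \sigma_1(e_2)\cupprod\sigma_1(e_3)^2,\; [M_0]\,\bigr\rangle,
\]
where $\sigma_1(e_i)$ is the first Chern class of the $i$th tautological line bundle on the flag manifold. This is a standard computation in $H^*(\SU(3)/T;\F)$ and equals~$1$. Your proposed detour through the $\SO(3)$ theory and Lemma~\ref{lem:involution-fixed} is unnecessary and would in any case not resolve the difficulty you identified, because that difficulty does not exist.
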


\begin{proof}
    The dimension constraint \eqref{eq:dim-zero-l} imposes the
    condition $12\kappa + 6 = 2(l_{1}+l+2 + l_{3})$, so there is no
    contribution if $l_{1} + l_{2} + l_{3}<3$. When $l_{1} + l_{2} +
    l_{3}=3$, we are again evaluating ordinary cohomology classes on
    the fundamental class of the moduli space of flat connections. In
    this case, the moduli space $M_{0}$ is the flag manifold of
    $\C^{3}$ and the three cohomology classes $\sigma_{1}(x_{i})$ are
    the first Chern classes $\mu_{1}$, $\mu_{2}$, $\mu_{3}$ of the
    three tautological line bundles. From the known cohomology ring of
    the flag manifold, we have
    \[
\begin{aligned}
    \Lsharp(\Theta(0,1,2)) &= (\mu_{2}\cupprod\mu_{3}^{2})[M_{0}] \\
                            &= 1,
\end{aligned}
    \]
   with the same answer mod 2 for any permutation of the three
   classes.

To proceed further, we note that the first of the dot-migration rules
of Lemma~\ref{lem:dot-migration} gives
\[
\Lsharp(\Theta(l_{1}+1, l_{2}, l_{3})) + \Lsharp(\Theta(l_{1}, l_{2}+1,
l_{3})) +  \Lsharp(\Theta(l_{1}, l_{2}, l_{3}+1)) =0,
\]
whenever the $l_{i}$ are non-negative. In particular, if
$l_{1}=l_{2}$, we obtain
\[
\begin{aligned}
\Lsharp(\Theta(l_{1}, l_{1}, l_{3}+1)) 
&= 0.
\end{aligned}
\]
In particular, we obtain zero for the evaluation in the cases
$(1,1,1)$, $(1,1,2)$, and $(2,2,1)$.

The relation of Lemma~\ref{lem:Xie-rel-mod-2} gives
\[
\Lsharp(\Theta(l_{1}+3, l_{2}, l_{3}))  =
       \Lsharp(\Theta(l_{1}+1, l_{2}, l_{3})) 
\]
whenever $l_{1}\ge 0$, and this allows the calculation for any
$(l_{1}, l_{2}, l_{3}))$ to be reduced to cases where each $l_{i}$ is
$0$, $1$, or $2$. We have dealt with all such cases already, with the
exception of $(2,2,0)$. But using dot-migration and the above relation
one more time, we have
\[
\begin{aligned}
    (2,2,0)  &= (3,1,0) + (2,1,1) \\
             &= (1,1,0) + 0 \\
             &= 0,
\end{aligned}
\]
in the obvious shorthand. This completes the calculation in all cases
and verifies the proposition.
\end{proof}

\subsection{The suspension of the tetrahedron}
\label{subsec:tetrahedron-foam}

Let $K$ be the 1-skeleton of the tetrahedron, as a web in $S^{3}$. Let
$\Sigma_{-} \subset B^{4}$ be a cone on $K$, a foam with one
tetrahedral point, and let $\Sigma = \Sigma_{-}\cup\Sigma_{+}\subset
S^{4}$ be the double of $\Sigma)=_{-}$, the suspension of $K$. The web
$K$ has 6 edges, and we label them as shown in
Figure~\ref{fig:tetrahedron-web}. We write $E_{1}$, $E_{2}$, $E_{3}$
for the facets of $\Sigma$ corresponding to the edges $e_{i}$ and
$F_{1}$, $F_{2}$, $F_{3}$ for those corresponding to $f_{i}$. We write
$\Sigma(k_{1}, k_{2}, k_{3}; l_{1}, l_{2}, l_{3})$ for the foam
decorated with dots on these three facets respectively.

\begin{figure}
    \begin{center}
        \includegraphics[scale=0.50]{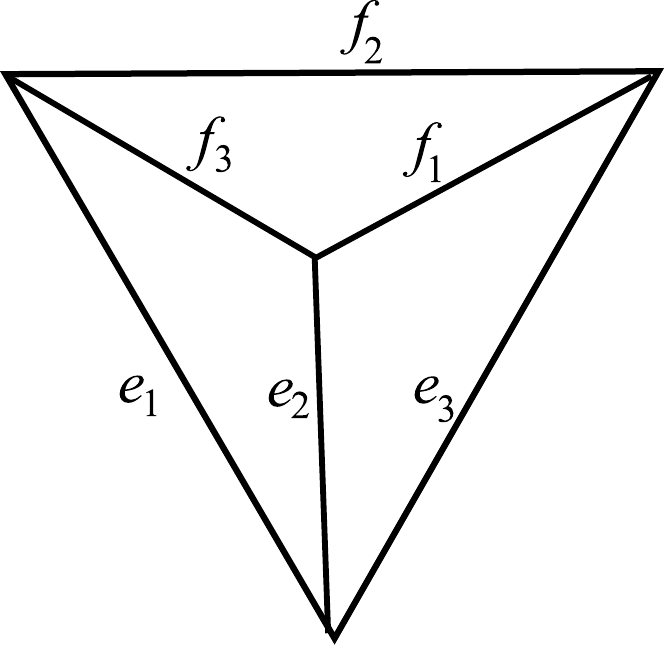}
    \end{center}
    \caption{\label{fig:tetrahedron-web}
   The tetrahedron.}
\end{figure}

\begin{proposition}\label{prop:tetr-evaluation}
    For the suspension of the tetrahedron with dots, we have
    \[
                \Sigma(k_{1}, k_{2}, k_{3}; 0,0,0) = 1
     \]
    if $(k_{1}, k_{2}, k_{3}) = (0,1,2)$ or some permutation
    thereof, or more generally if
    \[
                (k_{1}, k_{2}, k_{3}) = (0, 1 + 2m, 2+2n)
    \]
    for non-negative integers $n$ and $m$. For other values of
    $(k_{1}, k_{2}, k_{3})$, the evaluation is zero.
\end{proposition}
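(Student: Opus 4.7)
The plan is to imitate the proof of Proposition~\ref{prop:theta-evaluation}: compute the evaluation directly in a single low-$\kappa$ case where the relevant moduli space is flat, and then propagate to all other cases via the dot-migration relations of Lemma~\ref{lem:dot-migration} and the Xie relation of Lemma~\ref{lem:Xie-rel-mod-2}.

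First I would apply the dimension formula~\eqref{eq:dim-closed-foam}. For $\Sigma$ the suspension of the tetrahedral web $K$, one has $\chi(\Sigma) = 2 - \chi(K) = 2 - (4-6) = 4$, $t = 2$ (the two cone vertices), and $\Sigma\ccdot\Sigma = 0$, as in the $S^{4}/V_{8}$ computation appearing in the proof of Proposition~\ref{prop:dimension}. Thus $\dim M_\kappa = 12\kappa + 6$, and~\eqref{eq:dim-zero-l} becomes $k_1+k_2+k_3 = 6\kappa + 3$. Since $\kappa$ lies in the monoid generated by $1/2$ and $1/3$, the value $6\kappa = 1$ cannot be realized, so $\Sigma(k_1,k_2,k_3;0,0,0)$ automatically vanishes whenever $k_1+k_2+k_3 = 4$.

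The crux is the base case $(k_1,k_2,k_3) = (0,1,2)$ at $\kappa = 0$, where $M_0$ is the moduli space of flat bifold $\SU(3)$ connections. Following the analysis used for the theta foam, I would identify the top-dimensional stratum of $M_0$ with the flag manifold $\mathrm{Fl}(\C^3) = \SU(3)/T$: a flat connection corresponds to a triple of mutually commuting involutions $A_i \in \SU(3)$, one for each facet $E_i$ carrying dots, each conjugate to $\mathrm{diag}(1,-1,-1)$; generically the three $+1$-eigenlines $L_1, L_2, L_3$ are mutually orthogonal and form a complete flag, and the compatibility relations at the seams and tetrahedral points are then automatic. Under this identification $\sigma_1(E_i) = c_1(L_i)$ is the first Chern class of the $i$-th tautological line bundle, and the standard computation $c_1(L_2)\cupprod c_1(L_3)^2\,[\mathrm{Fl}(\C^3)] = 1$ (identical to the one used in the theta-foam proof) gives $\Sigma(0,1,2;0,0,0) = 1$, and by symmetry of the three facets $E_i$ the same for every permutation of $(0,1,2)$.

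Given the base case, the remaining evaluations follow by algebra. The Xie relation $\sigma_1(E_i)^3 = \sigma_1(E_i)$ reduces every case to one with $(k_1,k_2,k_3) \in \{0,1,2\}^3$. At the vertex $A$ of the tetrahedron where $e_1,e_2,e_3$ meet, dot migration reads
\[
\Sigma(k_1{+}1, k_2, k_3) + \Sigma(k_1, k_2{+}1, k_3) + \Sigma(k_1, k_2, k_3{+}1) = 0.
\]
Applied with $(k_1,k_2,k_3) = (1,1,0)$ this gives $\Sigma(1,1,1) = \Sigma(2,1,0) + \Sigma(1,2,0) = 1 + 1 = 0$; weight-$4$ cases vanish by the dimension argument; applied with $(0,2,2)$ it gives $\Sigma(1,2,2) = \Sigma(0,3,2) + \Sigma(0,2,3) = 1 + 1 = 0$ after using Xie and symmetry; applied with $(1,2,2)$ it gives $\Sigma(2,2,2) = \Sigma(1,3,2) + \Sigma(1,2,3) = 0 + 0 = 0$. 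This exhausts the reduced cases.

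The main obstacle is the base-case identification in the third paragraph: correctly identifying the $6$-dimensional flat moduli space with $\mathrm{Fl}(\C^3)$ in the presence of the atom $H_3$, verifying regularity as an ASD moduli space and not merely as a representation variety, and ensuring that the lower-dimensional strata (where two or three of the eigenlines $L_i$ coincide) make no contribution to the top-degree integration.
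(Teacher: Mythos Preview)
Your proof is correct and follows the same route as the paper, which simply notes that the argument is identical to that of Proposition~\ref{prop:theta-evaluation} because the flat moduli space is again the flag manifold $\SU(3)/T$. Your worry about degenerate strata is unfounded: since the image of any homomorphism $V_8\to\SU(3)$ lies in the $2$-torsion of a maximal torus (which is $V_4$), the bifold condition forces the three eigenlines $L_i$ to be mutually orthogonal, so $M_0$ is exactly the flag manifold with no boundary; your shortcut for the weight-$4$ cases via the action gap $\kappa=1/6\notin\langle 1/2,1/3\rangle_{\ge 0}$ is a pleasant variant of the dot-migration argument the paper uses for the theta foam.
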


\begin{proof}
    The proof is identical to that of
    Proposition~\ref{prop:theta-evaluation}, because moduli space of
    flat bifold connections is again the flag manifold $\SU(3)/T$.
\end{proof}

With a little further examination below, we will deal also with the
case that the $l_{i}$ are non-zero. We state the result here and prove
it later:

\begin{proposition}
        For the suspension of the tetrahedron with dots, we have
    \[
                \Sigma(k_{1}, k_{2}, k_{3}; l_{1}, l_{2}, l_{3}) =
                \Sigma(k_{1}+l_{1}, k_{2}+l_{2}, k_{3}+l_{3}; 0,0,0)
     \]
     for all $k_{i}$ and $l_{i}$. \qed
\end{proposition}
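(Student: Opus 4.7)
The plan is to introduce auxiliary classes $u=a+A$, $v=b+B$, $w=c+C$ in the cohomology of each moduli space $M_\kappa(\alpha_0,\check X^\sharp,\alpha_0)$ that underlies the closed-foam evaluation, writing $a,b,c,A,B,C$ for $\sigma_1(E_1),\sigma_1(E_2),\sigma_1(E_3),\sigma_1(F_1),\sigma_1(F_2),\sigma_1(F_3)$. I will show, from the seam relations of Lemma~\ref{lem:dot-migration}, that $u=v=w$ is a square-zero class annihilating every generator; the proposition will then fall out of a short binomial expansion using $u^2=0$.

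The foam $\Sigma$ has four seams, the suspensions of the four vertices of the tetrahedron; with the labelling of Figure~\ref{fig:tetrahedron-web}, the triples of facets meeting along these seams are $(E_1,E_2,E_3)$, $(E_1,F_3,F_2)$, $(E_2,F_3,F_1)$ and $(E_3,F_2,F_1)$. At each seam Lemma~\ref{lem:dot-migration} gives a linear relation ($\sum\sigma_1=0$) and a quadratic relation which, at the cohomology level, takes the form $\sum_{i<j}\sigma_1(e_i)\sigma_1(e_j)=\nu$, where $\nu=c_2(\Ad\PP_x)$ is the common degree-$4$ class for any $x$ off the singular set. Computing $A+B+C$ using the linear relations at seams $2$, $3$, $4$ in turn yields $A+a$, $b+B$, $c+C$, so $u=v=w$. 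Summing the four quadratic identities and regrouping by pairs of letters gives $uv+vw+wu=4\nu=0$ in characteristic $2$, which together with $u=v=w$ forces $u^2=0$. Substituting $B=b+u$ and $C=c+u$ into the seam-$2$ quadratic relation and simplifying with the seam-$1$ relation and $u^2=0$ produces $ua=0$; the analogous substitutions at seams $3$ and $4$ give $ub=uc=0$, and then $uA=u(a+u)=0$, $uB=uC=0$. Thus $u$ kills every positive-degree element of the subalgebra generated by $a,b,c,A,B,C$.

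Since $u^2=0$, the identity $(a+u)^l=a^l+(l\bmod 2)\,a^{l-1}u$ holds in characteristic $2$, so multiplying over the three factors gives
\[
A^{l_1}B^{l_2}C^{l_3}=a^{l_1}b^{l_2}c^{l_3}+u\cdot r
\]
with $r\in\F[a,b,c]$. Multiplying by $a^{k_1}b^{k_2}c^{k_3}$, the difference between the two sides of the proposition becomes $u\cdot s$ with $s\in\F[a,b,c]$ of cohomological degree $2\bigl(\sum_i(k_i+l_i)-1\bigr)$. The dimension formula~\eqref{eq:dim-closed-foam} applied to $\Sigma$ (with $\chi(\Sigma)=4$, $t=2$, and $\Sigma\ccdot\Sigma=0$, the last being forced by the fact that $\Sigma(0,1,2;0,0,0)$ is nonzero at $\kappa=0$ in Proposition~\ref{prop:tetr-evaluation}) requires $\sum_i(k_i+l_i)\ge 3$ for a nonzero matrix element; in that range $s$ has strictly positive degree in $a,b,c$, so $u\cdot s=0$ by the annihilation property of $u$. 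The two decorated foams therefore define the same class on every $M_\kappa$ and have equal $\Lsharp$ evaluations.

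The main technical step is the derivation of $ua=0$, in which one must use that the same $\nu$ appears on the right of the quadratic relations at seams $1$ and $2$ so that the $\nu$-terms cancel, leaving $ua+u^2=0$ which collapses to $ua=0$ via $u^2=0$. Everything else is formal ring manipulation plus the dimension count.
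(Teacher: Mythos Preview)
Your algebraic derivation is correct and is a genuinely different route from the paper's. The paper deduces this proposition from Proposition~\ref{prop:tetrahedron-web}(2): it computes $\Lsharp(K)$ for the tetrahedral web $K$ as a cyclic module over $\F[u_1,u_2,u_3]$, then checks (using a handful of low-degree evaluations on the flag manifold) that the operators $v_i=\sigma_1(f_i)$ and $u_i=\sigma_1(e_i)$ agree on the cyclic generator $\iota$, hence on all of $\Lsharp(K)$; the closed-foam statement follows by splitting $\Sigma=\Sigma_-\cup_K\Sigma_+$. Your approach instead extracts from the four seam relations alone that $u:=\sigma_1(e_i)+\sigma_1(f_i)$ is independent of $i$, squares to zero, and annihilates every edge class, which suffices to kill the difference $u\cdot s$ without ever computing $\Lsharp(K)$. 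That is a pleasant simplification, and the derivations of $u=v=w$, $u^2=0$, and $ua=0$ from the linear and quadratic seam identities are all correct.

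There is, however, a gap in the final step. You work in $H^*(M_\kappa)$ and conclude that equal cohomology classes give equal $\Lsharp$ evaluations. That implication is only available when $M_\kappa$ is compact, which by the lemma in section~4.1 means $\kappa<1/3$, i.e.\ $\sum(k_i+l_i)\le 4$. For $\sum\ge 5$ the moduli space can be non-compact, there is no fundamental class to pair with, and cohomological equality does not by itself control the point-count defining $\Lsharp$; this is exactly the regime in which bubbling corrections enter (compare the care taken with the degree-6 Xie relation).

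The fix is to run the same algebra at the operator level on $\Lsharp(K)$ rather than in $H^*(M_\kappa)$. Split $\Sigma=\Sigma_-\cup_K\Sigma_+$; dots on the facets become the commuting operators $u_i,v_i$ on $\Lsharp(K)$, and Lemma~\ref{lem:dot-migration} parts (1) and (2) supply your linear and quadratic seam identities directly as \emph{operator} equalities (all four quadratic relations have $\nu=1$ on the right, and $4\cdot 1=0$ in $\F$, so the derivation of $u^2=0$ and $uu_i=0$ goes through verbatim). Then $u\cdot s$ is the zero operator whenever $s$ has positive degree in the $u_i$, and applying it to $\iota=\Lsharp(\Sigma_-)$ gives the proposition. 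Your dimension argument still handles $\sum\le 2$, the only range in which $s$ could be constant. Note that this argument does not quite prove the stronger statement $v_i=u_i$ of Proposition~\ref{prop:tetrahedron-web}(2), since $u\iota$ itself is not directly controlled; but it is exactly enough for the closed-foam identity.
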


\subsection{Some foams based on \texorpdfstring{$\RP^{2}$}{RP2}}
\label{subsec:RP2-foams}

For the proof of the exact triangles in
section~\ref{sec:exact-triangle} we will need to understand the
smallest-energy moduli spaces for some particular foams in $S^{4}$.
This material closely follows the case of the $\SO(3)$ gauge group as
presented in \cite[section 4]{KM-triangles}. As in that previous
paper, we denote by $R\subset S^{4}$ a standard copy of $\RP^{2}$ with
self-intersection number $+2$, arising as the branch locus of the
quotient map $S^{4}=(-\CP^{2})/c$, where $c$ is complex conjugation.
For $n=0,1,2$ or $3$, we consider $n$ lines in general position in
$-\CP^{2}$ defined by real equations,
\[
           L_{1},\dots,L_{n} \subset -\CP^{2}.
\]
Their images in the orbifold $(S^{4}, R)$ are $n$ disks
$D_{1},\dots,D_{n}$ with disjoint interiors. When $n\ge 2$, these
disks meet in pairs at their boundaries in $R$. The union
\begin{equation}\label{eq:Psi-n-foam}
            \Psi_{n} = R\cup D_{1} \cup \dots \cup D_{n}
\end{equation}
is a foam in $S^{4}$ whose seams lie along lines in $R$ and whose
tetrahedral points are where the disks meet.

The next lemma is the counterpart of \cite[Lemma
4.1]{KM-triangles}, which was the $\SO(3)$ case.

\begin{lemma}
    \label{lem:Psi-n-dim} The formal dimension of the moduli space of
    anti-self-dual bifold $\SU(3)$ connections of action $\kappa$ on
    $(S^{4}, \Psi_{n})$ is given by
    \[
                12 \kappa -4 +2n -n^{2}/2
    \]
     In particular, we have
    \begin{enumerate}
    \item $12\kappa - 4$ for $\Psi_{0}$;
    \item $12\kappa - 5/2$ for $\Psi_{1}$;
    \item $12\kappa -2 $ for $\Psi_{2}$;
    \item $12\kappa - 5/2$ for $\Psi_{3}$.
    \end{enumerate}
\end{lemma}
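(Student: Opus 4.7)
The plan is to specialize the general dimension formula of Proposition~\ref{prop:dimension} to the bifold $\check X = (S^4, \Psi_n)$. Since $b^+(S^4) = b^1(S^4) = 0$, that formula reduces to
\[
d = 12\kappa - 8 + \Psi_n\ccdot\Psi_n + 2\chi(\Psi_n) - t.
\]
The task is then to compute the three topological quantities $\chi(\Psi_n)$, $t$, and $\Psi_n\ccdot\Psi_n$; substituting these and simplifying should produce the claimed formula, and the four enumerated special cases follow by evaluation at $n=0,1,2,3$.

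For the Euler characteristic I would use inclusion--exclusion on the decomposition $\Psi_n = R \cup D_1 \cup \cdots \cup D_n$. The pairwise intersection $R \cap D_i$ is the circle $\partial D_i$, with $\chi=0$; for $i\ne j$ the intersection $D_i \cap D_j$ is the single point where $\partial D_i$ meets $\partial D_j$ in $R$, and every higher intersection reduces to that same point by the general position hypothesis. A short computation then gives $\chi(\Psi_n) = n + 1$.

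The tetrahedral count is immediate from the construction: at each of the $\binom{n}{2}$ points where $\partial D_i$ meets $\partial D_j$ in $R$, two seams cross, four rays of singular locus emanate, and six facets (the two disks together with the four quadrants of $R$ at the crossing) meet. This is precisely the local model of a tetrahedral point, so $t = n(n-1)/2$.

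The main work, and the only real obstacle, is the self-intersection $\Psi_n\ccdot\Psi_n$, since the foam convention (see \cite[Definition 2.5]{KM-Tait}) permits half-integer values and one must handle the seams and tetrahedral points with care. However, the quantity is purely topological and is the same as the one that appears in the parallel $\SO(3)$ computation of \cite[Lemma 4.1]{KM-triangles}; the argument there starts from $R\ccdot R = 2$ and adds a contribution of $-1/2$ for each disk $D_i$, reflecting that $D_i$ is the image under the $2$-to-$1$ quotient $-\CP^2 \to S^4$ of a real line $L_i$ of self-intersection $-1$. Using $c$-equivariant pushoffs of the lines $L_i$ inside $-\CP^2$, which by general position can be arranged to avoid each other and to meet $\RP^2$ only where already accounted for, one checks that there are no cross-terms between distinct disks. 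The result is $\Psi_n\ccdot\Psi_n = 2 - n/2$. Substituting this, together with $\chi(\Psi_n) = n+1$ and $t = n(n-1)/2$, into the displayed formula yields $12\kappa - 4 + 2n - n^2/2$.
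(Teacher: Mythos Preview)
Your proposal is correct and follows essentially the same approach as the paper: both substitute the topological invariants $\Psi_n\ccdot\Psi_n = 2 - n/2$, $\chi(\Psi_n) = n+1$, and $t = n(n-1)/2$ into the dimension formula of Proposition~\ref{prop:dimension}. The paper simply cites \cite{KM-triangles} for these three values, whereas you supply brief justifications for $\chi$ and $t$ and refer to \cite{KM-triangles} only for the self-intersection; this is a harmless difference of exposition.
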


\begin{proof}
    As in \cite{KM-triangles}, we have $\Psi_{n} \ccdot \Psi_{n} = 2 -
    n/2$ and $\chi(\Psi_{n})=n+1$, while the number of tetrahedral
    points is $n(n-1)/2$. With this information, the formulae are
    derived directly from the general dimension formula in
    Proposition~\ref{prop:dimension}.
\end{proof}

With the dimension formula above, we examine the smallest-energy
moduli spaces in each case. The results and the proofs exactly
parallel the results from the $\SO(3)$ case,
\cite[Lemma~4.2]{KM-triangles}. The stabilizers of the various
reducible solutions in the $\SO(3)$ case were $O(2)$, the Klein
4-group $V_{4}$, or the group of order $2$, while in the $\SU(3)$ case
the corresponding groups are $U(2) = S(U(1)\times U(2))$, the maximal
torus, and a circle subgroup. (See the discussion of reducible
solutions in section~\ref{subsec:config}.) The formal dimensions of
the moduli spaces reflects the dimension of the stabilizers.

\begin{lemma}
\label{lem:smallest-action-R}
On the bifolds corresponding to $(S^{4}, \Psi_{n})$, the smallest-action
    non-empty moduli spaces of anti-self-dual bifold connections  are
    as follows, for $n\le 3$.
    \begin{enumerate}
    \item For $n=0$ or $n=2$, there is a unique solution with $\kappa=0$: 
        a flat connection whose holonomy group has order $2$ for $n=0$
        and is the Klein $4$-group, $V_{4}$, for $n=2$. The
        automorphism group of the connection is $U(2)$ (respectively,
        the maximal torus $T^{2}$),
         and it is an
        unobstructed solution in a moduli space of formal dimension
        $-4$ (respectively, dimension $-2$).
    \item For $n=1$ and $n=3$,
        the smallest non-empty moduli spaces have
        $\kappa=1/8$ and formal dimension $-1$. In both cases,  with
        suitable choices of bifold metrics, 
        the moduli space consists of a unique unobstructed
        solution with holonomy group $O(2)\subset U(2)$ and stabilizer
        $S^{1}$.
    \end{enumerate}
\end{lemma}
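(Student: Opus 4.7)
The plan is to follow closely the $\SO(3)$ argument of \cite[Lemma~4.2]{KM-triangles}, replacing each reducible $\SO(3)$ stabilizer $O(2)$, $V_{4}$, or $\Z/2$ with its $\SU(3)$-centralizer analogue $S(U(1)\times U(2))\cong U(2)$, the maximal torus $T^{2}$, or a circle $S^{1}$, as classified in section~\ref{subsec:config}. Under this substitution, stabilizers of dimensions $4$, $2$, $1$ match the formal dimensions $-4$, $-2$, $-1$ from Lemma~\ref{lem:Psi-n-dim} precisely when the corresponding solutions are isolated and unobstructed, so the task reduces to exhibiting and enumerating such solutions, checking $H^{1}=H^{2}=0$ of the deformation complex in each case, and ruling out smaller-action ones.

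For $n=0$ and $n=2$, I would first compute the orbifold fundamental group $\pi_{1}^{\mathrm{orb}}(S^{4},\Psi_{n})$ and list its $\SU(3)$-representations compatible with the bifold local models. For $n=0$, $\pi_{1}^{\mathrm{orb}}(S^{4},R) \cong \Z/2$ and there is a unique compatible representation, sending the meridian of $R$ to $\mathrm{diag}(1,-1,-1)$, whose centralizer is $U(2)$. For $n=2$, any compatible flat representation is forced by the seam and tetrahedral-point local models to factor through $V_{4}\subset \SU(3)$ acting by diagonal sign matrices of determinant $1$, yielding again a unique flat connection, now with centralizer the maximal torus $T^{2}$. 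Unobstructedness would be verified by passing to a suitable finite orbifold cover on which the flat connection becomes trivial — the double branched cover $-\CP^{2}$ in the $n=0$ case and a further cover for $n=2$ — and identifying the twisted cohomology with eigenspaces of the deck action on the ordinary de~Rham cohomology of a closed manifold, following \cite{KM-triangles}.

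For $n=1$ and $n=3$, the same fundamental-group analysis should rule out any $\kappa=0$ flat connection, because the local monodromy requirements around the seams and tetrahedral points cannot be simultaneously satisfied. The smallest-action moduli space is then realized by a reducible whose structure group drops to $O(2)\subset S(U(1)\times U(2))$: the bifold bundle splits as $L\oplus E'$ with $E'$ a rank-two bundle carrying a compatible real structure, so the stabilizer is $S^{1}$. I would construct this connection explicitly by identifying a suitable orbifold line bundle on $(S^{4},\Psi_{n})$ and choosing a bifold metric for which the corresponding abelian anti-self-duality equation admits a unique solution, and verify the value $\kappa = 1/8$ by a direct Chern--Weil computation on the orbifold. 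The formal dimension $-1$ is then given by Lemma~\ref{lem:Psi-n-dim}, while uniqueness and the vanishing $H^{2}=0$ on the directions transverse to the reduction follow from a Weitzenb\"ock argument carried out on the same cover.

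The hard part will be the $n=1,3$ construction. By Lemma~\ref{lem:times-4} any $\SO(3)$ reducible produces $\kappa\in(1/2)\Z$ upon complexification, so the $\kappa=1/8$ solution is genuinely $\SU(3)$ and cannot simply be imported from the $\SO(3)$ analogue in \cite{KM-triangles}; it must be produced directly, together with the verification that no bifold bundle of smaller positive action exists on $(S^{4},\Psi_{n})$. Once that construction and its action computation are in place, the remaining bookkeeping — uniqueness, unobstructedness, and the metric choices — parallels the $\SO(3)$ argument with only cosmetic changes reflecting the substituted stabilizer groups.
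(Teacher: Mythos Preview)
Your proposal contains a concrete misreading that undermines the $n=1,3$ cases. Lemma~\ref{lem:times-4} says $\kappa = 4\kappa_{r}$, not that $\kappa$ lands in $(1/2)\Z$. So an $\SO(3)$ bifold solution with $\kappa_{r}=1/32$ (which is exactly what \cite{KM-triangles} produces for $\Psi_{3}$, and similarly $\kappa_{r}$ values exist for $\Psi_{1}$) yields under $\r$ an $\SU(3)$ bifold solution with $\kappa=1/8$. The $\kappa=1/8$ solutions therefore \emph{are} imported directly from the $\SO(3)$ case via $\r$, and the paper's proof says so explicitly. Your claim that the solution ``is genuinely $\SU(3)$ and cannot simply be imported'' is false, and with it collapses your identification of ``the hard part'': there is no need to construct the solution from scratch via orbifold line bundles and an abelian ASD equation.

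What actually remains to be done for $n=1,3$ --- and what the paper does --- is to verify that the imported $\SO(3)$ solution is the \emph{only} $\SU(3)$ solution at $\kappa=1/8$, and that it is unobstructed. Unobstructedness for $n=1$ uses a conformally anti-self-dual orbifold metric (so $H^{2}_{A}=0$ automatically), and for $n=3$ the same mechanism applies. Uniqueness is handled by pulling back to the branched cover $-\CP^{2}$ (double for $n=1$, eight-fold for $n=3$), where the stabilizer enlarges to the maximal torus and the bundle splits as a sum of orbifold line bundles whose degrees are pinned down by the action, the $\SU(3)$ constraint, and invariance under complex conjugation; one then checks that the involution(s) descend in only one way. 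Your proposed splitting $L\oplus E'$ with $E'$ carrying a real structure is not the right picture for the $O(2)$ holonomy here (see the remark after Lemma~\ref{lem:involution-fixed}): that would describe a $\{1\}\times \SU(2)$ reduction, which is ruled out when seams are present.

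For $n=0,2$ your outline is closer to the paper, though the unobstructedness argument is simpler than you suggest: for flat bifold connections, the ASD deformation space $H^{1}_{A}$ coincides with the flat deformation space, which is visibly zero since the representation varieties are points; then $H^{2}_{A}=0$ follows from the dimension formula and the known stabilizer dimensions. No passage to covers is needed there.
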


\begin{proof}
    The corresponding result for the $\SO(3)$ case is Lemma~4.2 from
    \cite{KM-triangles}, where it was shown that the smallest
    non-empty moduli spaces consist in each case of a unique solution.
    The inclusion of $\SO(3)$ in $\SU(3)$ gives the map $\r$ of
    section~\ref{sec:includeSO3} on the spaces of connections. The
    statement of the present lemma amounts to the assertion that by
    applying $\r$ to the $\SO(3)$ solutions we obtain $\SU(3)$
    solutions which are unique in their moduli spaces and are
    unobstructed. We illustrate how this goes.  
    The complement $S^{4}\sminus R$ deformation-retracts onto another copy of
    $\RP^{2}$ denoted $R'$ and $S^{4}\sminus
    \Psi_{n}$ has the homotopy type of $R'$  with $n$ punctures.
    The fundamental group is $\Z/2$ for $n=0$ and $\Z *
    \Z$ for $n=2$. For $n=0$ and $2$, the smallest possible action is
    $\kappa=0$, and elements of $M_{0}$ are representations of $\Z/2$
    or $\Z * \Z$ in $\SU(3)$ sending the
    standard generators to involutions, modulo conjugation.
    In the second case, the two
    involutions must be distinct and commuting because of the presence
    of the tetrahedral point where the disks meet. In each of these
    cases $n=0,2$, there is therefore exactly one solution with
    $\kappa=0$, and the stabilizers are $U(2)$ and the maximal torus
    respectively, as claimed. For a moduli space of flat bifold connections,
    the space of infinitesimal deformations $H^{1}_{A}$ as ASD
    bifold connections coincides with the deformation space as flat
    bifold connections, allowing us to read off that $H^{1}_{A}=0$ in
    both of these cases.  The dimension of $H^{0}_{A}$ is the
    dimension of the stabilizer, which is $4$ or $2$ respectively. From
    the dimension formula, which gives the index of the deformation
    complex, we can then obtain the dimension of $H^{2}_{A}=0$, and we
    see that it is zero in both cases. The solutions are therefore
    unobstructed.

    For the case $n=1$, we exploit as in \cite{KM-triangles} the
    existence of a conformally anti-self-dual orbifold metric on
    $(S^{4}, \Psi_{1})$ to see that the solutions are unobstructed.
    The formal dimension being $-1$, the solutions for $\kappa=1/8$
    must be reducible. There is no possibility of having stabilizer
    larger than $S^{1}$ as there are no abelian solutions here, so the
    stabilizers must be $S^{1}$, and the dimension formula tells us
    that the solutions are isolated. It remains to show that there is
    exactly one. If $A$ is a solution, consider the pull-back $\tilde
    A$ on the double cover along $R$, namely the bifold $(-\CP^{2},
    L_{1})$ whose orbifold locus is $L_{1}$. We have $\kappa(\tilde
    A)=1/4$ (twice that of $A$), and the dimension formula tells us
    that the formal dimension of the moduli space containing $\tilde
    A$ is $-2$. Again, $\tilde A$ is unobstructed for the same reason
    that $A$ is. So $\tilde A$ must have stabilizer the maximal torus.
    Thus the bifold bundle with connection $(\tilde E, \tilde A)$ is a
    sum of bifold line bundles
    \[
           \tilde E = M_{0}\oplus M_{1}\oplus M_{2}
    \]
    where the local orbifold group $\Z/2$ acts as $+1$ on the first
    factor and $-1$ on the other two. Let $m_{0}$, $m_{1}$, $m_{2}$ be
    their orbifold degrees. These sum to zero because the structure
    group is $\SU(3)$. We have $m_{0}\in \Z$ and $\m_{1}$, $\m_{2}$ in
    $1/2 + \Z$. Furthermore the solution is invariant under complex
    conjugation, which forces $m_{2}=-m_{1}$ and $m_{0}=0$. We want
    $\kappa=1/4$, and this implies that $m_{1}=1/2$ and $m_{2}=-1/2$
    (or vice versa). This determines $\tilde E$ uniquely. To pass back
    to $A$, we need to consider how the involution $c$ on $(-\CP^{2},
    L_{1})$ is acting on $\tilde E$. At a fixed point of the action,
    the involution $c$ on the fiber is a complex-linear involution on
    the fiber of $M_{1}\oplus M_{2}$ interchanging the two factors.
    The eigenvalues here are $1$ and $-1$ therefore. To make the
    action of bifold type, the involution $c$ therefore has to act as
    $-1$ on the trivial line bundle $M_{0}$. This determines the
    action of $c$ uniquely, and completes the argument for $n=1$.  

    In the case $n=3$, as in \cite{KM-triangles}, the solutions must
    again be unobstructed with stabilizer $U(1)$. The issue is again
    the uniqueness. The bifold corresponding to $\Psi_{3}\subset
    S^{4}$ is a quotient of $-\CP^{2}$ by an elementary abelian group
    of order $8$: this is the Klein 4-group acting as $\pm 1$ on the
    coordinates, complex conjugation. A solution $A$ of action $1/8$
    pulls back to a solution $\tilde A$ of action $1$ on the $8$-fold
    cover. The stabilizer of $\tilde A$ must be at least as large as
    the stabilizer of $A$, so $\tilde A$ reduces to the subgroup
    $S(U(1)\times U(2))\subset \SU(3)$.

    We claim that $\tilde A$ must actually reduce to the maximal
    torus, \[T^{2}=S(U(1)\times U(1)\times U(1)).\] If it did not,
    then the associated $U(1)$ bundle to the $S(U(1)\times U(2))$
    reduction would have to be invariant under complex conjugation,
    and would therefore have to be trivial. So $\tilde A$ would arise
    from the inclusion of an irreducible $\SU(2)$ instanton in
    $\SU(3)$. The irreducible $\SU(2)$ instantons with $\kappa=1$ are
    an open cone on $-\CP^{2}$ and the action of the group of order
    $8$ does not have isolated fixed points. This contradicts the fact
    that $A$ is isolated on the quotient orbifold.

    It follows that $(\tilde E, \tilde A)$ is a sum of line bundles
    again, say $N_{0}\oplus N_{1} \oplus N_{2}$. The invariance under
    complex conjugation forces their degrees to be $0$, $k$ and $-k$
    respectively, and since $c_{2}=1$ we must have $k=\pm1$. This
    uniquely determines $\tilde A$. The action of complex conjugation
    $c$ on $-\CP^{2}$ must be as $-1$ on the trivial summand $N_{0}$
    and must interchange the other two, by the same argument as in the
    $n=1$ case above. The group of order $8$ is $\langle c \rangle
    \times V_{4}$. The $V_{4}$ subgroup must preserve the lines
    $N_{i}$ separately, because they have three different degrees. At
    a fixed point of $\langle c \rangle \times V_{4}$, i.e.~at a point
    $x$ where two of the lines, say $L_{1}$ and $L_{2}$, meet
    $\RP^{2}$, the action of $c$ on $(N_{0}\oplus N_{1}\oplus
    N_{2})_{x}$ is by
    \[
               c =  
               \begin{pmatrix}
                -1 & 0 & 0 \\
                0 & 0 & 1 \\
                0 & 1 &0
               \end{pmatrix}.
    \]
    The two generating involutions $a$ and $b$ in $V_{4}$ whose fixed
    sets near $x$ are $L_{1}$ and $L_{2}$ must act on $(N_{0}\oplus
    N_{1}\oplus N_{2})_{x}$ by non-trivial diagonal matrices of bifold
    type, and since they must both commute with $c$, the only
    possibility is
    \[
             a = b = \begin{pmatrix}
                1 & 0 & 0 \\
                0 & -1 & 0 \\
                0 & 0 & -1
               \end{pmatrix}.
    \]
    This shows that the solution on $(S^{4}, \Psi_{3})$ is unique. A
    comparison with results of the $\SO(3)$ case \cite{KM-triangles}
    shows that it must be obtained from the unique $\SO(3)$ bifold
    solution (which has action $1/32$) by the inclusion $\r$.
\end{proof}

\section{Consequences of the closed foam calculations}

\subsection{Unknots and unlinks}

Let $U_{n}$ denote a standard $n$-component unlink in $\R^{3} \subset
S^{3}$. We examine $\Lsharp(U_{n})$ in the notation of
\eqref{notn:webs}. By the excision property,
Proposition~\ref{prop:excision}, this is the tensor product of $n$
copies of $\Lsharp(U_{1})$ in a natural way. The next proposition
identifies $\Lsharp(U_{1})$. The unknot $U_{1}$ has a single edge $e$,
from which is obtained a linear operator $\sigma_{1}(e)$ acting on
$\Lsharp(U_{1})$. From Lemma~\ref{lem:Xie-rel-mod-2}, we know that
this satisfies the relation $\sigma_{1}(e)^{3} + \sigma_{1}(e)=0$.

\begin{proposition}
    \label{prop:unknot} The $\SU(3)$ bifold homology of the unknot
    $U_{1}$ is a 3-dimensional vector space over $\F$. As a module for
    the polynomial ring $\F[u]$, where $u$ acts by the operator
    $\sigma_{1}(e)$, the three-dimensional vector space
    $\Lsharp(U_{1})$ is isomorphic to the cyclic module
    \[
            \Lsharp(U_{1}) \cong \F[u] / (u^{3}+u).
    \]
    A cyclic generator for the module is the element $\Lsharp(D)$,
    where $D$ is a standard disk in the ball, with boundary $U_{1}$.
    (See Notation~\ref{not:rel-invariant}.)
\end{proposition}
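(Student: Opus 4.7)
The plan is to establish matching upper and lower bounds of $3$ on $\dim_{\F}\Lsharp(U_{1})$ and then combine them with the relation $u^{3}+u=0$ of Lemma~\ref{lem:Xie-rel-mod-2}.

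For the lower bound, I would view the standard disk $D$ decorated with $l$ dots both as a morphism $\emptyset\to U_{1}$, producing the relative invariant $u^{l}\Lsharp(D)=\sigma_{1}(e)^{l}\Lsharp(D)\in\Lsharp(U_{1})$, and (via Notation~\ref{not:rel-invariant}) as a morphism $U_{1}\to\emptyset$, producing a linear functional $\phi_{l}:\Lsharp(U_{1})\to\F$. Gluing two such disks along their common boundary $U_{1}$ produces the unknotted $2$-sphere $S(k+l)$, so by functoriality $\phi_{l}(u^{k}\Lsharp(D))=\Lsharp(S(k+l))$. Proposition~\ref{prop:sphere-with-dots} then yields the $3\times 3$ pairing matrix $(\Lsharp(S(k+l)))_{0\le k,l\le 2}$, whose entries are $0,0,1;0,1,0;1,0,1$ and whose determinant over $\F$ is $1$. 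Hence $\Lsharp(D), u\Lsharp(D), u^{2}\Lsharp(D)$ are $\F$-linearly independent, so $\dim_{\F}\Lsharp(U_{1})\ge 3$.

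For the upper bound, I would compute the flat moduli space directly on $U_{1}^{\sharp}=(S^{3},U_{1})\#H_{3}$. By van Kampen, its orbifold fundamental group is the free product $(\Z/2)\ast G_{\Z}$, and a flat bifold $\SU(3)$-connection is the same data as the atom representation $\rho:G_{\Z}\to\SU(3)$ of Lemma~\ref{lem:atomic}, together with an involution in the $\SU(3)$-conjugacy class of $\mathrm{diag}(1,-1,-1)$, taken modulo simultaneous conjugation. Because $\rho$ is irreducible, its stabilizer is the center $Z(\SU(3))=\Z/3$, which commutes with every involution and hence acts trivially on the second factor. The conjugacy class of $\mathrm{diag}(1,-1,-1)$ is $\SU(3)/S(U(1)\times U(2))\cong\CP^{2}$, so the flat moduli space on $U_{1}^{\sharp}$ is precisely $\CP^{2}$, consisting entirely of irreducibles (as guaranteed by the atom construction). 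Standard Floer-theoretic arguments, in the style of \cite{KM-yaft}, then show that this $\CP^{2}$ is Morse--Bott non-degenerate---that is, $H^{1}_{A}$ at each flat connection coincides with the tangent space of the character variety---and that a small Morse--Bott holonomy perturbation replaces the critical submanifold by the critical points of a Morse function on $\CP^{2}$. Any such function has $\chi(\CP^{2})=3$ critical points, so the Floer chain complex has rank at most $3$, giving $\dim_{\F}\Lsharp(U_{1})\le 3$.

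Combining the two bounds, $\dim_{\F}\Lsharp(U_{1})=3$ and $\{\Lsharp(D),u\Lsharp(D),u^{2}\Lsharp(D)\}$ is an $\F$-basis. Together with $u^{3}+u=0$, this identifies $\Lsharp(U_{1})$ with $\F[u]/(u^{3}+u)$ as $\F[u]$-modules, with $\Lsharp(D)$ corresponding to the class of $1$. The main obstacle is the Morse--Bott analysis of the $\CP^{2}$ of flat connections on $U_{1}^{\sharp}$: the identification of the character variety is a routine representation-theoretic calculation and the linear independence at the chain level is automatic once Proposition~\ref{prop:sphere-with-dots} is in hand, but verifying non-degeneracy in the Morse--Bott sense and then running the perturbation argument to extract exactly three critical points is the core analytic input.
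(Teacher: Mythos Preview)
Your proposal is correct and follows essentially the same route as the paper: the upper bound comes from identifying the representation variety of $(S^{3},U_{1})^{\sharp}$ with $\CP^{2}$ and invoking its Morse--Bott nondegeneracy, and the lower bound comes from the pairing matrix $(\Lsharp(S(k+l)))_{0\le k,l\le 2}$ computed via Proposition~\ref{prop:sphere-with-dots}, which is exactly the nonsingular matrix you wrote down. One small imprecision: not every Morse function on $\CP^{2}$ has exactly $3$ critical points, but a perfect one does, and that is all that is needed for the upper bound.
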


\begin{proof}
    The representation variety of the orbifold $(S^{3},
    U_{1})^{\sharp}$ is $\CP^{2}$ and is Morse-Bott, which shows as
    usual that the dimension of $\Lsharp$ is at most $3$. Let $D(l)$
    be the disk with $l$ dots, as a cobordism from the empty set to
    $U_{1}$ and let $D'(l)$ be the opposite morphism. The elements
    $\Lsharp(D(l)) \in \Lsharp(U_{1})$ for $l=0,1,2$ are linearly
    independent, because we can compute their pairings with
    $\Lsharp(D'(l'))$ for $l'=0,1,2$ and verify that the determinant
    of the pairing matrix is non-zero: the pairings are the
    evaluations of the closed foam $S^{2}(l+l')$ which are given by
    Proposition~\ref{prop:sphere-with-dots}, yielding the matrix,
    \[
\begin{pmatrix}
    0 & 0 & 1\\
    0 & 1 & 0\\
    1 & 0 & 1
\end{pmatrix}.
    \]
    As well as establishing independence, the matrix shows that $D(0)$
    provides a cyclic generator. 
\end{proof}

From the proposition, we can pull out a corollary.

\begin{corollary}
    \label{cor:basis-dual-basis}
    A basis for $\Lsharp(U_{1})$ is given by 
    \[
                \beta_{l} = \Lsharp(D(l)), \qquad l =0,1,2,
    \]
    which are given by $\beta_{l}=u^{l}\beta_{0}$.
    The dual basis are the elements $(\beta'_{0}, \beta'_{1},
    \beta'_{2})$ in $\Hom(\Lsharp(U_{1}), \F)$ given by
    \[
    \begin{aligned}
    \beta'_{0} &= \Lsharp(D'(2)) + \Lsharp(D'(0)) \\
                  \beta'_{1} &= \Lsharp(D'(1))  \\
                   \beta'_{2} &= \Lsharp(D'(0)) .
                   \end{aligned}
    \]\qed
\end{corollary}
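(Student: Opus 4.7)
The plan is to identify $\Hom(\Lsharp(U_{1}), \F)$ with the span of the functionals $\Lsharp(D'(l'))$ for $l'=0,1,2$, and then recover the dual basis by inverting the pairing matrix that was already computed in the proof of Proposition~\ref{prop:unknot}. The assertion that $\beta_{l}=u^{l}\beta_{0}$ is immediate from that proposition: $u$ acts as $\sigma_{1}(e)$, $\beta_{0}=\Lsharp(D)$ is a cyclic generator of $\Lsharp(U_{1})\cong \F[u]/(u^{3}+u)$, and the cobordism $D(l)$ is obtained from $D$ by adding $l$ dots on the unique facet. The substantive content of the corollary is therefore the dual-basis formula.

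Recall from the proof of Proposition~\ref{prop:unknot} that the matrix $M$ with entries $M_{l,l'}=\Lsharp(D'(l'))(\beta_{l})=\Lsharp(S^{2}(l+l'))$ is, by Proposition~\ref{prop:sphere-with-dots},
\[
M = \begin{pmatrix} 0 & 0 & 1 \\ 0 & 1 & 0 \\ 1 & 0 & 1 \end{pmatrix}.
\]
Its determinant is $1$ in $\F$, so the three functionals $\Lsharp(D'(l'))$ form a basis of $\Hom(\Lsharp(U_{1}),\F)$. Writing $\beta'_{l}=\sum_{j}a_{lj}\,\Lsharp(D'(j))$, the requirement $\beta'_{l}(\beta_{k})=\delta_{lk}$ becomes $a M^{T}=I$, so $a=(M^{-1})^{T}$.

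A short row reduction over $\F$ gives
\[
M^{-1}=\begin{pmatrix} 1 & 0 & 1 \\ 0 & 1 & 0 \\ 1 & 0 & 0 \end{pmatrix},
\]
which happens to be symmetric, so $a=M^{-1}$; reading off the rows produces exactly the claimed expressions for $\beta'_{0}$, $\beta'_{1}$, $\beta'_{2}$. There is essentially no obstacle here: the whole argument is a $3\times 3$ linear algebra exercise over $\F$ that rests only on Propositions~\ref{prop:unknot} and~\ref{prop:sphere-with-dots}. In practice it may be cleanest to bypass matrix inversion altogether and simply verify the nine identities $\beta'_{l}(\beta_{k})=\delta_{lk}$ by hand from the values of $\Lsharp(S^{2}(n))$.
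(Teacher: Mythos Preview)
Your proposal is correct and is exactly the argument the paper has in mind: the corollary is stated with a bare \qed immediately after Proposition~\ref{prop:unknot}, so the intended proof is precisely to read off the dual basis from the pairing matrix already displayed there. Your explicit inversion (or the direct verification of the nine pairings) fills in what the paper leaves implicit.
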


Having a basis and dual basis for $\Lsharp(\check Q)$ when $\check Q$
is the bifold corresponding to an unlink in $S^{3}$, we are able to
test whether an element of $\Lsharp(\check Q)$ is zero by pairing it
with the dual basis elements. This allows us to draw the following
particular corollary from Proposition~\ref{prop:excision-local}.

\begin{corollary}\label{cor:excision-unlink}
    As in Proposition~\ref{prop:excision-local}, let $\bX_{1}, \dots,
    \bX_{r}$ be morphisms from $\check Y_{0}$ to $\check Y_{1}$ of the
    form $\bX' \cup \mathbf{P}_{i}$, where the $\mathbf{P_{i}}$ are
    decorated bifolds corresponding to dotted foams $\Sigma_{i}\subset
    B^{4}$ with common boundary $\check Q = (S^{3}, U_{n})$, where
    $U_{n}$ is the unlink of $n$ components. Let $D(l_{1}, \dots,
    l_{n})$ be the foam consisting of $n$ standard disks, with
    boundary $U_{n}$, carrying $l_{i}$ dots on the $i$oh disk, with
    each $l_{i}$ at most $2$. Suppose that for all such $(l_{1},
    \dots, l_{n})$, we have
    \[
               \sum_{i=1}^{r} \Lsharp ( \mathbf{P}_{i} \cup_{\check Q}
               D(l_{1}, \dots, l_{n})) = 0.
    \]
    as evaluations of closed foams in $\R^{4}$. Then
    \[
                \sum_{i=1}^{r} \Lsharp(\bX_{i}) = 0
    \]
    as linear maps  $\Lsharp(\check Y_{0}) \to \Lsharp(\check
    Y_{1})$. \qed
    \end{corollary}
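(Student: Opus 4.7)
The plan is to reduce to Proposition~\ref{prop:excision-local} by showing that the elements $\alpha_i = \Lsharp(\mathbf{P}_i) \in \Lsharp(\check Q)$ sum to zero. The whole content is then a dualization argument using the basis and dual basis supplied by Corollary~\ref{cor:basis-dual-basis}, together with the excision isomorphism of Proposition~\ref{prop:excision}.

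First, I would identify $\Lsharp(\check Q) = \Lsharp(S^3, U_n)$ with $\Lsharp(U_1)^{\otimes n}$ via repeated application of the excision isomorphism in Proposition~\ref{prop:excision}. By Corollary~\ref{cor:basis-dual-basis}, a basis of $\Lsharp(U_1)$ is given by $\beta_l = \Lsharp(D(l))$ for $l = 0, 1, 2$, so an $\F$-basis for $\Lsharp(\check Q)$ is the collection of tensor products $\beta_{l_1} \otimes \cdots \otimes \beta_{l_n}$ with each $l_i \in \{0,1,2\}$. Under excision and the natural pairing $\Lsharp(\check Q) \otimes \Lsharp(-\check Q) \to \Lsharp(S^3) = \F$, the dual basis is $\beta'_{l_1} \otimes \cdots \otimes \beta'_{l_n}$, which the corollary presents as an explicit $\F$-linear combination of elements $\Lsharp(D'(l_1, \dots, l_n))$ with each $l_i \in \{0,1,2\}$, where $D'(l_1, \dots, l_n)$ is the morphism opposite to $D(l_1, \dots, l_n)$.

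Next, I would compute the pairing of $\alpha_i$ with such a dual-basis element. The naturality in Proposition~\ref{prop:excision} (together with the fact that $\Lsharp(\check X) \in \Lsharp(\check Y)$ of Notation~\ref{not:rel-invariant} is functorial under capping) gives
\[
\bigl\langle \alpha_i,\; \Lsharp(D'(l_1,\dots,l_n)) \bigr\rangle
= \Lsharp\bigl(\mathbf{P}_i \cup_{\check Q} D(l_1,\dots,l_n)\bigr) \in \F,
\]
the evaluation of the closed foam obtained by capping off $\mathbf{P}_i$ with the appropriately dotted disks. By the standing hypothesis, this last quantity sums to zero over $i$ for every tuple $(l_1,\dots,l_n)$ with $l_i \le 2$. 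By $\F$-linearity, the pairing of $\sum_i \alpha_i$ with each dual-basis element $\beta'_{l_1}\otimes\cdots\otimes\beta'_{l_n}$ therefore vanishes, which forces
\[
\sum_{i=1}^{r} \alpha_i = 0 \quad \text{in } \Lsharp(\check Q).
\]
Finally, Proposition~\ref{prop:excision-local} applied to this vanishing yields $\sum_i \Lsharp(\bX_i) = 0$, as required.

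There is no real obstacle in this argument; it is essentially a bookkeeping reduction. The only point that requires a moment of care is the identification of the pairing $\langle \Lsharp(\mathbf{P}_i), \Lsharp(D'(l_1,\dots,l_n)) \rangle$ with the closed-foam evaluation $\Lsharp(\mathbf{P}_i \cup_{\check Q} D(l_1,\dots,l_n))$. This is an instance of the functoriality/excision package: capping a relative invariant with its partner to produce a closed evaluation. I would verify it by running the excision cobordism backwards to reduce the pairing on $\Lsharp(U_n)$ to $n$ independent pairings on $\Lsharp(U_1)$, each of which is the standard closed-foam pairing used to compute the matrix $\begin{pmatrix} 0 & 0 & 1\\ 0 & 1 & 0\\ 1 & 0 & 1\end{pmatrix}$ in the proof of Proposition~\ref{prop:unknot}.
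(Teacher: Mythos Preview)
Your proposal is correct and follows exactly the approach the paper intends: the paper marks this corollary with \qed and explains in the preceding sentence that one tests whether $\sum_i \alpha_i = 0$ in $\Lsharp(\check Q)$ by pairing with the dual basis elements from Corollary~\ref{cor:basis-dual-basis} (extended to $U_n$ via excision), then applies Proposition~\ref{prop:excision-local}. You have simply written out the details of this implicit argument.
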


\subsection{The theta web}

Just as the $\Lsharp$ of the unknot is computed from the evaluation of
the $2$-sphere with dots, so $\Lsharp$ of the theta web is computed
from the evaluation of the theta foam with dots.

\begin{proposition}
\label{prop:theta-web}
Let $K$ denote the theta web, and regard $\Lsharp(K)$ as a module over
the polynomial in $\F[u_{1}, u_{2}, u_{3}]$, where $u_{i}$ acts by the
dot operators $\sigma_{1}(e_{i})$ corresponding to the the three
edges. Let $\iota \in \Lsharp(K)$ be the element obtained from
regarding $K$ as the boundary of half of the theta foam,
$\Theta=\Theta_{-} \cup \Theta_{+}$. Then $\Lsharp(K)$ is a
6-dimensional vector space over $\F$ and is a cyclic module over
polynomial ring $\F[u_{1}, u_{2}, u_{3}]$ with cyclic generator
$\iota=\Lsharp(\Theta_{-})$ and relations
    \[
\begin{gathered}
    u_{1} + u_{2} + u_{3} = 0 \\
         u_{1}u_{2} + u_{2}u_{3} + u_{3}u_{1}=1 \\
                u_{1}u_{2}u_{3}=0.
\end{gathered}
\] \qed
\end{proposition}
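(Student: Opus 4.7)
The plan is to follow the blueprint used to prove Proposition~\ref{prop:unknot}, with the theta foam $\Theta$ and the evaluations from Proposition~\ref{prop:theta-evaluation} playing the role of the $2$-sphere with dots. The upper bound on $\dim \Lsharp(K)$ will come from the Morse--Bott structure of the representation variety of $(S^{3},K)^{\sharp}$, and the lower bound, together with the cyclic module structure, will come from pairing dotted copies of $\Theta_{-}$ against their mirrors $\Theta'_{-}$ and reading off the resulting closed-foam evaluations.

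First I would identify the moduli space of flat bifold connections on $(S^{3},K)^{\sharp}$. As used already in the proof of Proposition~\ref{prop:theta-evaluation}, the relevant representation variety in $\SU(3)$ is the full flag manifold $\SU(3)/T^{2}$; the connected sum with the atom $H_{3}$ does not alter this because $H_{3}$ itself has a unique flat connection (Lemma~\ref{lem:atomic}). Since $\SU(3)/T^{2}$ is Morse--Bott with total mod~$2$ Betti number $6$, this yields $\dim \Lsharp(K) \le 6$.

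For the lower bound, set $\beta_{\alpha} = \Lsharp(\Theta_{-}(l_{1},l_{2},l_{3}))$ for each multi-index $\alpha = (l_{1},l_{2},l_{3})$; by the cobordism composition law, $\beta_{\alpha} = u_{1}^{l_{1}} u_{2}^{l_{2}} u_{3}^{l_{3}} \cdot \iota$. I would then choose six multi-indices $\alpha_{1}, \dots, \alpha_{6}$ whose images in the ring $R = \F[u_{1},u_{2},u_{3}]/(u_{1}+u_{2}+u_{3},\, u_{1}u_{2}+u_{2}u_{3}+u_{3}u_{1}+1,\, u_{1}u_{2}u_{3})$ form an $\F$-basis, modelled on a monomial basis of $H^{*}(\SU(3)/T^{2};\F)$. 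To check independence I would pair $\beta_{\alpha_{j}}$ with linear functionals $\beta'_{\alpha'_{k}} \in \mathrm{Hom}(\Lsharp(K), \F)$ defined by the opposite dotted cobordisms $\Theta'_{-}(\alpha'_{k})$, and evaluate each pairing using Proposition~\ref{prop:theta-evaluation}, after simplifying with the Xie relation (Lemma~\ref{lem:Xie-rel-mod-2}) and the dot-migration relations (Lemma~\ref{lem:dot-migration}) so that every combined exponent lies in $\{0,1,2\}$. Arranging $(\alpha'_{k})$ to be dual to $(\alpha_{k})$ under the Poincar\'e-style pairing on $R$ produces a pairing matrix whose determinant is $1 \in \F$, forcing $\dim \Lsharp(K) = 6$.

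The three stated ring relations are exactly the three identities of Lemma~\ref{lem:dot-migration} applied at a trivalent vertex of $K$ (both vertices give the same relations). Applying these to $\iota$ and using $\beta_{\alpha} = u^{\alpha} \cdot \iota$ yields a surjection $R \twoheadrightarrow \Lsharp(K)$ sending $1 \mapsto \iota$. Substituting $u_{3} = u_{1}+u_{2}$ reduces $R$ to $\F[u_{1},u_{2}]/(u_{1}^{2}+u_{1}u_{2}+u_{2}^{2}+1,\, u_{1}u_{2}(u_{1}+u_{2}))$, which a short calculation shows has $\F$-dimension $6$ with basis $\{1, u_{1}, u_{2}, u_{1}u_{2}, u_{1}^{2}, u_{1}^{2}u_{2}\}$; hence the surjection is an isomorphism. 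The main obstacle is choosing the multi-indices so that the $6 \times 6$ pairing matrix is invertible: Proposition~\ref{prop:theta-evaluation} only guarantees nonvanishing theta-evaluations for triples of the form $(0,1+2m,2+2n)$ up to permutation, so $(\alpha_{k})$ and $(\alpha'_{k})$ must be chosen precisely so that every relevant combined multi-index falls into this allowed pattern after reduction by Xie and dot-migration.
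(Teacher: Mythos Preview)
Your proposal is correct and follows exactly the approach the paper intends: the paper states the proposition with a bare \qed, having already remarked that ``$\Lsharp$ of the theta web is computed from the evaluation of the theta foam with dots'' in the same way that $\Lsharp$ of the unknot was computed from Proposition~\ref{prop:sphere-with-dots}. Your upper bound via the flag manifold $\SU(3)/T^{2}$, lower bound via the pairing matrix built from Proposition~\ref{prop:theta-evaluation}, and module relations from Lemma~\ref{lem:dot-migration} are precisely the ingredients the paper has in mind; note that the paper gives the concrete basis $u_{1}^{l_{1}}u_{2}^{l_{2}}\iota$ with $l_{1}\le 2$, $l_{2}\le 1$ just after the statement, which matches your reduction of $R$ to $\F[u_{1},u_{2}]$ modulo the two relations.
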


Concretely, this tells us that a basis for $\Lsharp(K)$ is given by
$u_{1}^{l_{1}}u_{2}^{l_{2}}\iota$ with $l_{1}\le 2$ and $l_{2}\le 1$.

\subsection{The tetrahedral web}

As in section~\ref{subsec:tetrahedron-foam}, let $K$ be the 1-skeleton
of the tetrahedron, with edges labeled as in
Figure~\ref{fig:tetrahedron-web}. Let $\Sigma_{-}$ be the cone on $K$
and let $\Sigma$ be the suspension of $K$. Let
\[
\begin{aligned}
\iota &= \Lsharp(\Sigma_{-}) \\
                     &\in \Lsharp(K).
                     \end{aligned}
\]
Let $u_{i}$ be the three operators $\sigma_{1}(e_{i})$ and let $v_{i}$
be the operators $\sigma_{1}(f_{i})$ acting on $\Lsharp(K)$.

\begin{proposition}
The vector space $\Lsharp(K)$ for the tetrahedral web
has the following description.
    \label{prop:tetrahedron-web}
\begin{enumerate}
    \item It has
    dimension $6$ over $\F$ and is a cyclic module over $\F[u_{1},
    u_{2}, u_{3}]$ with generator $\iota$ and the same relations as
    for the theta web:
        \[
\begin{gathered}
    u_{1} + u_{2} + u_{3} = 0 \\
         u_{1}u_{2} + u_{2}u_{3} + u_{3}u_{1}=1 \\
                u_{1}u_{2}u_{3}=0.
\end{gathered}
\]
\item For $i=1,2,3$, the operator $v_{i}=\sigma_{1}(f_{i})$ on
$\Lsharp(K)$ is equal to the operator $u_{i}=\sigma_{1}(e_{i})$.
\end{enumerate}   \qed
\end{proposition}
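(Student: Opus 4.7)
The plan is to mirror the proof of Proposition~\ref{prop:theta-web} as closely as possible, with Proposition~\ref{prop:tetr-evaluation} playing the role of Proposition~\ref{prop:theta-evaluation}. For the three relations in part~(1), I would apply Lemma~\ref{lem:dot-migration} at the vertex of the tetrahedron where $e_1$, $e_2$, $e_3$ meet (vertex $A$ in Figure~\ref{fig:tetrahedron-web}), together with $\nu=1$ from Lemma~\ref{lem:nu-is-1}; these give exactly the listed identities, so the cyclic submodule $\F[u_1,u_2,u_3]\cdot\iota$ is a quotient of the six-dimensional algebra $R=\F[u_1,u_2,u_3]/(u_1+u_2+u_3,\,u_1u_2+u_2u_3+u_3u_1+1,\,u_1u_2u_3)$.

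To pin down the dimension and cyclicity, I would pair the six candidate elements $u_1^{l_1}u_2^{l_2}\iota$ (with $l_1\le 2$ and $l_2\le 1$) against the six corresponding elements $\Lsharp(\Sigma_+)$ coming from the opposite half-suspension decorated by dots on the facets $E_1$, $E_2$, $E_3$. By Proposition~\ref{prop:tetr-evaluation}, each pairing matrix entry is an evaluation $\Sigma(k_1,k_2,k_3;0,0,0)$ of the same numerical form that produced a nonsingular matrix in the theta-web case, so the same matrix is nonsingular here. This forces $\dim_\F\Lsharp(K)\ge 6$ and shows $\iota$ is a cyclic generator. For the reverse inequality, I would identify the $\SU(3)$ representation variety of the tetrahedral bifold with the flag manifold $\SU(3)/T$ and apply a Morse--Bott bound: at each vertex the three edge-holonomies are pairwise commuting involutions of type $\mathrm{diag}(1,-1,-1)$ and so simultaneously diagonalize, and because each edge is shared between two vertices, the common eigenlines at neighbouring vertices must agree along the shared edge; a short case analysis then forces every vertex decomposition to coincide with the one at $A$.

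For part~(2), all the $u_j$ and $v_i$ commute (they are $\sigma_1$-operators on edges of a common web), and $\iota$ is a cyclic generator, so it suffices to prove $v_i\iota = u_i\iota$. I would test this against the dual basis of $\Lsharp(K)$ arising from half-suspensions $\Sigma_+$ with dots on their $E$-facets; each pairing becomes an evaluation of the closed foam $\Sigma(k_1,k_2,k_3;l_1,l_2,l_3)$, so the desired equality is precisely the unnumbered evaluation proposition immediately following Proposition~\ref{prop:tetr-evaluation}. Using the Xie relation (Lemma~\ref{lem:Xie-rel-mod-2}) and dot-migration at the four vertices, I would reduce an arbitrary such evaluation to a sum of terms whose total number of dots is small enough to be computed on the action-$0$ moduli space. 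On that moduli space, the rigidity from the previous paragraph shows that the holonomy eigenline around $f_i$ coincides with the eigenline around $e_i$, so $\sigma_1(E_i)=\sigma_1(F_i)$ as ordinary cohomology classes on $\SU(3)/T$; this yields the evaluation identity and hence $v_i = u_i$.

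The hard part will be the rigidity analysis of the flat moduli space: I need to track the commuting-involution constraints at all four vertices carefully enough to conclude that opposite edges literally share the same simultaneous eigenline rather than merely conjugate ones, so that the line bundles representing $\sigma_1(E_i)$ and $\sigma_1(F_i)$ are equal (not just cohomologous via some additional symmetry). A secondary technical point is to verify that the Xie-plus-dot-migration reductions in part~(2) can always be arranged to leave only action-$0$ evaluations, so that the argument does not require a direct analysis of bubbling on the bifold or trifold loci.
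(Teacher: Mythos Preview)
Your Part~(1) is the paper's argument. For Part~(2) you take a different but correct route, in effect reversing the paper's logical order: the paper proves $v_i=u_i$ first (writing $v_1\iota=p(u)\iota$, using one symmetry of the tetrahedron and two specific flag-manifold evaluations to narrow $p$ to $A+Bu_1$, then a minimal-polynomial check) and deduces the evaluation identity $\Sigma(k;l)=\Sigma(k+l;0)$ as a corollary, whereas you aim to establish the evaluation identity directly. Your systematic use of dot-migration at all four vertices is the right idea---combining those relations yields $v_i=u_i+c$ with $c\,u_j=c^2=0$, so every monomial of degree $\ge 2$ in the $u$'s and $v$'s already agrees with its $v_i\mapsto u_i$ substitute as an operator---but you should make explicit why the reduced sum for $\Sigma(k;l)$ actually equals $\Sigma(k+l;0)$: either note that the substitution $v_i\mapsto u_i$ carries each dot-migration relation at vertices $B,C,D$ to one at vertex $A$ (so your reduction of $u^kv^l$ becomes a valid reduction of $u^{k+l}$), or, more directly, observe that once $c\,u_j=0$ the pairings $\langle u^{k'}\iota',c\iota\rangle$ all vanish (by $u^{k'}c=0$ for $|k'|\ge 1$, and by the dimension formula for the remaining one-dot evaluation when $|k'|=0$), whence $c\iota=0$. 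Your ``hard part'' is easier than you fear: since $\SU(3)$ contains no elementary abelian $2$-group larger than $V_4$, every bifold representation of the tetrahedral orbifold group factors through $V_4$, and the vertex relations then force opposite-edge meridians to the same involution, so the eigenlines literally coincide.
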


\begin{proof}
    The first part follows from
    Proposition~\ref{prop:tetr-evaluation}, in just the same way that
    Proposition~\ref{prop:theta-web} (for $\Lsharp$ of the theta web)
    followed from Proposition~\ref{prop:theta-evaluation} (the
    evaluations of the the theta-foam). This is because the
    representation variety is again the flag manifold $\SU(3)/T$ and
    the cohomology classes $\sigma_{1}(e_{i})$ are the three
    tautological classes.

    For the second part, it is enough to show that $v_{1}\iota =
    u_{1}\iota$. On general grounds we have
    \[
                 v_{1}\iota = p(u_{1}, u_{2}, u_{3})\iota
    \]
    for some polynomial $p$; and because of the relations, we may take
    it that $p$ has the form
    \[
                p = A + Bu_{2} + C u_{3} + D u_{2}^{2} + E u_{2}u_{3}
                + F u_{2} u_{3}^{2}.
    \]
    We must show that $p=u_{2}+u_{3}$ (because this is $u_{1}$). From
    the symmetries of the tetrahedron, this expression (modulo the
    relations) must be invariant under interchanging $u_{2}$ and
    $u_{3}$. So $B=C$ and $D=0$. The evaluations
    $\Lsharp(\Sigma(k_{1}, k_{2}, k_{3} ; l_{1}, l_{2}, l_{3})$ can be
    computed as ordinary evaluations of cohomology classes on the flag
    manifold when the total number of dots is 3, and vanish when the
    number of dots is less than 3. Furthermore, as ordinary cohomology
    classes, $\sigma_{1}(f_{i}) = \sigma_{1}(e_{i})$. This provides
    the evaluations (in abbreviated notation):
    \[
\begin{aligned}
    (0,1,1;1,0,0)&=0;
    (0,0,0,;1,0,0)&=0.
\end{aligned}
    \]
    It follows that $E=0$ and $F=0$. So $p=A + B u_{1}$ for some $A$
    and $B$ in $\F$. This gives four possibilities to check, but only
    one of these has minimal polynomial $v^{3}+v$, namely the operator
    $u_{1}$. So $v_{1}=u_{1}$ as claimed.
\end{proof}

\subsection{Some computations for connected sums}

For the application to the proof skein exact triangle later, we will
need to examine particular sums involving the foams $\Psi_{n}$ from
\eqref{eq:Psi-n-foam} in section~\ref{subsec:RP2-foams}. This follows
\cite{KM-triangles} very closely, drawing on the description of the
smallest-action moduli spaces for these foams,
Lemma~\ref{lem:smallest-action-R}.

If $\Sigma
\subset X$ and $\Sigma'\subset X'$ with tetrahedral points $t$, $t'$ in
each, there is a connected sum
\begin{equation}\label{eq:tet-sum}
                 (X,\Sigma) \#_{t,t'} (X',\Sigma')
\end{equation}
performed by removing standard neighborhoods and gluing together the
resulting foams-with-boundary. The result is not unique, because the
gluing is performed along a copy of the $(S^{3}, K_{T})$, where
$K_{T}$ is the tetrahedral web, which has automorphisms that are not
isotopic to the identity. For uniqueness, we need to specify which
edges are glued to which.

With similar ambiguity, if $s$ and $s'$ are points on seams of
$\Sigma$ and $\Sigma'$, there is a connected sum
\[
                (X,\Sigma) \#_{s,s'} (X',\Sigma')
\]
along a theta web in $S^{3}$, and then there is the usual connect sum
of pairs,
\[
                (X,\Sigma) \#_{f,f'} (X',\Sigma'),
\]
formed at points $f$, and $f'$ in facets of the foams.

We consider a connected sum at a tetrahedral point in the case that
$(X',\Sigma')$ is either $(S^{4}, \Psi_{2})$ or $(S^{4}, \Psi_{3})$.

\begin{proposition}
 \label{prop:sum-with-Psi-2-3}
 Let $\check X=(X,\Sigma)$ be a morphism in $\Cat^{\sharp}$ (possibly
 decorated with dots). Let $t$ be a tetrahedral point in $\Sigma$.
    \begin{enumerate}
    \item If a new foam $\tilde\Sigma$ is constructed from $\Sigma$ as
        a connected sum \[(X,\Sigma) \#_{t, t_{2}} (S^{4},\Psi_{2}), \]
        where $t_{2}$ is the unique tetrahedral point in $\Psi_{2}$,
        then the new linear map  $\Lsharp(X,\tilde \Sigma)$ is equal to
        the old one, $\Lsharp(X,\Sigma)$.
    \item If a new foam $\tilde\Sigma$ is constructed from $\Sigma$ as
        a connected sum \[ (X,\Sigma) \#_{t, t_{3}} (S^{4},\Psi_{3}),\]
        where $t_{3}$ is any of the three tetrahedral points in $\Psi_{3}$,
        then the new linear map  $\Lsharp(X,\tilde \Sigma)$ is zero.   
    \end{enumerate}
\end{proposition}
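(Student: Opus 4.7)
The plan is to recast both parts as equalities in $\Lsharp(K_T)$, where $K_T$ denotes the tetrahedral web of Section~\ref{subsec:tetrahedron-foam}. Define
\[
\xi_n \;=\; \Lsharp\bigl(B^4,\, \Psi_n \setminus \mathrm{nbhd}(t_n)\bigr)\;\in\;\Lsharp(K_T)
\]
as the relative invariant of the complement of a small $4$-ball around $t_n$, via Notation~\ref{not:rel-invariant}, and let $\iota = \Lsharp(\Sigma_-)\in\Lsharp(K_T)$ be the class of the standard cone on $K_T$ from Proposition~\ref{prop:tetrahedron-web}. Applying Proposition~\ref{prop:excision-local} to the decomposition of $\tilde\Sigma$ along a $3$-sphere around $t$, part~(1) is equivalent to the identity $\xi_2 = \iota$, and part~(2) is equivalent to $\xi_3 = 0$.

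By Proposition~\ref{prop:tetrahedron-web}, $\Lsharp(K_T)$ is a $6$-dimensional cyclic $\F[u_1,u_2,u_3]$-module with cyclic generator $\iota$, and pairing with suitably dotted caps from $K_T$ back to $S^{3}$ gives a non-degenerate bilinear form. I would therefore check each identity by evaluating six pairings, and each such pairing collapses to the $\Lsharp$-value of a closed foam in $S^4$. On the $\iota$-side these are the evaluations of the dotted suspension of $K_T$, determined in Proposition~\ref{prop:tetr-evaluation}. On the $\xi_n$-side they are $\Lsharp$-evaluations of closed foams obtained by capping $\Psi_n \setminus \mathrm{nbhd}(t_n)$ with the dotted cones; using the dot-migration relations (Lemma~\ref{lem:dot-migration}) and the Xie relation (Lemma~\ref{lem:Xie-rel-mod-2}), the list of genuinely new evaluations is short.

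For part~(2), the vanishing of each pairing of $\xi_3$ is driven by Lemma~\ref{lem:smallest-action-R}(2): the smallest moduli space on $\Psi_3$ has $\kappa=1/8$, stabilizer $S^{1}$, and formal dimension $-1$. Combined with the dimension formula in Lemma~\ref{lem:Psi-n-dim} and the fact that bubbling changes $\kappa$ by multiples of $1/2$ or $1/3$, a case analysis across the six cap decorations shows that each relevant transverse intersection is empty (or has dimension formally too small for a nontrivial even-degree dot input), giving $\xi_3 = 0$. For part~(1), the smallest moduli space on $\Psi_2$ is a single unobstructed flat connection at $\kappa=0$ with stabilizer $T^2$ (Lemma~\ref{lem:smallest-action-R}(1)), arising via the inclusion $\r$ of Section~\ref{sec:includeSO3} from the corresponding $\SO(3)$ bifold solution analyzed in \cite{KM-triangles}. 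The six dotted-cap pairings of $\xi_2$ reduce to intersections of dot classes against this $T^2$-reducible moduli space, and via the $\r$-map the resulting computations are cohomological evaluations on the flag manifold $\SU(3)/T^2$ numerically identical to those used in Proposition~\ref{prop:tetr-evaluation} to compute the pairings with $\iota$.

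The main obstacle is the term-by-term identification of the $T^2$-reducible contribution for $\Psi_2$ with the flag-manifold evaluations: one must confirm that for all six basis elements the dot-class intersections agree, and this is where the parallel with the $\SO(3)$ argument of \cite{KM-triangles} is essential, applied through the inclusion map $\r$ of Section~\ref{sec:includeSO3} and the identification of fixed sets from Lemma~\ref{lem:involution-fixed}.
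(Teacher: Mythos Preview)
Your approach is genuinely different from the paper's, and it is worth spelling out both what is correct and where it falls short.

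The paper does not go through $\Lsharp(K_T)$ at all. Instead it applies a direct neck-stretching argument to the connected sum at the tetrahedral point. The limiting holonomy at a tetrahedral point is the Klein $4$-group $V_4$, whose commutant in $\SU(3)$ is the maximal torus $T^2$, so the gluing model for the moduli space on the long-neck connected sum is a fibre bundle over the original moduli space with fibre $T^2/\Gamma_{A'}$, where $\Gamma_{A'}$ is the stabilizer of the minimal-action solution on $(S^4,\Psi_n)$. For $\Psi_2$ one has $\Gamma_{A'}=T^2$ and the fibre is a point, so the zero-dimensional count is unchanged; for $\Psi_3$ one has $\Gamma_{A'}=S^1$ and the fibre is a circle, so the zero-dimensional moduli space on the sum is empty. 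This is short, geometric, and uses Lemma~\ref{lem:smallest-action-R} only for the identification of the stabilizers.

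Your excision route is valid in principle, and for part~(2) it can be made to work cleanly --- but not quite by the reasoning you give. You invoke Lemma~\ref{lem:smallest-action-R}(2), which describes moduli on the closed bifold $(S^4,\Psi_3)$; but the closed-foam evaluations live on the cylinder $\R\times H_3$ with $\Psi_3$ inserted, where the dimension formula is \eqref{eq:dim-closed-foam}, namely $12\kappa + \tfrac{1}{2}+8-3 = 12\kappa + \tfrac{11}{2}$. Since $\kappa$ on this cylinder lies in the additive semigroup generated by $1/2$ and $1/3$, this quantity is never an even integer, so no $2l$-dimensional moduli space exists and every dotted evaluation of $\Psi_3$ vanishes. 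That gives $\xi_3=0$ immediately, without any analysis of the $\kappa=1/8$ solution.

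For part~(1) the obstacle you flag is real, and you do not overcome it. You need the six closed-foam evaluations $\Lsharp(\Psi_2(\text{dots}))$ to match those of the suspended tetrahedron. Both $\kappa=0$ moduli spaces (after atom-sum) are indeed flag manifolds, because both flat connections have stabilizer $T^2$; but to check that the six tautological classes agree you must identify, for each edge of $K_T$, the $+1$-eigenline of the corresponding $V_4$-holonomy and verify that the assignment is the same in both cases. That identification is a local statement at the tetrahedral point, and making it precise amounts to carrying out the gluing description $\tilde U_A \times_{T} \tilde U_{A'}$ --- which is exactly what the paper's direct argument does. Your appeal to the map $\r$ and Lemma~\ref{lem:involution-fixed} is not relevant here: nothing in the computation requires passing through the $\SO(3)$ fixed locus. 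So your route, once completed, recovers the paper's gluing analysis as its essential content; the paper's argument simply arrives there without the detour through $\Lsharp(K_T)$.
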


\begin{proof}
    The proof of this proposition and the following two are almost
    identical to the proofs of the corresponding results
    \cite[Propositions~4.3--4.5]{KM-triangles} in the $\SO(3)$ case.
    We point out how the proofs get modified in the present case, and
    leave the remaining two.

    Consider a general connected sum at tetrahedral points, as in
    equation~\eqref{eq:tet-sum}. Let $A$ and $A'$ be unobstructed
    solutions on $(X,\Sigma)$ and $(X',\Sigma')$. Let $U_{A}$ and
    $U_{A'}$ be neighborhoods of $[A]$ and $[A']$ in their respective
    moduli spaces. The limiting holonomy of a bifold connection at a
    tetrahedral point is the Klein $4$-group $V$, whose commutant in
    $\SU(3)$ is the maximal torus $T$. By comparison, in the $\SO(3)$
    case, the commutant was also $V$. Moduli spaces of solutions with
    framing at $t$ and $t'$ contain neighborhoods of $[A]$ and $[A']$,
    say $\tilde U_{A}$, $\tilde U_{A'}$, such that $U_{A} = \tilde
    U_{A}/ T$ and $U_{A'} = \tilde U_{A'}/T$ are the unframed moduli
    spaces. The model for the moduli space on the connected sum with a
    long neck, has the form
       \[ 
            \tilde U_{A} \times_{T} \tilde U_{A'}.
        \]
    If the action of $T$ on $\tilde U_{A}$ is free and $U_{A'}$
    consists of the single point $[A']$, then this local model is a
    bundle over $U_{A}$ with fiber $T/\Gamma_{A'}$, where
    $\Gamma_{A'}\subset T$ is the automorphism group of the solution
    $A'$. 

    When $A'$ is the smallest energy solution on $(S^{4},\Psi_{2})$,
    then the fiber is a single point, while for $(S^{4}, \Psi_{3})$
    the fiber is a circle. For the case of compact, zero-dimensional
    moduli spaces on the connected sum, these local models become
    global descriptions when the neck is long, and we conclude that
    the moduli space whose point-count defines the map
    $\Lsharp(X,\Sigma)$ is unchanged in the first case and becomes
    empty in the second case, by dimension counting.
\end{proof}

Next we cover connected sums at seam points.

\begin{proposition} \label{prop:seam-sums}
 Let $\check X=(X,\Sigma)$ be a morphism in $\Cat^{\sharp}$ (possibly
 decorated with dots), as in the previous proposition. Let $s$ be a
 point in a seam of $\Sigma$. For $n=1,2,3$, let $s_{n}$ be a point on
 a seam of $\Psi_{n}$. If a new foam $\tilde\Sigma_{n}$ is constructed
 from $\Sigma$ as the connected sum
 \[(X,\Sigma) \#_{s, s_{n}} (S^{4},\Psi_{n}), \] then the new linear
 map $\Lsharp(X,\tilde \Sigma_{n})$ is equal to the old one in the
 case $n=2$, and is zero in the case that $n=1$ or $n=3$. \qed
\end{proposition}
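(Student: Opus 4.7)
The plan is to run the same long-neck / fibered-product gluing analysis used in Proposition~\ref{prop:sum-with-Psi-2-3}, since along a seam the orbifold monodromy is the Klein four-group $V_{4}$, whose commutant in $\SU(3)$ is again the full maximal torus $T=T^{2}$. In particular, for a pair of unobstructed solutions $A$ on $(X,\Sigma)$ (with $A$ irreducible, as required for a contribution to $\Lsharp(X,\Sigma)$) and $A'$ on $(S^{4},\Psi_{n})$, the gluing model for the moduli space on the connected sum, with a long neck along the $S^{3}$ containing the theta-web to which the seam points are identified, is the fibered product
\[
\tilde U_{A}\times_{T}\tilde U_{A'},
\]
where $\tilde U_{A}$ and $\tilde U_{A'}$ are neighborhoods of $[A]$ and $[A']$ in the respective moduli spaces framed (at $s$ and $s_{n}$) by $T$. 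The real dimension of this model is
\[
\dim U_{A}+\bigl(\dim T-\dim \Gamma_{A'}\bigr),
\]
where $\Gamma_{A'}$ is the stabilizer of $A'$.

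Lemma~\ref{lem:smallest-action-R} identifies the smallest-action non-empty moduli space on each $(S^{4},\Psi_{n})$ as a single unobstructed reducible solution, with stabilizer $\Gamma_{A'}$ equal to the full maximal torus $T$ when $n=2$ and equal to $S^{1}\subset T$ when $n=1$ or $n=3$. Accordingly $\tilde U_{A'}$ is a single copy of $T/\Gamma_{A'}$, namely a point for $n=2$ and a circle for $n=1,3$. The dimension increment in the display is therefore $0$ when $n=2$, and the corresponding component of the glued moduli space is canonically identified with the original $U_{A}$; so $\Lsharp(X,\tilde\Sigma_{2})$ is defined by the same count of points as $\Lsharp(X,\Sigma)$. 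When $n=1$ or $n=3$ the increment is $+1$; the glued moduli space near each old contributing $[A]$ becomes a circle, so no zero-dimensional components survive and $\Lsharp(X,\tilde\Sigma_{n})=0$.

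The main point requiring care, exactly as in the proof of Proposition~\ref{prop:sum-with-Psi-2-3}, is to check that no additional contribution arises from solutions on $\Psi_{n}$ of action strictly larger than the minimum. Such higher-action solutions contribute to the connected-sum moduli space at strictly greater formal dimension (the increment being $12(\kappa-\kappa_{\min})$, with $\kappa-\kappa_{\min}$ a positive multiple of $1/2$), and the standard neck-stretching and Uhlenbeck compactness arguments imported from \cite{KM-triangles} confine the zero-dimensional stratum of the glued moduli space, in the long-neck limit, to the smallest-action piece treated above. This is the step that takes the most effort to set up, but it is essentially verbatim the argument already used at tetrahedral points; once it is in place, the dimension analysis of the previous paragraph yields the two stated conclusions.
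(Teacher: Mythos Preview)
Your proof is correct and follows essentially the same approach as the paper: the paper gives no separate argument for this proposition, simply indicating (via the \qed and the remark preceding Proposition~\ref{prop:sum-with-Psi-2-3}) that the proofs of all three connected-sum propositions are identical to the $\SO(3)$ case in \cite{KM-triangles} and to the tetrahedral-point argument just given. Your observation that the limiting holonomy at a seam point is again $V_{4}$ with commutant the maximal torus $T$---so that the gluing model and the fiber dimensions $T/\Gamma_{A'}$ are exactly as in the tetrahedral case---is precisely the point, and your case analysis using Lemma~\ref{lem:smallest-action-R} is the intended one.
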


Finally we have a proposition about connected sums at points
interior to faces of the foams. The case $n=0$ here is already stated
in Proposition~\ref{prop:R-sum} above.

\begin{proposition}\label{prop:face-sums}
    Let $\check X = (X,\Sigma)$ be a foam cobordism, as in the
    previous propositions. Let $f$ be a point in the interior of a
    face of $\Sigma$. Let $f_{n}$ be a point in a face of $\Psi_{n}$.
    If a new foam $\tilde\Sigma_{n}$ is constructed from $\Sigma$ as
    the connected sum \[(X,\Sigma) \#_{f, f_{n}} (S^{4},\Psi_{n}), \]
    then the new linear map $\Lsharp(X,\tilde \Sigma_{n})$ is equal to
    the old one in the case $n=0$, and is zero when $n=1$, $2$ or $3$.
    \qed
\end{proposition}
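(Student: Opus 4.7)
The plan is to follow the framed-moduli-space template established in the proof of Proposition~\ref{prop:sum-with-Psi-2-3}, modified for the geometry of a face sum. At a face point of the foam the limiting holonomy is the order-$2$ subgroup of $\SU(3)$ generated by $\mathrm{diag}(1,-1,-1)$, whose commutant is the group $S(U(1)\times U(2))\cong U(2)$. This group $U(2)$ will take over the role played in the tetrahedral argument by the maximal torus $T^{2}$, and the whole proof is an adaptation of that argument with this substitution.

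Concretely, for unobstructed solutions $A$ on $(X,\Sigma)$ and $A'$ on $(S^{4},\Psi_{n})$, I would pass to neighborhoods $\tilde U_{A}$, $\tilde U_{A'}$ in the framed moduli spaces at the face points $f$ and $f_{n}$, so that the unframed neighborhoods are $U_{A}=\tilde U_{A}/U(2)$ and $U_{A'}=\tilde U_{A'}/U(2)$. Gluing with a long neck then produces the standard local model
\[
\tilde U_{A}\times_{U(2)}\tilde U_{A'}
\]
for the moduli space on $(X,\tilde\Sigma_{n})$. In each of the four cases $n=0,1,2,3$, Lemma~\ref{lem:smallest-action-R} tells us that the smallest-action moduli space on $(S^{4},\Psi_{n})$ consists of a single unobstructed solution, so $U_{A'}$ is a point and $\tilde U_{A'}\cong U(2)/\Gamma_{A'}$. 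The local model is therefore a fiber bundle over $U_{A}$ with fiber $U(2)/\Gamma_{A'}$.

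The case analysis is then immediate from the stabilizer data in Lemma~\ref{lem:smallest-action-R}. For $n=0$ one has $\Gamma_{A'}=U(2)$, the full commutant, so the fiber is a single point; the connected-sum moduli space is identified with the original, and hence $\Lsharp(X,\tilde\Sigma_{0})=\Lsharp(X,\Sigma)$. For $n=2$ the stabilizer is $T^{2}$ and the fiber is $U(2)/T^{2}\cong\CP^{1}$, while for $n=1$ and $n=3$ the stabilizer is $S^{1}$ and the fiber is the $3$-dimensional manifold $U(2)/S^{1}$. In each of these three cases, after intersecting with the dot cycles $\cV_{k}$ on the $\Sigma$-side, the connected-sum moduli space is a positive-dimensional fiber bundle over a zero-dimensional cut-down of $U_{A}$, hence contains no isolated points, so the count defining $\Lsharp(X,\tilde\Sigma_{n})$ vanishes.

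The main obstacle, as in the analogous Propositions~\ref{prop:sum-with-Psi-2-3} and \ref{prop:seam-sums}, is justifying the long-neck local model, verifying that larger-action solutions on the $\Psi_{n}$ side do not contribute after the cut-down by dot cycles, and ruling out interference from bubbling. These ingredients are already in place in the $\SO(3)$ counterpart \cite[Proposition~4.5]{KM-triangles}, and translation to the present $\SU(3)$ setting is the same $U(2)$-for-its-$\SO(3)$-counterpart substitution used in the tetrahedral case; the fractional action $\kappa=1/8$ that enters when $n=1,3$ passes through the dimension formula exactly as there and only reinforces the dimensional conclusion.
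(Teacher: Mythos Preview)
Your proposal is correct and follows precisely the approach the paper intends: the paper gives the detailed gluing argument only for the tetrahedral case (Proposition~\ref{prop:sum-with-Psi-2-3}) and explicitly leaves Propositions~\ref{prop:seam-sums} and~\ref{prop:face-sums} to the reader as direct adaptations, with the commutant of the local holonomy replacing $T^{2}$. Your identification of $S(U(1)\times U(2))\cong U(2)$ as the relevant gluing group at a face point, and the resulting fiber dimensions $0,3,2,3$ for $n=0,1,2,3$, is exactly the substitution required; the one phrasing to tighten is that for $n\ge 1$ the zero-dimensional moduli space on the connected sum corresponds via gluing to a moduli space of \emph{negative} formal dimension on $(X,\Sigma)$ and is therefore generically empty, which is what the paper means by ``dimension counting''.
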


\section{The exact triangles and the octahedron}
\label{sec:exact-triangle}

\subsection{The set-up}

We now turn to the skein exact triangles which hold for $\Lsharp$.
These are essentially identical in their statement (and even their
proof) to the corresponding results for the $\SO(3)$ case, which are
stated as Theorems~1.1 and 1.2 in \cite{KM-triangles}. As with the
$\SO(3)$ case however, a more complete statement puts both triangles
together in a larger octahedral diagram. The following theorem is the
result. It exactly mirrors Theorem~9.1 in \cite{KM-triangles}, which
was the $\Jsharp$ case, and is summarized in
Figure~\ref{fig:L-octahedron}. Here it is understood that the webs
$K_{i}$ and $L_{i}$ all lie in the same $3$-manifold $Y$ and that they
are identical outside a ball, inside which they are as shown. Each
arrow in the diagram represents a standard foam in $[0,1]\times Y$.
See for example \cite{KM-triangles}.

\begin{figure}%[h]
    \begin{center}
        \includegraphics[scale=.46]{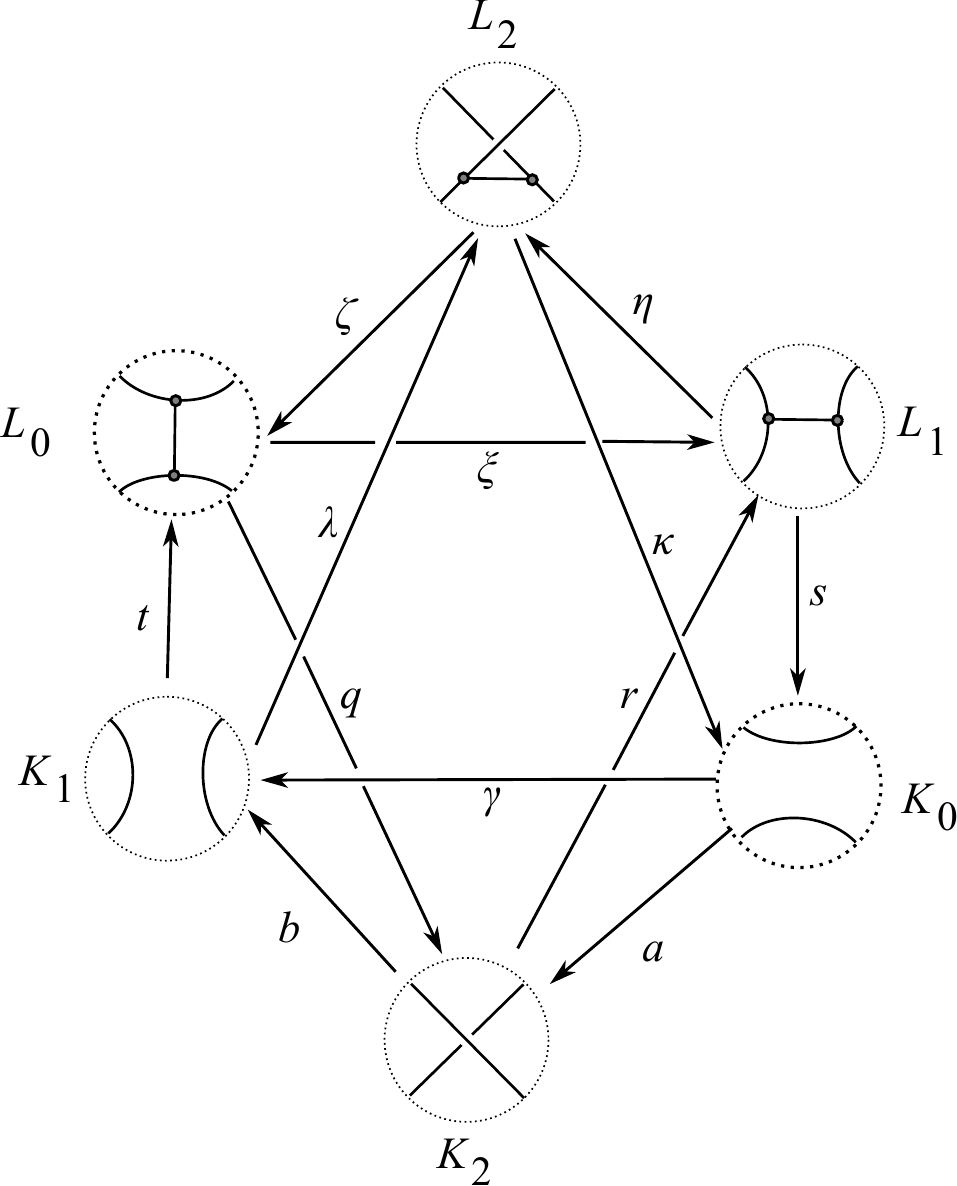}
    \end{center}
    \caption{\label{fig:L-octahedron}
    The octahedral diagram.}
\end{figure}

\begin{theorem}\label{thm:octahedron-statement}
In the diagram of standard cobordisms pictured in
Figure~\ref{fig:L-octahedron}, the triangles involving
\begin{enumerate}
\setlength{\itemsep}{0pt}
\item  \label{item:LKK-exact} $L_{0}$, $K_{2}$, $K_{1}$,
\item $L_{1}$, $K_{0}$, $K_{2}$, 
\item \label{item:LKK3-exact} $L_{2}$, $K_{0}$, $K_{1}$, and
\item \label{item:LLL-exact} $L_{2}$, $L_{0}$, $L_{1}$
\end{enumerate}
become exact triangles on applying $\Lsharp$. The faces
\begin{enumerate}
\setcounter{enumi}{4}
\setlength{\itemsep}{0pt}
\item \label{item:first-comm-face} 
      $K_{0}$, $K_{2}$, $K_{1}$,
\item  \label{item:second-comm-face}
      $L_{0}$, $K_{2}$, $L_{1}$,
\item  \label{item:third-comm-face}
      $K_{1}$, $L_{2}$, $L_{0}$, and
\item  \label{item:fourth-comm-face}
      $L_{1}$, $L_{2}$, $K_{0}$
\end{enumerate}
become commutative diagrams. And finally,
\begin{enumerate}
\setcounter{enumi}{8}
\setlength{\itemsep}{0pt}
\item \label{item:bottom-to-top} the composites $K_{2}\to K_{1} \to
    L_{2}$  and $K_{2} \to
    L_{1}\to L_{2}$ give the same map on $\Lsharp$, and
\item \label{item:top-to-bottom} the composites $L_{2}\to K_{0}\to
    K_{2}$  and $L_{2}\to
    L_{0}\to K_{2}$ give the same map on $\Lsharp$.
\end{enumerate}
\end{theorem}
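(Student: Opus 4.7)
The proof plan follows the strategy of \cite[Section~9]{KM-triangles} essentially verbatim, with the SO(3) local model computations replaced by their SU(3) counterparts that we have already assembled in Lemma~\ref{lem:smallest-action-R} and Propositions~\ref{prop:sum-with-Psi-2-3}--\ref{prop:face-sums}. In broad outline, every statement in the theorem is local: the webs and the cobordisms between them differ only inside a fixed ball whose boundary carries a standard unlink $U_{n}$. By the relative excision principle in the form of Corollary~\ref{cor:excision-unlink}, each statement reduces to a finite collection of identities among evaluations of closed decorated foams obtained by capping off the local model with the dotted disks $D(l_{1},\dots,l_{n})$.

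First I would dispose of the commutativity statements (parts~5--8) and the two equality-of-composites statements (parts~9--10). In each case one has two decorated bifold cobordisms supported in a ball, and the required identity becomes, after capping with each $D(l_{1},\dots,l_{n})$, an equality among evaluations of closed foams that are unions of spheres, theta foams, and (when tetrahedral points appear) suspensions of the tetrahedron. All of these are computed by Propositions~\ref{prop:sphere-with-dots}, \ref{prop:theta-evaluation} and \ref{prop:tetr-evaluation}, and the bookkeeping exactly reproduces that of the $\SO(3)$ argument.

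Next I would take up the four exact triangles (parts~1--4). The first task is to show that each pair of consecutive maps composes to zero. In the composite, the two local modifications fit together inside a single slightly larger ball to form a standard configuration which, after isotopy, is a connected sum with one of the foams $\Psi_{n}$ along a face point, a seam point, or a tetrahedral point. Propositions~\ref{prop:face-sums}, \ref{prop:seam-sums} and \ref{prop:sum-with-Psi-2-3} then tell us precisely which of these connected sums induce the zero map, and in each triangle the relevant combination vanishes. To upgrade this to exactness I would invoke the standard exact-triangle criterion (as in Ozsv\'ath--Szab\'o, and in the form used in \cite{KM-triangles}): produce chain-level secondary maps, again realized by bifold cobordisms inside the same local ball with suitable dot decorations, together with a cyclic system of homotopy relations among them. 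Each homotopy relation is again translated by excision into a closed-foam identity or a connected-sum vanishing governed by Propositions~\ref{prop:sum-with-Psi-2-3}--\ref{prop:face-sums} and the closed-foam evaluations. The fact that the smallest-action moduli spaces in Lemma~\ref{lem:smallest-action-R} are unique and unobstructed, with stabilizers $U(2)$, $T^{2}$, and $S^{1}$ replacing the $\SO(3)$ stabilizers $O(2)$, $V_{4}$, and $\{\pm 1\}$ in just the way needed to keep the formal dimensions matching, is what makes the homotopies from \cite{KM-triangles} transport without modification.

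The main place where one must actually pause and re-examine the argument, rather than simply quote it, is the bubbling analysis that underlies these connected-sum statements: when one glues $\Psi_{n}$ onto $\Sigma$ along a long neck, the claim is that the glued moduli space is a fiber product $\tilde U_{A}\times_{T}\tilde U_{A'}$ which, in each relevant case, is either a copy of $U_{A}$ (giving the identity) or fibers over it with positive-dimensional fiber (giving zero by dimension count). This is where the SU(3) story genuinely differs from the SO(3) one, but the ingredients are already packaged for us: the stabilizers listed in Lemma~\ref{lem:smallest-action-R} make the fiber-product analysis go through as in the proof sketched for Proposition~\ref{prop:sum-with-Psi-2-3}. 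Once this is in hand, the fourth triangle (part~4), which has no direct counterpart for the usual SU(2) unoriented skein relation, follows from the remaining nine parts by the standard octahedral diagram chase.
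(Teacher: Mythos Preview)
Your treatment of parts~5--10 is close in spirit to the paper's, though the paper does not route through Corollary~\ref{cor:excision-unlink} and closed-foam evaluations: it identifies each composite cobordism directly as a connected sum with some $\Psi_{n}$ (at a face, seam, or tetrahedral point) and then reads off the answer from Propositions~\ref{prop:sum-with-Psi-2-3}--\ref{prop:face-sums}. Your excision-and-cap approach could likely be made to work for these statements, but it is more indirect.

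The real gap is in your account of the exactness (parts~1--4). The secondary maps in the Ozsv\'ath--Szab\'o criterion are \emph{not} ``realized by bifold cobordisms \dots\ with suitable dot decorations,'' nor are the homotopy relations verified by excision to closed-foam identities. The first chain homotopies $j_{i}$ count instantons over a one-parameter family of metrics (stretching along the intermediate slice, then along the sphere that exhibits the composite as a $\Psi_{1}$-sum). The second chain homotopies $k_{i}$ count instantons over a two-parameter pentagon of metrics, and the heart of the argument is analyzing the contribution $U_{i}$ from the fifth edge of that pentagon. This is where the genuinely new work lies: one must show (Lemmas~\ref{lem:push-me-pull-you} and~\ref{lem:push-me-pull-you-k0} in the paper) that the parametrized moduli space on a specific bifold $4$-ball $\check B_{10}$ (resp.\ $\check B_{21}$), over a one-parameter family $G$ of metrics, has a one-dimensional reducible locus mapping properly with mod-$2$ degree~$1$ onto the interior of the interval $\Rep(\check S_{10})$. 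That degree-$1$ statement is what makes $U_{i}$ chain-homotopic to the identity, and it is proved by a gluing analysis at the two ends of $G$ using the descriptions of the $\kappa=1/8$ moduli space on $\Psi_{1}$ from Lemma~\ref{lem:smallest-action-R}. None of this is captured by ``closed-foam identities or connected-sum vanishing''; the connected-sum propositions feed into the construction of the $j_{i}$ and into the vanishing at four edges of the pentagon, but the fifth edge requires this separate moduli-space computation, which your outline omits entirely.
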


\begin{remark}
    In the corresponding diagram in \cite{KM-triangles}, the web
    $L_{2}$ was named $L'_{2}$, because in that paper the notation
    $L_{2}$ had been reserved earlier for the mirror image of this
    local web.
\end{remark}

\begin{proof}
    We begin with the commutativity statements,
    \ref{item:first-comm-face}--\ref{item:fourth-comm-face}. The
    arguments are identical to the $\Jsharp$ case. The composite
    cobordism $K_{0}\to K_{2}\to K_{1}$ is equal to the connect sum
    $\Sigma\# \Psi_{0}$, where $\Sigma$ is the standard cobordism from
    $K_{0}$ to $K_{1}$. So the commutativity in case
    \ref{item:first-comm-face} follows from
    Proposition~\ref{prop:face-sums}. For \ref{item:second-comm-face},
    the argument is the same, except that the sum is with $\Psi_{2}$
    at a tetrahedral point, so Proposition~\ref{prop:sum-with-Psi-2-3}
    establishes the commutativity in this case. and with $\Psi_{2}$.
    In cases \ref{item:fourth-comm-face} and
    \ref{item:fourth-comm-face}, the composite has the form of a sum
    with $\Psi_{2}$ at a seam point, so
    Propositions~\ref{prop:seam-sums} deals with these cases.

    In each of the final two statements \ref{item:bottom-to-top} and
    \ref{item:top-to-bottom}, the first composite cobordism is
    obtained from the second composite by forming a connect sum with
    $\Psi_{2}$ at a tetrahedral point. So these cases also follow from
    Proposition~\ref{prop:sum-with-Psi-2-3}.

    The most interesting parts of the theorem concern the exactness of
    the triangles in the first four statements. Note that case
    \ref{item:LLL-exact} is different from the other three: the
    remaining ones are essentially all the same. In
    \cite{KM-triangles}, the proof of \ref{item:LLL-exact} was
    presented directly, while the remaining triangles,
    \ref{item:LKK-exact}--\ref{item:LKK3-exact}, were deduced from
    \ref{item:LLL-exact} by additional arguments. In order to slightly
    vary the approach, we will outline the direct approach to the
    proof of \ref{item:LKK-exact} instead, which is the exactness of
    the sequence
    \begin{equation}\label{eq:exact-Lsharp-eg}
           \cdots \stackrel{q}{\longrightarrow} \Lsharp(K_{2})
           \stackrel{b}{\longrightarrow}
                 \Lsharp(K_{1}) \stackrel{t}{\longrightarrow}
                  \Lsharp(L_{0}) \stackrel{q}{\longrightarrow}
                    \Lsharp(K_{2}) \stackrel{b}{\longrightarrow}
                    \cdots .
     \end{equation}                    
     (In what follows, the letters $b$, $t$, $q$ etc.~will refer
     variously to the actual cobordisms between the webs, or the
     induced maps on $\Lsharp$ or at the chain level on $\CLsharp$.)
     We will not dwell on the proof of exactness in the triangle
     \ref{item:LLL-exact}, because the proof is so similar to the
     $\SO(3)$ case \cite{KM-triangles}. Alternatively, the exactness
     of \ref{item:LLL-exact} can be deduced from the exactness of
     \ref{item:LKK-exact}, by the same sort of auxiliary arguments
     that were used for $\SO(3)$.

The argument for the exactness of \eqref{eq:exact-Lsharp-eg} follows a
standard layout. It begins by showing that the composite maps $b\comp
q$, $t\comp b$ and $q\comp t$ are zero at the level of homology on
$\Lsharp$, and in so doing we also construct explicit chain homotopies
to zero for the corresponding maps at the chain level of Floer
homology. So we have chain homotopies $j_{0}$, $j_{1}$ and $j_{2}$
with
     \begin{equation}\label{eq:first-chain-homotopy}
     \begin{aligned}
     b \, q  + d j_{2} + j_{2} d &= 0 \\
          q\, t  + d j_{0} + j_{0} d &= 0 \\
                     t \,b  + d j_{1} + j_{1} d &= 0 \\
 \end{aligned}
     \end{equation}
     where in each case $d$ denotes the differential on the singular
     instanton chain complex $\CLsharp$ for $\Lsharp$. So for example
     the first equation of these three expresses the vanishing of a
     chain map,
      \[
               b \,q  + d j_{1} + j_{1} d :
               \CLsharp(L_{0})\to \CLsharp(K_{1}).
      \]
     Note that, unlike other similar situations, all three cases here
     are slightly different, because of the lack of symmetry between
     $L_{0}$ and the others.

     Next one constructs second chain homotopies,
     \begin{equation}\label{eq:second-chain-homotopies}
\begin{aligned}
            k_{0} : \CLsharp(L_{0}) &\to \CLsharp(L_{0}) \\
            k_{1} : \CLsharp(K_{1}) &\to \CLsharp(K_{1}) \\
            k_{2} : \CLsharp(K_{2}) &\to \CLsharp(K_{2}) \\
\end{aligned}
     \end{equation}
     so that the following chain maps are \emph{isomorphisms} at the chain
     level:
\begin{equation}\label{eq:second-CH-fmla}
     \begin{aligned}
     t j_{1} + j_{0} q + d k_{0} + k_{0} d : \CLsharp(L_{0}) & \to
        \CLsharp(L_{0}) \\
                b j_{2} + j_{1} t + d k_{1} + k_{1} d : \CLsharp(K_{1}) & \to
        \CLsharp(K_{1}) \\
                q j_{0} + j_{2} b + d k_{2} + k_{2} d : \CLsharp(K_{2}) & \to
        \CLsharp(K_{2}) .
     \end{aligned}
\end{equation}
Given such chain homotopies, an algebraic argument used in
\cite{OS-double-covers} establishes the exactness of
\eqref{eq:exact-Lsharp-eg}, following model arguments in \cite{KMOS,
KM-unknot, KM-triangles}, for example.

We outline each of the steps for the argument: the vanishing of the
composites in \eqref{eq:exact-Lsharp-eg}, the construction of the
first chain homotopies \eqref{eq:first-chain-homotopy} and the second
chain homotopies \eqref{eq:second-chain-homotopies}. For reference in
what follows, the cobordisms $b$, $t$ and $q$ are depicted somewhat
schematically in Figure~\ref{fig:mobiusshadedwithfeetfoamonedisk}. The
cobordisms are trivial outside a region $[0,1]\times B^{3}$, and the
non-trivial parts are drawn. It consists of plumbed twisted bands, and
the composite $b\cup t\cup q$ contains two M\"obius bands. The core of
one the M\"obius bands bounds a disk $\Delta_{0}$ which is part of the
foam. It is the union $\Delta^{-}_{0} \cup \Delta^{+}_{0}$ which lie
in the cobordisms $t$ and $q$ respectively. The indicated disk
$\Delta_{1}$ is similar, but is not part of the foam, and is included
for reference.

\begin{figure}%[h]
    \begin{center}
 \includegraphics[scale=.36]{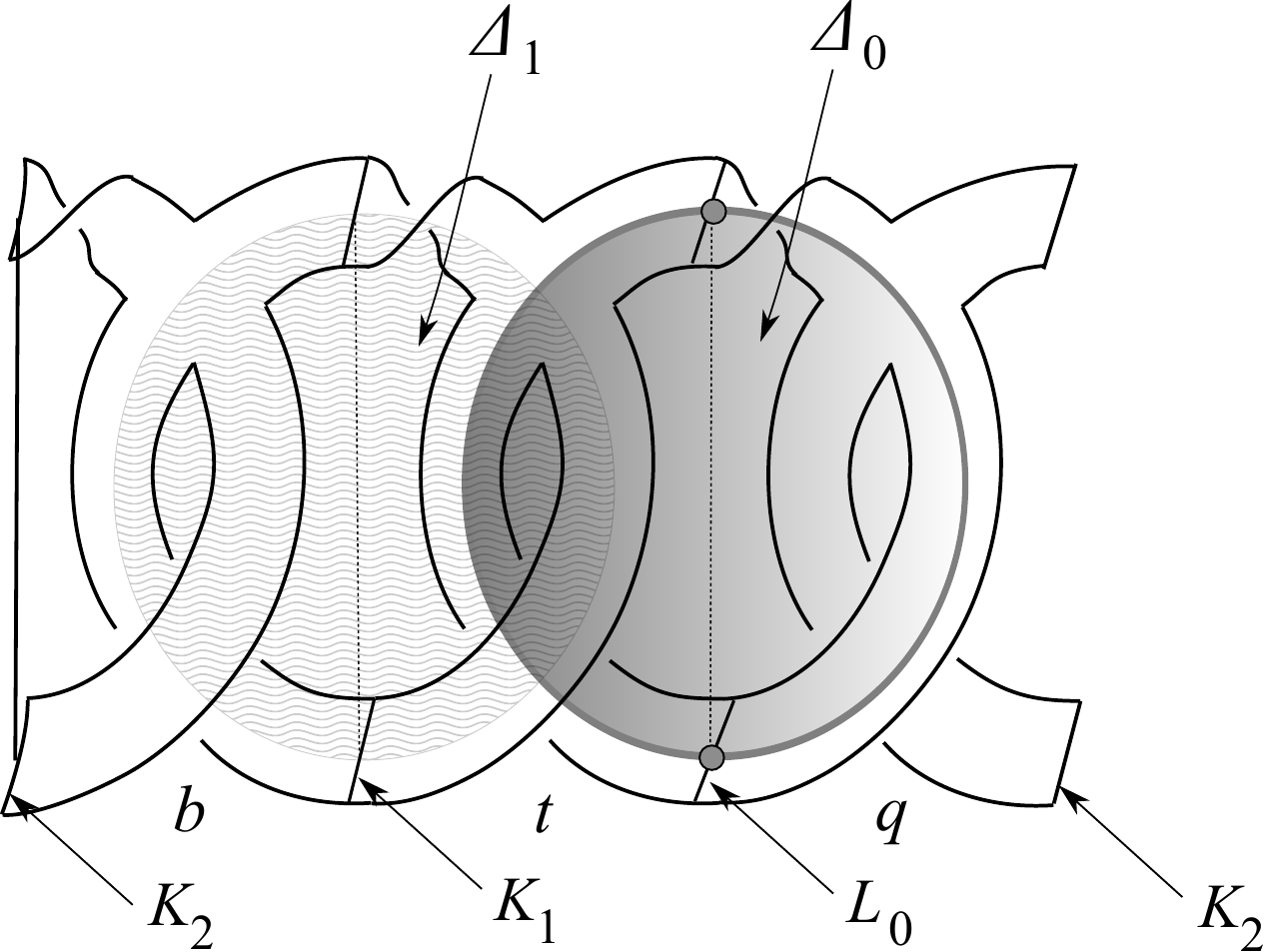}
    \end{center}
    \caption{\label{fig:mobiusshadedwithfeetfoamonedisk} The
    non-trivial part of the composite cobordism $b \cup t \cup q$
    (portrayed schematically because it is not embedded in $\R^{3}$).
    The gray dots are vertices of the web $L_{0}$. The shaded disk
    $\Delta_{0}$ is part of the foam. The hatched disk $\Delta_{1}$ is
    not. The picture continues periodically with period $3$ in both
    directions.}
\end{figure}

\subsection{The vanishing of the composites}

Consider the composite map $q\comp t : \Lsharp(K_{1}) \to
\Lsharp(K_{2})$. It is induced by the cobordism $t \cup q$ in
Figure~\ref{fig:mobiusshadedwithfeetfoamonedisk}. In the interior of
the cobordism, a regular neighborhood of the disk $\Delta_{0}$ is a
4-ball $B_{0}$ which meets the foam in the union of a M\"obius band
the disk $\Delta_{0}$ itself. The boundary $S_{0}=\partial B_{0}$
meets the foam in an unknotted circle, so this describes $t\cup q$ as
a connected sum: one summand is the union of an $\RP^{2}$ and a disk.
The $\RP^{2}$ has self-intersection $+2$, so this summand is a copy of
the foam $\Psi_{1}$ from section~\ref{subsec:RP2-foams}. The fact that
composite map is zero on $\Lsharp$ is therefore a corollary of
Proposition~\ref{prop:face-sums} in the case $n=1$.

For the composite map $t \comp b : \Lsharp(K_{2}) \to \Lsharp(L_{0})$
induced by the cobordism $b\cup t$, the argument is similar. This
time, a regular neighborhood of the disk $\Delta_{1}$ is a ball
$B_{1}$ meeting the foam in the union of a M\"obius band and a
half-disk $\Delta_{0}^{-}$. (The disk $\Delta_{1}$ is not part of the
foam.) This describes $b\cup t$ as a sum where one summand is again
$\Psi_{1}$, but the sum is now formed at a seam point of the foams.
The vanishing of the map on $\Lsharp$ follows now from
Proposition~\ref{prop:seam-sums} (in the case $n=1$ again).

The composite map $b \comp q$ induced by the cobordism $q\cup b$
vanishes for essentially the same reason as $t\comp b$, using the
evident disk $\Delta_{2}$ (not shown in the figure).

\subsection{The first chain homotopies \texorpdfstring{\boldmath
$j_{i}$}{j\_i}}

The proof the vanishing of the composite maps on $\Lsharp$ above, when
picked apart, provides the necessary chain homotopies $j_{i}$. At the
chain level, the map induced by a composite cobordism such as $t\cup
q$ is not the composite of the chain maps, but is chain-homotopic to
it. They become equal when the cobordism is stretched, in this case
along a cylindrical neighborhood of the intermediate bifold $(S^{3},
L_{0})$. So a chain homotopy is provided by counting instantons in
moduli spaces of total dimension $0$ over a 1-parameter family of
metrics parametrized by $(-\infty, 0]$, where the end at $-\infty$ is
the limit where the neck is stretched. The vanishing of the map
$\Lsharp(t\cup q)$ from the connected-sum argument above is also not
at the chain level, but becomes so when the connected sum is stretched
along the sphere $S_{0}=\partial B_{0}$. Joining these two families of
metrics, we obtain a family parametrized by $\R = (-\infty,0] \cup
[0,\infty)$. The chain homotopy $j_{0}$ is defined by counting
instantons in moduli spaces of total dimension $0$ over this
one-parameter family:
\[
          \CLsharp(q)\comp \CLsharp(t) = d j_{0} + j_{0} d.
\]

The construction of the chain homotopy $j_{1}$ uses a similar
1-parameter family of metrics, stretching along $K_{1}$ and
$S_{1}=\partial B_{1}$, and so on with $j_{2}$. 

\subsection{The second chain homotopy  \texorpdfstring{%
$k_{i}$ for $i=2$}{k\_i for i=2}}
\label{subsec:second-chain}

We continue to follow \cite{KM-triangles} closely. We will construct
the chain homotopy $k_{2}$ needed in the formula
\eqref{eq:second-chain-homotopies}. In interior of the triple
composite cobordism $b \cup t \cup q$ in the figure, we can identify
five codimension-1 bifolds. These are:
\begin{itemize}
    \item
the bifolds $K_{1}$ and $L_{0}$ (we use this
notation as short-hand for the corresponding 3-dimensional bifolds);
\item
the bifold $\check S_{1}$ arising from the 3-sphere which is the
boundary of the regular neighborhood of $\Delta_{1}$, whose singular
set is a theta graph (the union of the boundary of a M\"obius band and
half the boundary of the half-disk $\Delta_{0}^{-}$;
\item
the bifold $\check S_{0}$ arising from the 3-sphere which is the
boundary of the regular neighborhood of $\Delta_{0}$, whose singular
set is an unknot;
\item
a bifold $\check S_{10}$ arising from the boundary of the ball $\check
B_{10}$
which is a regular neighborhood of $\Delta_{1}\cup \Delta_{0}$. The
singular set, where $\partial \check B_{10}$ meets the foam,
is a 2-component unlink.
\end{itemize}

If we list these five in a suitable cyclic order,
\[
       K_{1}, \check S_{0}, \check S_{10},  \check S_{1}, L_{0},
\]
then adjacent bifolds are disjoint (and the last is disjoint from the
first). For each such disjoint pair, we form a family of metrics
parametrized by a quadrant $[0,\infty)^{2}$, stretching along both.
These five quadrants have common edges, and their union is the
interior of an open 2-parameter family of metrics that can be
visualized as a pentagon $P$. See \cite{KMOS}, and equation (11) in
\cite{KM-triangles} for example.

The second chain homotopy $k_{2}$, as a chain map from
$\CLsharp(K_{2})$ to $\CLsharp(K_{2}))$, has two parts,
$k_{2}=k'_{2}+k''_{2}$. The first part is defined by counting points
in zero-dimensional moduli spaces over $P$, on the composite cobordism
$(b\cup t\cup q)^{+}$ equipped with cylindrical ends. As in
\cite{KM-triangles}, the terms in the expression
\[
             q j_{0} + j_{2} b + d k'_{2} + k'_{2} d 
\]
have the following interpretation in terms of 1-dimensional moduli
spaces over the same family $P$. The terms $d k'_{2}$ and $k'_{2} d$
count the ends of such 1-dimensional moduli spaces arising from
trajectories sliding off one of the two ends of $(b\cup t\cup q)^{+}$.
The terms $q j_{0}$ and $j_{2} b$ count ends which limit to two of the
5 edges of the pentagon. There are three other edges of in the
compactification of the pentagon $P$, two of which contribute $0$ to
the count of the ends. Since the number of ends is zero mod $2$, we
therefore have a relation at the chain level,
\begin{equation}\label{eq:U}
             q j_{0} + j_{2} b + d k'_{2} + k'_{2} d = U_{2},
\end{equation}
where $U_{2}$ counts the ends of 1-dimensional moduli spaces which
limit to the fifth and final edge of $P$. The key step is to
understand $U_{2}$, and to show that it is chain-homotopic to the
identity. Writing $k''_{2}$ for the latter chain-homotopy, we will
complete our task of constructing $k_{2}$ as $k'_{2}+k''_{2}$.

The edge of $P$ which corresponds to $U_{2}$ is where the cylindrical
neighborhood of $\check S_{10}$ has been stretched to infinity,
pulling out the orbifold 4-ball $\check B_{10}$, which carries a
1-parameter family of metrics $G$. This 1-parameter family is the
union of two half-lines, where $\check B_{10}$ is stretched either
along $\check S_{1}$ or along $\check S_{0}$.

For this one-parameter family of metrics $G$, let $M_{G}$ denote the
parametrized moduli space on the bifold $(\check B_{10})^{+}$ with
cylindrical end $\R^{+}\times \check S_{10}$, and let
\[
            r : M_{G} \to \Rep(\check S_{10})
\]
be the restriction map to the space of flat bifold connections on the
end. The following proposition is the main non-formal ingredient, and
is the counterpart of \cite[Proposition~7.1]{KM-triangles}.

\begin{lemma}\label{lem:push-me-pull-you}
The representation variety $\Rep(\check S_{10})$ is a closed interval,
and for generic choice of perturbations, $M_{G}$ has an open subset
$M^{1}_{G}$ of dimension $1$ consisting of connections with $S^{1}$
stabilizer. Furthermore, the restriction map $r$ maps $M^{1}_{G}$
properly and surjectively to the interior of the interval $\Rep(\check
S_{10})$ with degree $1$ mod $2$. The remainder of $M_{G}$ consists of
components of dimension $6$ or more, together with possibly a finite
number of irreducible solutions mapping to the interior of the
interval.
\end{lemma}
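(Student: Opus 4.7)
The strategy is to identify the target $\Rep(\check S_{10})$ explicitly, then construct the reducible stratum $M_{G}^{1}$ by adapting the $\SO(3)$ argument of \cite[Proposition~7.1]{KM-triangles}, and finally establish properness and the degree-one statement by transporting the $\SO(3)$ computation via the complexification map $\r$.

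First I would compute $\Rep(\check S_{10})$. The orbifold fundamental group is the free product $\Z/2 \ast \Z/2$ because the singular set is a 2-component unlink, and a bifold representation in $\SU(3)$ sends each generator to an involution conjugate to $\mathrm{diag}(1,-1,-1)$. After normalizing the first such involution, pairs modulo simultaneous conjugation correspond to orbits of the second involution (parametrized by its $+1$ eigenline in $\CP^{2}$) under the centralizer $S(U(1)\times U(2))$ acting in the standard way. This orbit space is a closed interval: the fixed point $[1:0:0]$ is one endpoint (the two involutions coincide, with common centralizer $U(2)$), while the fixed $\CP^{1}=\{[0:\ast:\ast]\}$ collapses to the other endpoint (holonomy is $V_{4}$, with common centralizer the maximal torus $T^{2}$). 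All interior points have stabilizer $S^{1}$.

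Second, I would construct $M_{G}^{1}$ as the stratum of connections whose structure group reduces to $S(U(1)\times U(2))\subset \SU(3)$, yielding a splitting of the bifold bundle as $L\oplus E_{2}$ with $U(1)$ determinant $L^{\otimes 2}\otimes \det E_{2}$ trivial. Paralleling the $\SO(3)$ analysis, for each metric in the one-parameter family $G$ and each admissible orbifold first Chern class for the $U(1)$ summand, an abelian Hodge-theoretic argument produces a unique unobstructed reducible solution. Combining the formal dimension formula of Proposition~\ref{prop:dimension} with the $1$-dimensional stabilizer gives $\dim M_{G}^{1}=1$, and $r|_{M_{G}^{1}}$ lands in the interior of the interval because the $U(2)$-endpoint demands a larger stabilizer while the $T^{2}$-endpoint requires a reduction to the full torus rather than to $S(U(1)\times U(2))$.

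Third, and this is the main obstacle, I need to establish properness and the mod-$2$ degree. Properness follows from the Uhlenbeck compactness lemma preceding Corollary~\ref{cor:kappa-half}: any failure would require bubbling inside $\check B_{10}$, which raises $\kappa$ by at least $1/2$ and drops the formal dimension by at least $6$, reducing to an empty stratum. For the degree, I would exploit the inclusion $\r:\bonf_{r}(\check B_{10})\to \bonf(\check B_{10})$ of Section~\ref{sec:includeSO3}. By Lemma~\ref{lem:involution-fixed} and the analysis of real structures on reducibles, every $\SU(3)$ reducible in $M_{G}^{1}$ whose holonomy on $\check S_{10}$ is invariant under complex conjugation arises as the complexification of an $\SO(3)$ bifold connection with $O(2)$ stabilizer, and these are precisely the solutions counted in \cite[Proposition~7.1]{KM-triangles}, whose restriction map is proper of degree one onto the interior of the $\SO(3)$ representation interval. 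One then needs the small check that the natural map from the $\SO(3)$ interval to the $\SU(3)$ interval is a homeomorphism onto the closure of the interior, so that the mod-$2$ count of preimages is transported without change. Finally, components of $M_{G}$ outside of $M_{G}^{1}$ either carry action $\kappa\ge 1/2$ in the bifold locus, giving formal dimension at least $6$ by the dimension formula, or are irreducible with stabilizer $\Z/3$ and generically isolated in the parametrized family; a standard compactness argument rules out accumulations of irreducibles at the endpoints, completing the picture.
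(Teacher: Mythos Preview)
Your identification of $\Rep(\check S_{10})$ as an interval is fine and matches the paper. But the construction of $M_{G}^{1}$ and the degree computation both have real gaps, and the paper takes a different route.

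The phrase ``abelian Hodge-theoretic argument'' in your second step does not do what you want. An $S^{1}$-reducible on $\check B_{10}$ splits as $L\oplus E_{2}$ with $E_{2}$ an \emph{irreducible} rank-$2$ bundle; the $U(2)$ summand is not abelian, so Hodge theory only pins down the $U(1)$ part and leaves you with a genuine nonabelian problem for $E_{2}$. There is no reason to expect a unique unobstructed solution for each metric $g\in G$ from such an argument. The paper avoids this entirely: it does not construct solutions for interior metrics at all, but instead analyzes the \emph{two ends} of the family $G$ via the explicit sum decompositions $W_{2}=\Psi_{1}\#_{f,f'}W'_{2}$ and $W_{2}=\Psi_{1}\#_{s,s'}W''_{2}$. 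Near each end, gluing the unique $\kappa=1/8$ solution on $\Psi_{1}$ (from Lemma~\ref{lem:smallest-action-R}) to the unique flat connection on $W'_{2}$ or $W''_{2}$ yields a single unobstructed $S^{1}$-reducible. So $M_{G,1/8}$ is a proper $1$-manifold whose two ends map under $r$ to the two endpoints $0$ and $\pi/2$; the degree-one statement then follows immediately from the topology of $1$-manifolds. Properness uses $\kappa=1/8<1/2$ so no bubbling can occur (your version has the direction of the action change under bubbling reversed).

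Your third step, transporting the degree from the $\SO(3)$ result via $\r$, is also incomplete as written. To compute the mod-$2$ degree of $r$ you must count \emph{all} preimages of a generic interior point, not only those fixed by complex conjugation. A localization argument (non-fixed solutions cancel in pairs mod $2$) could salvage this, and the remark after Lemma~\ref{lem:involution-fixed} would then rule out the $\{1\}\times\SU(2)$ alternative because $W_{2}$ has a seam; but you have not supplied the localization step, and making it rigorous on a cylindrical-end bifold with a family of metrics is not automatic. The paper's gluing argument at the ends is both shorter and self-contained.
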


\begin{figure}
    \begin{center}
        \includegraphics[scale=.30]{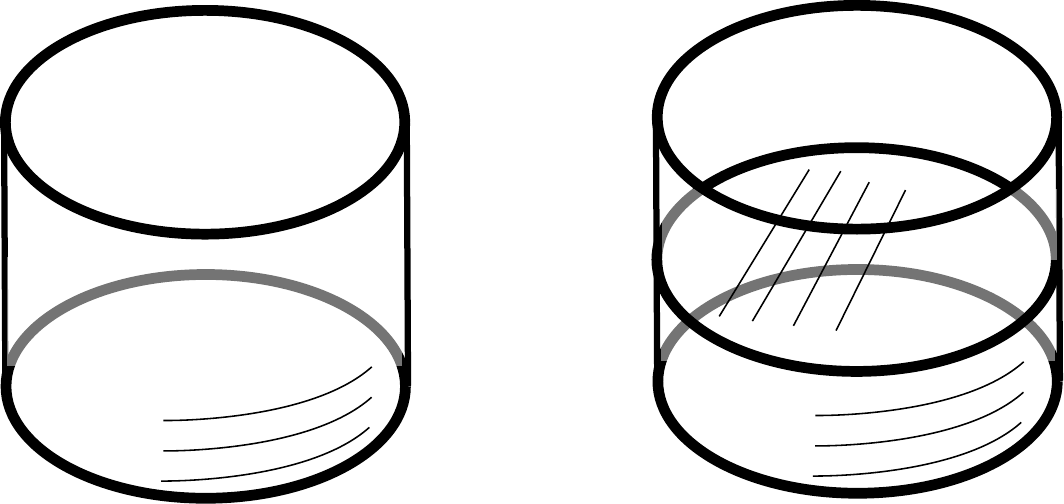}
    \end{center}
    \caption{\label{fig:foams-for-K-SU3} Two isomorphic foams $W'_{2}$
    (left) and $W''_{2}$ (right), each with boundary the unlink
    $U_{2}$, resulting from the two decompositions of $W$.}
\end{figure}

\begin{proof}[Proof of the Lemma]
    The orbifold $\check S_{10}$ is the 3-sphere with singular set a
    2-compo\-nent unlink. The bifold fundamental group is a free
    product of two cyclic groups of order $2$, and an element of
    $\Rep(\check S_{10})$ assigns to each generator an element of
    $\CP^{2}$, to be considered up to the action of $\PSU(3)$. The one
    invariant is the distance between the points on $\CP^{2}$. So the
    representation variety is a closed interval, which we choose to
    write as
    \begin{equation}\label{eq:Rep-interval}
           \Rep(\check
    S_{10}) = [0,\pi/2]
    \end{equation}
    for consistency with \cite{KM-triangles}.

    To describe the singular set of the foam $\Sigma$ which forms the
    singular set of $\check B_{10}$, we start with the foams
    $\Psi_{1}\subset \Psi_{2}$ of \eqref{eq:Psi-n-foam}, but renaming
    the disks for the current context, so
    \[
\begin{aligned}
     \Psi_{2} &= R\cup \Delta_{1} \cup \Delta_{0} \\
     \Psi_{1} &= R\cup \Delta_{0} .
\end{aligned}
    \]
    The boundaries of the two disks divide $R$ into two connected
    components. Let $x_{0}$ and $x_{1}$ belong to these two connected
    components, let $a$ be an arc joining $x_{0}$ to $x_{1}$ which is
    otherwise disjoint from $\Psi_{2}$, and let $\beta$ be a regular
    neighborhood of $a$. So $\beta$ is a 4-ball whose boundary meets
    $\Psi_{2}$ in an unlink $U_{2}$. The complement $\beta^{c}$ is
    also a 4-ball, and the bifold $\check B_{10}$ can be described as
    the pair
    \begin{equation}\label{eq:B10}
            \check B_{10} = (\beta^{c}, \beta^{c} \cap \Psi_{1})
    \end{equation}
    whose singular set is a twice-punctured copy of $\Psi_{1}$, which
    we write as
    \begin{equation}\label{eq:W2-foam}
        W_{2} = \beta^{c} \cap \Psi_{1}.
    \end{equation}
    This
    description also displays the spheres $S_{1}$ and $S_{0}$ which
    are the boundaries of regular neighborhoods of $\Delta_{1}$ and
    $\Delta_{0}$, though only the latter disk is part of the foam.

     The two limit points of the $1$-parameter family of metrics $G$
     correspond to pulling out a neighborhood of either $\Delta_{1}$
     or $\Delta_{0}$ from the bifold $\check B_{10}$. As in the proof
     of the vanishing of the composites, this is a sum decomposition
     of the foam $W_{2}$, in which the summand that is being pulled
     off is a copy of $\Psi_{1}$ and the sum is either at facet (in
     the case of $\Delta_{0}$) or a seam (in the case of
     $\Delta_{1}$). So we have decompositions,
     \begin{equation}\label{eq:push-me-pull-you}
     \begin{aligned}
         W_{2} &= \Psi_{1} \#_{f,f'} W'_{2} \\
                  W_{2} &= \Psi_{1} \#_{s,s'} W''_{2} \\
     \end{aligned}
     \end{equation}
     corresponding to pulling out a neighborhood of $\Delta_{0}$ or
     $\Delta_{1}$ respectively. The foams $W'_{2}$ and $W''_{2}$ are
     both easy to describe. The former is an annulus standardly
     embedded in the 4-ball with boundary the unlink $U_{2}$. The
     latter is the union of an annulus and a disk whose boundary lies
     on the interior of the annulus. See
     Figure~\ref{fig:foams-for-K-SU3}. 

     The sum decompositions~\eqref{eq:push-me-pull-you} allow
     descriptions of the ends of the moduli space $M_{G}$ over the two
     ends of the family of metrics $G$. Consider first the case
     $\Psi_{1} \#_{f,f'} W'_{2}$. The representation variety for
     $W'_{2}$ consists of a single point with stabilizer $U(2)$, and
     in the description \eqref{eq:Rep-interval}, the image of this
     single point under $r$ is the endpoint $0$. (The monodromy of the
     flat bifold connection is the same at the two boundary
     components.) The smallest non-empty moduli space on $\Psi_{1}$
     has $\kappa=1/8$, and is a single point with stabilizer $S^{1}$
     as stated in Lemma~\ref{lem:smallest-action-R}. We consider
     gluing this to the flat connection using a metric $g$ near the
     end of $G$ (so with a fixed, large parameter for the length of
     the neck). The gluing is unobstructed, and there is no effective
     gluing parameter because of the $U(2)$ stabilizer. It follows
     that the moduli space $M_{g,1/8}$ of solutions with $\kappa=1/8$
     is a single point when $g$ is close to this end. This single
     point is a solution with $S^{1}$ stabilizer.

     The analysis of the other end of the family $G$, corresponding to
     the decomposition $W_{2} = \Psi_{1} \#_{s,s'} W''_{2}$, is
     similar. The moduli space of flat connections for the foam
     $W''_{2}$ is again a single point, this time a connection with
     stabilizer $S^{1}$. Under $r$, it maps to the end $\pi/2$ of the
     interval $[0,\pi/2]$ because the monodromies of the connection
     around links of the two boundary components determine orthogonal
     points in $\CP^{2}$. The smallest-action moduli space with
     $\kappa=1/8$ for $\Psi_{1}$ is summed at a point on seam, which
     means the gluing parameter is $S^{1}$. But the $S^{1}$ stabilizer
     results in there being no effective gluing parameter. The moduli
     space $M_{g,1/8}$ for $g$ close to this end is again a single
     point.

     The formal dimension of $M_{g,1/8}$ is $-1$ in both the cases
     above, because of the $S^{1}$ stabilizer. The formal dimension of
     the parametrized moduli space $M_{G,1/8}$ is therefore $0$. We
     have seen that it contains a 1-dimensional subset consisting of
     $S^{1}$ reducibles, and it may perhaps contain isolated
     irreducibles also. We conclude that there is a 1-dimensional part
     $M^{1}_{G}$ consisting of solutions with $S^{1}$ stabilizer and
     that it maps to $G$ in way that is a diffeomorphism near the two
     ends. The moduli space consists of solutions with $\kappa=1/8$,
     which means there can be no bubbles, so $M^{1}_{G}$ is proper
     over $G$, and therefore has exactly two ends. We have seen that
     $r$ maps the two ends to $0$ and $\pi/2$. If the action is bigger
     than $1/8$, then the difference is at least $1/2$, leading to
     other components of formal dimension $6$ or more.
 \end{proof}

We now return to the chain map $U$ in \eqref{eq:U}. Recall that we aim
to show that $U$ is chain-homotopic to the identity. Let $Z$ be the
complement of $\check B_{10}$ in $b\cup t \cup q$, and let $Z^{+}$ be
$Z$ equipped with cylindrical ends on the two copies of $K_{2}$ and an
additional cylindrical end on $\check S_{10} = \partial \check
B_{10}$. Lemma~\ref{lem:push-me-pull-you} provides a gluing
interpretation of $U$ as the count of isolated solutions of $Z^{+}$.
The orbifold $\check S_{10}$ on the boundary of $Z$ is has singular
set the 2-component unlink in $S^{3}$, so is the boundary of the
orbifold $\check D$ whose singular set is 2-disks in a 4-ball. By
gluing, we see that the count of solutions on $Z^{+}$ is also equal to
the count of solutions on the cobordism $Z\cup\check D$, from $K_{2}$
to $K_{2}$, when the neck is stretched along $\check S_{10}$. However,
$Z\cup\check D$ is diffeomorphic to the product cobordism. So the
count of solutions, with any choice of metric, is a chain map which is
chain-homotopic to $1$. It follows that $U\sim 1$. This completes the
construction of the second chain homotopy $k_{2}$ in
\eqref{eq:second-chain-homotopies} and proof that the corresponding
chain map \eqref{eq:second-CH-fmla} is an isomorphism on
$\CLsharp(K_{2})$.

\subsection{The second chain homotopy  \texorpdfstring{%
$k_{i}$ for $i=0,1$}{k\_i for i=0, 1}}

The construction of the other two chain homotopies $k_{0}$ and $k_{1}$
in \eqref{eq:second-chain-homotopies}, and the verification that the
maps in \eqref{eq:second-CH-fmla} are isomorphisms, follow a similar
pattern to $k_{2}$. Indeed, the case of $k_{1}$ is essentially
identical. We briefly indicate the key step in the verification for
$k_{0}$.

We construct as before a family of Riemannian metrics parametrized by
a pentagon $P$, and we have a chain-homotopy formula
\begin{equation}\label{eq:U0}
             t j_{1} + j_{0} q + d k'_{0} + k'_{0} d = U_{0}
\end{equation}
in which the three terms $t j_{1}$, $j_{0} q$ and $U_{0}$ arise from
counting isolated solutions in 1-parameter families of metrics
corresponding to three of the five boundary edges of $P$. What needs
to be done is to show that $U_{0}$ is chain-homotopic to the identity
on $\CLsharp(L_{0})$.

Adapting the construction from the $i=2$ case in the natural way, we
now consider the orbifold ball $\check B_{21}$ that is the regular
neighborhood of the union of the two disks $\Delta_{2}$ and
$\Delta_{1}$. In Figure~\ref{fig:mobiusshadedwithfeetfoamonedisk}, the
disk $\Delta_{2}$ is not shown but lies to the left in the figure.
Recall that the disk $\Delta_{0}$ is part of the foam, but
$\Delta_{1}$ is not. The boundary $\check S_{21}=\partial \check
B_{21}$ is an orbifold corresponding to a web $J\subset S^{3}$. In the
case $i=2$, the corresponding web was the unlink $U_{2}$, but now $J$
(depicted in Figure~\ref{fig:web-for-k0} has two extra edges, labeled
$a$ and $b$: the edge $b$ is where the sphere $\partial B_{21}$ meets
the disk $\Delta_{0}$ in the figure, and the arc $a$ is where
$\partial B_{21}$ meets the translate of $\Delta_{0}$ that is three
steps to the left.

On $\check B_{21}$ and the corresponding cylindrical-end bifold, there
is again an open $1$-parameter family $G$ of Riemannian metrics whose
ends correspond to pulling out a regular neighborhood of either
$\Delta_{2}$ or $\Delta_{2}$. We use the same notation $M_{G}$ as
before for the moduli space of solutions on the cylindrical-end
manifold, over this family $G$. The key step in showing that $U_{2}$
was chain homotopic to the identity in the $i=2$ was
Lemma~\ref{lem:push-me-pull-you}. The following lemma adapts this for
$U_{0}$, and completes the argument.

\begin{figure}
    \begin{center}
        \includegraphics[scale=.30]{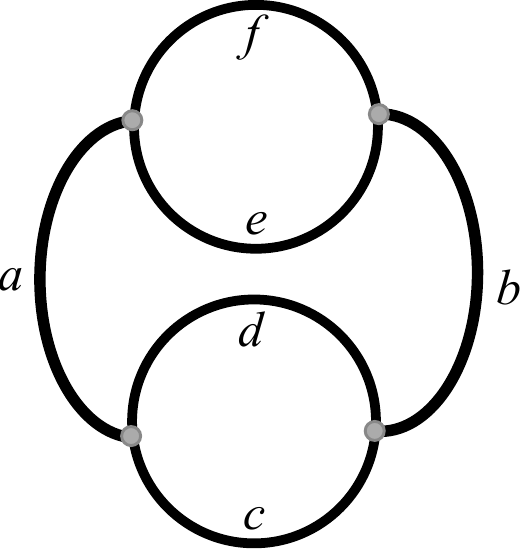}
    \end{center}
    \caption{\label{fig:web-for-k0}
    The web $J$ arising as the boundary of $\check B_{21}$, for the chain
    homotopy $k_{0}$.}
\end{figure}

\begin{figure}
    \begin{center}
        \includegraphics[scale=.30]{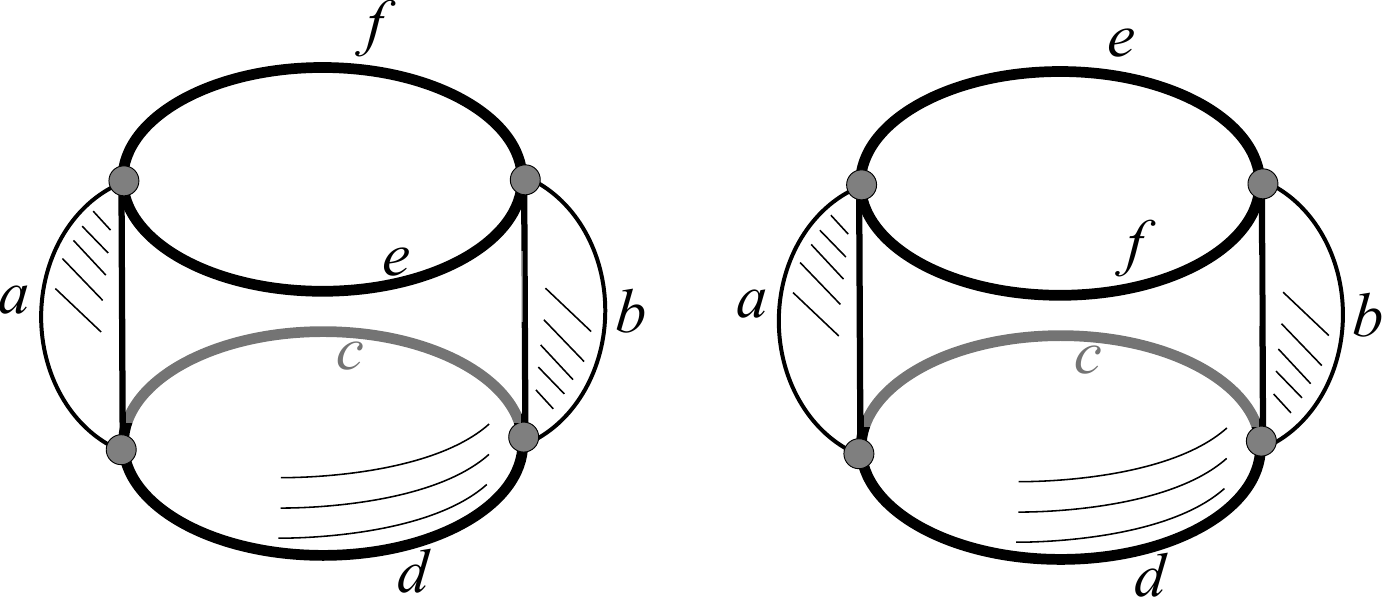}
    \end{center}
    \caption{\label{fig:foams-for-K-SU3-2} The foams $W'_{0}$ and
    $W''_{0}$ from the verification of the chain homotopy formula for
    $k_{0}$. They are isomorphic foams, and both have boundary $J$,
    but the identifications with $J$ differ in the two cases.}
\end{figure}

\begin{lemma}\label{lem:push-me-pull-you-k0}
The statement of Lemma~\ref{lem:push-me-pull-you} continues to hold
verbatim with $\check S_{21} = \partial \check B_{21}$ replacing
$\check S_{10}=\partial B_{10}$ from the previous version.
\end{lemma}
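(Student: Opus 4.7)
The plan is to mimic the argument used for Lemma~\ref{lem:push-me-pull-you} step by step, using the analogous decomposition of the bifold $\check B_{21}$ into the sum of $\Psi_1$ and one of the foams $W'_0$ or $W''_0$ from Figure~\ref{fig:foams-for-K-SU3-2}, and then reading off the structure of the ends of the moduli space $M_G$ from the smallest-action moduli space on $\Psi_1$ provided by Lemma~\ref{lem:smallest-action-R}.

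First I would identify the representation variety $\Rep(\check S_{21})$. The bifold $\check S_{21}$ has singular set the web $J$ of Figure~\ref{fig:web-for-k0}, consisting of two circles of an unlink plus the two extra edges $a$ and $b$ cut out of the facet of the foam by $\partial \check B_{21}$. Using the local models of the two vertices on each circle, the bifold fundamental group has generators corresponding to the meridians of the two circles and the edges $a$, $b$, with relations forcing the two meridional involutions at each vertex to commute. A direct computation of the resulting variety of $\SU(3)$ representations of bifold type, modulo conjugation, shows that (as in the $U_2$ case) the only continuous modulus is the ``distance'' in $\CP^2$ between a pair of lines selected by the two circles; the extra edges $a$, $b$ contribute no free parameters because their meridians are forced by the adjacent data. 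Thus $\Rep(\check S_{21})$ is again a closed interval, which I will again write as $[0,\pi/2]$.

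Next I analyse the two ends of the family $G$. Pulling out a regular neighborhood of $\Delta_0$ (resp.~$\Delta_1$) gives the decompositions
\[
W_{21} = \Psi_1 \#_{f,f'} W'_0, \qquad W_{21} = \Psi_1 \#_{s,s'} W''_0,
\]
with the first sum at an interior point of a facet and the second at a seam. Both $W'_0$ and $W''_0$ have a unique flat bifold connection modulo gauge; their restrictions to $\partial \check B_{21}$ map under $r$ to the two endpoints $0$ and $\pi/2$ of the interval, because in the two cases the meridional monodromies of the two unlink circles either agree (stabilizer $U(2)$) or are orthogonal in $\CP^2$ (stabilizer $S^1$). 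By Lemma~\ref{lem:smallest-action-R}, the smallest-action moduli space on $\Psi_1$ has $\kappa = 1/8$, consists of a single unobstructed solution with $S^1$ stabilizer, and has formal dimension $-1$. Gluing is unobstructed at both ends: in the facet case the $U(2)$-stabilizer absorbs the gluing parameter entirely, while in the seam case the $S^1$-gluing parameter is again absorbed by the $S^1$-stabilizer of the $\Psi_1$ solution. In both cases the moduli space $M_{g,1/8}$ near the corresponding end of $G$ is therefore a single point with $S^1$ stabilizer.

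To finish, I globalise: the parametrized moduli space $M_{G,1/8}$ has formal dimension $0$, and the above shows it contains a $1$-dimensional family $M^1_G$ of reducibles with $S^1$ stabilizer that is a diffeomorphism to $G$ near both ends. Since $\kappa = 1/8$ is below the bubbling threshold (any bubble would drop $\kappa$ by at least $1/2$), $M^1_G$ is proper over $G$; it therefore has exactly two ends, which are the two solutions just constructed, mapping under $r$ to $0$ and $\pi/2$ respectively. This gives the proper surjection of degree $1$ mod $2$ onto $(0,\pi/2)$. Any other component of $M_G$ either has $\kappa \ge 1/8 + 1/2 = 5/8$, pushing the formal dimension up by $6$, or is a finite set of irreducibles at $\kappa = 1/8$. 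The main obstacle I anticipate is confirming carefully that the extra edges $a$ and $b$ of the web $J$ really do not introduce additional moduli in $\Rep(\check S_{21})$ or additional unobstructed components of $M_{G,1/8}$; this is where the verbatim parallel with the $U_2$ case has to be justified rather than just asserted.
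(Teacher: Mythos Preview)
Your overall strategy is right, but you have transplanted the $i=2$ argument too literally and several of the specific claims are wrong for the $i=0$ case.

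First, the ball $\check B_{21}$ is a regular neighborhood of $\Delta_{2}\cup\Delta_{1}$, so the two ends of the family $G$ correspond to pulling out a neighborhood of $\Delta_{2}$ or of $\Delta_{1}$, not of $\Delta_{0}$. Neither $\Delta_{1}$ nor $\Delta_{2}$ is part of the foam; only $\Delta_{0}$ is. Consequently \emph{both} of the resulting decompositions are sums at seam points:
\[
W_{0} = \Psi_{1} \#_{s,s'} W'_{0}, \qquad W_{0} = \Psi_{1} \#_{s,s'} W''_{0},
\]
not one facet sum and one seam sum as you wrote. Your description of the web $J$ is also off: it is not an unlink with two extra edges but has six edges $a,b,c,d,e,f$ with four trivalent vertices (Figure~\ref{fig:web-for-k0}). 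The continuous parameter in $\Rep(\check S_{21})$ is the point $q$ assigned to the edge $e$ on the projective line orthogonal to $p_{3}$, not a distance between two circle meridians.

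The more serious issue is your mechanism for why the two ends of $M^{1}_{G}$ map to opposite endpoints of the interval. You argue this via a stabilizer difference ($U(2)$ versus $S^{1}$) inherited from the facet/seam dichotomy; but since both sums are at seams, that dichotomy is absent here. The actual reason is different: the foams $W'_{0}$ and $W''_{0}$ are isomorphic, but the isomorphism does not fix the boundary; it interchanges the edges $e$ and $f$ of $J$. Each has a unique flat connection, and under $r$ these go to opposite ends of the interval precisely because swapping $e$ and $f$ swaps $q$ and $q'$ in the parametrization of $\Rep(\check S_{21})$. Once this is established, the gluing and properness arguments proceed as you outlined.
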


\begin{proof}
The first assertion is that the representation variety of the orbifold
$\check S_{21} = (S^{3}, J)$ is a closed interval. The web $J$ is
shown in Figure~\ref{fig:web-for-k0}. We can describe a flat bifold
connection by specifying a point in $\CP^{2}$ for each edge, with the
constraint that these points be orthogonal when two edges meet at a
vertex. Up to the action of $\PU(3)$, the edges $c$ and $d$ must be
assigned the first two basis vectors $p_{1}$ and $p_{2}$ in $\CP^{2}$
while the edges $a$ and $b$ are assigned the third basis vector
$p_{3}$. The edge $e$ is assigned a point $q$ in the projective line
orthogonal to $p_{3}$, and the edge $f$ is assigned the point $q'$ on
the same line and orthogonal to $q$. Using the remaining symmetry in
the picture, we can take $q$ to lie on a chosen closed geodesic
joining $p_{1}$ to $p_{2}$. The representation variety $\Rep(S^{3},
J)$ is in bijection with this geodesic.

The bifold $\check B_{21}$ has a description parallel to the
\eqref{eq:B10}. It is again the complement of an arc in the bifold
$(B^{4}, \Psi_{1})$, but this time the relevant arc $\gamma$ joins two
points on the seam of $\Psi_{1}$. The interior of $\gamma$ is disjoint
from $\Psi_{1}$, and we have
   \begin{equation}\label{eq:B21}
            \check B_{21} = (\gamma^{c}, \gamma^{c} \cap \Psi_{1}).
    \end{equation}
So the singular set is $\Psi_{1}$ with the neighborhood of two seam
points removed, which we write as
    \begin{equation}\label{eq:W0-foam}
        W_{0} = \gamma^{c} \cap \Psi_{1}.
    \end{equation}
     The limit points of the $1$-parameter family of metrics $G$ now
     correspond to pulling out a neighborhood of either $\Delta_{2}$
     or $\Delta_{1}$ and give rise to two sum decompositions of the
     foam $W_{0}$:
     \begin{equation}\label{eq:push-me-pull-you-2}
     \begin{aligned}
         W_{0} &= \Psi_{1} \#_{s,s'} W'_{0} \\
                  W_{0} &= \Psi_{1} \#_{s,s'} W''_{0} .\\
     \end{aligned}
     \end{equation}
     In both cases, the sum is made at a seam. The foams $W'_{0}$ and
     $W''_{0}$ are shown in Figure~\ref{fig:foams-for-K-SU3-2}. They
     are isomorphic foams, but the isomorphism is not the identity on
     the boundary: it interchanges the two edges $e$ and $f$ of the
     web $J$ as shown in the figure. The moduli space of flat
     connections on $(B^{4}, W'_{0})$ and $(B^{4}, W''_{0})$ each
     consist of a single point; but in our description of the
     representation variety $\Rep(\check S_{21})$ as a closed
     interval, these two flat connections map to the opposite ends of
     the interval. With this understood, the description of the moduli
     space $M_{G}$ for $\check B_{21}$ can be completed as before,
     using gluing and the decompositions
     \eqref{eq:push-me-pull-you-2}, completing the proof of the lemma.
\end{proof}

This lemma, combined with the previous arguments from the case $i=2$,
establishes the chain-homotopies $k_{i}$ for all $i$, with the desired
property, that the chain maps \eqref{eq:second-CH-fmla} are
isomorphisms. This completes the proof of the exactness of the
triangles.
\end{proof}

\section{The edge decomposition and planar webs}
\label{sec:edge-decomp}

\subsection{The edge decomposition}
\label{subsec:edge}

Recall that to each edge $e$ of web $K$ we have associated an operator
\[ u_{e} = \sigma_{1}(e) \] acting on $\Lsharp(K)$, and that the
relation
\[
          u_{e}(u_{e}^{2}+1) =0
\]
holds (Lemma~\ref{lem:Xie-rel-mod-2}. Since the two factors $u$ and
$u^{2}+1$ are coprime, we have a decomposition into generalized
eigenspaces for the eigenvalues $0$ and $1$:
\[
\begin{aligned}
     \Lsharp(K) &= \ker(u_{e}) \oplus \ker
     (u_{e}^{2} + 1) \\
       &= \ker (u_{e}) \oplus \im (u_{e}).
\end{aligned}
\]
The situation here is very similar to what happens in the $\SO(3)$
theory when $\Jsharp$ is deformed using a local coefficient system on
the configurations space (see \cite[section 5.1]{KM-deformation}), and
we can pursue the consequences of this decomposition in just the same
way.

The operators $u_{e}$ all commute, so there is a decomposition of
$\Lsharp(K)$ into generalized eigenspaces. We write
\[
            \Edges(K) = \{\text{edges of $K$}\},
\] 
and given a subset $s\subset
\Edges(K)$, we define
\[
        V(K;s) = \left(\bigcap_{e\in s} \ker(u_{e})\right) 
                                \cap \left(\bigcap_{e\notin s}
                                \im(u_{e})\right) ,
\]
and we have a decomposition of $\Lsharp(K)$ which we call the
\emph{edge decomposition}:
\begin{equation}
    \label{eq:espace-decomp}
    \Lsharp(K) = \bigoplus_{s\subset \Edges(K)} V(K;s).
\end{equation}

Recall that a subset $s\subset \Edges(K)$ is \emph{$1$-set}, or a
\emph{perfect matching} if each vertex of $K$ is incident to exactly
one element of $s$. A $2$-set is any collection of edges whose
complement is a $1$-set.

\begin{lemma}\label{lem:only-1-sets}
    The summand $V(K;s) \subset \Lsharp(K)$ is zero if\/ $s$ is not a
    $1$-set.
\end{lemma}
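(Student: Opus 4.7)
The plan is to use the three dot-migration relations of Lemma~\ref{lem:dot-migration} at each vertex of $K$ to constrain which edges can belong to $s$ on a non-zero summand $V(K;s)$, and to rule out every case except the case of exactly one incident edge in $s$ at each vertex.

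First I would clarify the structure of the operators $u_e$ on $V(K;s)$. Over $\F$ we have the factorization $u_e^3 + u_e = u_e(u_e+1)^2$ with the two factors coprime, so $\Lsharp(K) = \ker(u_e) \oplus \ker((u_e+1)^2)$; moreover $\im(u_e)$ coincides with $\ker((u_e+1)^2)$, because $u_e^2 = 1$ on the latter summand and so $u_e$ is invertible there (with inverse itself). On $V(K;s)$ the operator $u_e$ therefore acts as $u_e = 0$ when $e \in s$ and as $u_e = 1 + N_e$ when $e \notin s$, where $N_e := u_e+1$ satisfies $N_e^2 = 0$. Since the family $\{u_e\}$ commutes, so does $\{N_e\}$; in particular any $\F$-polynomial in the $N_e$'s with vanishing constant term is a commuting sum of $2$-step nilpotents, hence nilpotent, and cannot equal a non-zero scalar on a non-zero space.

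With this in hand I would fix a vertex $v$ of $K$ with incident edges $e_1,e_2,e_3$ and run through the four possibilities for the size of $\{e_1,e_2,e_3\}\cap s$. If all three lie in $s$, relation (ii) of Lemma~\ref{lem:dot-migration} reads $0=1$ on $V(K;s)$. If exactly two lie in $s$, say $e_1$ and $e_2$, relation (i) gives $1 + N_{e_3} = 0$ on $V(K;s)$, which is impossible since $N_{e_3}$ is nilpotent. If none of the three lie in $s$, relation (i) gives $N_{e_1}+N_{e_2}+N_{e_3}=1$, again impossible. In each of these three cases $V(K;s)$ must vanish, so for $V(K;s)\neq 0$ exactly one of the edges at each vertex lies in $s$, which is to say $s$ is a perfect matching.

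The argument is almost entirely formal once the decomposition $\Lsharp(K) = \ker(u_e)\oplus\ker((u_e+1)^2)$ is in place; the one point that deserves care is the characteristic-$2$ computation showing that $u_e-1$ acts as a square-zero operator on the generalized $1$-eigenspace, since this is what powers the nilpotent-versus-scalar dichotomy used in every case above.
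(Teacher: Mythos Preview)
Your argument is correct and follows essentially the same route as the paper's proof. Both reduce to showing that the dot-migration relations at a vertex force the eigenvalue pattern $(\lambda_1,\lambda_2,\lambda_3)=(1,1,0)$ up to permutation; the paper states this by reading off the two constraints $\lambda_1+\lambda_2+\lambda_3=0$ and $\lambda_1\lambda_2+\lambda_2\lambda_3+\lambda_3\lambda_1=1$ directly, while you run the same deduction case by case and make explicit the nilpotent-versus-scalar step that justifies passing from operator relations to eigenvalue relations (a point the paper leaves implicit).
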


\begin{proof}
    Let $u_{1}$, $u_{2}$, $u_{3}$ be the operators on $\Lsharp(K)$
    arising from three edges incident at a single vertex. (We allow
    that these edges may not be distinct if $K$ has a loop.) Let
    $\lambda_{1}, \lambda_{2}, \lambda_{3}$ each be either $0$ or $1$,
    and let $V\subset\Lsharp(K)$ be the simultaneous generalized
    eigenspace for $u_{i}$ with eigenvalue $\lambda_{i}$ ($i=1,2,3$).
    The lemma is equivalent to the assertion that $V$ is zero unless
    $(\lambda_{1},\lambda_{2}, \lambda_{3})$ is $(1,1,0)$ or a
    permutation thereof. But Lemma~\ref{lem:dot-migration} tells us
    that, if $V$ is non-zero, we have
    \[
    \begin{aligned}
    \lambda_{1}+\lambda_{2}+\lambda_{3}&=0 \\
    \lambda_{1}\lambda_{2} + \lambda_{2}\lambda_{3} +
    \lambda_{3}\lambda_{1}&=1,
    \end{aligned}
    \]
    and these relations leave no other possibilities open.
\end{proof}

\begin{corollary}\label{cor:edge-decomp}
    There is a direct sum decomposition of $\Lsharp(K)$ as
    \[
            \Lsharp(K) = \bigoplus_{\text{\normalfont $1$-sets $s$}}
            V(K;s).
    \]\qed
\end{corollary}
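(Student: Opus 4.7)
The corollary is essentially immediate from combining the two ingredients already in hand. My plan is to start from the full edge decomposition \eqref{eq:espace-decomp},
\[
    \Lsharp(K) = \bigoplus_{s\subset \Edges(K)} V(K;s),
\]
which is justified by the fact that the operators $u_{e}$ pairwise commute (as noted in the discussion of \eqref{eq:sigma-ops}) and that each $u_{e}$ satisfies $u_{e}(u_{e}^{2}+1)=0$ with coprime factors (Lemma~\ref{lem:Xie-rel-mod-2}), so simultaneous generalized eigenspace decomposition applies over $\F$.

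Then I would invoke Lemma~\ref{lem:only-1-sets}: every summand $V(K;s)$ indexed by a subset $s\subset \Edges(K)$ which fails to be a $1$-set vanishes. Discarding these zero summands from the sum yields the stated decomposition indexed only over $1$-sets.

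There is no obstacle here — the corollary is a formal consequence of the preceding lemma and the simultaneous-eigenspace decomposition, and the proof is a one-line deduction. The substantive content is contained in Lemma~\ref{lem:dot-migration} (which drives Lemma~\ref{lem:only-1-sets}) and in the Xie-type relation Lemma~\ref{lem:Xie-rel-mod-2} (which ensures $u_{e}$ is semisimple-by-idempotents enough for the eigenspace decomposition to hold).
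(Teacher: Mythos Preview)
Your proposal is correct and matches the paper's approach exactly: the corollary is marked with a \qed\ in the paper, indicating it follows immediately from the edge decomposition \eqref{eq:espace-decomp} together with Lemma~\ref{lem:only-1-sets}, just as you describe.
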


\begin{examples}
    For the unknot $K$ with its single edge $e$, there are two 1-sets
    $s$, namely $\{e\}$ and $\{\emptyset\}$. The corresponding
    summands $V(K;s)$ have dimension $1$ and $2$ respectively, these
    being the kernel and image of the rank-2 endomorphism $u_{e}$. For
    the theta graph, there are three $1$-sets, each consisting of a
    single edge, and the corresponding summands each have rank $2$.
    These facts follow from Proposition~\ref{prop:unknot} and
    Proposition~\ref{prop:theta-web}.
\end{examples}

\subsection{Planar webs and Tait colorings}

We now turn to the calculation of the dimension of $\Lsharp(K)$ in the
case that $K$ is planar. This is the content of
Theorem~\ref{thm:planar-tait} in the introduction.

Given a Tait coloring of a web $K$, the edges of each color are
$1$-sets, and the edges of any two distinct colors are a $2$-set,
comprising therefore a collection of disjoint simple closed curves,
$C$. Furthermore, a $1$-sets that arises in this way from a particular
color in a Tait coloring is always an \emph{even} $1$-set. Here
``even'' means that the number of vertices in each connected component
of the complementary $2$-set is even. Equivalently, it means that the
relative $\Z/2$ homology class which the $1$-set defines in $H_{1}(Y,
C; \Z/2)$ is zero. Given an even $1$-set $s$, we can look for all Tait
colorings of $K$ for which the edges of the first color are $s$. If
$s$ is even, the number of such Tait colorings is $2^{n(s)}$ where
$n(s)$ is the number of connected components of the 2-set. If $s$ is
odd, the number of Tait colorings is zero. So the number of Tait
colorings of $K$ is
\[
     \sum_{s \in \{\text{\normalfont even $1$-sets}\}} 2^{n(s)}.
\]
To prove Theorem~\ref{thm:planar-tait}, it is therefore enough to
establish this same formula for the dimension of $\Lsharp(K)$ in the
case that $K$ is planar. The required formula follows immediately from
Corollary~\ref{cor:edge-decomp} and the following proposition (which
is the present counterpart of \cite[Proposition~5.17]{KM-deformation}.

\begin{proposition}
    Let $K$ be a planar web and let $s$ be an even $1$-set. Then $V(K;
    s)$ dimension $2^{n}$, where $n$ is the number of
    components in the complementary $2$-set $C$. If $s$ is not even,
    then $V(K;s)=0$. 
\end{proposition}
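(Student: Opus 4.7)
The plan is to prove both assertions (the dimension formula for even $1$-sets and the vanishing for odd $1$-sets) simultaneously by induction on the number of vertices of the planar web $K$, following the template of \cite[Proposition~5.17]{KM-deformation} and using the skein exact triangles of Theorem~\ref{thm:octahedron-statement} to carry out the inductive step.

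For the base case, $K$ will have no vertices, so it is a disjoint union of $n$ unknotted circles. By excision (Proposition~\ref{prop:excision}) and Proposition~\ref{prop:unknot}, one has $\Lsharp(K)\cong\Lsharp(U_{1})^{\otimes n}$, and the edge operators $u_{e}$ act factorwise. Since $\Lsharp(U_{1})=\F[u]/(u^{3}+u)$ decomposes as $\ker(u)\oplus\ker(u^{2}+1)$ with respective dimensions $1$ and $2$, for any $1$-set $s\subset\Edges(K)$ (every subset is vacuously a $1$-set here, and every such $s$ is vacuously even) one gets $\dim V(K;s)=2^{n-|s|}$, matching $2^{n(s)}$ since the complementary $2$-set has $n-|s|$ circle components.

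For the inductive step, the plan is to select a face of minimal degree in the planar graph underlying $K$; an Euler characteristic argument guarantees some face of degree at most five, so one of the local configurations handled by the skein triangles of Theorem~\ref{thm:octahedron-statement} applies. Applying the relevant triangle at such a face expresses $\Lsharp(K)$ in an exact sequence with $\Lsharp$ of webs $K'$, $K''$ having strictly fewer vertices. The central point is that the maps in the triangle commute with $u_{e}$ for every edge $e$ outside the ball where the skein is performed (since these edges appear as facets of all three cobordism foams), and for edges inside the ball the dot-migration relations of Lemma~\ref{lem:dot-migration} determine the correspondence. Consequently the triangle restricts to an exact triangle on the generalized eigenspaces $V(-;s)$, where the $1$-sets of the three webs are indexed by $1$-sets of a common exterior skeleton. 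The parity of a $1$-set is preserved by this correspondence (the local move alters components of the complementary $2$-set in a controlled way), so both the dimension formula in the even case and the vanishing in the odd case propagate through the induction from the base.

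The hard step will be the compatibility of the skein triangle with the edge decomposition: one must verify, for each of the skein configurations that can arise at a low-degree face of a planar cubic graph, that the cobordism maps do send $V(K;s)$ into the intended eigenspace summand of the neighboring web, and that the resulting exact triangle on eigenspaces correctly matches $1$-sets up to the bookkeeping of how vertices and components merge or split across the local move. This amounts to a local, case-by-case analysis — essentially identical in structure to the $\SO(3)$ argument with local coefficients in \cite{KM-deformation} — but requires checking that the relations $u(u^{2}+1)=0$ of Lemma~\ref{lem:Xie-rel-mod-2}, the dot-migration identities, and the explicit evaluations computed in Propositions~\ref{prop:sphere-with-dots}, \ref{prop:theta-evaluation}, and \ref{prop:tetr-evaluation} suffice to make the decomposition compatible in the $\SU(3)$ setting with $\F$-coefficients, where the two eigenvalues $0$ and $1$ play the role the three eigenvalues played in the deformed $\SO(3)$ theory.
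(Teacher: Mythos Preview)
Your approach diverges from the paper's and has a genuine gap at the inductive step.

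The paper does not induct on the number of vertices of $K$. Instead it fixes the $2$-set $C$ and modifies only the $1$-set: the key Lemma~\ref{lem:modify-s} shows $V(K;s)\cong V(K';s')$ whenever $K=C\cup s$, $K'=C\cup s'$, and $[s]=[s']$ in $H_{1}(Y,C;\Z/2)$. For an even $1$-set one takes $s'=\emptyset$, reducing to the unlink case you handled; for an odd $1$-set one moves $s'$ until some component of $C$ meets it in a single edge, producing an embedded bridge so $\Lsharp(K')=0$. The proof of that lemma does use exact triangles, but applied at arcs or circles of the $1$-set, not at faces of the planar graph.

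Your inductive step, by contrast, asks for a skein triangle ``at a face of degree $\le 5$'' with the neighboring webs having strictly fewer vertices. The triangles of Theorem~\ref{thm:octahedron-statement} are not indexed by face degree: in a triangle such as $(L_{0},K_{2},K_{1})$ the web $K_{2}$ carries a crossing and is not planar, so the inductive hypothesis does not apply to it. If you instead intend to invoke the derived bigon, triangle, and square removals, those cover only face degrees $2$, $3$, $4$; there is no pentagon relation, and planar cubic graphs with every face a pentagon exist (the dodecahedron), so the induction stalls. A second gap is the asserted restriction of the triangle to an exact triangle on the summands $V(-;s)$: the cobordism maps commute with $u_{e}$ for edges outside the ball, but the edges inside differ among the three webs, and you have not said how a $1$-set of $K$ corresponds to $1$-sets of the other two, nor why that correspondence preserves parity and the component count of the $2$-set. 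In the paper's argument this difficulty disappears precisely because $C$ is held fixed throughout.
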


\begin{proof}
    Consider first the case that $s$ is empty, so that $C$ is a planar
    unlink with $n$ components. We wish to see that $V(C;\emptyset)$
    has dimension $2^{n}$. By the excision result,
    Proposition~\ref{prop:excision}, and the naturality of the
    operators $u_{e}$ with respect to the excision isomorphism, we
    have
    \begin{equation}\label{eq:Vs-2n}
         V(C;\emptyset)
         \cong \bigotimes_{1}^{n} V(C_{i};\emptyset),
    \end{equation}
    where each $C_{i}$ is an unknot. The formula for the dimension,
    $2^{n}$, follows from this, since we know that a single unknot
    has $\dim V(C_{i};\emptyset) =2$.

    Next we have the following result, which applies not just to
    planar webs.

    \begin{lemma}\label{lem:modify-s}
        Let $(Y, K)$ be a web in a 3-manifold $Y$ and let $s$ be a
        1-set for $K$. Write $K=C\cup s$, where $C$ is the link formed
        by the $2$-set. Let $K'\subset Y$ be another web differing
        only in the $1$-set: so $K'=C\cup s'$. Suppose that $s$ and
        $s'$ have the same homology class in $H_{1}(Y, C; \Z/2)$. Then
        \[
            V(K;s) \cong V(K'; s').
        \]
    \end{lemma}

    Assuming the lemma for the moment, we complete the proof of the
    proposition (and hence of Theorem~\ref{thm:planar-tait} also). If
    $s$ is even, then we can apply the lemma with $s'=\emptyset$, and
    reduce to the calculation \eqref{eq:Vs-2n} just above, to
    establish the proposition in the even case. If $s$ is odd, then we
    can use the lemma to reduce to the case that at least one
    component of $C$ is incident with exactly one edge of $s'$. For
    such a web, the representation variety is empty (there is a
    ``embedded bridge''), so $\Lsharp(K')$ is zero in such a case, as
    is the subspace $V(K';s')$ and hence $V(K;s)$ as claimed. We turn
    to the proof of the Lemma next.
\end{proof}

\begin{proof}[Proof of Lemma~\ref{lem:modify-s}]
If $s$ and $s'$ are homologous in $H_{1}(Y,C;\Z/2)$, then $s$ and $s'$
are related to each other by isotopies combined with a sequence of
modifications of the following elementary types.
\begin{enumerate}
    \item \label{it:s-birth-death}
    $s'$ is obtained from $s$ by the birth of a single unknotted
    circle in a ball disjoint from $K$, or by the death of such a
    circle;
    \item \label{it:s-surger} $s'$ is obtained from $s$ by surgery in ball, just as
    $K_{0}$ and $K_{1}$ are related to each other in
    Figure~\ref{fig:L-octahedron};
    \item \label{it:s-bigon}
    $s'$ is obtained from $s$ by the addition or removal of a single
    edge inside a ball which meets $C$ in an arc, as illustrated in
    Figure~\ref{fig:s-bigon}.
    \end{enumerate}    
We will show that a single modification of any of these sorts leaves
$V(K;s)$ unchanged. In the first case, the addition of a single
unknotted circle results in tensor product (by excision) where the new
factor is $V(U_{1}, \{e\})$, where $U_{1}$ is the unknotted circle and
$e$ its edge. From our calculation for the unknot, we know that
$V(U_{1}, \{e\})$ is $1$-dimensional, so this established case
\ref{it:s-birth-death}.

\begin{figure}
    \begin{center}
        \includegraphics[scale=0.50]{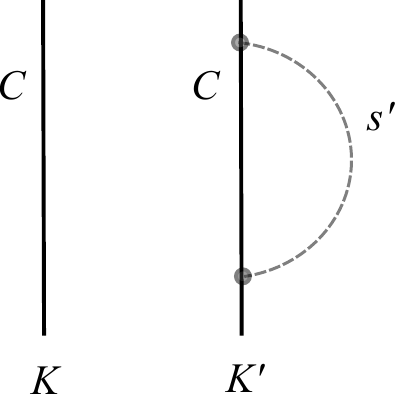}
    \end{center}
    \caption{\label{fig:s-bigon} Changing the $1$-set by adding an
    extra edge, to obtain a new web $K'$ from $K$. The subset $C$ is
    the union of the edges of a $2$-set, and is the same in both
    cases.}
\end{figure}

In case \ref{it:s-surger}, $V(K;s)$ and $V(K';s')$ appear in an exact
triangle which is the front face of the octahedron
\ref{fig:L-octahedron} and in which the third web is $L_{2}$ in the
figure. The maps in the exact triangle commute with the operators
$u_{e}$ for any edge that extends past the boundary of the ball. So
there is an exact triangle involving $V(K_{0}; s)$ and $V(K_{1}; s')$
and in which the third term is a summand of $\Lsharp(L_{2})$ which is
contained in the simultaneous kernel of the four operators $u_{e}$, as
$e$ runs through the four edges of $L_{2}$ meeting the boundary of the
ball in the figure. However, because of the additional edge inside the
ball, there is no $1$-set of $L_{2}$ which includes these four edges.
The corresponding summand of $\Lsharp(L_{2})$ is therefore zero, so
$V(K_{0}; s)$ and $V(K_{1}; s')$ are isomorphic.

Finally we consider the case illustrated in Figure~\ref{fig:s-bigon}.
In this case, we redraw $K'$ as shown in
Figure~\ref{fig:bigon-1-set-triangle}. There it appears as one term in
an exact triangle. The other two webs in the triangle are: first, the
web $K''$ which is the disjoint union of $K$ and an unknotted circle;
and second, a web $L$ isotopic to $K$. Let $s''\subset K''$ be the
1-set formed by $s$ and the additional unknotted circle. We have
$V(K;s) \cong V(K'';s'')$ as an instance of case
\item{it:s-birth-death}. As summands in the triangle, we have an exact
triangle in which two terms are $V(K';s')$ and $V(K'';s'')$ and in
which the third term is a summand of $\Lsharp(L)$ comprising elements
which are in the $\lambda=1$ generalized eigenspace of the operators
$u_{1}$, $u_{2}$ located at the points $p_{1}$, $p_{2}$ and also in
the $\lambda=0$ eigenspace for the operator $p_{3}$. However, in $L$,
unlike in $K'$, these three points lie on the same edge and define the
same operator on $\Lsharp(L)$. So this summand of $\Lsharp(L)$ is
zero, and the exact triangle gives an isomorphism between $V(K';s')$
and $V(K'';s'')\cong V(K,s)$. This completes the proof of the lemma.
\end{proof}

\begin{figure}
    \begin{center}
        \includegraphics[scale=0.50]{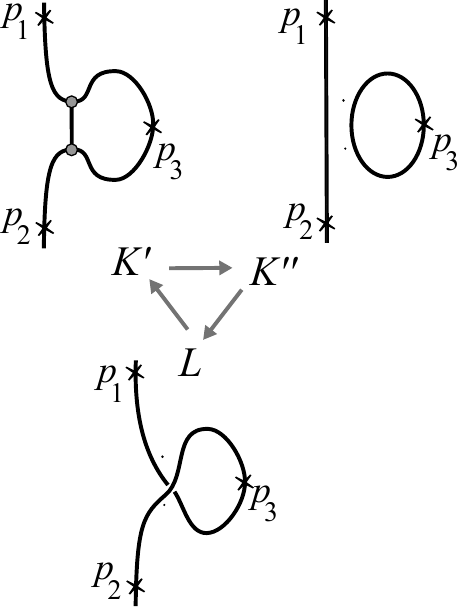}
    \end{center}
    \caption{\label{fig:bigon-1-set-triangle}
   The exact triangle for the proof of case \ref{it:s-birth-death}.
   The web $K''$ is the union of $K$ and an extra unknotted circle.}
\end{figure}

\section{Absolute \texorpdfstring{$\Z/2$}{Z/2} gradings}
\label{sec:absolute}

\subsection{Framings and mod 2 gradings}

For a web $K$ in $Y$, we have seen that $\Lsharp(Y,K)$ has a
relative $\Z/2$ grading. We wish to see what extra data is needed to
specify an absolute $\Z/2$ grading, so as to make $\Lsharp(Y,K)$ a
$\Z/2$-graded vector space.

To begin, let a perturbation be chosen so that we have an instanton
Floer complex $\CLsharp(Y,K)$, and let $\alpha$ be a generator: a
critical point of the perturbed Chern-Simons functional. Choose a
cobordism $(X,\Sigma)$ from $(S^{3},\emptyset)$ to $(Y,K)$ together
with a path of basepoints where the atom is to be attached, and
consider the moduli space which we will simply denote
$M(X,\Sigma,\alpha)$ on the cylindrical-end manifold, for solutions
asymptotic to $\alpha$ on $(Y,K)^{\sharp}$ and to the unique critical
point on $(S^{3})^{\sharp}$. Write $\delta$ for the formal dimension
mod $2$:
\[
   \delta(X,\Sigma,\alpha) = \dim M(X,\Sigma,\alpha) \bmod 2.
\]

So far, we have a quantity that depends on both $\alpha$ and the
choice of $(X,\Sigma)$. Corollary~\ref{cor:dimension-parity} tells
us how the dimension mod $2$ depends on $\Sigma$ through its
self-intersection number in the case of a closed foam. To define a
self-intersection number in the case of foam $\Sigma$ with boundary a
web $K$, we need a suitable notion of a ``framing'' for $K\subset Y$.

The correct notion of a
framing for a web can be read off from the definition of
$\Sigma\ccdot\Sigma$ for closed foams
\cite[Definition~2.5]{KM-Tait}. Recall that an embedded web
$K\subset Y$ has the property that the tangent directions to the three
edges at a vertex are distinct. Since the space of triangles on
$S^{2}$ deformation-retracts onto the space of equilateral triangles
on great circles, we can equally well require that the tangent
directions lie in a 2-plane in addition to being distinct. We will
impose this stronger restriction forthwith. At each
vertex $v$ of $K$, we therefore have a distinguished (unoriented)
normal line,
\begin{equation}\label{eq:Wv}
                W_{v} \subset T_{v}Y,
\end{equation}
perpendicular to the tangents to all three edges (depending on an
unimportant choice of Riemannian metric). For foams in a
$4$-manifold $X$, we similarly impose the condition that at all 
points $p$ of a seam, the tangent 2-planes to the three incident facets
are distinct and lie in a 3-dimensional subspace of the 4-dimensional
tangent space:
\begin{equation}\label{eq:Wp}
                   W_{p} \subset T_{p}X.
\end{equation}
The normals are automatically compatible at the tetrahedral points
where seams meet, so the lines $W_{p}$ define a line subbundle of $TX$
over the
graph formed by the seams. The restriction of this line subbundle to
the boundary $Y$ is the collection of lines \eqref{eq:Wv} at the
vertices of the web, provided that the foam is orthogonal to the
boundary.

\begin{definition}\label{def:K-framing}
    A \emph{semi-framing} $\phi$ of the web $K$ is a choice of a line
    subbundle $W=W_{\phi}\subset TY|_{K}$ which is normal to $K$ along
    every edge and (consequently) coincides with $W_{v}$ at each
    vertex $v$.
\end{definition}

We emphasize that the requirement that $W$ coincide with the normal
line $W_{v}$ at the vertices means that the choice of $W$ is a choice
along the edges only. Note that there is no orientability condition on
$W$. So for example if $K$ is a circle in an oriented 3-ball then the
semi-framings are naturally indexed by $(1/2)\Z$ up to isotopy, with
the integer semi-framings corresponding to the orientable subbundles
$W$. For a general web $K$ with edge set $E$, there is a transitive
action of $((1/2)\Z)^{E}$ on the set of isotopy classes of
semi-framings. If $\phi$ and $\phi'$ are two semi-framings then there
is total difference,
\[
        \Delta(\phi',\phi) \in (1/2)\Z,
\]
defined by summing over the edges.

\begin{definition}
    We say that semi-framings $\phi'$ and $\phi$ belong to the same
    \emph{parity class} if $\Delta(\phi',\phi)$ is an integer.
\end{definition}

We can specify a parity class by providing data at the vertices.

\begin{definition}
    Let an orientation $o_{v}$ of the normal line $W_{v}$ be given at each
    vertex $v$ of $K$. Then a semi-framing $\phi$ of $K$ is
\emph{consonant} with the orientations if the line bundle
    $W_{\phi}$ is orientable in such a way that the orientation agrees
    with $o_{v}$ at each vertex.
\end{definition}

Being consonant with the orientations $o_{v}$ determines $\phi$ up to
the action of $\Z^{E}$, so we have:

\begin{lemma}\label{lem:cons-parity}
    If two orientations $\phi'$ and $\phi$ are both consonant with
    given orientations $o_{v}$ at the vertices, then $\phi$ and
    $\phi'$ belong to the same parity class. If $\phi$ is consonant
    with orientations $o_{v}$ and $\phi'$ is consonant with
    orientations which differ at exactly one vertex, then $\phi$ and
    $\phi'$ belong to different parity classes.
\end{lemma}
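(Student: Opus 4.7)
The plan is to reduce the comparison $\Delta(\phi', \phi)$ to a sum of edge-by-edge contributions and to analyse each contribution through the orientation double cover $S^{1} \to \RP^{1}$ of the space of normal lines to an edge.

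On a single edge $e$ with endpoints $v_{1}, v_{2}$, I would trivialize the rank-two normal bundle of $e$ in $Y$ and identify the space of possible normal lines with $\RP^{1}$. The restriction of a semi-framing to $e$ is then a path $\gamma : [0,1] \to \RP^{1}$ with $\gamma(i) = W_{v_{i}}$, and isotopy classes of such paths form a torsor over $\pi_{1}(\RP^{1}) \cong \Z$. Up to the factor of $1/2$ in the paper's convention, the $(1/2)\Z$-valued contribution of $e$ to $\Delta(\phi',\phi)$ equals half of this integer, and so it is integral exactly when the loop $\gamma \gamma'^{-1}$ lifts through the double cover $S^{1} \to \RP^{1}$, whose image on $\pi_{1}$ is $2\Z \subset \Z$. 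An orientation $o_{v}$ of $W_{v}$ is a choice of preimage in the fiber above $W_{v}$, and a semi-framing on $e$ is consonant with $(o_{v_{1}}, o_{v_{2}})$ precisely when $\gamma$ admits a lift to a path in $S^{1}$ from $o_{v_{1}}$ to $o_{v_{2}}$.

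From this description the two statements follow edge by edge. If $\phi$ and $\phi'$ are both consonant with $\{o_{v}\}$, then on every edge the two lifts have matching endpoints, so $\gamma \gamma'^{-1}$ lifts to a loop and the per-edge contribution to $\Delta$ is an integer; summing yields $\Delta(\phi', \phi) \in \Z$. If $\{o_{v}'\}$ agrees with $\{o_{v}\}$ except at $v_{0}$, then on any non-loop edge $e$ incident to $v_{0}$ the lifts of $\gamma$ and $\gamma'$ have endpoints on opposite sheets above $W_{v_{0}}$, so their difference lies in $\Z \setminus 2\Z$ and contributes a half-integer; edges disjoint from $v_{0}$ continue to contribute integers; and on any loop at $v_{0}$ the orientations at both endpoints reverse simultaneously, so the condition ``the lift starts and ends on the same sheet'' is unchanged and the contribution remains integral.

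The conclusion is a small combinatorial count at $v_{0}$. Let $\ell$ be the number of loops and $m$ the number of non-loop edges incident at $v_{0}$; since $v_{0}$ has three half-edges, $m + 2\ell = 3$, so $m \in \{1, 3\}$ is always odd. Summing the per-edge contributions,
\[
\Delta(\phi', \phi) \equiv m \cdot (1/2) \equiv 1/2 \pmod{\Z},
\]
so $\phi$ and $\phi'$ lie in different parity classes. The main point requiring care will be the loop case in the previous paragraph, namely that reversing $o_{v_{0}}$ leaves the set of loop-consonant semi-framings unchanged; this holds because the lifting condition for a closed loop is invariant under swapping the two sheets of the double cover. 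Once this is verified, the parity identity $m + 2\ell = 3$ delivers the conclusion.
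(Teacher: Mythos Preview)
Your argument is correct and follows essentially the same route as the paper: the first claim holds because consonance with fixed $\{o_v\}$ pins down the semi-framing up to the $\Z^E$ action, and the second claim reduces to the observation that flipping $o_{v_0}$ forces a half-integer shift on each edge incident to $v_0$, with the odd count $3$ yielding the parity change. Your treatment is in fact more careful than the paper's one-line proof, which simply says ``three is odd'' without separating out the loop case; your analysis via the double cover $S^1\to\RP^1$ makes explicit why a loop at $v_0$ contributes an integer (both endpoints flip together) while the remaining $m\in\{1,3\}$ non-loop incidences each contribute a genuine half-integer.
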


\begin{proof}
    Only the last part needs comment. Changing $o_{v}$ at one vertex
    $v$ requires changing $\phi$ by a half-integer along each of the
    three edges incident at $v$. The parity class changes because
    three is odd.
\end{proof}

Returning now to a foam $\Sigma$ with boundary $K$, we can define a
relative self-intersection number of $\Sigma$ with a semi-framing
$\phi$ on the boundary, as follows. Rephrasing the definition from
\cite{KM-Tait} slightly, this self-intersection number is the
obstruction to extending $W_{\phi}$ to all of $\Sigma$. More
precisely, let us first extend $W_{\phi}$ (uniquely) along the seams
of $\Sigma$ as the unoriented common normal line to the incident
facets, as above \eqref{eq:Wp}. Then let us remove a disk $D_{f}$ from
the interior of each facet $f$ of $\Sigma$, and let us extend $W$ to
the interiors of the facets minus the disks $D_{f}$. On the boundary
of each disk $D_{f}$, there is an obstruction to extending $W$ across
the disk, which is a half integer $q_{f}(W)$ in line with the remarks
above. We define the relative self-intersection number as
\begin{equation}\label{eq:Q-Sigma}
            Q(\Sigma, \phi) = \sum_{f} q_{f}(W) \in (1/2)\Z.
\end{equation}
From the definition, it follows that the parity of the integer $2
Q(\Sigma,\phi)$ depends only on the parity class of $\phi$.

\begin{definition}
    Let $(Y,K)$ be given and a choice of semi-framing $\phi$. After a
    choice of generic perturbation, let $\alpha$ be a critical point,
    a generator for $\CLsharp(Y,K)$. We then define the absolute
    $\Z/2$ grading of $\alpha$ with respect to this choice of $\phi$
    to be
    \[
             \gr_{\phi}(\alpha) =   \delta(X,\Sigma,\alpha) + 2
             Q(\Sigma,\phi) \bmod 2.
    \]
    Here $(X,\Sigma)$ is a bifold cobordism from $S^{3}$ to $(Y,K)$ and
    $\delta$ is the formal dimension of the moduli spaces on
    $(X,\Sigma)^{\sharp}$ mod $2$ as above. This depends only on
    $\alpha$ and the parity class of $\phi$. When $\Lsharp(K)$ has
    been given an absolute $\Z/2$ grading in this way, we shall
    sometimes write it as $\Lsharp_{*}(K) = \Lsharp_{0}(K) \oplus
    \Lsharp_{1}(K)$.
\end{definition}

The definition of $Q(\Sigma, \phi)$  can be extended in the obvious
way for  a foam
cobordism $\Sigma$ with incoming and outgoing boundaries carrying
semi-framings $\phi_{0}$, $\phi_{1}$. In this  case
it is additive for composite cobordisms and coincides with the
definition of $\Sigma\ccdot\Sigma$ from \cite{KM-Tait} for closed
foams. It follows that this mod 2 invariant of foam cobordisms
determines whether the maps on $\Lsharp$ induced by a cobordism are
even or odd for the corresponding absolute $\Z/2$ gradings:

\begin{lemma}
 Let $(X,\Sigma)$ be a foam cobordism, with semi-framings $\phi_{0}$
 and $\phi_{1}$ on the boundaries $(Y_{0}, K_{0})$ and
 $(Y_{1},K_{1})$. Use these semi-framings to define the absolute
 $\Z/2$ gradings for $\Lsharp_{*}(Y_{i}, K_{i})$. Then the map
 $\Lsharp(X,\Sigma)$ is even or odd with respect to these $\Z/2$
 gradings according to the parity of the relative self-intersection
 number, $2 Q(\Sigma,\phi_{0}, \phi_{1})$.\qed
\end{lemma}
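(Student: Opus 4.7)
The plan is to compute the difference $\gr_{\phi_{1}}(\beta)-\gr_{\phi_{0}}(\alpha)\bmod 2$ for any pair of critical points $\alpha,\beta$ contributing a non-zero matrix entry to $\Lsharp(X,\Sigma)$, and to show that the answer is $2Q(\Sigma,\phi_{0},\phi_{1})$ independently of $\alpha$ and $\beta$.

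First, choose any auxiliary bifold cobordism $(X_{0},\Sigma_{0})$ from $(S^{3},\emptyset)$ to $(Y_{0},K_{0})$ with a semi-framing on its outgoing boundary extending $\phi_{0}$. By definition of the absolute grading,
\[
\gr_{\phi_{0}}(\alpha)=\delta(X_{0},\Sigma_{0},\alpha)+2Q(\Sigma_{0},\phi_{0})\bmod 2.
\]
Concatenating with $(X,\Sigma)$ produces a cobordism $(X_{0}\cup X,\Sigma_{0}\cup\Sigma)$ from $(S^{3},\emptyset)$ to $(Y_{1},K_{1})$ with semi-framing on its outgoing boundary equal to $\phi_{1}$, so we may also write
\[
\gr_{\phi_{1}}(\beta)=\delta(X_{0}\cup X,\Sigma_{0}\cup\Sigma,\beta)+2Q(\Sigma_{0}\cup\Sigma,\phi_{1})\bmod 2.
\]

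Next, I would invoke two additivity statements. The topological additivity $Q(\Sigma_{0}\cup\Sigma,\phi_{1})=Q(\Sigma_{0},\phi_{0})+Q(\Sigma,\phi_{0},\phi_{1})$ follows directly from the definition \eqref{eq:Q-Sigma}: the obstructions $q_{f}(W)$ are computed on punctured disks in the interiors of facets, and since the semi-framing $\phi_{0}$ on the common boundary uniquely determines the extension of $W$ across the interface along the seams, the facet-local contributions on either side of $(Y_{0},K_{0})$ simply add. The analytic additivity, obtained from the index addition formula for elliptic operators on cobordisms glued along non-degenerate asymptotic boundary conditions, gives
\[
\delta(X_{0}\cup X,\Sigma_{0}\cup\Sigma,\beta)\equiv\delta(X_{0},\Sigma_{0},\alpha)+\dim M(\alpha,(X,\Sigma)^{\sharp},\beta)\pmod 2.
\]
Subtracting the two gradings yields
\[
\gr_{\phi_{1}}(\beta)-\gr_{\phi_{0}}(\alpha)\equiv\dim M(\alpha,(X,\Sigma)^{\sharp},\beta)+2Q(\Sigma,\phi_{0},\phi_{1})\pmod 2.
\]

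Finally, I would observe that a non-zero matrix entry from $\alpha$ to $\beta$ in $\Lsharp(X,\Sigma)$ requires a zero-dimensional component of the transverse intersection $M(\alpha,(X,\Sigma)^{\sharp},\beta)\cap\cV_{1}\cap\dots\cap\cV_{l}$, hence $\dim M=\sum_{k}\deg\mu_{k}$. Every admissible decoration $\mu_{k}$ is one of $\nu(x_{k})$, $\sigma_{1}(x_{k})$ or $\sigma_{2}(x_{k})$, with cohomological degrees $4$, $2$ and $4$ respectively, so $\sum_{k}\deg\mu_{k}$ is even. Therefore $\dim M(\alpha,(X,\Sigma)^{\sharp},\beta)\equiv 0\pmod 2$ on any non-zero matrix entry, and the displayed formula collapses to the desired $\gr_{\phi_{1}}(\beta)-\gr_{\phi_{0}}(\alpha)\equiv 2Q(\Sigma,\phi_{0},\phi_{1})\pmod 2$.

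The one step that requires actual verification, rather than bookkeeping, is the additivity of $Q$ under composition along a semi-framed boundary. I expect this to be the main obstacle only in the sense that it has to be unpacked from the definition: once it is granted, the remainder of the argument is a direct comparison of the two formulas for $\gr_{\phi}$ through the index addition principle and the triviality of the dimension shift modulo $2$ coming from even-degree dot classes.
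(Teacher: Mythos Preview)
Your proposal is correct and is essentially an explicit unpacking of what the paper treats as immediate: the paper gives no proof beyond the \qed, having noted just before the lemma that $Q$ is additive for composite cobordisms and agrees with $\Sigma\ccdot\Sigma$ for closed foams, from which the statement follows by exactly the computation you wrote out. Your handling of the decorated case (showing $\sum_{k}\deg\mu_{k}$ is even) is a harmless bonus, since the lemma as stated concerns the undecorated cobordism map, where the relevant moduli space is already zero-dimensional.
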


\subsection{Planar webs and degrees of maps}

Let $K\subset \R^{3}$ be a spatial web. Compactify $\R^{3}$ as $S^{3}$
and use the framed point at infinity for the atom, to construct
$\Lsharp(K)$. Let $\Pi\subset \R^{3}$ be a plane, and let
\[
       \pi  : \R^{3}\to \Pi
\]
be the orthogonal projection. If $K$ is in general position, then the
only singularities of the map $\pi : K\to \Pi$ are transverse
crossings at interior points of edges. In particular, we can require
that, at every vertex $v$, the kernel of $d\pi_{v}$ is not orthogonal
to the common normal $W_{v}$. This implies that the tangents to the
three edgelets in the image are distinct in $T_{\pi(v)}\Pi$. We refer
to $\pi$ or the image $\pi(K)$ as a \emph{regular planar diagram} of
the web $K$. A special case is a web that is actually planar.

\begin{definition}
    Let $K\subset \R^{3}$ be a spatial web and $\pi : K \to \Pi$ a
    regular planar diagram. The associated \emph{diagram semi-framing} of
    $K$ is then the semi-framing $\phi$ for which
    the line subbundle is $W_{\phi} = (\ker d\pi)|_{K}$.
\end{definition}

From the definition, the parity class of the diagram semi-framing is
consonant with the vertex orientations $o_{v}$ which are obtained from an
orientation of $\Pi$ via the projection $\pi$.

Now consider the case of webs $K$ that are actually planar, lying in
the plane $\Pi$. Let $K_{0}$ and $K_{1}$ be two such webs, both
equipped with their diagram semi-framings. Let us say that a foam
cobordism $\Sigma \subset [0,1]\times \R^{3}$ is
\emph{three-dimensional} if it lies in the subspace $[0,1]\times \Pi$.

\begin{lemma}\label{lem:3d-foams-even}
    If the foam $\Sigma$ is three-dimensional, then the resulting map
    $\Lsharp(\Sigma): \Lsharp_{*}(K_{0})\to \Lsharp_{*}(K_{1})$ has
    even degree, i.e.~preserves the $\Z/2$ gradings.
\end{lemma}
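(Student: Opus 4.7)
The plan is to reduce the lemma to the previous one, which tells us the parity of $\Lsharp(\Sigma)$ is precisely the parity of the integer $2Q(\Sigma,\phi_{0},\phi_{1})$. So it will suffice to show that when $\Sigma$ is three-dimensional and the boundary webs $K_{0}$, $K_{1}$ are equipped with their diagram semi-framings, the relative self-intersection $Q(\Sigma,\phi_{0},\phi_{1})$ vanishes (in fact, it is an integer; even zero).

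The key observation is that the diagram semi-framing for a planar web $K\subset\Pi$ is given by the line $W_{\phi}=(\ker d\pi)|_{K}$, i.e.~the constant line bundle pointing in the direction of $\R^{3}$ perpendicular to $\Pi$. I will call this direction $\nu$. Now if $\Sigma\subset [0,1]\times\Pi\subset[0,1]\times\R^{3}$, then at every point $p\in\Sigma$ the vector $\nu$ lies in the 4-dimensional ambient tangent space but is transverse to the 3-dimensional slice $T([0,1]\times\Pi)$ containing $T_{p}\Sigma$. In particular $\nu$ defines a global trivial line subbundle of the ambient tangent bundle along $\Sigma$ which is everywhere normal to $\Sigma$ and which restricts on the boundary webs to precisely $W_{\phi_{0}}$ and $W_{\phi_{1}}$. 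I would take this globally-defined constant line as my candidate extension of $W_{\phi}$ across $\Sigma$.

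To check that this candidate extension satisfies the requirements used in the definition of $Q$, I would verify the seam condition: at any point $p$ of a seam of $\Sigma$, the three incident facets have distinct tangent 2-planes all lying inside the 3-dim subspace $T([0,1]\times\Pi)$, so their common normal line in $T([0,1]\times\R^{3})$ is necessarily the $\nu$-direction. Hence the constant extension agrees with the prescribed extension along seams (and is continuous across tetrahedral points trivially). On each facet $f$ of $\Sigma$, the extension of $W$ across the puncture disk $D_{f}$ is a section of the normal bundle of $f$ that is constantly equal to $\nu$, which has no winding relative to itself, giving $q_{f}(W)=0$. Summing over facets yields $Q(\Sigma,\phi_{0},\phi_{1})=0$, so $2Q$ is certainly even, and the preceding lemma then gives the conclusion.

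I do not anticipate a real obstacle here; the only thing to double-check carefully is that, in the definition of $Q$ for a foam cobordism, the relative obstructions are really being computed inside the full 4-dimensional ambient tangent bundle (so that the out-of-plane direction $\nu$ is legitimately available as a global extension), and that the seam extension in the definition matches my candidate. Both follow directly from rephrasing the closed-case definition from \cite{KM-Tait} as in the discussion preceding \eqref{eq:Q-Sigma}.
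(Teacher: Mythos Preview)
Your proposal is correct and follows essentially the same approach as the paper: the paper simply observes that the line bundle $\ker d(\mathrm{id}\times\pi)$ along $\Sigma$ extends the diagram semi-framings $\phi_0,\phi_1$ on the boundary, which forces $Q(\Sigma,\phi_0,\phi_1)=0$. Your argument spells out the same extension (the constant $\nu$-direction) in more detail, including the verification of the seam condition and the vanishing of each $q_f(W)$.
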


\begin{proof}
    The line bundle $\ker d\pi$ which defines the diagram
    semi-framings $\phi_{0}$, $\phi_{1}$ of $K_{0}$ and $K_{1}$
    extends to a line bundle $\ker d (\mathrm{id} \times \pi)$ along
    the foam $\Sigma$, which shows that
    $Q(\Sigma,\phi_{0},\phi_{1})=0$.
\end{proof}

Either using this lemma, or simply by examining the dimension of
explicit moduli spaces, we can determine the mod 2 grading in the
simplest cases:

\begin{lemma}\label{lem:unknot-theta-even}
    For the unknot and for the theta web with its planar embedding,
    and using the diagram semi-framing,
    the homology $\Lsharp_{*}(K)$ is non-zero only in even grading.
\end{lemma}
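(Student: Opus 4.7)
The plan is to combine two ingredients: (a) show $\Lsharp_*(K)$ is concentrated in a single $\Z/2$ grading in each case, and (b) identify that grading as even by a direct cobordism computation.

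For (a), the representation variety of $(S^3,U_1)^\sharp$ is $\CP^2$ (from the proof of Proposition~\ref{prop:unknot}), and the representation variety of the planar theta web with the atom attached is the complete flag manifold $\SU(3)/T$ (implicit in the proof of Proposition~\ref{prop:theta-web}, which uses the cohomology ring of this flag manifold directly). Both are K\"ahler manifolds whose cohomology is concentrated in even real degrees, so the Morse-Bott spectral sequence for singular instanton homology bounds $\dim\Lsharp(K)$ by the total Betti number of the representation variety. Since Propositions~\ref{prop:unknot} and~\ref{prop:theta-web} give $\dim\Lsharp(U_1)=3$ and $\dim\Lsharp=6$ for the planar theta, these bounds are saturated; the spectral sequence therefore degenerates and places all Floer generators into a single $\Z/2$ grading, inherited from the parity of cohomological degree on the representation variety.

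For (b), I would construct in each case a foam cobordism from $(S^3,\emptyset)$ to $(S^3,K)$ lying entirely in a three-dimensional slab $[0,1]\times\Pi\subset[0,1]\times\R^3$. For the unknot, take $\Sigma=D$, a standard disk in $\Pi$ bounded by $U_1$; for the planar theta, take $\Theta_-$ to be three half-disks in $[0,1]\times\Pi$ meeting along a common vertical arc. The diagram semi-framing $W_\phi$ is the line perpendicular to $\Pi$ in $\R^3$; since this line is constant and normal to the three-dimensional slab, it extends as a constant normal line bundle across $\Sigma$, giving $Q(\Sigma,\phi)=0$. The chain-level map $\Lsharp(\Sigma):\F=\Lsharp(S^3)\to\Lsharp(K)$ counts zero-dimensional moduli spaces, so every contributing critical point $\alpha$ satisfies $\delta(\Sigma,\alpha)=0$, and hence $\gr_\phi(\alpha)=\delta+2Q\equiv 0\pmod 2$. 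Thus $\Lsharp(D)$ and $\Lsharp(\Theta_-)$ lie in $\Lsharp_0$. These are the cyclic generators $\beta_0$ and $\iota$ of Propositions~\ref{prop:unknot} and~\ref{prop:theta-web}, so they are nonzero, and combined with (a) this forces all of $\Lsharp_*(K)$ into even grading.

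The main obstacle is (a): setting up Morse-Bott perturbations in the bifold-with-atom setting and verifying the spectral-sequence degeneration carefully. Once this is in hand, step (b) is a short computation from the definition of $\gr_\phi$ and the cobordism-map construction.
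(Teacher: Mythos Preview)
Your argument is correct, but it takes a longer path than the paper's. The paper's proof uses only Lemma~\ref{lem:3d-foams-even} together with the cyclic module structure already established in Propositions~\ref{prop:unknot} and~\ref{prop:theta-web}: since $\Lsharp(K)$ is spanned by the elements $u^{l}\iota$ (or $u_1^{l_1}u_2^{l_2}\iota$), and each such element is the image of $1\in\Lsharp(S^3)$ under a three-dimensional cobordism decorated with dots, Lemma~\ref{lem:3d-foams-even} applies to every basis element simultaneously. The dot decorations do not affect the parity because they cut down by even-dimensional classes, so $Q$ is still zero. No Morse--Bott input is needed.

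Your step (b) is essentially this same computation applied to the single generator $\iota$; the difference lies in your step (a). Rather than using the fact that the dot operators $u_e$ have even degree (which follows immediately from $\deg\sigma_1=2$), you invoke the Morse--Bott structure of the representation variety and the saturation of the Betti-number bound to conclude that all Floer generators lie in a single $\Z/2$ grading. This works, and it has the conceptual appeal of explaining \emph{why} the grading is concentrated (the representation variety is connected with even-only cohomology), but it requires you to set up and justify the Morse--Bott perturbation and the normal nondegeneracy in the orbifold-with-trifold-atom setting. The paper sidesteps all of this by observing that the entire module is already reached from $\iota$ by even-degree operators, which is a one-line consequence of the module description.
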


\begin{proof}
    For the empty web, the homology is in even grading from the
    definition. For the unknot and the theta web, $\Lsharp_{*}(K)$ is
    generated by 3-dimensional cobordisms from the empty web
    (including decorations by dots), so the
    Lemma~\ref{lem:3d-foams-even} can be applied.
\end{proof}

With Lemma~\ref{lem:3d-foams-even}
as a starting point,  we can determine which of the
maps in the octahedral diagram have even degree and which are odd
(Figure~\ref{fig:L-octahedron}).

\begin{lemma}
    Let $K_{i}$, $L_{i}$, $i=1,2,3$, be webs in an octahedral diagram,
    equipped with diagram semi-framings as implied by the figures.
    Then the cobordism maps in the octahedron have even or odd grading
    as indicated in Figure~\ref{fig:L-octahedron-odd-even}.
\end{lemma}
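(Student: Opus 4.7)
The plan is to apply the preceding lemma directly: for a cobordism $\Sigma$ between planar webs equipped with their diagram semi-framings $\phi_0$ and $\phi_1$, the parity of $\Lsharp(\Sigma)$ is the mod-$2$ reduction of the relative self-intersection number $2Q(\Sigma,\phi_0,\phi_1)$. The problem thus reduces to computing this single mod-$2$ invariant for each of the twelve arrows in the octahedral diagram.

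Every cobordism in the octahedron is a standard local foam: the two boundary webs agree outside a fixed ball $B\subset\R^3$, and the foam is the product foam outside $[0,1]\times B$. Over this product region the diagram semi-framings extend canonically as the pullback of $\ker d\pi$, so the obstruction $q_f(W)$ vanishes on each product facet. The computation of $Q$ is therefore purely local to $[0,1]\times B$, and on each local facet $f$ amounts to measuring the half-integer winding of the tangent plane of $f$ relative to the fixed vertical line bundle $\ker d\pi$ along the boundary.

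First I would dispatch every arrow which admits a realization as a \emph{three-dimensional} foam in $[0,1]\times\Pi$ rel boundary: for these $Q=0$ by Lemma~\ref{lem:3d-foams-even}, and the induced map is even. The simple planar saddles and bigon-contractions inside the octahedron fall into this class. For the remaining arrows, the local foam inevitably contains a half-twisted band or M\"obius-type facet; a single half-twist contributes $1/2$ to $q_f$, and such facets are precisely what renders a map odd. This is illustrated already in Figure~\ref{fig:mobiusshadedwithfeetfoamonedisk}, where the composite $b\cup t\cup q$ contains two M\"obius bands arising from compositions of these arrows, from which one can read off the half-integer contributions facet-by-facet.

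As a cross-check, I would confirm that the resulting parities are consistent with the relations already proved in Theorem~\ref{thm:octahedron-statement}: commutativity of each of the four commutative faces equates the parities of the two composites around it; the two equality-of-composites statements do the same; and exactness of the four triangles imposes further mod-$2$ constraints on triples of arrows. Combined with the baseline even-ness of the empty-to-unknot and empty-to-theta cobordisms from Lemma~\ref{lem:unknot-theta-even}, these constraints pin down the parities uniquely. The main obstacle is purely combinatorial, namely keeping accurate account of the half-twists in all twelve local pictures and reconciling them with the relations of the octahedron; no further analytic input is needed beyond the framing lemma already in hand.
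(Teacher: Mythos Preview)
Your approach is different from the paper's, and as written it has a genuine gap.

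You propose to compute $Q(\Sigma,\phi_0,\phi_1)$ directly for each of the twelve arrows by tracking half-twists on the local facets, using the constraint structure of the octahedron only as a cross-check. This is valid in principle, but you do not actually carry it out: you say the non-three-dimensional foams ``inevitably contain a half-twisted band'' without saying which arrows these are or verifying the count. The proof as written defers the entire content to an unexecuted bookkeeping exercise.

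More seriously, your fallback argument via constraints is missing the key ingredient. You write that ``exactness of the four triangles imposes further mod-$2$ constraints on triples of arrows,'' but exactness by itself imposes no parity constraint: the composite of two consecutive maps is zero, which says nothing about degrees. What the paper uses instead is the non-obvious fact that the \emph{triple} composite around each exact triangle has \emph{odd} degree. This is extracted from the proof of exactness, specifically from the construction of the second chain homotopies $k_i$ in section~\ref{subsec:second-chain}, where the triple composite is shown to be chain-homotopic to the identity via moduli spaces of formal dimension one less than those for the product cobordism. This odd-triple-composite fact is the essential analytic input beyond Lemma~\ref{lem:3d-foams-even}, and your proposal does not mention it.

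The paper then proceeds quite differently from a direct $Q$-computation: having the odd-triple-composite constraint, it \emph{closes up} the local pictures by adding two arcs so that the webs become specific closed webs (theta graphs, unknots, a web with a bridge). In these test cases certain maps are forced to be isomorphisms by exactness, and since the semi-framing parity classes of the resulting closed webs are known, the parities of those maps can be read off from Lemma~\ref{lem:unknot-theta-even}. The remaining parities then follow by propagating through the commutative faces and the odd-triple constraint. This avoids ever computing $Q$ for a non-three-dimensional foam directly.
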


\begin{figure}%[h]
    \begin{center}
        \includegraphics[scale=.46]{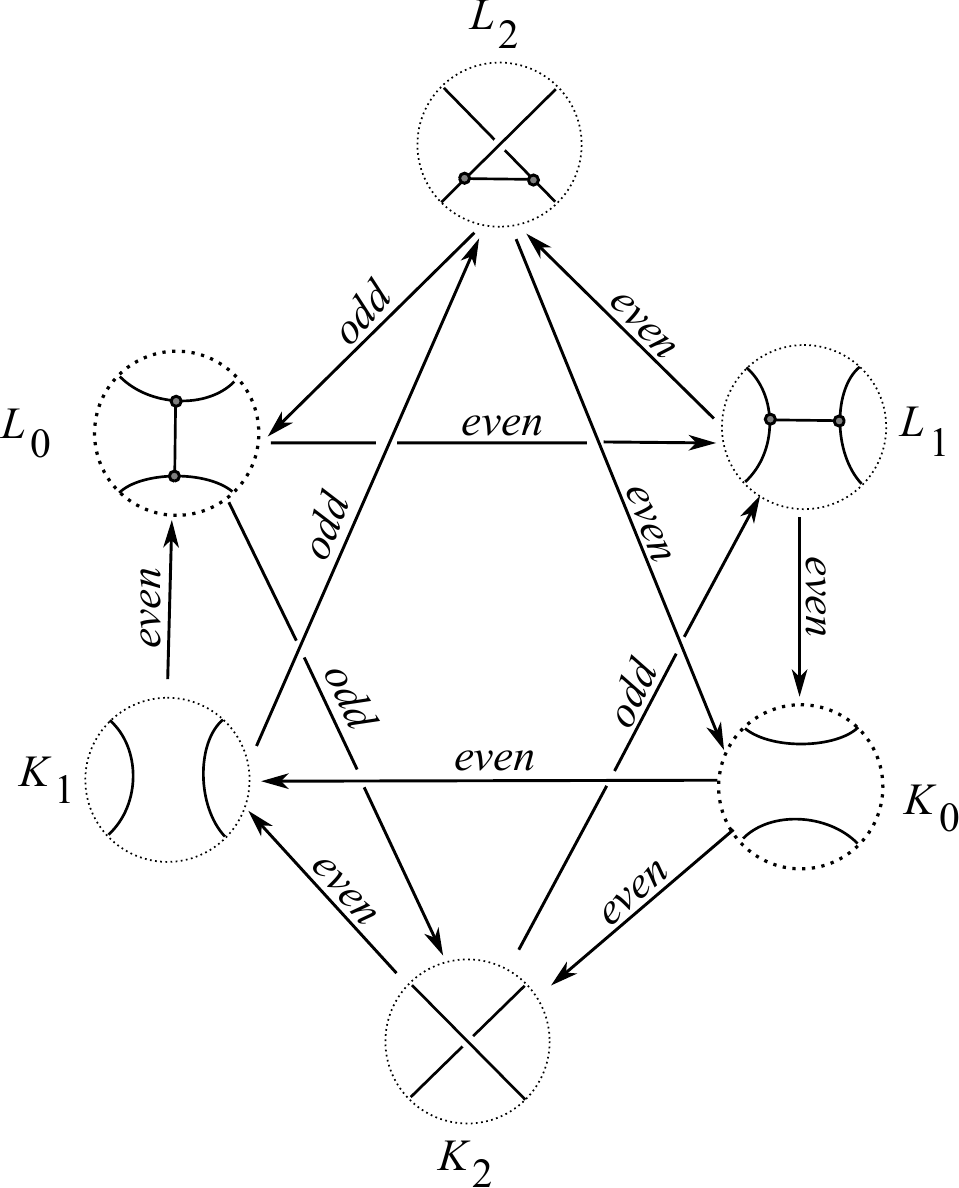}
    \end{center}
    \caption{\label{fig:L-octahedron-odd-even}
    The mod $2$ degrees of the maps when the webs are semi-framed in the
    parity class determined by the planar diagrams.}
\end{figure}

\begin{proof}
    Although the figures in the octahedral diagram only show the parts
    of the web lying in a ball (with the understanding that the webs
    are identical outside), the self-intersection numbers $Q$ of the
    various foams depend only on the non-trivial parts of the
    cobordism, so the question of determining the parity of the
    cobordism maps is well-defined in this context. The diagrams for
    $L_{0}$, $L_{1}$, $K_{0}$ and $K_{1}$ are all planar, and the
    cobordisms between them are 3-dimensional, so by
    Lemma~\ref{lem:3d-foams-even}, these maps are even. This leaves
    only the cobordism maps involving either $L_{2}$ or $K_{2}$.

    In a skein exact triangle, the composite of the three maps always
    has odd degree. This can be seen from the proof of exactness: the
    construction of the second chain homotopies $k_{i}$ in
    section~\ref{subsec:second-chain} shows that the moduli spaces
    along the triple-composite cobordism have formal dimension which
    is one less than the corresponding moduli spaces on the product
    cobordism.

    Looking at the exact triangle involving $L_{0}$, $L_{1}$ and
    $L_{2}$, we therefore learn that exactly on of the two maps
    $L_{1}\to L_{2}$ or $L_{2}\to L_{0}$ has odd degree. Let us
    complete the pictures to closed webs by adding two arcs on the
    left and right, so that $L_{0}$ is a theta web and $L_{1}$ has
    trivial homology because it has a bridge. In this case, the map
    $L_{2}\to L_{0}$ must be an isomorphism, by exactness. Both are
    theta webs, but their planar diagrams give them semi-framings in
    opposite parity classes. The map between them, being an
    isomorphism, must therefore be odd. It follows also that $L_{1}\to
    L_{2}$ is even.

    With the same two arcs added on the outside, both $K_{0}$ and
    $K_{2}$ are unknots, and they have diagram semi-framings in the
    same parity class. From the exact triangle involving $L_{1}$, we
    see that the map $K_{0}\to K_{2}$ is an isomorphism, so it must be
    even. The map $K_{2}\to L_{1}$ must therefore be odd. A similar
    argument involving the triangle $L_{0}$, $K_{1}$ and $K_{2}$
    determines the parity of $K_{2}\to K_{1}$ and $L_{0}\to K_{2}$.
    The parity of the remaining maps is then easily obtained. 
\end{proof}

\subsection{The Euler characteristic}
\label{subsec:euler}

Given a web $K$ with a semi-framing, we have a $\Z/2$ graded vector
space $\Lsharp_{*}(K)$, and we can consider the Euler characteristic
$\chi(\Lsharp_{*}(K)) \in
\Z$.

\begin{lemma}
    The integer $\chi(\Lsharp_{*}(K))$ for semi-framed webs $K$ is
    invariant under isotopy, is multiplicative for disjoint split
    unions, and satisfies the following additional relations for webs
    $K$ when the semi-framing is determined by the planar diagram:
    \begin{enumerate}
    \item \label{item:crossing-change}
    $\mathfig{0.079}{figures/Graph-X} =
    \mathfig{0.079}{figures/Graph-Xdag}$
    \item  \label{item:skein}
    $\mathfig{0.079}{figures/Graph-X} =
    \mathfig{0.079}{figures/Graph-Res1} -
    \mathfig{0.079}{figures/Graph-I}$
    \item  \label{item:whole-twist}
    $\mathfig{0.079}{figures/Graph-arc-twist} =
    \mathfig{0.079}{figures/Graph-arc}$
    \item  \label{item:vertex-twist}
    $\mathfig{0.079}{figures/Graph-Y-twisted} =
   - \mathfig{0.079}{figures/Graph-Y-untwisted}$
   \item \label{item:unknot-3}
   $\mathfig{0.079}{figures/Graph-O} =
   3$
    \end{enumerate}
    These relations completely determine the invariant. Only the
    overall sign of the invariant depends on the semi-framing.
\end{lemma}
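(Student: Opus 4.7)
The plan is to verify each assertion directly from the structural results already established, then note that the skein and unknot relations together with multiplicativity suffice to compute $\chi$ on any semi-framed web. Isotopy invariance and multiplicativity under split disjoint union are immediate from the functoriality of $\Lsharp_{*}$ and the excision isomorphism of Proposition~\ref{prop:excision}; the tensor decomposition respects the absolute $\Z/2$ grading because $\Lsharp(S^{3})=\F$ sits in even degree, so Euler characteristics multiply. Relation~\ref{item:unknot-3}, $\chi(\mathrm{unknot})=3$, combines Proposition~\ref{prop:unknot} with Lemma~\ref{lem:unknot-theta-even}: the three-dimensional space $\Lsharp(U_{1})$ is concentrated in even degree for the planar-diagram semi-framing.

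For the skein identity~\ref{item:skein} I would apply one of the exact triangles of Theorem~\ref{thm:octahedron-statement} to the three webs that appear there locally, namely the crossing and its two resolutions. A long exact sequence of $\Z/2$-graded vector spaces yields an alternating-sum identity for the Euler characteristics, with signs determined by the mod-$2$ degrees of the three maps. These degrees were computed in the preceding subsection and recorded in Figure~\ref{fig:L-octahedron-odd-even}; exactly one of the three maps in the relevant triangle is odd, and this produces the minus sign between the two resolutions in~\ref{item:skein}. The crossing-change identity~\ref{item:crossing-change} then follows because over- and under-crossings have the same pair of resolutions, so~\ref{item:skein} assigns them the same Euler characteristic.

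The whole-twist relation~\ref{item:whole-twist} follows because a full (integer) twist on an edge alters the diagram semi-framing by an integer and therefore preserves its parity class, while the underlying spatial web is unchanged up to isotopy. The vertex-twist relation~\ref{item:vertex-twist} is the one place where a sign genuinely appears: rotating a trivalent vertex by a half turn reverses the distinguished normal orientation $o_{v}$ at that vertex, and by Lemma~\ref{lem:cons-parity} this flips the parity class of the planar-diagram semi-framing. The underlying vector space $\Lsharp(K)$ is unchanged but the absolute $\Z/2$ grading is reversed, so $\chi$ changes sign. The same argument shows that a global change of parity class only multiplies $\chi$ by $\pm 1$, proving the final clause of the lemma.

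To see that the listed relations determine the invariant completely, one resolves crossings repeatedly via~\ref{item:skein} to reduce an arbitrary semi-framed spatial web to a $\Z$-linear combination of semi-framed planar trivalent webs, then uses the skein exact triangles applied to intrinsic edges of the planar web (together with~\ref{item:vertex-twist},~\ref{item:whole-twist} and multiplicativity) to peel off bigons, triangles, and theta-like configurations until only disjoint unions of unknots remain; their Euler characteristics are then fixed by~\ref{item:unknot-3}. The only substantive obstacle in the whole argument is the careful tracking of the mod-$2$ degrees in the octahedron needed to fix the signs in~\ref{item:skein} and~\ref{item:vertex-twist}, and that bookkeeping has already been carried out in the preceding subsection; the remaining verifications are formal consequences of the octahedral exact triangles and the vertex-orientation analysis of Lemma~\ref{lem:cons-parity}.
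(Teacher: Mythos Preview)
Your derivations of \ref{item:skein}, \ref{item:whole-twist}, \ref{item:vertex-twist}, and \ref{item:unknot-3} match the paper's, but your argument for the crossing-change relation \ref{item:crossing-change} has a gap. You write that ``over- and under-crossings have the same pair of resolutions, so \ref{item:skein} assigns them the same Euler characteristic.'' But relation \ref{item:skein} is derived from the single exact triangle $(K_{2},K_{1},L_{0})$, in which $K_{2}$ is one specific crossing; it does not immediately apply to the mirror crossing. If one sets up the corresponding octahedron with the opposite crossing in the $K_{2}$ slot, the webs playing the roles of $K_{1}$ and $L_{0}$ are the $90^{\circ}$-rotated pictures (namely $K_{0}$ and $L_{1}$), so the skein identity one obtains for the opposite crossing is the rotated relation, not literally the same right-hand side. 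The paper closes this gap by invoking the \emph{second} exact triangle $(K_{2},L_{1},K_{0})$ from the octahedron to obtain a companion relation
\[
\mathfig{0.079}{figures/Graph-X} \;=\; -\,\mathfig{0.079}{figures/Graph-H}\; + \;\mathfig{0.079}{figures/Graph-Res0},
\]
and then observes that rotating this by a quarter turn and comparing with \ref{item:skein} yields \ref{item:crossing-change}. Your shortcut skips exactly this use of the second triangle.

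Your argument that the relations determine the invariant is also more roundabout than necessary, and as stated is not obviously complete: after resolving all crossings via \ref{item:skein} you arrive at planar trivalent webs, but reducing these by ``peeling off bigons, triangles, and theta-like configurations'' can stall (a planar cubic graph need not have a face of size $\le 4$; think of the dodecahedron). The paper instead runs \ref{item:skein} in reverse, rewriting any internal edge (the ``$I$'' picture) as a difference of a crossing and a smoothing, thereby reducing the number of vertices by two at each step until none remain; then \ref{item:crossing-change} reduces the resulting link to an unlink. This terminates and uses nothing beyond the listed relations.
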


\begin{proof}
    The relation \ref{item:skein} follows from the exact triangle
    $(K_{2}, K_{1}, L_{0})$ in Figure~\ref{fig:L-octahedron-odd-even},
    now that we know that it is the map $L_{0}\to K_{2}$ that is odd.
    From the exact triangle $(K_{2}, L_{1}, K_{0})$, we similarly
    obtain the following relation for the Euler characteristics:
    \[
         \mathfig{0.079}{figures/Graph-X} =
    -\mathfig{0.079}{figures/Graph-H} +
    \mathfig{0.079}{figures/Graph-Res0}
    \]
    If we rotate the three diagrams in this relation by a quarter turn
    and compare with the relation \ref{item:skein}, we deduce the
    crossing-change relation \ref{item:crossing-change}.

    The twist in the strand in \ref{item:whole-twist} changes the
    semi-framing by an integer, and does not change the parity class,
    hence the equality there. The twist in \ref{item:vertex-twist} on
    the other hand, changes the parity class of the diagram
    semi-framing, so changes the sign of the Euler number. Finally item
    \ref{item:unknot-3} follows from the fact that $\Lsharp_{*}$ has
    rank $3$ in this case and is supported in even grading, by
    Lemma~\ref{lem:unknot-theta-even}.

    These properties characterize this invariant of semi-framed webs,
    because \ref{item:skein} can be used to reduce a general web to a
    knot or link (i.e a web without vertices), and
    \ref{item:crossing-change} can then be used to reduce to the case
    of an unlink. Indeed, the relation \ref{item:vertex-twist} is not
    needed, as it can be deduced from the others.
\end{proof}

The crossing-change relation \ref{item:crossing-change} in the lemma
means that this invariant of semi-framed webs does not depend on the
spatial embedding. This invariant can be found elsewhere in the
literature. One connection is with the \emph{Yamada polynomial} of a
thickened spatial graph \cite{Yamada}. The Yamada polynomial $R[K](A)$
is a finite Laurent series in $A$ associated to a thickened graph in
$3$-space. When restricted to trivalent graphs, and evaluated at
$A=1$, it is an integer invariant $\mathcal{Y}(K)=R[K](1)$ that
satisfies exactly the same relations as $\chi(\Lsharp_{*}(K))$, with
the exception of a change of sign in the skein relation:
 \begin{equation}\label{eq:Y-skein}
 \mathcal{Y} \mathfig{0.079}{figures/Graph-X} =
    \mathcal{Y}\mathfig{0.079}{figures/Graph-Res1} +
    \mathcal{Y}\mathfig{0.079}{figures/Graph-I}.
\end{equation}    
Correcting for the sign, we see from the lemma that
\[
           \chi(\Lsharp_{*}(K)) = (-1)^{n/2}  \mathcal{Y}(K) 
\]
where the even integer $n$ is the number of vertices.

This same integer invariant can be evaluated as a \emph{signed} count
of Tait colorings, as follows. Let $K$ be a web, and let local
orientations $o_{v}$ at the vertices be given by specifying at each
vertex $v$ a cyclic order for the edgelets at that vertex. The local
orientations $o_{v}$ determine a consonant parity class of
semi-framings, by Lemma~\ref{lem:cons-parity}, and hence a
well-defined $\Z/2$ grading on $\Lsharp_{*}(K)$. Given a Tait coloring
$t$ of $K$, the order of the colors $\{1,2,3\}$ at each vertex also
determines a cyclic order of the edgelets. We attach a sign
$\epsilon_{v}(t)=\pm 1$ to each vertex according to whether this
cyclic order agrees with the order determined by $o_{v}$. We then
define an overall sign as the product: \[\epsilon(t)=\prod_{v}
\epsilon_{v}(t). \] The \emph{signed Tait count} for $K$ with the
given local orientations $o_{v}$ is then defined as
\begin{equation}\label{eq:sTait}
         \mathrm{sTait}(K) = \sum_{t} \epsilon(t),
\end{equation}
where the sum is over all Tait colorings $t$. It is not hard to verify
that $\mathrm{sTait}(K)$ satisfies the same relations as
$\mathcal{Y}(K)$, with the only non-trivial one being the skein
relation \eqref{eq:Y-skein}.
So we have
\begin{equation}\label{eq:Yamada-signed-sum}
                \mathcal{Y}(K) = \mathrm{sTait}(K) .
\end{equation}
We draw out the conclusion as a separate proposition:

\begin{proposition}
For a semi-framed web $K$, the Euler characteristic of
$\Lsharp_{*}(K)$ is equal to the signed count of Tait colorings, up to
an overall sign $(-1)^{n/2}$:
\[
\chi(\Lsharp_{*}(K)) = (-1)^{n/2}\, \mathrm{sTait}(K) .
\]
\qed
\end{proposition}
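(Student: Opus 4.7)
The plan is to combine the two ingredients that have just been assembled in the preceding discussion. We already have the identity
\[
\chi(\Lsharp_*(K)) = (-1)^{n/2}\,\mathcal{Y}(K),
\]
where $\mathcal{Y}(K) = R[K](1)$ is the Yamada polynomial evaluated at $A = 1$. It therefore suffices to prove the equality \eqref{eq:Yamada-signed-sum}, namely $\mathcal{Y}(K) = \mathrm{sTait}(K)$. I would do this by verifying that the signed Tait count satisfies the same defining relations as the Yamada polynomial at $A = 1$: the skein relation \eqref{eq:Y-skein}, crossing-change invariance, the whole-arc twist relation, a sign change under a vertex twist, and the normalization $\mathcal{Y}(\text{unknot}) = 3$. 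These relations reduce any semi-framed web first, via the skein relation, to a disjoint union of knots and links, and then, via crossing-change invariance, to a disjoint union of unknots; so once they are checked, an induction on the number of crossings and vertices completes the argument.

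Most of the relations are essentially immediate from the definition \eqref{eq:sTait}. The unknot has no vertex, so $\mathrm{sTait}$ is simply the unsigned count of Tait colorings of a single edge, which is $3$. A crossing carries no vertices and $\mathrm{sTait}$ depends only on the abstract graph together with the local orientations $o_v$, so the crossing-change relation \ref{item:crossing-change} holds trivially. The whole-arc twist \ref{item:whole-twist} likewise changes nothing at the vertices. A vertex twist reverses the cyclic order $o_v$ at a single vertex, which flips the sign $\epsilon_v(t)$ for every Tait coloring $t$, and so multiplies $\mathrm{sTait}$ by $-1$, giving \ref{item:vertex-twist}. Multiplicativity under disjoint unions and isotopy-invariance are clear.

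The main content is the skein relation. I would consider a ball with four outer strands labelled $a,b,c,d$ and analyse the three local tangles $X$, $\mathrm{Res}_1$ and $I$ from \ref{item:skein}. The tangle $X$ forces a pair of equalities among $a,b,c,d$; the tangle $\mathrm{Res}_1$ forces the complementary pair; and $I$ has two new trivalent vertices connected by a short edge whose color $x$ is determined on each side by $\{a,b\}$ and $\{c,d\}$. Fixing the local orientations $o_{v_1}$, $o_{v_2}$ at those vertices in the parity class determined by the planar diagram, I would enumerate the compatible boundary colorings, compute the product $\epsilon_{v_1}\epsilon_{v_2}$ in each case, and verify that the alternating sum in \eqref{eq:Y-skein} matches the count for $X$. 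The expected main obstacle, and the one step that genuinely needs care, is the sign bookkeeping here: one must check that the specific sign in the Yamada skein \eqref{eq:Y-skein} is exactly the sign produced by the two vertex factors $\epsilon_{v_i}$, and not some alternative convention. Once this sign is pinned down for a single model case, the general case of the skein relation follows because it is a local identity.

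Combining the verified relations with the displayed equality $\chi(\Lsharp_*(K)) = (-1)^{n/2}\,\mathcal{Y}(K)$ then gives $\chi(\Lsharp_*(K)) = (-1)^{n/2}\,\mathrm{sTait}(K)$, as required. The only flexibility in the right-hand side is a global sign, arising from whether the chosen parity class of the semi-framing is consonant with the given vertex orientations $o_v$; this matches exactly the fact, recorded in Lemma~\ref{lem:cons-parity}, that changing an $o_v$ flips the parity class and hence the sign of $\chi(\Lsharp_*(K))$.
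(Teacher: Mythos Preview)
Your proposal is correct and follows exactly the route the paper takes: the proposition is stated with a \qed because it is meant to be read off from the preceding discussion, namely the identity $\chi(\Lsharp_*(K)) = (-1)^{n/2}\mathcal{Y}(K)$ together with $\mathcal{Y}(K) = \mathrm{sTait}(K)$, the latter being verified by checking that $\mathrm{sTait}$ satisfies the same characterizing relations as $\mathcal{Y}$. You have simply spelled out in more detail the sign bookkeeping for the skein relation that the paper leaves as ``not hard to verify.''
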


If we restrict out attention to \emph{planar} webs (equipped with
their diagram semi-framings), then there is slightly different set of
relations that characterizes the Euler number. Chasing the octahedral
diagram in Figure~\ref{fig:L-octahedron-odd-even} we obtain the
following relation among the Euler numbers of the four planar diagrams
in the middle of the figure:
\[
                \mathfig{0.079}{figures/Graph-H} -
                \mathfig{0.079}{figures/Graph-I} +
                \mathfig{0.079}{figures/Graph-Res1} 
   - \mathfig{0.079}{figures/Graph-Res0} =0.
\]
This ``Tutte relation'' is also satisfied by the invariant
$\mathrm{Tait}(K)$ which counts Tait colorings of $K$ (without sign).
Indeed, this relation completely characterizes $\mathrm{Tait}(K)$ when
combined with the normalization provided by the value on unlinks the
relation
\[
             \mathfig{0.079}{figures/Lollipop}=0.
\]
See \cite{Tutte,Agol-Krushkal}. So for planar webs we have
\begin{equation}\label{eq:chi-Tait-planar}
      \chi(\Lsharp_{*}(K)) = \mathrm{Tait}(K).
\end{equation}
On the other hand, from Theorem~\ref{thm:planar-tait}, we know that
$\mathrm{Tait}(K)$ is also the \emph{dimension} of $\Lsharp_{*}(K)$ in
the planar case. So we have the following corollary.

\begin{corollary}
     For a planar web with its diagram semi-framing, the homology
     $\Lsharp_{*}(K)$ is supported in the even grading. \qed
\end{corollary}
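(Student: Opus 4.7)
The plan is to combine the two equalities for $\mathrm{Tait}(K)$ that appear just before the corollary. First, Theorem~\ref{thm:planar-tait} gives the total dimension
\[
    \dim_{\F} \Lsharp(K) = \dim_{\F} \Lsharp_{0}(K) + \dim_{\F} \Lsharp_{1}(K) = \mathrm{Tait}(K),
\]
while equation~\eqref{eq:chi-Tait-planar} gives the Euler characteristic
\[
    \chi(\Lsharp_{*}(K)) = \dim_{\F} \Lsharp_{0}(K) - \dim_{\F} \Lsharp_{1}(K) = \mathrm{Tait}(K).
\]
Subtracting these two identities yields $2 \dim_{\F} \Lsharp_{1}(K) = 0$, and since both summands are non-negative integers we conclude $\Lsharp_{1}(K)=0$, which is the statement of the corollary.

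There is really no obstacle here: the whole content of the corollary is the numerical coincidence between the Tait count appearing as dimension (Theorem~\ref{thm:planar-tait}) and the Tait count appearing as signed Euler characteristic (the identification \eqref{eq:chi-Tait-planar} via the Tutte relation and normalization on unlinks). One subtlety worth a brief remark in the write-up is that the Euler characteristic is defined as an integer, $\dim \Lsharp_{0} - \dim \Lsharp_{1}$, even though our coefficients are in $\F$; this is legitimate because the relative $\Z/2$ grading (upgraded to an absolute one using the diagram semi-framing) splits the $\F$-vector space into two graded pieces whose $\F$-dimensions are well-defined non-negative integers. With that understood, the two-line computation above finishes the argument.
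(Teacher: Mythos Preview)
Your argument is correct and is exactly the reasoning the paper intends: the corollary is stated with a bare \qed immediately after noting that $\mathrm{Tait}(K)$ equals both $\chi(\Lsharp_{*}(K))$ (from \eqref{eq:chi-Tait-planar}) and $\dim \Lsharp_{*}(K)$ (from Theorem~\ref{thm:planar-tait}), and your subtraction just makes this explicit.
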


Because we have a formula for $\chi$ in terms of
the signed count or in terms of the absolute count of Tait colorings,
it is apparent that, for planar webs, all the Tait colorings
have the same sign (as can be checked directly):
\[
           \epsilon(t) = (-1)^{n/2}.
\]
In non-planar examples, the signs attached to the Tait colorings in
the sum \eqref{eq:sTait} need not all be equal. An example of this
occurs with the web $K=K_{3,3}$, the complete bipartite graph with $6$
vertices. For this case, for a standard spatial embedding such that a
planar projection has only one crossing, the rank of $\Lsharp(K)$ is
12, which is also the number of Tait colorings. But the Euler
characteristic of $\Lsharp(K)$ is zero: it is easy to verify that the
12 Tait colorings fall into two classes of 6 having opposite signs.
See section~\ref{subsec:K33}.

\section{Further results on foam evaluation}

\subsection{Neck-cutting and bubble bursting}

The following neck-cutting relation is illustrated in
Figure~\ref{fig:neck-cutting}.

\begin{figure}
    \begin{center}
        \includegraphics[scale=0.40]{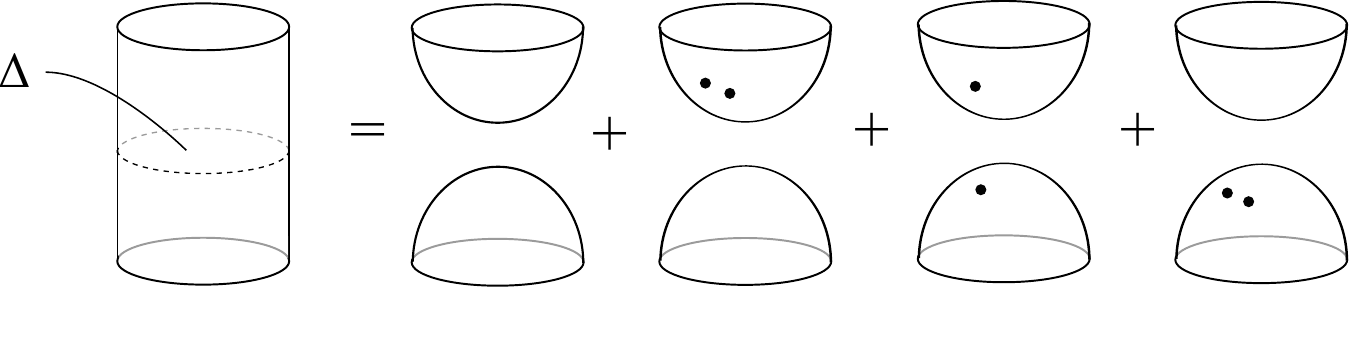}
    \end{center}
    \caption{\label{fig:neck-cutting}
   The neck-cutting relation. The
   disk $\Delta$ is not part of the foam.}
\end{figure}

\begin{proposition}[Neck-cutting]\label{prop:neck-cutting}
    Let $(X,\Sigma)$ be a cobordism defining a morphism in
    $\Cat^{\sharp}$. Suppose that $X$ contains an embedded disk
    $\Delta$ whose boundary lies in the interior of a facet of
    $\Sigma$, which it meets transversely, and whose interior is
    disjoint from $\Sigma$. Suppose that the trivialization of the
    normal bundle to $\Delta$ at the boundary which $\Sigma$
    determines extends to a trivialization over the disk. Let
    $\Sigma'$ be the foam obtained by surgering $\Sigma$ along
    $\Delta$, replacing the annular neighborhood of $\partial \Delta$
    in $\Sigma$ with two parallel copies of $\Delta$. Let
    $\Sigma'(k_{1}, k_{2})$ be obtained from $\Sigma'$ by adding
    $k_{i}$ dots to the $i$'th copy of $\Delta$. Then 
    \begin{equation}\label{eq:neck-cutting}
        \Lsharp(X,\Sigma) = \Lsharp(X, \Sigma'(0,0)) +
        \Lsharp(X,\Sigma'(0,2)) + \Lsharp(X,\Sigma'(1,1)) + \Lsharp(X,\Sigma'(2,0)). 
     \end{equation}
\end{proposition}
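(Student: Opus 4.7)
The plan is to prove the neck-cutting identity by localizing to a neighborhood of $\Delta$ and then invoking the excision-based relation of Corollary~\ref{cor:excision-unlink}. Specifically, let $B \subset X$ be a regular neighborhood of $\Delta$; the normal-bundle trivialization hypothesis ensures that $B$ is a standard $4$-ball in which $\Sigma$ appears as an annular neighborhood $A$ of $\partial \Delta$ on the relevant facet, while each $\Sigma'(k_1, k_2)$ appears instead as two parallel copies $D(k_1, k_2)$ of $\Delta$ (with the specified dots on each copy). All five foams $\Sigma, \Sigma'(0,0), \Sigma'(0,2), \Sigma'(1,1), \Sigma'(2,0)$ agree outside $B$, and their traces on the $3$-sphere $\partial B$ are identical, consisting of a $2$-component unlink $U_2$ (the two boundary circles of $A$, which also bound the two parallel disks). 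Thus, writing $\mathbf{X}'$ for the complement of $B$ regarded as a cobordism with an extra boundary component $\check Q = (S^3, U_2)$, we have $(X,\Sigma) = \mathbf{X}' \cup_{\check Q} A$ and $(X,\Sigma'(k_1,k_2)) = \mathbf{X}' \cup_{\check Q} D(k_1,k_2)$.

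By Corollary~\ref{cor:excision-unlink}, to verify the identity \eqref{eq:neck-cutting} it suffices to check that for every pair $(l_1,l_2) \in \{0,1,2\}^2$, the corresponding closed-foam evaluation vanishes mod $2$:
\[
    \Lsharp\!\bigl(A \cup D(l_1,l_2)\bigr) + \sum_{(k_1,k_2)} \Lsharp\!\bigl(D(k_1,k_2) \cup D(l_1,l_2)\bigr) = 0,
\]
where the sum runs over $(k_1,k_2) \in \{(0,0),(0,2),(1,1),(2,0)\}$. The first term is the evaluation of a single dotted $2$-sphere $S(l_1+l_2)$, while each summand is a product of evaluations of two dotted $2$-spheres $S(k_1+l_1) \cdot S(k_2+l_2)$. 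Hence the entire verification reduces to an elementary combinatorial check using only Proposition~\ref{prop:sphere-with-dots}, which states that $\Lsharp(S(l))=1$ exactly when $l \ge 2$ is even.

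I would then carry out this check as a short case analysis over the nine values of $(l_1,l_2)$, the only non-trivial cases being those where at least one of the arguments lands in the even range $\{2,4\}$; in each such case, the terms $1$ that appear in the sum cancel in pairs mod $2$. For instance, for $(l_1,l_2)=(2,2)$ the contributing terms are $S(4)$ together with $S(2)S(2)$, $S(2)S(4)$, and $S(4)S(2)$, summing to $1+1+1+1=0$ mod $2$; the cases $(0,2)$ and $(1,1)$ are handled similarly. The remaining cases all reduce to sums of zeros.

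The framework (localization plus Corollary~\ref{cor:excision-unlink}) is completely standard given the earlier development, so no new analytic input is required, and the only subtlety is the bookkeeping in the final verification; I anticipate no serious obstacle. The normal-bundle framing hypothesis is used precisely to guarantee that the two ``parallel copies of $\Delta$'' are canonically defined and that the local model inside $B$ is the product of a disk and an annulus/pair-of-disks, so that the dot-placement on each copy is unambiguous.
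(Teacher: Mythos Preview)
Your approach is essentially identical to the paper's own proof: localize to a ball around $\Delta$ so that the five foams differ only in the pieces $\mathbf{P}_0 = A$ and $\mathbf{P}_i = D(k_1,k_2)$ with common boundary $(S^3, U_2)$, apply Corollary~\ref{cor:excision-unlink}, and then verify the resulting closed-foam identities using Proposition~\ref{prop:sphere-with-dots}. The paper omits the explicit case-by-case check that you sketch, but otherwise the arguments coincide.
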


\begin{proof}
    The four morphisms in $\Cat^{\sharp}$ that appear in the formula
    are obtained by local modifications inside a ball, so the formula
    fits into the framework of the excision principle,
    Proposition~\ref{prop:excision-local}, with five local pieces
    $\mathbf{P}_{0}$, $\mathbf{P}_{1}$, \dots, $\mathbf{P}_{4}$. The
    first, $\mathbf{P}_{0}$, is the bifold corresponding to an annulus
    with boundary $U_{2}$, and the other four are all pairs of disks
    with dots, with the same boundary $U_{2}$. We can apply
    Corollary~\ref{cor:excision-unlink} directly, which leads us to
    consider the sum of evaluations of the closed foams
    \[
               \sum_{i_{0}}^{4}\Lsharp(\mathbf{P}_{i} \cup D(l_{1},
               l_{2})).
    \]
    This sum is zero for all $l_{1}$, $l_{2}$, as follows from
    Proposition~\ref{prop:sphere-with-dots}. The formula
    \eqref{eq:neck-cutting} therefore follows.     
\end{proof}

As one of several possible applications of the neck-cutting relation,
we single out this one as particularly useful. See \cite[Proposition
6.3]{KM-Tait}.

\begin{proposition}[Bubble-bursting]\label{prop:bubble-bursting}
    Let $D$ be an embedded disk in the interior of a facet of a foam
    $\Sigma\subset X$. Let $\gamma$ be the boundary of $D$, and let
    $\Sigma'$ be the foam $\Sigma' = \Sigma\cup D'$, where $D'$ is a
    second disk meeting $\Sigma$ along the circle $\gamma$, so that
    $D\cup D'$ bounds a $3$-ball. Let $\Sigma'(k)$ denote $\Sigma'$ with
    $k$ dots on $D'$. Let $\Sigma(k)$ denote $\Sigma$ with $k$ dots on
    $D$. Then we have
     \[
               \Lsharp(\Sigma'(k)) = \Lsharp(\Sigma(k-1))
     \]
      for $k=1$ or $2$, and $\Lsharp(\Sigma'(0))=0$. Furthermore
      $\Lsharp(\Sigma'(k+2))=\Lsharp(\Sigma'(k))$ for $k\ge 1$. 
\end{proposition}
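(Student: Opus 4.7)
The plan is to deduce all of the statements from one application of the excision principle, Corollary~\ref{cor:excision-unlink}, together with the theta-foam evaluations in Proposition~\ref{prop:theta-evaluation} and the evaluations of the dotted $2$-sphere in Proposition~\ref{prop:sphere-with-dots}.

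Choose a $4$-ball $B\subset X$ containing the $3$-ball bounded by $D\cup D'$, with $\partial B$ transverse to both $\Sigma$ and $\Sigma'$ and meeting each in a single unknotted circle $C$, the boundary of the disk $F\cap B$, where $F$ is the facet of $\Sigma$ containing $D$. This exhibits $(X,\Sigma'(k))$ and $(X,\Sigma(k-1))$ as a common exterior $\check X'$ glued along $\check Q=(S^{3},U_{1})$ to local fillings in $B$: namely $\mathbf{P}_{k}$, consisting of $F\cap B$ together with the dome $D'$ attached along $\gamma$ and carrying $k$ dots on $D'$; and $\mathbf{Q}_{k-1}$, consisting of $F\cap B$ alone carrying $k-1$ dots on $D$.

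The key geometric observation is the following. Capping $\mathbf{Q}_{k-1}$ by the standard dotted disk $D(l)$ produces the dotted $2$-sphere $S(k-1+l)$, while capping $\mathbf{P}_{k}$ by $D(l)$ produces the theta foam $\Theta(0,l,k)$: the circle $\gamma$ divides the $2$-sphere $(F\cap B)\cup D(l)$ into the small cap $D$ (with no dots) and the complementary larger cap (with $l$ dots), while $D'$ is the third facet meeting the seam $\gamma$ and carries $k$ dots. By Corollary~\ref{cor:excision-unlink}, the proposition then reduces to closed-foam identities which are verified for $l=0,1,2$ directly from Propositions~\ref{prop:theta-evaluation} and~\ref{prop:sphere-with-dots}: $\Lsharp(\Theta(0,l,0))=0$ for all $l$ (since the multiset $\{0,l,0\}$ has too many zeros to match $\{0,1+2m,2+2n\}$), giving $\Lsharp(\Sigma'(0))=0$; $\Lsharp(\Theta(0,l,k))=\Lsharp(S(k-1+l))$ for $k=1,2$ (both sides being nonzero only when $\{l,k\}=\{1,2\}$), giving $\Lsharp(\Sigma'(k))=\Lsharp(\Sigma(k-1))$; and $\Lsharp(\Theta(0,l,k+2))=\Lsharp(\Theta(0,l,k))$ for $k\ge 1$, which is immediate from Proposition~\ref{prop:theta-evaluation} because for $l\in\{0,1,2\}$ the theta evaluation depends only on the parity of $k$ and on $k$ being nonzero.

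I do not foresee any serious obstacle; the step requiring a moment of care is the identification of $\mathbf{P}_{k}\cup D(l)$ as the theta foam $\Theta(0,l,k)$ with dots in the claimed positions. As an aside, the periodicity could alternatively be derived from the Xie relation $\sigma_{1}(e)^{3}+\sigma_{1}(e)=0$ of Lemma~\ref{lem:Xie-rel-mod-2} applied to a dot on $D'$, but the uniform theta-foam treatment is cleaner.
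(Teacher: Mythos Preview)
Your argument is correct. The geometric identification of $\mathbf{P}_{k}\cup D(l)$ with $\Theta(0,l,k)$ is right, and the closed-foam identities you list all check out against Propositions~\ref{prop:sphere-with-dots} and~\ref{prop:theta-evaluation}.

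Your route differs slightly from the paper's. The paper first proves the neck-cutting relation (Proposition~\ref{prop:neck-cutting}) as a consequence of Corollary~\ref{cor:excision-unlink}, and then applies neck-cutting along a circle $\gamma_{1}$ parallel to $\gamma$ but lying \emph{outside} the bubble; the surgered foam splits off a theta-foam factor, yielding a four-term formula
\[
\Lsharp(\Sigma'(k)) = \sum \Lsharp(\Sigma(i))\cdot\Lsharp(\Theta(k,0,j))
\]
which is then evaluated case by case, with the final periodicity derived from the Xie relation. You instead apply Corollary~\ref{cor:excision-unlink} directly, with the ball enclosing the bubble, and compare the two local fillings by pairing with $D(l)$. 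This is a bit more direct: it bypasses the neck-cutting proposition as an intermediate lemma and handles all three statements, including the periodicity, uniformly via theta-foam evaluations. The paper's approach, on the other hand, factors through neck-cutting as a reusable tool and makes the dependence on $\Sigma$ explicit as a linear combination of $\Lsharp(\Sigma(i))$. Both are short; yours is marginally cleaner for this particular statement.
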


\begin{proof}
    Let $\gamma_{1}$ be a circle parallel to $\gamma$ in
    $\Sigma\setminus D$ lying outside the ball bounded by $D$ and
    $D'$. Apply the neck-cutting relation to the surgery of $\Sigma'$
    along an auxiliary disk $\Delta$ with boundary $\gamma_{1}$. The
    surgered foam is the disjoint union of a foam isotopic to $\Sigma$
    with theta foam. The neck-cutting relations provides the relation
    \[
    \begin{aligned}
    \Lsharp(\Sigma'(k)) =
            \Lsharp(\Sigma(0))& \cdot \Lsharp(\Theta(k,0,0)) +
            \Lsharp(\Sigma(2))\cdot \Lsharp(\Theta(k,0,0)) \\ & \null +
            \Lsharp(\Sigma(1))\cdot \Lsharp(\Theta(k,0,1)) +
            \Lsharp(\Sigma(0))\cdot \Lsharp(\Theta(k,0,2)) .
            \end{aligned}
    \]
    The result follows by examining this formula in the cases
    $k=0,1,2$ using Proposition~\ref{prop:theta-evaluation}. The last
    sentence follows from Proposition~\ref{lem:Xie-rel-mod-2}.
\end{proof}

\subsection{Evaluation of some standard closed surfaces}

We consider the effect of changing a foam $\Sigma$ by forming a
connected sum with a standard surface contained in $S^{4}$. We begin
with a torus.

\begin{proposition}
    \label{prop:torus-sum} Let $\check X = (X, \Sigma)$ be a bifold
    cobordism, and let $\bX$ be $\check X$ with any decoration by
    dots. Let $\Sigma'$ be obtained as the internal connected sum
    $\Sigma\# T^{2}$ with a standard $2$-torus, at a point $x$ on a
    facet of $\Sigma$. Let $\bX'$ be the corresponding decorated
    bifold. Then, as linear maps, we have
    \[
                 \Lsharp( \bX' ) = \Lsharp(\bX(\mu^{2}+1)), 
    \]
    where, on the right-hand side, $\bX(\mu^{2})$ for example denotes
    $\bX$ with decoration by two additional dots on the facet of
    $\Sigma$ where $x$ is.
\end{proposition}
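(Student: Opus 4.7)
The plan is to apply the neck-cutting relation (Proposition~\ref{prop:neck-cutting}) with the cutting disk $\Delta$ chosen to bound a meridian of the torus summand. Concretely, view $\Sigma' = \Sigma \# T^{2}$ as obtained by excising a small disk $D_{x}$ from the facet near $x$ and gluing in a once-punctured torus $T' = T^{2}\setminus D$ inside a small ball $B \subset X$ disjoint from the singular set of $\Sigma$. Choose a meridian $\gamma$ of $T'$ (which bounds a disk in $B$) and let $\Delta \subset B$ be such a disk, with interior disjoint from $\Sigma'$. Since everything takes place in a ball, the normal framing of $\Delta$ along $\gamma$ determined by $T'$ extends across $\Delta$, so the hypotheses of Proposition~\ref{prop:neck-cutting} are satisfied.

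The key geometric step is identifying the surgered foam. Cutting $T'$ along an annular neighborhood of $\gamma$ turns the once-punctured torus into a pair of pants (a sphere with three open disks removed), and the neck-cutting operation caps off two of its three boundary circles with parallel copies $\Delta_{1}$ and $\Delta_{2}$ of $\Delta$; the remaining boundary circle is $\partial D_{x}$. The resulting composite $2$-disk (pair of pants $\cup\, \Delta_{1} \cup \Delta_{2}$) is isotopic rel boundary to $D_{x}$ inside $B$, so the surgered foam is isotopic to $\Sigma$. Under this isotopy, the dots carried by $\Delta_{1}$ and $\Delta_{2}$ become dots on the single facet of $\Sigma$ containing $x$. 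Since dots on a single facet commute and each acts on $\Lsharp$ as the operator $\mu = \sigma_{1}(e)$, the configuration $\Sigma'(k_{1},k_{2})$ is equivalent to $\Sigma$ decorated by the insertion $\mu^{k_{1}+k_{2}}$.

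Summing the four terms of equation~\eqref{eq:neck-cutting} over $(k_{1},k_{2}) \in \{(0,0),(0,2),(1,1),(2,0)\}$ and working mod~$2$ gives
\[
\Lsharp(\bX') = \Lsharp(\bX) + 3\,\Lsharp(\bX(\mu^{2})) = \Lsharp(\bX) + \Lsharp(\bX(\mu^{2})) = \Lsharp\bigl(\bX(\mu^{2}+1)\bigr),
\]
which is the claimed formula. The only step requiring genuine geometric care is the isotopy from the pair-of-pants-plus-two-caps back to the original disk $D_{x}$ in $B$; once that is granted the argument reduces to an arithmetic count of dotted terms in $\F$.
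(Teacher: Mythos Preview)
Your proof is correct and follows essentially the same approach as the paper: apply neck-cutting along a disk bounded by an essential curve on the $T^{2}$ summand, observe that the surgered foam is isotopic to the original $\Sigma$ with both new disks lying on the same facet, and then sum the four dotted terms to obtain $1+3\mu^{2}=\mu^{2}+1$ in $\F$. Your write-up is somewhat more explicit about the intermediate pair-of-pants picture, but the argument is the same.
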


\begin{proof}
 This is a consequence of neck-cutting. Surgery on a disk $\Delta$
 with boundary on an essential curve in the $T^{2}$ changes $\bX'$
 back to $\bX$. In the neck-cutting formula \eqref{eq:neck-cutting},
 three of the terms contribute the $\mu^{2}$ term, while the first
 term on the right of \eqref{eq:neck-cutting} contributes the $+1$.    
\end{proof}

Next we examine the connected sum with the $R\subset S^{4}$, the
standard copy of $\RP^{2}$ with $R\ccdot R=2$ considered in
section~\ref{subsec:RP2-foams}.

\begin{proposition}
\label{prop:R-sum}
Let  $\check X = (X, \Sigma)$ be a bifold cobordism, and let $\bX$
    be $\check X$ with any decoration by dots. Let $\Sigma'$ be
    obtained as the internal connected sum $\Sigma\# R$ with $R$, the
    standard $\RP^{2}$ with $R\ccdot R =2$. Then we have
    \[
     \Lsharp( \bX' ) = \Lsharp(\bX).
    \]    
\end{proposition}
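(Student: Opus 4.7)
The plan is to apply the excision principle in the form of Corollary~\ref{cor:excision-unlink}. Write $\bX'$ and $\bX$ as $\bX_{\mathrm{rest}} \cup \mathbf{P}$ where $\bX_{\mathrm{rest}}$ is the common piece outside a small $4$-ball about the connected-sum point (with boundary $\check Q = (S^{3}, U_{1})$), and $\mathbf{P}$ is either the M\"obius band $M = R \setminus B^{4}$ (giving $\bX'$) or the standard disk $D \subset B^{4}$ (giving $\bX$). Both have boundary the unknot $U_{1}$ in $S^{3}$. The excision corollary reduces the desired equality $\Lsharp(\bX') = \Lsharp(\bX)$ to the equality of closed-foam evaluations
\[
    \Lsharp\bigl(M \cup_{\check Q} D(l)\bigr) =
    \Lsharp\bigl(D \cup_{\check Q} D(l)\bigr)
\]
for $l \in \{0, 1, 2\}$, where $D(l)$ denotes the standard dotted disk with boundary $U_{1}$.

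The right-hand side evaluates the dotted $2$-sphere $S(l)$ and is $0, 0, 1$ respectively by Proposition~\ref{prop:sphere-with-dots}. The left-hand side evaluates $R$ with $l$ dots, since capping the M\"obius band $M$ by the standard disk reconstructs the original $\RP^{2}$ with self-intersection $+2$. So the remaining task is to compute $\Lsharp(R(l))$ and verify it matches $0, 0, 1$. The dimension formula \eqref{eq:dim-closed-foam} gives $\dim M_{\kappa} = 12\kappa + R\ccdot R + 2\chi(R) - 0 = 12\kappa + 4$, so the zero-dimensional transverse intersection condition \eqref{eq:dim-zero-l} forces $\kappa = (l-2)/6$. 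Positivity of $\kappa$ rules out $l = 0$ and $l = 1$, leaving only $l = 2$ with $\kappa = 0$.

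The case $l = 2$ is where the real content lies. The plan is to identify the $4$-dimensional moduli space of flat bifold connections on $(S^{4}, R)$ with $\CP^{2} = \SU(3)/U(2)$, namely the $\SU(3)$-orbit of the bifold involution $\mathrm{diag}(1,-1,-1)$, exactly parallel to the sphere case in Proposition~\ref{prop:sphere-with-dots}. Regularity of this flat solution is the $n = 0$ case of Lemma~\ref{lem:smallest-action-R}, and compactness at $\kappa = 0$ is automatic since bubbling requires strictly positive action. On this $\CP^{2}$ the class $\sigma_{1}(x)$ pulls back to the first Chern class of the tautological line bundle, so $\sigma_{1}^{2}$ evaluates to $1$. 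This matches the sphere calculation across all three values of $l$, and so Corollary~\ref{cor:excision-unlink} gives the result. The only non-formal ingredient is the identification of the flat moduli space and the transversality assertion, both inherited from Lemma~\ref{lem:smallest-action-R}.
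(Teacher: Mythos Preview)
Your argument is correct, but it takes a different route from the paper's.

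The paper proves this directly by a neck-stretching (gluing) argument: by Lemma~\ref{lem:smallest-action-R}, the smallest-action moduli space on $(S^{4},R)$ is a single unobstructed flat connection with stabilizer $U(2)$, and since $U(2)$ is also the stabilizer of the flat bifold connection on the separating $(S^{3},U_{1})$, the gluing parameter is trivial and the moduli spaces defining $\Lsharp(\bX')$ and $\Lsharp(\bX)$ are identified when the neck is long. No closed-foam evaluation of $R$ with dots is needed.

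Your approach instead feeds the problem through the excision machinery (Corollary~\ref{cor:excision-unlink}), reducing to the computation of $\Lsharp(R(l))$ for $l=0,1,2$, which you then carry out in exact parallel to Proposition~\ref{prop:sphere-with-dots}. This works because the orbifold fundamental group of $(S^{4},R)$ is again $\Z/2$ generated by the meridian, so the framed $\kappa=0$ moduli space is the same $\CP^{2}$ as for the sphere, with $\sigma_{1}$ again the tautological class. One small imprecision: Lemma~\ref{lem:smallest-action-R} literally concerns the \emph{unframed} moduli space (one point, formal dimension $-4$); the regularity of the framed $\CP^{2}$ you actually use follows from the unobstructedness asserted there together with the non-degeneracy of the atom, via gluing. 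This is routine but worth saying.

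The paper's argument is shorter and fits the pattern used for the other $\Psi_{n}$ connected sums (Propositions~\ref{prop:sum-with-Psi-2-3}--\ref{prop:face-sums}). Your argument is more modular, produces the evaluation $\Lsharp(R(l))$ as a byproduct, and avoids invoking gluing theory directly --- at the cost of redoing the identification of the framed flat moduli space.
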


\begin{proof}
    This can be proved by a standard connected-sum argument, by
    stretching the neck where the sum is made. According to
    Lemma~\ref{lem:smallest-action-R}, there is a unique flat
    connection on the bifold $(S^{4}, R)$ which is an unobstructed
    solution with stabilizer $U(2)$. The group $U(2)$ is also the
    stabilizer of the flat connection on $(S^{3}, U(1))$, so when the
    neck is stretched, the local model for the moduli space on $\bX'$
    is the same as the moduli space on $\bX$. 
\end{proof}

An indirect argument now allows us to analyze the case of a sum with
the mirror-image copy of $\RP^{2}$, namely the standard copy
$R_{-}\subset S^{4}$ with self-intersection $-2$ (so that the branched
cover is $\CP^{2}$).

\begin{proposition}
\label{prop:R-minus-sum}
Let  $\check X$ and $\bX$
    be as above. Let $\Sigma'$ be
    obtained as the internal connected sum $\Sigma\# R_{-}$. Then we have
    \[
     \Lsharp( \bX' ) = \Lsharp(\bX(\mu^{2}+1)),
    \]
    where again $\mu$ is a dot on the facet of\/ $\Sigma$ where the sum
    is made.
\end{proposition}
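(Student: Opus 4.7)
The plan is to combine Prop~\ref{prop:R-sum} with a neck-cutting on an auxiliary Klein bottle.  First I would attach a trivial copy of $R$ to $\bX\#R_-$ at a point on the facet of the attached $R_-$.  By Prop~\ref{prop:R-sum} this gives
\[
\Lsharp(\bX\#R_-) \;=\; \Lsharp\bigl((\bX\#R_-)\#R\bigr) \;=\; \Lsharp(\bX\#K),
\]
where $K = R_-\#R$ is a closed Klein bottle sitting in a $4$-ball attached to $\bX$ at a single facet point, with self-intersection
\[
K\ccdot K \;=\; R_-\ccdot R_- + R\ccdot R \;=\; -2+2 \;=\; 0.
\]
The problem is thus reduced to evaluating $\Lsharp(\bX\#K)$.

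Next I would apply Prop~\ref{prop:neck-cutting} to $K$.  Viewed as an $S^1$-bundle over $S^1$, the Klein bottle has an essential $2$-sided non-separating fiber curve $\gamma$, and cutting $K$ along $\gamma$ yields a cylinder.  I would choose the embedding $K\subset S^4$ so that $\gamma$ bounds an embedded disk $\Delta$ in $S^4$ whose interior is disjoint from $K$ and from the rest of the foam, so that the hypotheses of Prop~\ref{prop:neck-cutting} are satisfied.  Surgery along $\Delta$ replaces the Klein bottle by a $2$-sphere $S^2$ (the cylinder capped off by two parallel copies of $\Delta$).

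The key simplification now is that the connected sum of a foam with a $2$-sphere at a facet point is trivial, with dots on the sphere transferring to the attaching point on the facet; so $\Lsharp(\bX\#S^2(l)) = \Lsharp(\bX(\mu^l))$, where $\mu$ denotes the dot operator at the attaching point.  The neck-cutting formula then gives
\[
\Lsharp(\bX\#K) \;=\; \sum_{(k_1,k_2)} \Lsharp\bigl(\bX(\mu^{k_1+k_2})\bigr) \;=\; \Lsharp(\bX) + 3\,\Lsharp(\bX(\mu^2)),
\]
summed over $(k_1,k_2)\in\{(0,0),(0,2),(1,1),(2,0)\}$.  Modulo $2$ this is $\Lsharp(\bX)+\Lsharp(\bX(\mu^2)) = \Lsharp\bigl(\bX(\mu^2+1)\bigr)$, and combining with the first step yields the claim.

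The main obstacle will be verifying that the $2$-sided curve $\gamma$ on $K\subset S^4$ can be realized so as to bound an embedded disk in $S^4$ disjoint from $K$; since $K$ is non-orientable in a $4$-manifold, general position alone is not enough.  The plan is to arrange the $R_-$ and $R$ summands in specific standard positions in $S^4$ so that a suitable fiber curve $\gamma$ appears as an unknotted circle bounding a standard meridional disk away from the rest of the foam, at which point all the hypotheses of Prop~\ref{prop:neck-cutting} are visibly satisfied.
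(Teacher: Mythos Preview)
Your approach is correct but takes a different route from the paper's.  Both arguments begin by invoking Proposition~\ref{prop:R-sum}, but then diverge.

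You attach a single copy of $R$, so that $\bX\#R_-$ becomes $\bX\#K$ with $K=R\#R_-$ a Klein bottle of self-intersection $0$, and then apply neck-cutting directly to $K$ along a two-sided non-separating curve, reducing to a sphere with dots; the arithmetic then gives $\mu^2+1$ as you compute.  The paper instead attaches \emph{two} copies of $R$, writing $\bX\#R_-\simeq(\bX\#R)\#(R\#R_-)=(\bX\#R)\#K$.  The purpose of the extra $R$ is that the facet of $\Sigma\#R$ on which the Klein bottle sits is now non-orientable, and on a non-orientable facet an internal connected sum with a Klein bottle is ambient-isotopic to one with a torus (drag the attaching tube once around an orientation-reversing loop).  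So $(\bX\#R)\#K\simeq(\bX\#R)\#T^2$, and the result follows from Proposition~\ref{prop:torus-sum} and one more application of Proposition~\ref{prop:R-sum}.

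The advantage of the paper's detour is precisely that it bypasses the obstacle you flag: there is no need to exhibit a specific curve $\gamma$ on $K\subset S^4$ and verify both the disjoint-disk and framing hypotheses of Proposition~\ref{prop:neck-cutting}.  Your plan to do this by placing $R_-$ and $R$ in explicit positions is reasonable and can be carried out, but the non-orientable-facet trick trades that verification for a standard fact from surface topology and makes the argument noticeably shorter.
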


\begin{proof}
       Because of Proposition~\ref{prop:R-sum}, we can equivalently
       try and compute for the foam $\Sigma\# R \# R \# R_{-}$, which
       is a sum of $\Sigma\# R$ and Klein bottle. Because $\Sigma \#
       R$ has a non-orientable facet where the sum is made, the 
       internal connected sum of $\Sigma\# R$ with a Klein bottle is
       the same as the sum with a torus, $\Sigma\# R \# T^{2}$, by an
       ambient isotopy. This
       last foam we can compute using Proposition~\ref{prop:R-sum} and
       Proposition~\ref{prop:torus-sum}, which gives the result.
\end{proof}

As a special case of these formulae for connected sums, we can compute
the evaluation $\Lsharp$ for standard closed surface in $S^{4}$,
either a standard orientable surface of genus $g$ formed as a sum of
unknotted tori,
\[
               \Sigma_{g} = \#_{g} T^{2},
\]
or a sum of copies of $R=R_{+}$ and $R_{-}$,
\[
                S_{a,b} =  ( \#_{a} R_{+}) \; \# \; (\#_{b} R_{-} ).
\]
We already know that for the sphere $\Sigma_{0}$ decorated with $k$
dots, the evaluation is zero for $k=0$ or $k$ odd, and is $1$ for even
$k\ge 2$. The above three propositions easily reduce the general
case to the case of $\Sigma_{0}$, and we obtain:

\begin{corollary}
    For the orientable surface $\Sigma_{g}$ with $g\ge 1$, or for the
    surface $S_{a,b}$ with $b\ge 1$, decorated with $k$ dots,
    the evaluation of $\Lsharp$ is
    $1$ for $k=0$ and zero otherwise. For the surface $S_{a,0}$ with
    $k$ dots, the evaluation is $1$ for even $k\ge 2$ and zero
    otherwise.\qed
\end{corollary}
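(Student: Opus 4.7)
The plan is to reduce every evaluation in the corollary to the sphere evaluation of Proposition~\ref{prop:sphere-with-dots} by repeatedly applying the three connected-sum formulas just established (Propositions~\ref{prop:torus-sum}, \ref{prop:R-sum}, and \ref{prop:R-minus-sum}). Each of the surfaces $\Sigma_g$ and $S_{a,b}$ has a single facet, so any configuration of $k$ dots on it represents the same element regardless of how the dots are distributed among the connected summands; this lets us consolidate dots freely as we strip summands off.

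First I would dispose of $S_{a,0} = \#_{a}R$ by applying Proposition~\ref{prop:R-sum} $a$ times, stripping off one copy of $R$ per step and leaving the evaluation unchanged. This reduces to $\Lsharp(\Sigma_{0}(k)) = \Lsharp(S(k))$, which by Proposition~\ref{prop:sphere-with-dots} is $1$ for even $k\ge 2$ and $0$ otherwise, matching the statement.

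For $\Sigma_g$ with $g\ge 1$, I would write $\Sigma_g = \Sigma_0 \# T^2 \# \cdots \# T^2$ and apply Proposition~\ref{prop:torus-sum} iteratively, each application of the operator $(\mu^{2}+1)$ expanding into a binomial sum on the sphere. The result is
\[
   \Lsharp(\Sigma_g(k)) = \sum_{i=0}^{g} \binom{g}{i}\,\Lsharp(S(k+2i)) \pmod 2.
\]
Now one evaluates by cases using Proposition~\ref{prop:sphere-with-dots}. For $k$ odd every term vanishes. For $k=0$, the $i=0$ term vanishes while the remaining $g$ terms all equal $1$, giving $2^{g}-1\equiv 1\pmod 2$ for $g\ge 1$. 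For $k\ge 2$ even, every term equals $1$, giving $2^{g}\equiv 0\pmod 2$ for $g\ge 1$. The case $S_{a,b}$ with $b\ge 1$ is then immediate: Proposition~\ref{prop:R-sum} kills the $a$ copies of $R$ (reducing to $\#_b R_-$ with $k$ dots), and iterating Proposition~\ref{prop:R-minus-sum} on the $b$ copies of $R_-$ produces exactly the same binomial sum with $b$ in place of $g$, so the arithmetic just carried out applies verbatim.

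I do not anticipate any real obstacle, since the geometric content has already been packaged into the three connected-sum propositions. The only subtlety worth flagging is that the distinction between the value $1$ at $k=0$ and the value $0$ at $k\ge 2$ even is driven solely by the parity difference between $2^{g}-1$ and $2^{g}$; this in turn depends on the vanishing $\Lsharp(S(0))=0$ in Proposition~\ref{prop:sphere-with-dots}, which is precisely what removes the $i=0$ term from the $k=0$ sum.
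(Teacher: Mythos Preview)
Your proposal is correct and follows exactly the approach the paper indicates: reduce everything to the sphere via Propositions~\ref{prop:torus-sum}, \ref{prop:R-sum}, and \ref{prop:R-minus-sum}, then apply Proposition~\ref{prop:sphere-with-dots}. The paper in fact omits all the details you have supplied (it simply asserts that the three propositions ``easily reduce the general case to the case of $\Sigma_{0}$''), so your binomial computation and parity analysis are a useful elaboration rather than a departure.
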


\subsection{Bigons, triangles and squares}

Neck cutting and bubble bursting, and our calculations for the sphere,
the theta web and the tetrahedral web
have applications to the calculation of $\Lsharp$ for webs $K$
containing a bigon, a triangle, or a square. These results, carried
over from the $\SO(3)$ case in \cite{KM-Tait}, have algebraic
counterparts for the web homology defined via foam evaluation in
\cite{Khovanov-Robert} (see Propositions~3.13--3.15 in
\cite{Khovanov-Robert}). In the case of the bigon and square
relation, these relations appear in \cite{Khovanov-SL3}.

The proofs for $\Lsharp$ can be written
so as to follow the
$\SO(3)$ case from \cite{KM-Tait} with little modification, but can
be simplified a little in the present context, using the exact
triangle and our knowledge of planar webs. (The exact triangle for
$\Jsharp$ was not used in the proof of the corresponding results in
\cite{KM-Tait}.) We briefly summarize these results.

\begin{figure}
    \begin{center}
        \includegraphics[scale=0.50]{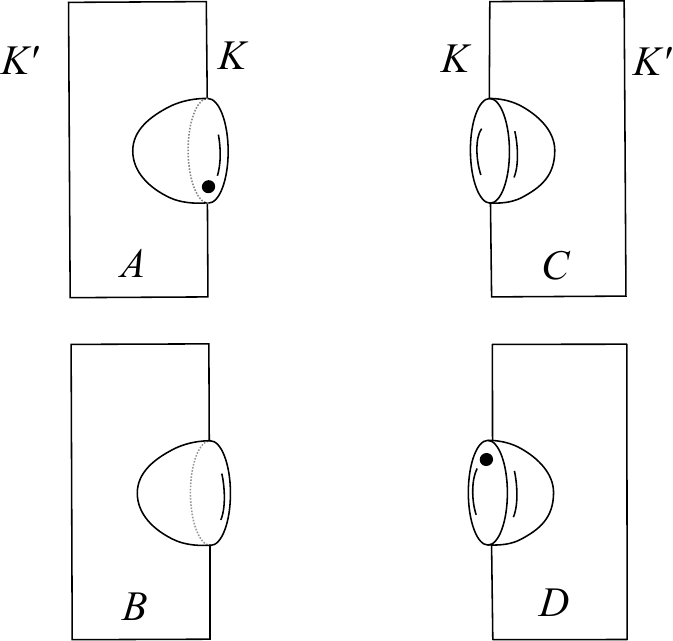}
    \end{center}
    \caption{\label{fig:bigon-maps}
   Four morphisms for the bigon relation.}
\end{figure}

Recall that a web $K$ contains a \emph{bigon} if two edges of $K$ are
arcs with common endpoints bounding a disk. There is then a web $K'$
obtained by collapsing the disk to a single edge and forgetting the
two vertices. Figure~\ref{fig:bigon-maps} illustrates four morphisms
between $K$ and $K'$, two of which include decorations with dots. The
morphism $A, B, C, D$ give linear maps
\[
\begin{aligned}
    a, b : \Lsharp(K') &\to \Lsharp(K) \\
       c, d : \Lsharp(K) &\to \Lsharp(K') \\
\end{aligned}
\]
Using the bubble bursting relation, and following the proof of the
corresponding result \cite[Proposition~6.5]{KM-Tait} for the
$\SO(3)$ case, we have:

\begin{proposition}[Bigon removal]
    The dimension of $\Lsharp(K)$ is twice that of $\Lsharp(K')$. The
    above morphisms provide mutually inverse isomorphisms
    \[
            a \oplus b : \Lsharp(K') \oplus \Lsharp(K') \to \Lsharp(K)
    \]
    and
    \[
                (c,d) : \Lsharp(K) \to \Lsharp(K') \oplus \Lsharp(K')
                .
    \]
\end{proposition}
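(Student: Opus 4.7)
The proof will parallel \cite[Proposition~6.5]{KM-Tait} for the $\SO(3)$ case, but is simpler here because the relation $u_e^3+u_e=0$ of Lemma~\ref{lem:Xie-rel-mod-2} forces a $2$-to-$1$ rather than $3$-to-$1$ dimension ratio. The plan is to verify directly that the pair of maps $(a\oplus b,(c,d))$ are mutually inverse at the level of $\Lsharp$, by computing the four compositions $c\circ a$, $c\circ b$, $d\circ a$, $d\circ b$ in one direction and the sum $a\circ c + b\circ d$ in the other.

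For each of the four compositions $\Lsharp(K')\to\Lsharp(K')$, the intermediate web $K$ with its bigon appears along a single time slice, and the two bigon-filling disks of the constituent cobordisms meet across this slice to form an embedded $2$-sphere inside the composite cobordism. The number of dots carried by this sphere is the total number of dots on the two morphisms being composed (as read from Figure~\ref{fig:bigon-maps}). Isotoping so that the sphere is enclosed in a small $3$-ball away from the rest of the foam and applying the bubble-bursting relation of Proposition~\ref{prop:bubble-bursting} (or, equivalently, Proposition~\ref{prop:sphere-with-dots} applied to the separated sphere component), each composition reduces to the identity morphism on $K'$ scaled by $\Lsharp(S(k))$, where $k$ is the total dot count. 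This scalar equals $1$ when $k=2$ and $0$ when $k\in\{0,1\}$, so the decorations on $A,B,C,D$ are arranged so that among the four compositions precisely the two diagonal ones $c\circ a$ and $d\circ b$ (or the opposite diagonal, according to the labeling convention) produce a sphere with exactly two dots, while the two off-diagonal compositions produce spheres with at most one dot. Hence $(c,d)\circ(a\oplus b)$ is the identity on $\Lsharp(K')\oplus\Lsharp(K')$, showing that $a\oplus b$ is a split injection and $\dim\Lsharp(K)\ge 2\dim\Lsharp(K')$.

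For the reverse direction I would compute the endomorphism $a\circ c + b\circ d$ of $\Lsharp(K)$ and identify it with the identity. This I expect to do by applying the neck-cutting relation (Proposition~\ref{prop:neck-cutting}) to the product cobordism $[0,1]\times K$ along a disk $\Delta$ whose boundary is a meridian on one of the two bigon facets. The resulting four-term expansion, indexed by the dot distributions $(k_1,k_2)\in\{(0,0),(0,2),(1,1),(2,0)\}$, should match, under isotopy and the dot-migration relations of Lemma~\ref{lem:dot-migration}, the two summands $a\circ c$ and $b\circ d$ plus two further terms that vanish by Lemma~\ref{lem:Xie-rel-mod-2} applied to the bigon edges. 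Combined with the first paragraph, this yields the mutual inverse property and hence both statements of the proposition. The main obstacle is the combinatorial bookkeeping of dot decorations: one must verify that the two ``extra'' terms of the neck-cutting expansion vanish and that the remaining two are correctly identified with $a\circ c$ and $b\circ d$ with matching dot counts. In spirit, one is exhibiting $\Lsharp(K)$ as the tensor product of $\Lsharp(K')$ with a local two-dimensional bigon module whose basis and dual basis are given by the four decorated cobordisms of Figure~\ref{fig:bigon-maps}, in precise parallel with Corollary~\ref{cor:basis-dual-basis} for the disk.
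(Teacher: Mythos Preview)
Your first step, verifying $(c,d)\circ(a\oplus b)=\mathrm{id}$ via bubble-bursting, matches the paper's. One correction: in each composite $C\circ A$, etc., the two half-disks do not form a \emph{separated} sphere; they combine into the single extra disk $D'$ of Proposition~\ref{prop:bubble-bursting}, attached along a seam circle on the cylinder over $e$, with the ambient facet as the third sheet along that seam. So Proposition~\ref{prop:sphere-with-dots} is not what applies here --- bubble-bursting (together with an instance of dot migration at the seam) is the correct tool, and with it the $2\times 2$ matrix does come out as the identity.

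For the reverse direction the paper takes a different and shorter route that sidesteps the neck-cutting computation you leave open. Rather than showing $a\circ c + b\circ d=\mathrm{id}$, it establishes the dimension equality $\dim\Lsharp(K)=2\dim\Lsharp(K')$ directly via an exact triangle whose third web $K''$ is $K'$ together with a disjoint unknotted circle. Excision gives $\dim\Lsharp(K'')=3\dim\Lsharp(K')$; the map $z\colon\Lsharp(K'')\to\Lsharp(K')$ in the triangle is surjective because precomposing with the cup cobordism $K'\to K''$ gives the identity; hence $\Lsharp(K)\cong\ker z$ has dimension $2\dim\Lsharp(K')$. Combined with the split injection from the first step, this finishes. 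Your neck-cutting plan may be workable in principle and is closer in spirit to the argument in \cite{KM-Tait}, but note that a ``meridian'' of a bigon facet does not lie on that facet as Proposition~\ref{prop:neck-cutting} requires, so the choice of $\Delta$ needs more care; the exact-triangle approach, newly available for $\Lsharp$, avoids that bookkeeping entirely.
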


\begin{proof}
    As in \cite{KM-Tait}, the fact that $(c,d)\comp (a \oplus b)$ is
    the identity follows from the bubble-bursting relations. To
    complete the proof we will show that $\dim \Lsharp(K) = 2 \dim
    \Lsharp(K')$ using the exact triangle.
    \begin{figure}
    \begin{center}
        \includegraphics[scale=0.50]{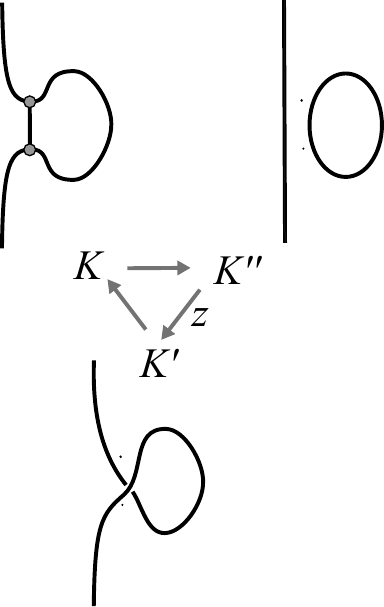}
    \end{center}
    \caption{\label{fig:bigon-exact-triangle}
   The exact triangle for the bigon relation.}
\end{figure}
   Figure~\ref{fig:bigon-exact-triangle} shows the exact triangle
   involving $K$, $K'$ and a third web $K''$ which is the disjoint
   union of $K'$ and an extra unknotted circle. By excision, we know
   that $\dim \Lsharp(K'') = 3 \dim \Lsharp(K')$. In the triangle, the
   map $z: \Lsharp(K'') \to \Lsharp(K')$ is surjective, as we can see by
   precomposing with the cobordism $K'\to K''$ which covers the extra
   circle with a disk: the composite is the identity morphism. So the
   kernel of $z$ has dimension $2 \dim \Lsharp(K')$ and is isomorphic
   to $\Lsharp(K)$.
\end{proof}

Next we state the result when $K'$ is obtained from $K$ by removing a
triangle, as described in Figure~\ref{fig:triangle-move}.

\begin{proposition}[Triangle removal]
    When $K'$ is obtained from $K$ by removing a triangle, as in
    Figure~\ref{fig:triangle-move}, then $\Lsharp(K)$ and $\Lsharp(K')$,
    are isomorphic. Mutually inverse isomorphisms are provided by the
    foams indicated in Figure~\ref{fig:triangle-foams}, each of which
    has a single tetrahedral point.
\end{proposition}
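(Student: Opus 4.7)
The plan is to verify directly that the two foam cobordisms $F\colon K\to K'$ and $G\colon K'\to K$ shown in Figure~\ref{fig:triangle-foams}, each carrying a single tetrahedral point, induce mutually inverse isomorphisms. I will concentrate on showing that $G\circ F$ is the identity on $\Lsharp(K)$; the reverse composite is handled by an entirely parallel argument. Outside a small ball in the cylinder, the composite agrees with the product cobordism, so the whole question reduces to a local calculation inside this ball, where the composite consists of two tetrahedral points (one from each foam) joined by three vertical strips of facet, one for each edge of the triangular face.

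The main tool is the neck-cutting relation (Proposition~\ref{prop:neck-cutting}), applied to a disk $\Delta$ whose boundary is a small circle around one of the three vertical strips, separating the two tetrahedral points. This expands the composite as a sum of four terms, indexed by dot placements $(k_1, k_2) \in \{(0,0),(0,2),(1,1),(2,0)\}$ on the two surgered caps. In each term, the two tetrahedral points are now separated by a capped facet, which can be analysed using the bubble-bursting relation (Proposition~\ref{prop:bubble-bursting}) together with the theta-foam evaluations (Proposition~\ref{prop:theta-evaluation}), the dot-migration identities (Lemma~\ref{lem:dot-migration}), and the Xie relation $\sigma_1(e)^3+\sigma_1(e)=0$ (Lemma~\ref{lem:Xie-rel-mod-2}). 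Concretely, bubble-bursting converts each term into a product cobordism with some pattern of dots inserted on the three edges of the original triangle of $K$; three of the four terms either vanish outright or cancel against each other, and the remaining term, after dot-migration shuffling at the vertices of $K$, reduces to the identity cobordism on $K$ with no residual decoration.

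To double-check the outcome independently, I would also run a dimension count via the skein exact triangle (Theorem~\ref{thm:octahedron-statement}) applied at one edge $e$ of the triangle in $K$. One of the two resolutions of $e$ contains a bigon, which by the just-proved bigon-removal relation contributes $2\dim\Lsharp$ of a simpler web, while the other resolution is isotopic to $K'$ with an extra split unknotted circle attached, contributing $3\dim\Lsharp(K')$ by excision (Proposition~\ref{prop:excision}) and Proposition~\ref{prop:unknot}. Chasing the resulting long exact sequence and identifying the connecting map with a cobordism whose image is a known rank-one summand forces $\dim\Lsharp(K)=\dim\Lsharp(K')$, confirming that the map induced by $F$ is an isomorphism as soon as one of the two composites is shown to be the identity.

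The main obstacle is the bookkeeping of the neck-cutting expansion: after neck-cutting, each of the four terms has dots in non-obvious positions relative to the remaining tetrahedral points, and one must verify that the interactions under bubble-bursting and dot-migration produce exactly the identity rather than some other automorphism that merely coincides with the identity dimension-wise. This cancellation rests on the precise pattern of non-zero evaluations in Proposition~\ref{prop:theta-evaluation}, namely that $\Lsharp(\Theta(l_1,l_2,l_3))$ vanishes except on permutations of $(0,1+2m,2+2n)$; this asymmetry in the three dot-counts is exactly what distinguishes the identity contribution from the other three neck-cutting terms and makes the argument work cleanly over $\F$.
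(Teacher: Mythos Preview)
Your approach diverges from the paper's in a significant way, and the neck-cutting step as you describe it does not go through.

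The paper argues as follows. First, it places the triangle move in one of the skein exact triangles from Theorem~\ref{thm:octahedron-statement}: after isotoping $K'$ so that its diagram acquires an artificial crossing, the three webs in the triangle are $K$, $K'$, and a third web $K''$ whose representation variety is \emph{empty} (it contains an embedded bridge), so $\Lsharp(K'')=0$. Exactness then forces the cobordism map $\Lsharp(K)\to\Lsharp(K')$ to be an isomorphism, and similarly in the other direction. Second, these exact-triangle cobordisms are not literally the tetrahedral foams $A$ and $C$ of Figure~\ref{fig:triangle-foams}, but they differ from them by a connected sum with $(S^{4},\Psi_{2})$ at the tetrahedral point, so by Proposition~\ref{prop:sum-with-Psi-2-3} they induce the same maps. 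Only then does the paper verify that $A$ and $C$ are mutually inverse, and for this it uses excision (Corollary~\ref{cor:excision-unlink}) to reduce $A\cup C$ to a finite list of closed-foam evaluations drawn from Propositions~\ref{prop:theta-evaluation} and~\ref{prop:tetr-evaluation}. No neck-cutting is used.

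The gap in your plan is geometric. In the composite $G\circ F\colon K\to K'\to K$, the two tetrahedral points are joined by a \emph{seam} (the trajectory of the $K'$ vertex), and each of the three facets you call ``vertical strips'' abuts this central seam. A single neck-cut on one such facet (the only kind Proposition~\ref{prop:neck-cutting} allows: $\partial\Delta$ must lie in the interior of a facet) does not separate the two tetrahedral points---they remain connected through the other two facets and the seam. To genuinely separate them you would need three neck-cuts, producing $4^{3}$ terms, and it is not clear that bubble-bursting then reduces each piece to a product cobordism with dots in the way you suggest. The cancellation you sketch is therefore not established.

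Your ``double-check'' exact triangle is also not the one that makes the argument clean. The paper's third web $K''$ has $\Lsharp(K'')=0$, which immediately gives the isomorphism; the web you describe (``$K'$ with an extra split unknotted circle'') does not arise from the relevant resolution and would not yield a direct dimension identity without further analysis of the connecting map. If you want to salvage the dimension count, use the paper's $K''$ instead.
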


    \begin{figure}
    \begin{center}
        \includegraphics[scale=0.40]{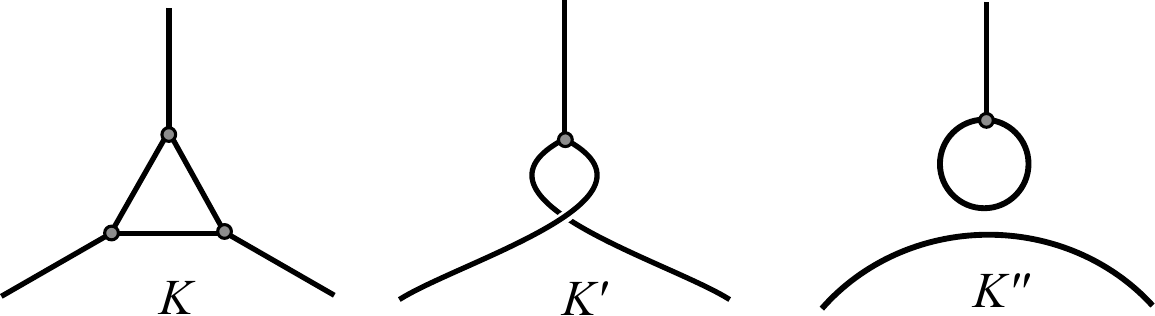}
    \end{center}
    \caption{\label{fig:triangle-move}
   The triangle-removal relation.}
\end{figure}

\begin{figure}
    \begin{center}
        \includegraphics[scale=0.4]{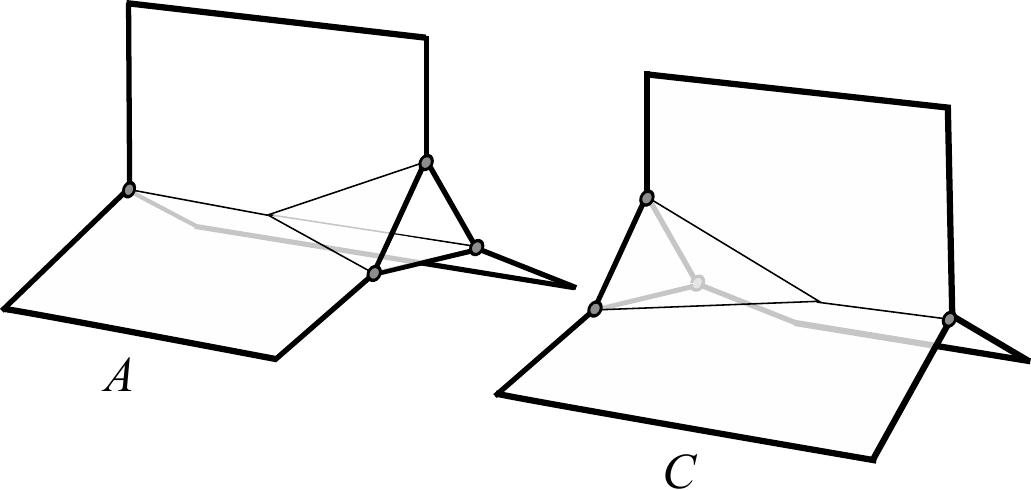}
    \end{center}
    \caption{\label{fig:triangle-foams}
   Foams $A$, $C$ as morphisms between $K$ and $K'$ for triangle
   removal.}
\end{figure}

\begin{proof}
    There is an exact triangle in which the third web $K''$ is as
    shown in the figure. An isotopy has been applied to the projection
    of $K'$ introducing a crossing, to match the description of the
    exact triangle.) Since the representation variety is empty, we
    have $\Lsharp(K'')=0$, and the map $\Lsharp(K)\to \Lsharp(K')$ is
    an isomorphism. There is a similar exact triangle with the
    crossing in $K'$ reversed and maps in the opposite directions,
    giving an isomorphism $\Lsharp(K')\to \Lsharp(K)$.

    The morphism $C'$ from $K$ to $K'$ is not the same as the morphism
    $C$ described in Figure~\ref{fig:triangle-foams}. But it is formed
    from $C$ by making a sum with $\Psi_{2}$ at the tetrahedral point,
    so Proposition~\ref{prop:sum-with-Psi-2-3} tells us that $C$ and
    $C'$ give the same map on $\Lsharp$. So the foam $C$ (and
    similarly $A$) in Figure~\ref{fig:triangle-move} gives an
    isomorphism.

    It remains only to show that the isomorphisms provided by the
    foams $A$ and $C$ are mutually inverse, and for this it is enough
    to look at the foam $A\cup C$ as a morphism from $K'$ to $K$'.
    This can be demonstrated by using excision to reduce to a local
    calculation and applying the known results for the foam
    evaluations in Propositions~\ref{prop:theta-evaluation} and
    \ref{prop:tetr-evaluation}. A model for this argument is the proof
    of the corresponding result in \cite{KM-Tait} (Proposition~6.6).
\end{proof}

Finally we consider the case that $K$ contains a square.

\begin{proposition}[Square removal]
    Suppose the web $K$ contains a square, and let $K'$ and $K''$ be
    obtained from $K$ as shown in Figure~\ref{fig:square}. Then  we
    have
    \[
                 \Lsharp(K) \cong \Lsharp(K') \oplus \Lsharp(K'').
    \]
\end{proposition}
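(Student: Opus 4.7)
The plan is to establish the square-removal relation by constructing an explicit direct-sum decomposition using foam cobordisms and then pinning down the dimension via the skein exact triangle. The approach parallels the proof of the corresponding result for $\Jsharp$ in \cite{KM-Tait}, but is streamlined by the availability of the exact triangle in the present $\SU(3)$ setting.

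First I would introduce the four standard local foams inside the ball containing the square: two ``saddle'' cobordisms $A: K' \to K$ and $B : K'' \to K$ that fuse the parallel arcs of each resolution into the four-vertex square of $K$, and the two reverse cobordisms $C : K \to K'$ and $D : K \to K''$. Each has two tetrahedral points, one at each pair of vertices that is created or destroyed. These induce linear maps which assemble into
\[
    a \oplus b : \Lsharp(K') \oplus \Lsharp(K'') \to \Lsharp(K),
    \qquad
    (c,d) : \Lsharp(K) \to \Lsharp(K') \oplus \Lsharp(K'').
\]

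Next, I would verify that $(c,d) \circ (a \oplus b)$ is the identity. The two diagonal composites $C\circ A$ and $D\circ B$ are each a ``tube'' cobordism with two tetrahedral points; I would evaluate them by applying the neck-cutting relation (Proposition~\ref{prop:neck-cutting}) along a disk bounded by an essential circle in the tube, and then reducing the resulting dotted terms with the bubble-bursting relation (Proposition~\ref{prop:bubble-bursting}) and the tetrahedron evaluation (Proposition~\ref{prop:tetr-evaluation}). The two off-diagonal composites $C\circ B$ and $D\circ A$ are handled by excision (Proposition~\ref{prop:excision-local}), which reduces their vanishing to the vanishing of certain closed-foam evaluations governed by Propositions~\ref{prop:sphere-with-dots} and \ref{prop:theta-evaluation}.

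Finally, to see that $a \oplus b$ is surjective (and hence an isomorphism, given the previous step), I would apply the skein exact triangle of Theorem~\ref{thm:octahedron-statement} at one edge of the square. In the resulting triangle, one of the other two webs is isotopic to $K'$ after an isotopy, while the other contains a bigon adjacent to the remnants of the square; after bigon removal (the preceding Bigon-removal proposition) this second term becomes $\Lsharp$ of a web isotopic to $K''$, possibly up to an extra factor that is absorbed by the splitting already constructed. Combining exactness with the existence of the splitting, the image of $a \oplus b$ must exhaust $\Lsharp(K)$, giving the direct-sum decomposition.

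The main obstacle is the combinatorial bookkeeping for the four composites in the second step: each requires a careful neck-cutting decomposition, and the numerical evaluations from Propositions~\ref{prop:theta-evaluation}, \ref{prop:tetr-evaluation}, and \ref{prop:sphere-with-dots} must conspire to give the identity on the diagonal and zero off the diagonal. This is essentially parallel to the $\SO(3)$ argument in \cite{KM-Tait}, but must be rechecked because the underlying $\SU(3)$ foam evaluations, while mod-$2$ in shape, come from moduli spaces of different dimensions, so the term-by-term cancellations in the neck-cutting sums occur for different reasons than in the $\SO(3)$ case.
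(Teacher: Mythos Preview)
Your first two steps---constructing the foam cobordisms $A,B,C,D$ and verifying that $(c,d)\circ(a\oplus b)$ is the identity via neck-cutting, bubble-bursting, and the closed-foam evaluations---are correct and constitute exactly the ``formal aspect'' that the paper says carries over unchanged from the $\SO(3)$ argument in \cite{KM-Tait}.

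The gap is in your third step. When you apply an exact triangle at one edge $e$ of the square, the other two webs in the triangle are \emph{not} isotopic to $K'$ and (after bigon removal) to $K''$. If $K$ plays the role of an $L$-type vertex in the octahedron, the partners are one planar smoothing and one crossing web. The planar smoothing gives either a web with a bigon---which after bigon removal becomes one of $K',K''$ but with dimension \emph{doubled}, not equal---or a two-vertex ``$H$-shape'' $K_H$, which is neither $K'$ nor $K''$. The crossing web, once the crossing is slid away, is another two-vertex $H$-shape. So at best you obtain $\dim\Lsharp(K)\le 2\dim\Lsharp(K')+\dim\Lsharp(K_H)$, and even a second triangle applied to $K_H$ yields only $\dim\Lsharp(K)\le 3\dim\Lsharp(K')+\dim\Lsharp(K'')$, which is too weak to force surjectivity. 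Your remark that the extra factor is ``absorbed by the splitting already constructed'' does not close this: the split injection gives only the lower bound, and nothing here controls the cokernel.

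The paper takes a different route for this last step. Rather than a local exact-triangle argument, it uses excision to reduce to a single closed test case: the web $L_4$ formed by the edges of a cube. The required input is the bound $\dim\Lsharp(L_4)\le 24$. In the $\SO(3)$ setting this needed a delicate Morse--Bott perturbation of the representation variety, but here the paper simply invokes Theorem~\ref{thm:planar-tait}: $L_4$ is planar with $24$ Tait colorings, so $\dim\Lsharp(L_4)=24$ exactly, and the square relation follows.
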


    \begin{figure}
    \begin{center}
        \includegraphics[scale=0.40]{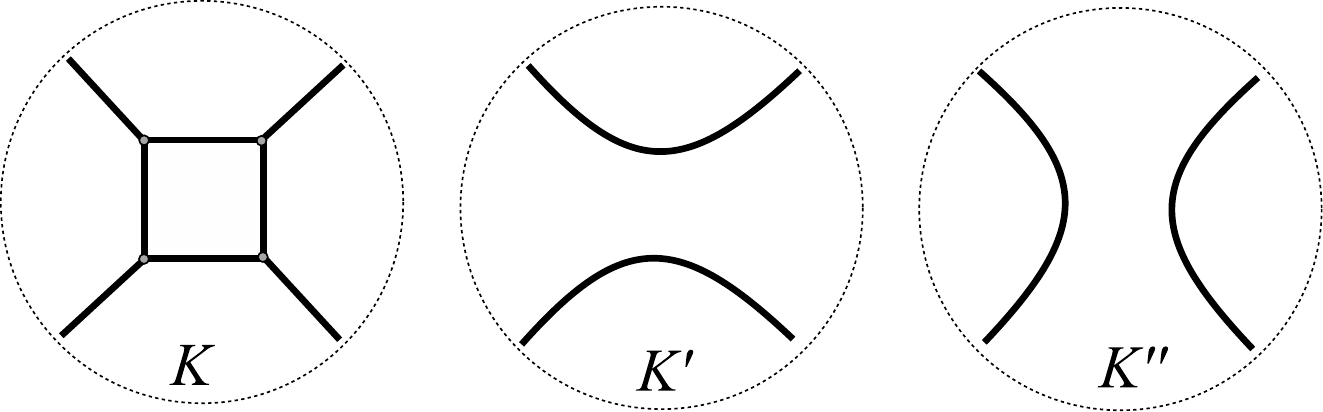}
    \end{center}
    \caption{\label{fig:square}
   The square relation.}
\end{figure}

\begin{proof}
    The proof for the $\SO(3)$ case has a formal aspect (which carries
    over essentially unchanged to the $\SU(3)$ case), but also a
    hands-on examination of a representation variety, used in
    \cite{KM-Tait} in the proof of Lemma~5.12 of that paper, where it
    is shown that the dimension of $\Jsharp(L_{4})$ is at most $24$
    when $L_{4}$ is the web formed by the edges of a cube. We need the
    corresponding result for $\Lsharp$. In \cite{KM-Tait}, the result
    for the $\SO(3)$ case was proved by showing that one can perturb
    the Chern-Simons functional so that the critical set is Morse-Bott
    and consists of four copies of the (real) flag manifold $\SO(3)/
    V_{4}$. Since the flag manifold itself has a Morse function with 6
    critical points, this gives a model for the complex which computes
    $\Jsharp(L_{4})$ with exactly $24$ generators, so providing the
    required bound. An argument of this sort can be carried out also
    in the $\SU(3)$ case for $\Lsharp$, to provide an upper bound
    $\dim\Lsharp(L_{4})\le 24$, completing the proof. (Indeed, in the
    $\SU(3)$ case it turns out that all 24 critical points are in the
    same mod $2$ grading, so we even get an equality rather than an
    upper bound.) However, we can avoid the need for this somewhat
    delicate argument in the present context, because $L_{4}$ is a
    planar web, and the equality $\dim\Lsharp(L_{4})=24$ follows from
    Theorem~\ref{thm:planar-tait}, because $L_{4}$ has $24$ Tait
    colorings.
\end{proof}

\section{Calculations for some non-planar webs}
\label{sec:examples}

\subsection{Calculation for Hopf links}

The ``linked handcuffs'' is the spatial web $\mathit{LHC}$ consisting
of a Hopf link with an extra edge joining the two components, as shown
in Figure~\ref{fig:hopf-cuffs-triangle}. The following proposition
describes $\Lsharp(\mathit{LHC})$ as both a vector space and a module
for the edge operators.

\begin{figure}
    \begin{center}
        \includegraphics[scale=0.5]{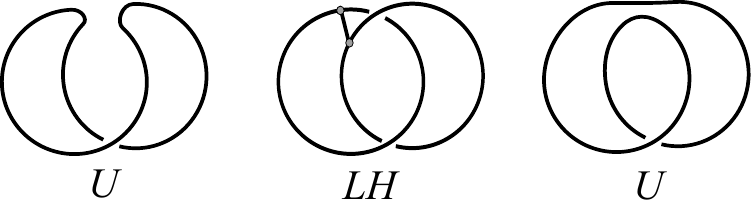}
    \end{center}
    \caption{\label{fig:hopf-cuffs-triangle}
   An exact triangle for computing $\Lsharp$ for the linked handcuffs,
   $\mathit{LHC}$.}
\end{figure}

\begin{proposition}
    \label{prop:LHC} As an $\F$-vector space, the $\SU(3)$ bifold
    homology $\Lsharp(\mathit{LHC})$ for the linked handcuffs has
    dimension $4$. It is a module for the polynomial algebra
    $\F[u_{1}, u_{2}, v]$, where $u_{1}$ and $u_{2}$ act by mapping to
    the edge operators corresponding to the two components of the Hopf
    link, and $v$ acts via the edge operator corresponding to the edge
    joining them. As such, we have
\[
        \Lsharp(\mathit{LHC} ) \cong M \oplus M\{1\}
\]
where $M$ is the $2$-dimensional cyclic module
\begin{equation}\label{eq:module-M}
            M = \F[u_{1}, u_{2}, v] \big/ \langle v,\, u_{1}+u_{2},\,
            u_{1}^{2}+1 \rangle
\end{equation}
and the notation $\{1\}$ indicates that the two copies of $M$ lie in
the two different relative $\Z/2$ gradings.
\end{proposition}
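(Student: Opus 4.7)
The strategy is to compute $\Lsharp(\mathit{LHC})$ by means of the skein exact triangle indicated in Figure~\ref{fig:hopf-cuffs-triangle}, reducing it to $\Lsharp$ of simpler webs whose dimensions are accessible via Theorem~\ref{thm:planar-tait} or via the octahedron applied a second time. The two other webs in the triangle will be planar after the appropriate local move, so their $\Lsharp$ can be computed from their Tait colourings, and the dimension bound $\dim \Lsharp(\mathit{LHC}) \le 4$ will follow from the long exact sequence together with a grading comparison using the relative $\Z/2$-grading and the parity arguments of Section~\ref{sec:absolute}.

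To pin down the module structure, I will begin with the dot-migration relations of Lemma~\ref{lem:dot-migration} applied at each of the two vertices of $\mathit{LHC}$. At such a vertex the three incident edgelets consist of two from a single loop (contributing $u_i$) and one from the bridge (contributing $v$), so the first migration relation reads $u_i + u_i + v = 0$, forcing $v = 0$ in characteristic~$2$, while the second reads $u_i^{2} + 2 u_i v = 1$, forcing $u_i^{2} + 1 = 0$. The cubic migration relation and the Xie relation of Lemma~\ref{lem:Xie-rel-mod-2} are then automatic. This already yields three of the four relations defining $M$ in \eqref{eq:module-M}.

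The remaining relation $u_1 + u_2 = 0$ is the delicate one, since it is not forced at either vertex alone. I expect to deduce it by a naturality argument along the exact triangle: one of the two resolutions identifies both loops with a single edge of a planar web, and under the associated chain map $u_1$ and $u_2$ become the same operator; exactness plus the minimal polynomial $(u_i+1)^{2}$ then transports this identification across the whole of $\Lsharp(\mathit{LHC})$. Once the four relations are in place, $\Lsharp(\mathit{LHC})$ is a quotient of the four-dimensional module $M \oplus M\{1\}$, with each relative grading separated by the parity invariant of Section~\ref{sec:absolute}; matching dimensions forces equality. For the lower bound I will exhibit explicit generators in the two gradings: the class of a pair of capping disks for the Hopf link provides a generator of even degree, and a cobordism involving the bridge facet provides one of odd degree, both acting on $\Lsharp(S^{3})$ via the convention of Notation~\ref{not:rel-invariant}.

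The principal obstacle is the identification $u_1 = u_2$: it is the only part of the module structure not visible from local vertex data, and it is what forces $\Lsharp(\mathit{LHC})$ to collapse to four rather than a larger dimension. Everything else—the $\le 4$ bound, the $\Z/2$-splitting into $M \oplus M\{1\}$, and the explicit generators—amounts to bookkeeping with the octahedron, the dot relations of Lemmas~\ref{lem:dot-migration} and \ref{lem:Xie-rel-mod-2}, and the grading computations.
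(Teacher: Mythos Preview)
Your overall plan---use the exact triangle of Figure~\ref{fig:hopf-cuffs-triangle}, extract module relations from Lemma~\ref{lem:dot-migration}, and split by the $\Z/2$ grading---is the same as the paper's, and your derivation of $v=0$ and $u_{i}^{2}+1=0$ directly from the vertex relations is a clean shortcut the paper does not spell out. But there is a genuine gap in how you obtain the dimension. The other two webs in that triangle are both unknots with $\dim\Lsharp(U)=3$, so the long exact sequence alone gives only $\dim\Lsharp(\mathit{LHC})\le 6$, and the Euler characteristic (which vanishes here) cuts this to ``even,'' not to $4$. What actually pins the dimension down is the connecting homomorphism $\Lsharp(U)\to\Lsharp(U)$: the paper identifies the cobordism as a product annulus connect-summed with an $\RP^{2}$ of self-intersection $-2$, and then Proposition~\ref{prop:R-minus-sum} shows this map is multiplication by $u^{2}+1$, hence has rank~$1$. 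That single computation forces $\dim\Lsharp(\mathit{LHC})=4$ and simultaneously identifies the kernel and cokernel as $\F[u]/(u^{2}+1)$. Your proposal never mentions this step, and nothing you wrote replaces it.

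Relatedly, the assertion ``once the four relations are in place, $\Lsharp(\mathit{LHC})$ is a quotient of the four-dimensional module $M\oplus M\{1\}$'' is a non sequitur: polynomial relations among the operators $u_{1},u_{2},v$ constrain the ring that acts, not the number of generators of the module---any direct sum of copies of $M$ satisfies the same relations. Likewise, your proposed lower bound via ``capping disks for the Hopf link'' is not available, since the two circles are genuinely linked. Once you have the connecting map, however, the rest of your outline is essentially correct: the intertwining of $u$ with both $u_{1}$ and $u_{2}$ along the cobordisms gives $u_{1}=u_{2}$ on both the image and the quotient, and the parity of the two maps in the triangle (one even, one odd) splits the extension as $M\oplus M\{1\}$.
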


\begin{proof}
    The linked handcuffs appear in an exact triangle in which the
    other two webs are unknots $U$, as shown in
    Figure~\ref{fig:hopf-cuffs-triangle}. The connecting homomorphism
    in the exact triangle is provided by a cobordism from the unknot
    to the unknot which has the topology of the connect sum of a
    product annulus with a standard copy of $\RP^{2}$, embedded with
    self-intersection number $-2$. By
    Proposition~\ref{prop:R-minus-sum} the connecting homomorphism is
    multiplication by $(u^{2}+1)$. Since we have
    \[
            \Lsharp(U) =\frac{ \F[u]}{u} \oplus \frac{ \F[u]}
            {u^{2}+1},
    \]
    the connecting homomorphism has rank $1$, and its kernel and
    cokernel are both $\F[u]/(u^{2}+1)$. In the cobordisms between $U$
    and $\mathit{LHC}$ (in both directions), the dot operator $u$ for
    the unknot and is intertwined with both of the two dot operators
    $u_{1}$, $u_{2}$ for $\mathit{LHC}$, so the exact triangle
    presents $\Lsharp(\mathit{LHC})$ as an extension:
    \begin{equation}\label{eq:LHC-is-twice-M}
              0 \to M \to \Lsharp(\mathit{LHC}) \to M \to 0
    \end{equation}
    where $M$ is as described in the statement of the proposition.

    Let The groups $\Lsharp(U)$ and $\Lsharp(\mathit{LHC})$ both be
    $\Z/2$-graded using the semi-framings that come from the diagrams.
    Comparing with Figure~\ref{fig:L-octahedron-odd-even}, we see that
    one of the maps in the sequence in
    Figure~\ref{fig:hopf-cuffs-triangle} is even and one is odd. It
    follows that the extension is trivial and
    \[
        \Lsharp(\mathit{LHC} ) \cong M \oplus M\{1\}
     \]
     as claimed.
    \end{proof}

\begin{remark}
    The ordinary (unlinked) handcuffs $\mathit{HC}$ consists of a
    planar 2-component unlink with a straight edge joining the two
    components. The representation variety for $\mathit{HC}$ is empty
    on account of the bridge, so $\Lsharp(\mathit{HC})=0$. The web
    $\mathit{LHC}$ is obtained from $\mathit{HC}$ by a crossing
    change: they are the same abstract graph, with different
    embeddings. We see, therefore, that $\Lsharp(K)$ is not an
    invariant of abstract trivalent graphs, but does depend on their
    embedding.
\end{remark}
 
\begin{figure}
    \begin{center}
        \includegraphics[scale=0.5]{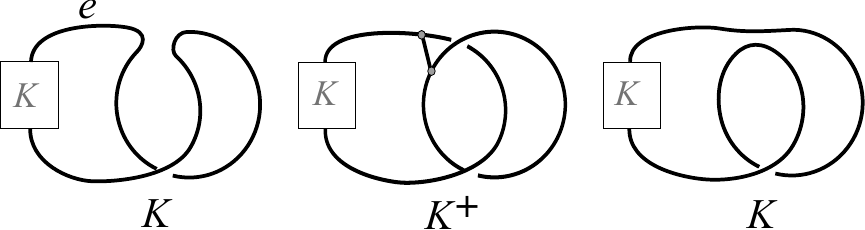}
    \end{center}
    \caption{\label{fig:add-earing-triangle}
   The web $K^{+}$ obtained from $K$ by adding an earing on an edge
   $e$.}
\end{figure}

The calculation of $\Lsharp(\mathit{LHC})$ can be partly generalized
to compute the $\Lsharp(K^{+})$ in terms of $\Lsharp(K)$, where
$K^{+}$ is the web obtained by adding an ``earing'' to $K$, as shown
in Figure~\ref{fig:add-earing-triangle}. From the exact triangle in the
figure, we see (as in the case that $K$ is the unknot) that
\[
       \Lsharp(K^{+}) \cong V \oplus V\{1\}
\]
as an $\F[u]$-module, where $u$ is the operator associated to the edge
$e$ and $V\subset \Lsharp(K)$ is the kernel of $u^{2}+1$. In the case
that $K$ is a knot, this tells us that $\dim \Lsharp(K^+) = 2 \dim
\Lsharp(K)-2$.

We turn next to the Hopf link $H$. The following proposition
determines $\Lsharp(H)$ as a vector space and as a  module. 

\begin{proposition}
    \label{prop:hopf} For the Hopf link $H$, let $u_{1}$ and $u_{2}$
    be the two edge operators. As a $\Z/2$ graded vector space
    $\Lsharp(H)$ has dimension $9$ and is concentrated in even
    grading. As a module for the algebra $\F[u_{1}, u_{2}]$, we have 
    \[
           \Lsharp(H) \cong \Lsharp(U) \oplus \Lsharp(\theta).
    \]
\end{proposition}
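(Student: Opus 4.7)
The plan is to compute $\Lsharp(H)$ via the skein exact triangle of Theorem~\ref{thm:octahedron-statement} applied at one of the two crossings of the standard Hopf link diagram. Changing the crossing unlinks the two components and produces the two-component unlink $U_{2}$, while the vertical-bar resolution introduces two trivalent vertices joined by a short bar, producing a spatial theta graph $\theta'$ with a single remaining crossing inherited from the second, untouched crossing of $H$. Since the three constituent cycles of $\theta'$ are all unknots, an ambient isotopy should identify $\theta'$ with the planar theta $\theta$, so one obtains an exact triangle
\[
    \cdots \to \Lsharp(U_{2}) \to \Lsharp(H) \to \Lsharp(\theta) \to \Lsharp(U_{2}) \to \cdots
\]
(up to cyclic rotation and grading shift). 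If the identification of $\theta'$ with $\theta$ does not go through directly, a second application of the skein triangle at the remaining crossing of $\theta'$ reduces the calculation entirely to planar webs, whose instanton homology is controlled by Theorem~\ref{thm:planar-tait}.

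Using $\dim \Lsharp(U_{2}) = 9$ (from excision, Proposition~\ref{prop:excision}, and Proposition~\ref{prop:unknot}) together with $\dim \Lsharp(\theta) = 6$ (Proposition~\ref{prop:theta-web}), the exact triangle bounds $\dim \Lsharp(H) \le 15$. To pin down $\dim \Lsharp(H) = 9$, the plan is to show that the connecting homomorphism $\Lsharp(\theta) \to \Lsharp(U_{2})$ has rank exactly $3$. I would do this by comparing with the analogous calculation for the linked handcuffs in Proposition~\ref{prop:LHC}: a cobordism from $\mathit{LHC}$ to $H$ that caps off the extra edge with a disk intertwines the two exact triangles and forces the rank. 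Gradings are tracked using Lemma~\ref{lem:3d-foams-even}: the cobordisms $U_{2} \to H$ and $H \to \theta$ in the triangle can be positioned so that their parities combine with the even-degree concentration of $\Lsharp(U_{2})$ and $\Lsharp(\theta)$ (both being planar webs) to give concentration of $\Lsharp(H)$ in even grading.

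For the module decomposition $\Lsharp(H) \cong \Lsharp(U) \oplus \Lsharp(\theta)$, I would construct explicit $\F[u_{1}, u_{2}]$-module maps that split the exact triangle. The summand isomorphic to $\Lsharp(\theta)$ is realized as the image of the map from $\Lsharp(U_{2})$ in $\Lsharp(H)$: using the relations from Lemma~\ref{lem:dot-migration} and Lemma~\ref{lem:Xie-rel-mod-2}, this image is a cyclic module with the same presentation as $\Lsharp(\theta)$. The summand isomorphic to $\Lsharp(U)$ arises from the kernel of the connecting map in $\Lsharp(\theta)$ and corresponds (via the edge decomposition of Section~\ref{sec:edge-decomp}) to the subspace on which $u_{1}$ and $u_{2}$ act by the same operator. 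The main obstacle will be the explicit computation of the connecting homomorphism and the identification of its rank via the $\mathit{LHC}$ comparison; once that is done, matching up the module structures and verifying the splitting is straightforward from the Xie and dot-migration relations.
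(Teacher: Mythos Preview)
Your proposal contains a basic error in identifying the terms of the skein triangle. The triangles of Theorem~\ref{thm:octahedron-statement} involve the two \emph{smoothings} of a crossing together with a web carrying a bar; they do not involve a crossing change. At a crossing of the Hopf link the two strands belong to different components, so \emph{either} smoothing merges them into a single component. Both $K_{0}$ and $K_{1}$ are therefore unknots (single circles with one remaining kink), not the two-component unlink $U_{2}$. The exact triangle you should be working with is
\[
\cdots \to \Lsharp(U) \to \Lsharp(H) \to \Lsharp(\theta) \to \Lsharp(U) \to \cdots,
\]
with $U$ the unknot, giving the immediate bound $\dim\Lsharp(H)\le 3+6=9$. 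The task is then to show that the connecting map $\Lsharp(\theta)\to\Lsharp(U)$ is \emph{zero}, not that some map has rank~$3$; your proposed comparison with $\mathit{LHC}$ is aimed at the wrong target.

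The paper handles this cleanly via the full octahedron rather than a single triangle. Placing $H$ at the $K_{2}$ vertex, the web at the $L_{2}$ vertex turns out to be the (untangled) handcuffs, which has an embedded bridge and hence $\Lsharp=0$. This forces the map $\gamma$ in the triangle through $L_{2}$ to be an isomorphism; the commutative face $b a=\gamma$ then exhibits $\gamma^{-1}b$ as a left inverse to the map $a:\Lsharp(U)\to\Lsharp(H)$. So the relevant exact triangle \emph{splits}, yielding $\Lsharp(H)\cong\Lsharp(U)\oplus\Lsharp(\theta)$ as $\F[u_{1},u_{2}]$-modules in one stroke, with no need to compute any connecting homomorphism by hand. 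The grading statement follows by tracking parities through the octahedron (the theta summand appears with the non-planar semi-framing but the map to it is odd, so everything lands in even degree). You should rebuild your argument around this octahedral splitting; the vanishing of $\Lsharp$ for the handcuffs is the key geometric input you are missing.
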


\begin{figure}
    \begin{center}
        \includegraphics[scale=0.66]{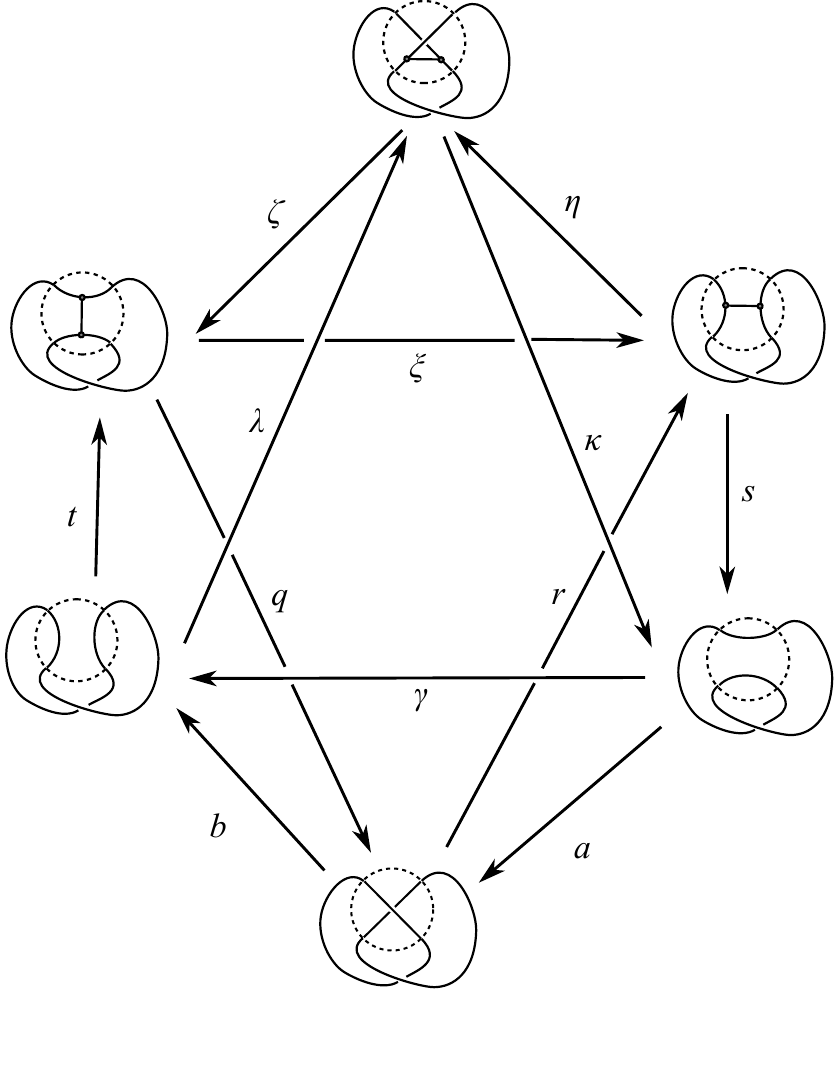}
    \end{center}
    \caption{\label{fig:octahedron-for-hopf}
   An instance of the octahedral diagram, for computation of the
   Hopf link.}
\end{figure}

\begin{proof}
 The Hopf link appears at the bottom of the octahedron shown in
 Figure~\ref{fig:octahedron-for-hopf}. The web at the top of the
 diagram is the handcuffs, which has trivial instanton homology
 because of the bridge. From the exact triangle
 $\{\lambda,\kappa,\gamma\}$, it follows that $\gamma$ is an
 isomorphism. Furthermore the map $a$ has a left inverse,
 $\gamma^{-1}b$, because $ba=\gamma$. It follows that the exact
 triangle $\{s,r,a\}$ has $s=0$ and also splits. The other two webs in
 this triangle are an unknot and a theta web. This gives the direct
 sum decomposition in the proposition. The theta web that appears has,
 from its diagram, a semi-framing of opposite parity from the one
 arising from a planar embedding, so the homology of this semi-framed
 theta web is in odd grading, while the homology of the unknot is in
 even grading. The map $a$ is even while the map $r$ is odd, so the
 homology of the Hopf link is all in even grading.
\end{proof}

\begin{remark}
    The Hopf link and the two-component unlink both have $\Lsharp(K)$
    of dimension $9$, but they have different module structures. In
    particular, consider the submodule $V(K;s)$ where $s$ consists of
    both components of the link. This is the intersection of the
    kernels of $(u_{1}^{2}+1)$ and $(u_{2}^{2}+1)$. In the case of the
    unlink, this is the cyclic module $\F[u_{1},
    u_{2}]/(u_{1}^{2}+1)(u_{2}^{2}+1)$. In the case of the Hopf link
    however, the proposition tells us that this module is not cyclic
    but is the direct sum of two copies of the module $M$ from
    \eqref{eq:module-M}.
\end{remark}

\subsection{Calculation for the trefoil}

The homology for the trefoil can be calculated using the octahedral
diagram, starting from our existing calculations of the linked
handcuffs, the Hopf link and the theta web.
\begin{figure}
    \begin{center}
        \includegraphics[scale=0.66]{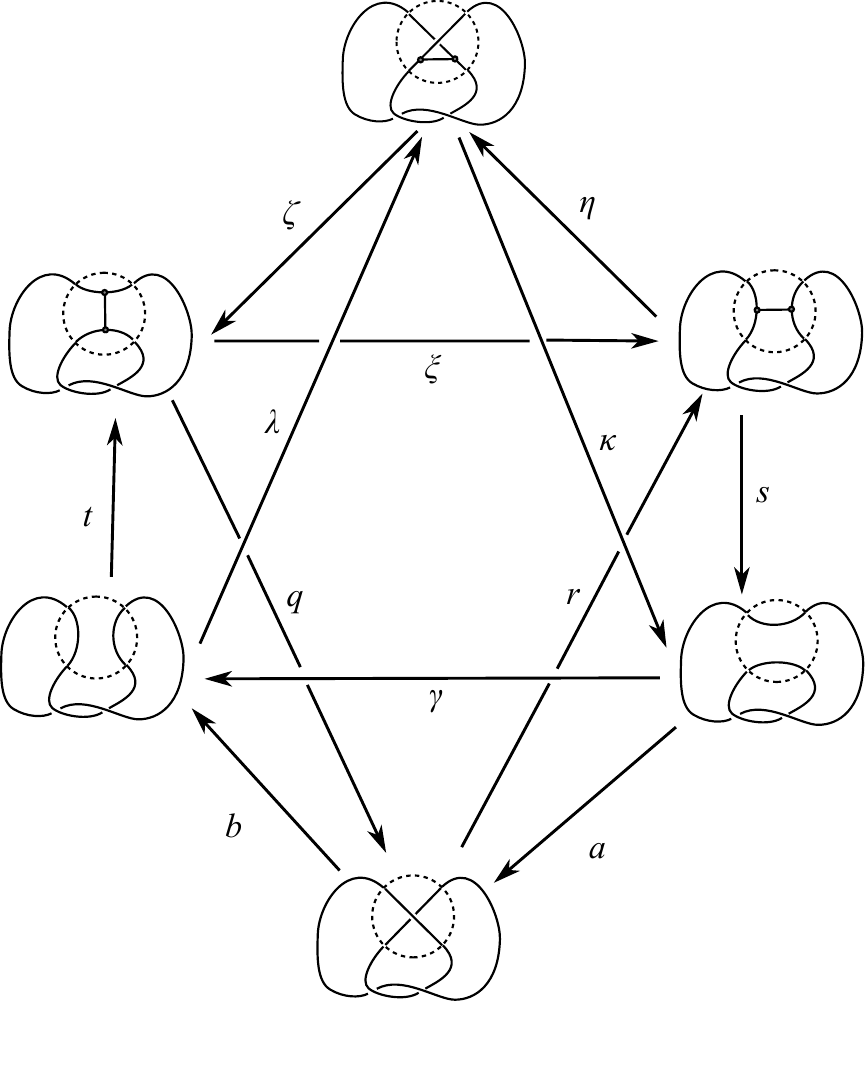}
    \end{center}
    \caption{\label{fig:octahedron-for-trefoil}
   An instance of the octahedral diagram, for computation of the
   trefoil.}
\end{figure}

\begin{proposition}
    Let $T$ be the trefoil knot (with either handedness). Then
    $\Lsharp(T)$, as a module over $\F[u]$ where $u$ is the edge
    operator, has the form
    \[
        \Lsharp(T) = N \oplus M \oplus M \oplus M\{1\},
    \]
    where $N$ is the $1$ dimensional module with $u=0$, and $M$ is the
    $2$-dimensional module $\F[u]/(u^{2}+1)$. The modules $N$ and $M$
    in this decomposition lie in even grading, and $M\{1\}$
    is shifted, in odd grading.
\end{proposition}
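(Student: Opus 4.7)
The plan is to mimic the computation of $\Lsharp(H)$ in Proposition~\ref{prop:hopf}, applying the octahedral diagram of Theorem~\ref{thm:octahedron-statement} at a single crossing of the trefoil $T$, as indicated in Figure~\ref{fig:octahedron-for-trefoil}. The key topological inputs are: (i) changing one crossing of $T$ produces an unknot $U$; (ii) the oriented smoothing at that crossing produces the Hopf link $H$; and (iii) the web resolutions produce the linked handcuffs $\mathit{LHC}$ together with the theta web $\theta$ (or its disjoint-union variant with an extra unknotted component). All of these have been computed in Propositions~\ref{prop:unknot},  \ref{prop:theta-web}, \ref{prop:LHC} and \ref{prop:hopf}, including the actions of the edge operators, and these computations will provide the raw material for the calculation.

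First I would identify the six vertices of the octahedron with concrete webs, placing $T$ in one of the $K_i$ positions (say $K_0$) and reading off the other five webs from the local pictures of Figure~\ref{fig:L-octahedron} applied to this crossing of~$T$. The skein triangle $(L_2, K_0, K_1)$ then yields an exact sequence involving $\Lsharp(T)$, $\Lsharp(U)$, and the $\Lsharp$ of the web $L_2$ (the standard unoriented resolution at the crossing), while the triangle $(L_1, K_0, K_2)$ gives a sequence involving $\Lsharp(T)$, $\Lsharp(H)$, and $\Lsharp(\mathit{LHC})$. The cobordism maps in these triangles are $\F[u]$-equivariant, where on each knot-like web the variable $u$ acts as $\sigma_1$ of the (unique) ambient edge; this respects the Xie decomposition $\Lsharp = \ker(u) \oplus \ker(u^{2}+1)$ of Section~\ref{subsec:edge}.

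Next I would split the argument according to the Xie decomposition. On the $\ker(u)$ summand, the operator $u$ vanishes; by comparing with the computations $\Lsharp(U) \cong \F[u]/(u^3+u)$, $\Lsharp(\mathit{LHC}) \cong M\oplus M\{1\}$ and $\Lsharp(H) \cong \Lsharp(U)\oplus\Lsharp(\theta)$, one sees that $\ker(u)$ contributes $1$ dimension in $\Lsharp(U)$ and nothing in $\Lsharp(\mathit{LHC})$ or $\Lsharp(\theta)$ (the latter because the relations of Proposition~\ref{prop:theta-web} force every edge operator to be invertible on $\Lsharp(\theta)$, and the same for $M$). Tracking this through the exact triangle gives the $N$ summand. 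On the $\ker(u^2+1)$ summand, the identifications of the other webs give an upper bound of $6$ for the dimension, and a short-exact-sequence presentation of $\Lsharp(T)\big|_{\ker(u^2+1)}$ as an extension of copies of $M$.

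Finally, to conclude that the extension splits as $M\oplus M\oplus M\{1\}$, I would invoke the absolute $\Z/2$ grading of Section~\ref{sec:absolute}, using the diagram semi-framings and the parity data in Figure~\ref{fig:L-octahedron-odd-even}, just as at the end of the proof of Proposition~\ref{prop:hopf}. The two copies of $M$ in the even grading come from the $\Lsharp(U) \oplus \Lsharp(\theta)$ decomposition of $\Lsharp(H)$ (both pieces sitting in even grading after the parity shift), while the extra $M\{1\}$ arises from the odd-graded part of $\Lsharp(\mathit{LHC})$ entering through the connecting homomorphism. The main obstacle is the bookkeeping: correctly identifying the six webs and the parities of the eight maps in the octahedron, and verifying that the alternation of parities forces each of the two extensions (for $\ker(u)$ and for $\ker(u^2+1)$) to split as claimed rather than producing a non-trivial module extension.
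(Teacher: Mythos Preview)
Your overall strategy (apply the octahedron at a crossing of the trefoil) is the right one, but the execution has a genuine gap and several misidentifications that matter.

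First, the bookkeeping: the trefoil must sit at the position $K_{2}$ (the local picture with the crossing), not $K_{0}$. With $T=K_{2}$ one finds $K_{0}=U$, $K_{1}=H$, $L_{1}=\mathit{LHC}$, and $L_{2}$ is a theta web. In particular there is \emph{no} exact triangle in the octahedron whose three vertices are $T$, $H$ and $\mathit{LHC}$; the triangle containing $T$ and $\mathit{LHC}$ is $(L_{1},K_{0},K_{2})=(\mathit{LHC},U,T)$. Your claim that every edge operator is invertible on $\Lsharp(\theta)$ is also false (the relation $u_{1}u_{2}u_{3}=0$ rules that out), so the $\ker(u)$ bookkeeping does not go through as you describe.

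The more serious gap is that you never explain why any connecting homomorphism vanishes. An exact triangle by itself only gives inequalities on dimensions; to obtain a short exact sequence you must show one of the three maps is zero, and your ``upper bound of $6$ plus grading'' sketch does not supply this. The paper's argument handles this cleanly: in the triangle $(\mathit{LHC},U,T)$ the map $t\colon \Lsharp(\mathit{LHC})\to\Lsharp(U)$ factors, via the commutative face $(L_{1},L_{2},K_{0})$ of the octahedron, as $t=\lambda\circ\zeta$ with $\lambda\colon \Lsharp(L_{2})\to\Lsharp(K_{0})$. But $\lambda$ lies in the exact triangle $(L_{2},K_{0},K_{1})=(\theta,U,H)$ whose three groups have dimensions $6$, $3$, $9$; the rank equations force $\lambda=0$. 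Hence $t=0$ and one obtains
\[
0\longrightarrow \Lsharp(U)\longrightarrow \Lsharp(T)\longrightarrow \Lsharp(\mathit{LHC})\longrightarrow 0.
\]
There is then no extension ambiguity: on the $\ker(u)$ summand this reads $0\to N\to ?\to 0\to 0$, and on the $\ker(u^{2}+1)$ summand $M=\F[u]/(u+1)^{2}$ is free (hence projective) over itself, so every extension of copies of $M$ splits. The grading statement follows from the parities of the maps $K_{0}\to K_{2}$ (even) and $K_{2}\to L_{1}$ (odd) recorded in Figure~\ref{fig:L-octahedron-odd-even}. So the missing idea in your proposal is precisely this octahedral factorization together with the $3$--$6$--$9$ dimension count; once you have that, the eigenspace and grading analysis you outline becomes unnecessary.
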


\begin{proof}
    From our calculation of the linked handcuffs earlier, this result
    is equivalent to the statement
    \[
            \Lsharp(T) = \Lsharp(U) \oplus \Lsharp(\mathit{LHC}),
    \]
    where $\Lsharp(\mathit{LHC})$ is regarded as an $\F[u]$ module
    using either of the two edge operators belonging to the cuffs.
    (The two operators are equal.)
    These three homology groups appear in an exact triangle which is
    one of the triangles in the octahedral diagram shown in
    Figure~\ref{fig:octahedron-for-trefoil}, where the right-handed
    trefoil appears in the bottom corner, and $U$ and $\mathit{LHC}$
    appear on the left. There are no non-trivial extensions to
    consider, so it will suffice to show that the connecting
    homomorphism $t$ is zero. In the diagram, $t= \zeta\comp\lambda$,
    so we will be done if $\lambda=0$. But $\lambda$ belongs to the
    exact triangle involving also $\kappa$ and $\gamma$ in the figure,
    and since the dimensions of the three vector spaces in this
    triangle are $3$, $6$ and $9$, the map $\lambda$ must be zero, as
    in the calculation of $\Lsharp(H)$ in Proposition~\ref{prop:hopf}.
\end{proof}

\subsection{The tangled handcuffs}
\label{subsec:tangled}

The web shown on the left in Figure~\ref{fig:tangled-cuffs} is the
tangled handcuffs, $\Th$. 

\begin{figure}
    \begin{center}
        \includegraphics[scale=0.4]{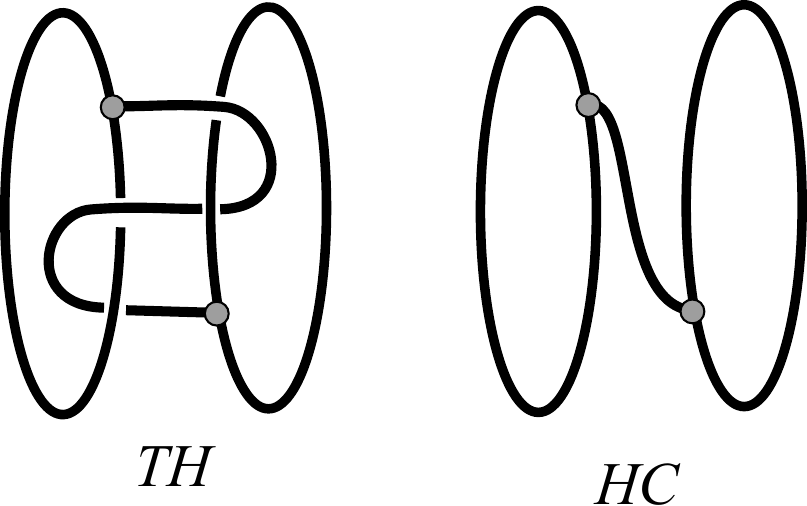}
    \end{center}
    \caption{\label{fig:tangled-cuffs}
   The ``tangled handcuffs'' and the untangled handcuffs.}
\end{figure}

\begin{proposition}
    For the tangled handcuffs, we have $\Lsharp(\Th) =0$. 
\end{proposition}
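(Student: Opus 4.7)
The strategy is to fit $\Lsharp(\Th)$ into a skein exact triangle (Theorem~\ref{thm:octahedron-statement}) whose other two vertices are webs with vanishing instanton homology. The untangled handcuffs $\mathit{HC}$ already provides one natural candidate for such a vanishing term, since $\Lsharp(\mathit{HC})=0$ because the bridge structure forces its orbifold representation variety to be empty (as recalled in the remark after Proposition~\ref{prop:LHC}).

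First, I would choose a crossing in the standard diagram of $\Th$ (Figure~\ref{fig:tangled-cuffs}) at which the connecting edge and one of the loops cross. Applying Theorem~\ref{thm:octahedron-statement} at this crossing places $\Lsharp(\Th)$ in an exact sequence with $\Lsharp(\Th')$ and $\Lsharp(\Th'')$, where $\Th'$ and $\Th''$ are the two resolutions. The crossing is selected so that one of these resolutions becomes isotopic to $\mathit{HC}$.

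Second, I would analyze the remaining web $K'$ (the other resolution). Because the tangling of $\Th$ is local and the graph-theoretic structure still features a separating edge between the two loops, I expect $K'$ to contain a bridge as well, or to reduce to a bridged web after Reidemeister moves, bigon removal, or the triangle/square removal procedures of the previous section. In that case $\Lsharp(K')=0$, and exactness of the triangle immediately gives $\Lsharp(\Th)=0$.

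The main obstacle will be the explicit identification of $K'$ and the verification that its representation variety is empty after simplification. If a single application of the triangle is insufficient, the backup plan is to iterate: each skein triangle either removes a crossing or simplifies the graph structure, and after finitely many steps one reaches a web which either has an explicit vanishing representation variety or is handled by previous calculations. As additional leverage, the dot-migration relations of Lemma~\ref{lem:dot-migration}, applied at the two vertices of $\Th$ (where two incident edgelets belong to a common loop, forcing $\sigma_1(e)=0$ on $\Lsharp(\Th)$ for the connecting edge $e$), can be combined with the excision principle of Corollary~\ref{cor:excision-unlink} to rule out surviving summands.
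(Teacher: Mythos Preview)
Your proposal has a genuine gap, and in fact the main strategy cannot succeed. The skein exact triangles of Theorem~\ref{thm:octahedron-statement} hold verbatim for the $\SO(3)$ homology $\Jsharp$ as well, and the vanishing of $\Jsharp$ for webs with an embedded bridge is equally true. So any argument that uses only exact triangles together with ``this web has a bridge, hence vanishes'' would prove $\Jsharp(\Th)=0$ too. But this is false: as the paper remarks immediately after this proposition, $\Jsharp(\Th)\ne 0$ by the non-vanishing theorem of \cite{KM-Tait}. Therefore no finite iteration of your scheme can close up, and at some stage the ``remaining web $K'$'' will necessarily have nonzero $\Lsharp$; your expectation that it always reduces to a bridged web is simply wrong. (There is also a smaller issue: resolving a crossing of $\Th$ does not produce $\mathit{HC}$, since a resolution is not a crossing change.)

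The paper's proof instead uses the edge decomposition, which is where $\Lsharp$ and $\Jsharp$ genuinely diverge. The tangled handcuffs has a unique $1$-set $s$, the tangled chain, so $\Lsharp(\Th)=V(\Th;s)$ by Corollary~\ref{cor:edge-decomp}. Then Lemma~\ref{lem:modify-s} says $V(\Th;s)\cong V(\mathit{HC};s')$, because the tangled and untangled chains represent the same class in $H_1(Y,C;\Z/2)$ (here $C$ is the pair of cuffs). Since $\Lsharp(\mathit{HC})=0$, we are done. This argument depends essentially on the Xie relation $\sigma_1(e)^3+\sigma_1(e)=0$ of Lemma~\ref{lem:Xie-rel-mod-2}, which is what makes the edge decomposition exist for $\Lsharp$ but not for $\Jsharp$. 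Your final paragraph gestures toward the relevant ingredient (the dot relations force $\sigma_1(\text{chain})=0$, which is precisely the statement that the chain is the unique $1$-set), but you do not invoke Lemma~\ref{lem:modify-s}, and without it there is no way to pass from $\Th$ to $\mathit{HC}$.
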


\begin{proof}
    There is only one choice of $1$-set $s$ for $\Th$, namely the
    single edge which forms the tangled ``chain'' joining the two
    cuffs, so we have $\Lsharp(\Th) = V(\Th; s)$. By
    Lemma~\ref{lem:modify-s}, we can modify the edges of the $1$-set,
    keeping the relative homology class unchanged, without altering
    $V(\Th, s)$. It follows that $\Lsharp(\Th) \cong
    \Lsharp(\mathit{HC})$, where the latter is the untangled
    handcuffs. But for $\mathit{HC}$, the representation variety is
    empty, and $\Lsharp$ must vanish.
\end{proof}

\begin{remarks}
    Essentially the same argument is used in \cite{KM-deformation}, to
    show that a deformation of $\Jsharp$ vanishes. On the other hand,
    $\Jsharp$ itself is non-trivial for $\Th$ on account of a general
    non-vanishing theorem for spatial webs without an embedded bridge
    \cite{KM-Tait}. We see here that such a non-vanishing theorem
    cannot hold of $\Lsharp$. At the same time, this is the first
    example presented in this paper where $\Lsharp$ and $\Jsharp$ have
    different dimensions.
\end{remarks}

\subsection{The bipartite graph \texorpdfstring{$K_{3,3}$}{K(3,3)}}
\label{subsec:K33}

The complete bipartite graph $K_{3,3}$ is the simplest trivalent graph
that is not abstractly isomorphic to a planar graph. Of course, it has
many spatial embeddings as a web in $\R^{3}$, but we refer to the one
pictured as $L_{2}$ at the top of
Figure~\ref{fig:L-octahedron-for-K33}.

\begin{figure}%[h]
    \begin{center}
        \includegraphics[scale=.46]{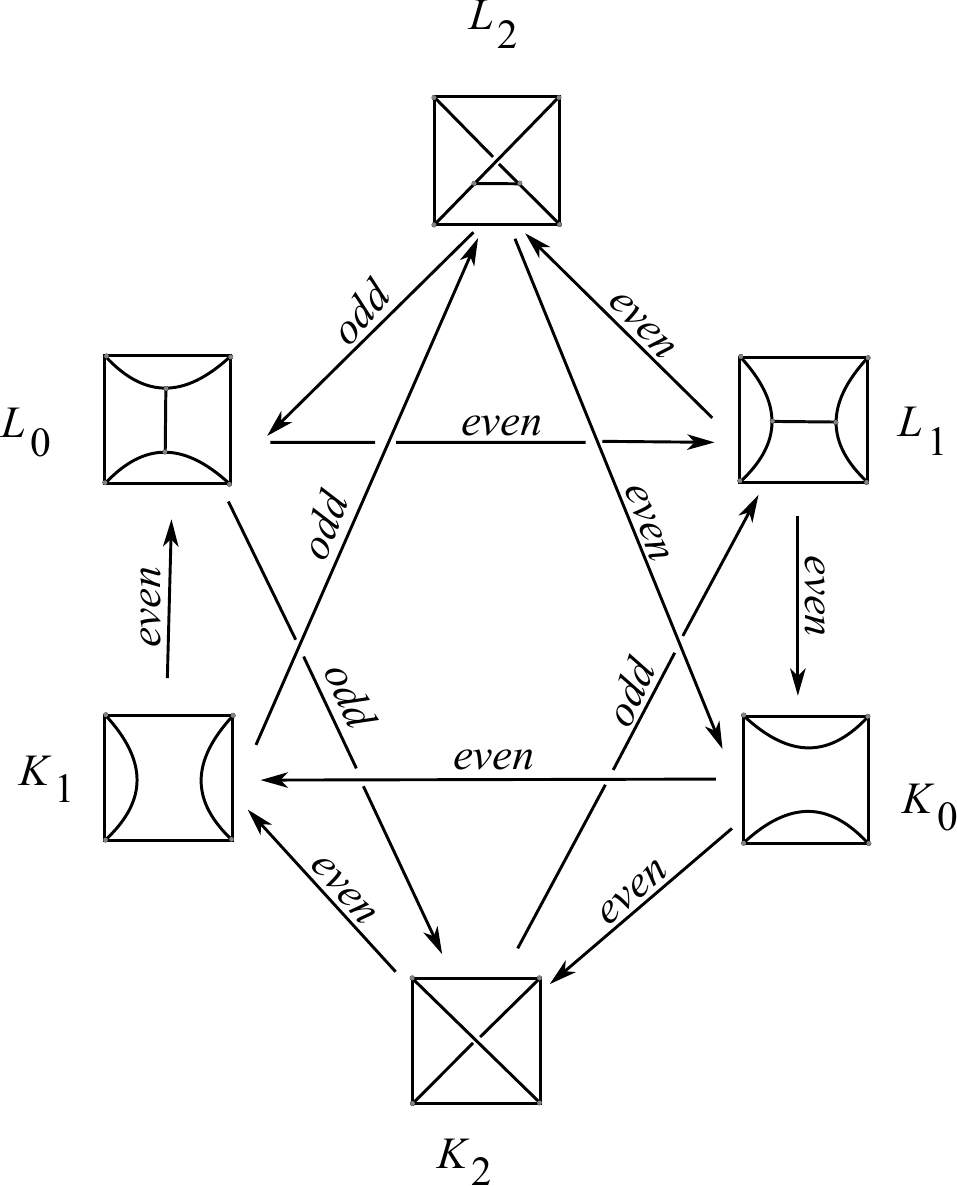}
    \end{center}
    \caption{\label{fig:L-octahedron-for-K33}
    Computing $\Lsharp$ for the spatial embedding of the complete
    bipartite graph $K_{3,3}$, shown in the position $L_{2}$.}
\end{figure}

\begin{proposition}
    For the graph $K_{33}$ embedded as a web as shown in
    Figure~\ref{fig:L-octahedron-for-K33}, the dimension of $\Lsharp$
    is $12$ and the Euler characteristic is $0$.
\end{proposition}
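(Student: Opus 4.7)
The plan is to compute $\dim \Lsharp(K_{3,3})$ directly from the edge-decomposition of Corollary~\ref{cor:edge-decomp}, and $\chi$ from the signed Tait count of Section~\ref{subsec:euler}. Since $K_{3,3}$ is bipartite and $3$-regular, each of its six perfect matchings $s$ has a single Hamiltonian $6$-cycle as its complementary $2$-set, so $n(s)=1$ and every such $s$ is automatically an even $1$-set. Consequently only the six summands $V(K_{3,3}; s)$ can contribute in the edge decomposition.

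For each such $s$, Lemma~\ref{lem:modify-s} (which is stated for arbitrary webs, not only planar ones) identifies $V(K_{3,3};s)$ with $V(C;\emptyset)$, where $C\subset S^{3}$ is the complementary $6$-cycle regarded as a vertex-free embedded circle. In the diagram of Figure~\ref{fig:L-octahedron-for-K33} the spatial embedding has only one crossing, so each of the six $6$-cycles $C$ inherits a diagram with at most one crossing and is therefore unknotted, since any nontrivial knot has crossing number at least three. By Proposition~\ref{prop:unknot}, $\Lsharp(C) \cong \F[u]/(u^{3}+u)$ and $V(C;\emptyset) = \im(u)$ is $2$-dimensional. Summing over the six perfect matchings yields $\dim \Lsharp(K_{3,3}) = 12$.

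For the Euler characteristic, I would apply the identity $\chi(\Lsharp_{*}(K)) = (-1)^{n/2}\, \mathrm{sTait}(K)$ of Section~\ref{subsec:euler} with $n=6$. What remains is to verify $\mathrm{sTait}(K_{3,3})=0$: equipping the six vertices with the cyclic orientations induced by the planar projection of the diagram, a direct enumeration of the twelve Tait colorings should show that they split into two classes of six according to the sign $\epsilon(t)=\prod_{v}\epsilon_{v}(t)$, with the two classes cancelling. The main technical point is the unknottedness of every complementary $6$-cycle, which is secured by the minimal-crossing observation above; the sign computation for $\chi$ is elementary combinatorics once the vertex orientations are fixed.
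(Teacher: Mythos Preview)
Your argument is correct and takes a genuinely different route from the paper's own proof. The paper places $K_{3,3}$ as $L_{2}$ in an octahedral diagram whose other five webs are planar (four tetrahedra and a twisted prism), observes that the map $\Lsharp(L_{0})\to\Lsharp(L_{1})$ must vanish because it is even while factoring through the even-graded $\Lsharp(K_{2})$ via two odd maps, and so obtains a short exact sequence which splits by grading as $\Lsharp(L_{2})\cong\Lsharp(L_{1})\oplus\Lsharp(L_{0})\{1\}$; both $L_{0}$ and $L_{1}$ are tetrahedral webs with $\dim\Lsharp=6$, yielding dimension $12$ and Euler number $0$ simultaneously.

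Your approach instead applies the edge decomposition directly: the six perfect matchings of $K_{3,3}$ each have a single Hamiltonian $6$-cycle as complement, Lemma~\ref{lem:modify-s} (valid for arbitrary spatial webs) reduces each $V(K_{3,3};s)$ to $V(C;\emptyset)$ for that cycle $C$, and the one-crossing diagram guarantees each $C$ is unknotted so that $\dim V(C;\emptyset)=2$. The Euler characteristic you obtain from the signed Tait count, which the paper also remarks on at the end of Section~\ref{subsec:euler}. Your route is more combinatorial and illustrates how the general edge-decomposition machinery handles a non-planar example; the paper's route is more geometric and delivers the full $\Z/2$-graded structure (identifying the two graded pieces with $\Lsharp$ of tetrahedral webs) as a byproduct. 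Both arguments are short, and the signed-Tait enumeration you leave as ``should show'' is indeed a straightforward check once vertex orientations are fixed.
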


\begin{proof}
Except for $L_{2}$, all webs in the octahedron in
Figure~\ref{fig:L-octahedron-for-K33} are planar. In the case of
$K_{2}$, the diagram shown is not the planar embedding, but the
semi-framing of the diagram is in the same parity class as the planar
one. So $\Lsharp$ is supported in even degrees for all webs in the
picture except (perhaps) for $L_{2}$. The latter is an embedding of
the complete bipartite graph $K_{3,3}$.

The map $L_{0}\to L_{1}$ in the diagram has even degree, but it is the
composite of two odd-degree maps, via $K_{2}$. So the map $L_{0}\to
L_{1}$ is zero. It follows that the exact triangle at the top of the
octahedron becomes a short exact sequence (notation $\Lsharp$ is
implied but omitted):
\[
  0 \to L_{1} \to L_{2} \to L_{0} \to 0,
\]
Since one map in this short exact sequence is
even and the other is odd, and since $L_{1}$ and $L_{2}$ both have
$\Lsharp$ supported in even degree, we have
\[
       \Lsharp(L_{2}) = \Lsharp(L_{1}) \oplus \Lsharp(L_{0})\{1\}.
\]
Both $L_{1}$ and $L_{0}$ are tetrahedral webs, and $\Lsharp$ has
dimension $6$ for both. It follows that $\Lsharp(L_{2})$ has rank $12$
and Euler characteristic $0$.    
\end{proof}

\begin{remark}
    It is not hard to verify that the representation variety for
    $(S^{3}, K_{3,3})^{\sharp}$ consists of two copies of the flag
    manifold $\SU(3)/ T^{2}$. The flag manifold has ordinary homology
    of rank $6$, all in even gradings. The above proposition shows
    that $\Lsharp$ in this case can be identified with the ordinary
    homology of the representation variety, but with one of the copies
    of the flag manifold shifted so that its homology is in odd
    grading.
\end{remark}

\subsection{The Kinoshita theta graph}

The Kinoshita theta graph $K$ \cite{Kinoshita} is a spatial embedding
of the theta graph that is knotted (i.e.~not isotopic to a planar
embedding), but has the Brunnian property, that the deletion of any
one edge leaves an unknot. See Figure~\ref{fig:Kinoshita-theta}.
Knowing only this, if we look at the direct sum decomposition,
\[
       \Lsharp(K) = \bigoplus_{s} V(K;s),
\]
we find that there are three $1$-sets and each corresponding summand
is $2$-dimensional, because it coincides with the kernel of
$(u^{2}+1)$ for the unknot. So $\Lsharp(K)$ has dimension $6$. Since
this is also the number of Tait colorings, we see that the Euler
number must also be $\pm 6$, so $\Lsharp(K)$ is supported in just one
grading (which depends on the choice of semi-framing).

\begin{figure}%[h]
    \begin{center}
        \includegraphics[scale=.75]{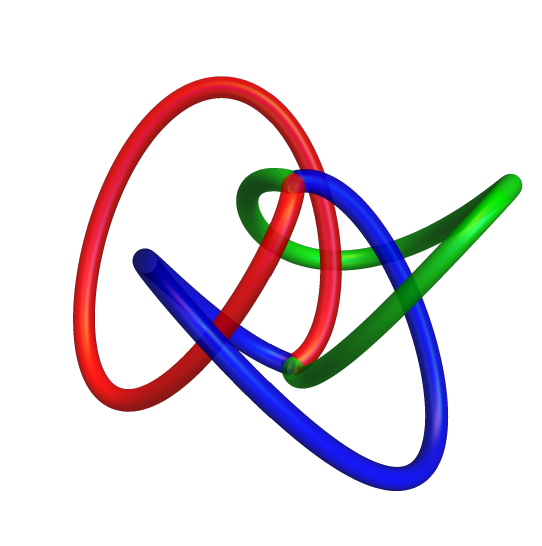}
    \end{center}
    \caption{\label{fig:Kinoshita-theta}
    The Kinoshita theta graph.}
\end{figure}

The discussion so far applies to any Brunnian theta graph, but for the
particular case of the Kinoshita graph it is interesting to also look
at the representation variety $\Rep^{\sharp}(K)$. The orbifold
fundamental group of $(S^{3}, K)$ is a finite subgroup $\Gamma$ of
$\SO(4)$, and the orbifold is a quotient of $S^{3}$. The group
$\Gamma$ can most easily be described in terms of its double cover
$\tilde\Gamma$ in $\Spin(4) =\SU(2)\times \SU(2)$, which is a product,
\[
     \tilde\Gamma = 2I \times Q_{8},
\]
where the factors are the binary icosahedral group and the quaternion
group of order $8$. From this description it is straightforward to
verify that there are exactly three conjugacy classes of
representations $\Gamma\to\SU(3)$ sending elements of order $2$ to
non-trivial elements (as required for a bifold connection). One of
these three representations is abelian and factors through the map
$\Gamma\to V_{4} = Q_{8}/\{\pm 1\}$. The other two factor through the
map $\Gamma\to I$. The group $I$ is $A_{5}$ which has two irreducible
representations in $\SU(3)$. These conjugacy classes give rise to the
three connected components of $\Rep^{\sharp}(K)$, which are one flag
manifold $\SU(3)/T$ and two copies of $\PSU(3)$. The instanton
homology $\Lsharp(K)$ coincides with $\Lsharp$ for the unknotted theta
graph and can be identified with the homology of the flag manifold, as
a vector space.

\section{Further discussion}
\label{sec:discuss}

\subsection{Relation to Khovanov-Rozansky homology}

The Khovanov-Rozansky $\sl_{N}$ homology of an oriented knot or link
$K$, originally defined over a field of characteristic zero, is a
finite-dimensional vector space $\KR_{N}(K)$ carrying an operator
$x_{e}$ for each edge $e$, satisfying $x_{e}^{N}=0$. A suitable
deformation $\KR_{3}(K)'$ can be constructed in which the operators
satisfy $x_{e}^{3}+x_{e}=0$. Equivalent constructions, valid for
coefficients in any field, were given in \cite{Queffelec-Rose} and
\cite{Robert-Wagner}. So it is natural to compare our $\Lsharp(K)$
with the deformed $\KR_{3}(K;\Z/2)'$ in this case.

We can pursue this comparison via two closely related routes. On the
one hand, the exact triangles satisfied by $\Lsharp$ can be extended
so as to compute $\Lsharp(K)$ from a cube of resolutions (the
resolutions being trivalent graphs). On the gauge theory side, the
construction closely parallels the constructions in \cite{KM-unknot}.
The result is a spectral sequence whose $E_{2}$ page we can expect to
agree with $\KR_{3}(K;\Z/2)'$ and which abuts to $\Lsharp(K)$.

On the other hand, both $\Lsharp(K)$ and $\KR_{3}(K;\Z/2)'$ are
simplified because of the factorization $x^{3}+x=x(x+1)^{2}$. So, for
a knot $K$ for example, we have $\Lsharp(K) = \F \oplus
V(K,\emptyset)$ in the notation of section~\ref{subsec:edge}.
Similarly, the deformed Khovanov-Rozansky homology has a decomposition
\[
           \KR_{3}(K;\Z/2)' = \F \oplus \KR_{2}(K;\Z/2),
\]
where in the second summand the usual operator $x$ has been replaced
by $x+1$. We therefore expect there to be a spectral sequence from the
ordinary Khovanov homology $\KR_{2}$ for knots and links abutting to
$V(K,\emptyset)$. Starting from the exact triangles for $\Lsharp$, one
can verify with a little care that $V(K,\emptyset)$ satisfies the same
skein exact triangles as $\KR_{2}(K;\Z/2)$, and the required spectral
sequence should be constructed as in \cite{KM-unknot} again.

At this point, we can notice that there are two different instanton spectral
sequences whose $E_{2}$ page is $\KR_{2}(K;\Z/2)$. There is the
spectral sequence in \cite{KM-unknot} converging to the $\SU(2)$
singular instanton homology $\Isharp(K;\Z/2)$. Then there is the
spectral sequence described above, converging to the summand
$V(K,\emptyset)$ of the $\SU(3)$ instanton homology $\Lsharp(K)$. It
is natural to ask:

\begin{question}
Can we directly compare $\Isharp(K;\Z/2)$ with
$V(K,\emptyset)$ for a knot or link $K$? Are they even isomorphic?
\end{question}

\noindent
The authors do not have an approach to answering this question.

\subsection{Using \texorpdfstring{$\Th$}{TH} as an atom}

In section~\ref{subsec:atoms}, we described the ``atom'' which we use
in the construction of $\Lsharp(K)$. We can replace this choice of
atom with any other orbifold of our choice as long as the
representation variety consists of just one, irreducible
representation. Such a choice is the bifold $(S^{3}, \Th)$, where
$\Th$ is the tangled handcuffs described in
section~\ref{subsec:tangled}. Let us consider the $\SU(3)$ instanton
homology $L^{\Th}(K)$ for bifolds $K$ that arises when our atom
$H_{3}$ from section~\ref{subsec:atoms} is replaced with $\Th$.

One key difference is that the proof of the excision property,
Proposition~\ref{prop:excision}, no longer works as before: it is not
clear to the authors whether $L^{\Th}(K)$ is multiplicative for split
unions of webs in $S^{3}$ for example. On the other hand, the proof of
the exact triangles is independent of the choice of atom, so we have
(for example) an octahedral diagram for $L^{\Th}$, just as in
section~\ref{sec:exact-triangle}.

The discussion of the canonical $\Z/2$ grading and the Euler
characteristic from section~\ref{sec:absolute} also needs no change.
In particular, the Euler characteristic of $L^{\Th}$ is again the
count of Tait colorings with signs. Because there is no trifold locus
now that we have dispensed with the trifold atom $H_{3}$, the $\Z/2$
grading on the instanton homology $L^{\Th}(K)$ can be lifted to a
relative $\Z/6$ grading. The authors have not examined this further,
though is apparent for the unknot that the homology has rank $3$ and
is supported with rank 1 in each of the even gradings mod $6$.

The last important difference between $\Lsharp$ and $L^{\Th}$ is that
the edge operators $x_{e}$ for the latter satisfy a different cubic
relation.

\begin{lemma}
    The operators $x_{e}$ on $L^{\Th}(K)$ for a web $K$ satisfy
    $x_{e}^{3}=0$.
\end{lemma}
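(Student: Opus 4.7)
The plan is to follow the structure of the proof of Lemma~\ref{lem:Xie-rel-mod-2}, with the atom $H_{3}$ replaced by $\Th$. The Xie cubic operator identity
\[
    x_{e}^{3} + c_{2}(\Ad\PP_{x})\, x_{e} - c_{3}(\Ad\PP_{x}) = 0
\]
continues to hold on $L^{\Th}(\check Y)$: the argument in \cite{Xie} showing that the bubbling correction cancels is local in the Chern--Simons functional and is insensitive to the choice of atom. So it suffices to prove that the two coefficient operators $\nu = c_{2}(\Ad\PP_{x})$ and $c_{3}(\Ad\PP_{x})$ both act as zero on $L^{\Th}(\check Y)$.

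The essential structural feature of $L^{\Th}$ is that $\Th$ carries only bifold singularities, so bubbles change the action $\kappa$ only by multiples of $1/2$ and the formal dimension only by multiples of $6$. Consequently $CL^{\Th}(\check Y)$ carries a relative $\Z/6$ grading, and in particular the unique irreducible flat connection on $\Th$ is isolated in a single grading, giving $L^{\Th}(S^{3}) = \F$ concentrated in that one grading. The operator $\nu$ has degree $4 \not\equiv 0 \pmod 6$, so $\nu = 0$ on $L^{\Th}(S^{3})$. One then propagates this to every $L^{\Th}(\check Y)$, as in the $\Lsharp$ case; since Proposition~\ref{prop:excision} is not established for $L^{\Th}$, this step requires a more careful neck-stretching argument to show that the point-class operator $\nu$ is controlled entirely by the atom.

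The class $c_{3}(\Ad\PP_{x})$ has degree $6 \equiv 0 \pmod 6$, so its vanishing is not dictated by the grading. Instead it must be extracted from a direct ADHM-style examination of the lowest-dimensional ASD moduli space on $\Th$, playing the same role that the appendix calculation plays for $\nu = 1$ in the $\Lsharp$ case. Once both $\nu$ and $c_{3}(\Ad\PP_{x})$ are known to vanish as operators on $L^{\Th}(\check Y)$, the Xie identity immediately reduces to $x_{e}^{3} = 0$. The main obstacle is this $c_{3}$ computation: the degree argument fails, and one must analyse the moduli space on $\Th$ explicitly; the absence of excision also complicates the propagation from the atom to an arbitrary bifold.
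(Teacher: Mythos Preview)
Your outline has the right shape—Xie's cubic identity plus vanishing of the point operators—but both of the steps you flag as difficult are handled quite differently in the paper, and your proposed resolutions do not go through.

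For the $\nu$ operator, you propose reducing to $L^{\Th}(S^{3})$ and using the $\Z/6$ grading there. The paper explicitly remarks (immediately after the proof) that this is precisely the argument that \emph{would} work if excision held, but excision is not available for $L^{\Th}$. The paper instead stretches along the boundary of a ball $B$ that contains the basepoint $x_{0}$ together with a segment $[1/4,3/4]\times\Th$ of the atom; the boundary $\partial B$ is $(S^{3},\Th\sqcup\Th)$, which is exactly the orbifold underlying $L^{\Th}(\Th)$. So the reduction is to $K=\Th$, not to $K=\emptyset$. This is not a minor variation: reducing to $\Th$ is what makes the rest of the argument possible, because $\Th$ has a trivalent vertex where the chain meets a cuff loop, so $e_{2}=e_{3}$. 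The first dot-migration relation then gives $\sigma_{1}(e_{1})=0$ for the chain, and the second gives $\nu=\sigma_{1}(e_{2})^{2}$. The representation variety for $(S^{3},\Th\sqcup\Th)$ is $\PSU(3)$, whose $\F$-homology is supported in degrees $0,2,3,5$ mod $6$; a degree-$2$ operator on a space with this support must square to zero, so $\nu=0$ on $L^{\Th}(\Th)$, and hence on every $L^{\Th}(K)$.

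For $c_{3}$, your proposed ADHM-style computation cannot be carried out: the appendix argument works because $H_{3}=S^{3}/V_{9}$ is a global quotient of the round sphere, so equivariant instantons on $\R^{4}$ are available. The tangled handcuffs $\Th$ has no such description, and there is no analogue of the ADHM moduli space to examine. The paper simply asserts that the Xie argument, which is local and independent of the atom, already delivers the relation $x_{e}^{3}+\nu x_{e}=0$; whatever one thinks of the $c_{3}$ term, no ADHM calculation is invoked.

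In short, the missing idea is the reduction to $K=\Th$ and the use of the specific vertex structure of $\Th$ to express $\nu$ as the square of an edge operator. Your route via $L^{\Th}(S^{3})$ plus an ADHM computation for $c_{3}$ is blocked at both steps.
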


\begin{proof}
The argument in \cite{Xie} does not rely on excision and shows that
there is a relation $x_{e}^{3} + \nu x_{e}=0$, where $\nu$ is the
operator associated to the characteristic class $c_{2}(P)$ for the
basepoint bundle $P$ at a non-singular point $x_{0}$ of the bifold. We
must show $\nu=0$ on $L^{\Th}(K)$ for all $K$.

We consider the trivial
cobordism $[0,1]\times Z$, where $Z=(Y,K) \# (S^{3}, \Th)$ in the
usual way, and consider $x_{0}$ as a point near $\{1/2\}\times \Th$.
Let $B$ a neighborhood of $x_{0}$ that meets $[0,1]\times\Th$ in
subset $[1/4,3/4]\times \Th$ so that the boundary of $B$ is $(S^{3},
\Th \sqcup \Th)$. By pulling out $B$, we see that it is enough to show
that $\nu=0$ on $L^{\Th}(K)$ in the special case that $K=\Th$.

When $K=\Th$, we consider next the first two relations in
Lemma~\ref{lem:dot-migration}, where $e_{1}$ is the tangled ``chain''
in $\Th$ and $e_{2}=e_{3}$ are both the same ``cuff'', meeting $e_{1}$
at a trivalent vertex therefore. Because we considering $L^{\Th}$
rather then $\Lsharp$, the right-hand side of the second relation is
not $1$ but the operator $\nu$, as the proof of the lemma shows. From
the first relation of Lemma~\ref{lem:dot-migration}, we learn that
$\sigma_{1}(e_{1})=0$, and from the second relation we then see
$\sigma_{1}(e_{2})^{2}=\nu$. On the other hand, the representation
variety of $(S^{3}, \Th \sqcup \Th)$ is isomorphic to $\PSU(3)$ whose
$\Z/2$ homology is non-zero in degrees $0,2,3,5$ modulo $6$. So
$L^{\Th}(\Th)$ is potentially non-zero only in these relative degrees
modulo $6$. The operator $\sigma_{1}(e_{2})$ has degree $2$, and must
have square zero because of the gradings. This shows that $\nu=0$ as
claimed.
\end{proof}

\begin{remark}
    The proof of the lemma above is more elaborate than would have
    been the case if we had an excision property, which would have
    allowed us to base the proof on the case of the empty, for which
    $L^{\Th}$ is supported in a single grading mod $6$, where it is
    apparent that the degree-4 operator $\nu$ must be zero.
\end{remark}

Unlike the case of $\Lsharp$ where we have the edge decomposition from
section~\ref{subsec:edge} resulting from the factorization of
$x^{3}+x$, there is no direct sum decomposition of $L^{\Th}(K)$ in
general, and we cannot readily compute it for planar graphs. It has
much in common with the $\SO(3)$ homology, $\Jsharp(K)$, and one
should ask whether it shares the following property:

\begin{question}
    Is $L^{\Th}(K)$ always non-zero for webs $K$ which do not have an
    embedded bridge in the sense of \cite{KM-Tait}?
\end{question}

Many of the calculations that we have presented for $\Lsharp$ can be
adapted readily for $L^{\Th}$, with only the module structure changing.
For example, by exploiting the $\Z/6$ grading and the relation
$x^{3}=0$, we can see that $L^{\Th}(\theta)$ is isomorphic to the
ordinary cohomology ring for the flag manifold $\SU(3)/T$. The
calculations for the Hopf link, the linked handcuffs, the trefoil and
$K_{3,3}$ in the previous section are all presented in a way that
adapts for $L^{\Th}$, though the module structure is different. On the
other hand, we do not have a calculation of $L^{\Th}$ for the tangled
handcuffs $L^{\Th}(\Th)$ or for the Kinoshita theta web, because those
calculations for $\Lsharp$ depended on the edge decomposition.

The authors do not know of an example where $L^{\Th}(K)$ and
$\Jsharp(K)$ have different dimensions, but this may simply reflect
the fact that our toolkits for calculation are similarly limited.

\subsection{Comparison with previous work}

In \cite{KM-yaft}, a general framework was constructed for gauge
theory with arbitrary compact structure group $G$, which was applied
to the case of three-dimensional orbifolds $(Y,K)$ in which the
singular set $K$ was an oriented embedded link. (More generally there,
the geometrical setup studied connections defined on the complement of
$K$, without the requirement that the local geometry be of orbifold
type: the case of orbifolds arose as a special case.) A suitable
``atom'' was found only in the case of $\SU(N)$, which restricted the
applicability to the construction when the eventual goal was to define
instanton Floer homology groups.

Despite the generality of the earlier work, the construction of
$\Lsharp(Y,K)$ (restricted perhaps to knots and links) is not a
special case of the results of \cite{KM-yaft}. To explain why this is
so, we recall the earlier framework for the case of simply-connected
compact group $G$. Let $\mathfrak{t}$ be the Lie algebra of the
maximal torus, and let $W\subset \mathfrak{t}$ be the (closed)
fundamental Weyl chamber with respect to some choice of positive
roots. Let $W_{1}\subset W$ be the fundamental \emph{Weyl alcove},
defined as
\[
          W_{1} = \{\, \alpha \in W \mid  \theta(\alpha) \le 1 \,\}.
\]
In the case of $\SU(N)$, with standard choices, $W_{1}$ is the set of
diagonal matrices
\[
       \alpha = i\, \mathrm{diag}(\lambda_{1}, \dots, \lambda_{N}),
\]
where $\lambda_{j}\ge \lambda_{j+1}$ for all $j$ and
$\lambda_{1}-\lambda_{N}\le 1$. The image of $W_{1}$ under the
exponential map $\alpha\mapsto \exp(2\pi \alpha)$ meets every
conjugacy class in $G$, so when studying (for example) flat
connections on $Y\setminus K$ we can treat the general case by
choosing $\Phi \in W_{1}$ and considering connections for the which
the monodromy on the oriented meridian of $K$ at each component lies
in the conjugacy class of $\exp(2\pi\,\Phi)$. As $\Phi$ varies in
$W_{1}$, there are finitely many possibilities that arise for the
stabilizer $G_{\Phi}\subset G$ of the Lie algebra element $\Phi$ under
the adjoint action. For each of these possible stabilizers $G_{p}$,
there it turns out that there is a unique $\Phi_{p}$ for which the
dimension formula has a \emph{monotone} property \cite{KM-yaft}.

In \cite{KM-yaft}, the framework is not quite this general: it is
required that $\Phi$ does not lie on the ``far wall'' of the alcove,
meaning that
\[
           \Phi \in W'_{1} =  \{\, \alpha \in W \mid  \theta(\alpha) <
           1 \, \}.
\]
This extra constraint (the strict inequality for $\theta(\alpha)$)
excludes the case that leads to $\Lsharp$.

\begin{figure}%[h]
    \begin{center}
        \includegraphics[scale=.46]{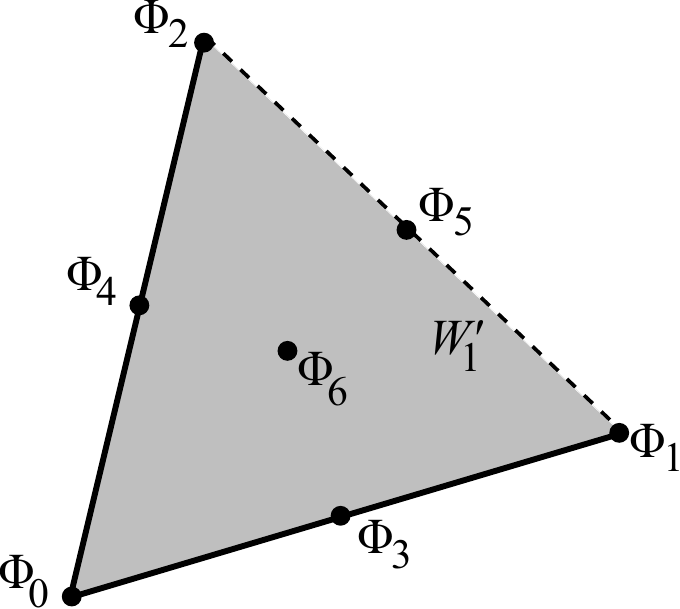}
    \end{center}
    \caption{\label{fig:alcove}
    The alcove $W'_{1}\subset W_{1}$ and the various monotone cases
    for $\Phi$. The bifold bundles of this paper correspond to
    $\Phi_{5}$.}
\end{figure}

To see this in detail, we can list the elements $\Phi\in W_{1}$ which
lead to a monotone gauge theory. In the standard Weyl alcove these are
as follows (see Figure~\ref{fig:alcove}):
\begin{itemize}
\item    $\Phi_{0} = i \, (0,0,0)$;
\item    $\Phi_{1} = i \, (1/3,1/3,-2/3)$;
\item    $\Phi_{2} = i \, (2/3,-1/3,-1/3)$;
\item    $\Phi_{3} = i \, (1/6,1/6,-1/3)$;
\item    $\Phi_{4} = i \, (1/3,-1/6,-1/6)$;
\item    $\Phi_{5} = i \, (1/2,0,-1/2)$;
\item    $\Phi_{6} = i \, (1/3,0,-1/3)$.
\end{itemize}
The group elements $\exp(2\pi\,\Phi_{k})$ for $k=0,1,2$ are the three
central elements of $\SU(3)$, and are less interesting in the current
context: the resulting connections in the adjoint $\PSU(3)$ bundle
extend smoothly over the locus $K$. The element $\exp(2
\pi\,\Phi_{6})$ has eigenvalues the three cube roots of unity and is
the monodromy that is relevant in the description of the atom $H_{3}$
used in this paper. Of the remaining three, $\Phi_{3}$ and $\Phi_{4}$
belong to the framework of \cite{KM-yaft}. The stabilizers $G_{\Phi}$
of $\exp(2\pi\,\Phi)$ are two different copies of $U(2)$ in $\SU(3)$.
Significantly, the case of $\Phi_{5}$ does \emph{not} fall into the
earlier framework, because $\theta(\Phi_{5})=1$. The exponential
$\exp(2\pi \,\Phi_{5})$ is the diagonal matrix $(-1,1,-1)$ which is
precisely the bifold holonomy used for $\Lsharp$. Its stabilizer is a
third copy of $U(2)$in $\SU(3)$. Note that this stabilizer is larger
than the stabilizer of the Lie algebra element $\Phi_{5}$ itself.

The distinction between $W_{1}$ and $W'_{1}$ arises also for
connections in dimension two, on a punctured Riemann surface. When
$\Phi\in W'_{1}$, such flat connections have an interpretation as
stable bundles with parabolic structure, by an extension of the
theorem of Mehta and Seshadri \cite{Mehta-Seshadri}. In this context,
the restriction $\theta(\Phi)<1$ is discussed in
\cite{Teleman-Woodward}, where it is pointed out that the
Mehta-Seshadri
correspondence breaks down when $\theta(\Phi)=1$.

Nevertheless, there is an important case in which the cases of
$\Phi_{3}$, $\Phi_{4}$ and $\Phi_{5}$ become equivalent. The group
elements $\exp(2\pi\,\Phi)$ in these three cases differ by central
elements of $\SU(3)$. Consider the case that $K$ is an oriented knot
in a 3-manifold $Y$ or an oriented embedded surface in a 4-manifold
$X$. Write $Z$ for $X$ or $Y$ in either case. Let $m\cong S^{1}$ be an
oriented of $K$ in $Z\setminus K$, and suppose that the map
$H^{1}(Z\setminus K) \to H^{1}(m)$ is surjective, either with integer
coefficients or (slightly more generally) with $\Z/3$ coefficients. If
$K$ has more than one component with meridians $m_{i}$, we require
that there is an element in $H^{1}(Z\setminus K)$ that restricts to
the oriented generator of each. In this case, there is a flat complex
line bundle $\xi\to Z\setminus K$ whose holonomy around every meridian
is $e^{2\pi i / 3}$. Given a rank-3 vector bundle $E$ with connection
and structure group $\SU(3)$ on $Z\setminus K$, we can form another by
tensoring with $\xi$:
\[
          E \mapsto E\otimes \xi.
\]
This operation carries the holonomy parameter $\exp(2\pi\,\Phi_{5})$
(the bifold case relevant to $\Lsharp$) to $\exp(2\pi\,\Phi_{4})$. It
provides a an isomorphism between the bifold configuration space
$\bonf(Z,K)$ and the configuration space for $\Phi_{4}$ from
\cite{KM-yaft}. Similarly, tensoring with $\xi^{-1}$ takes us to
$\Phi_{3}$.

The hypothesis on the existence of the flat bundle $\xi$ is always
satisfied if $K$ is an oriented classical knot or link. So in this
case, with suitable atom, $\Lsharp(K)$ is isomorphic to the Floer
homology defined in \cite{KM-yaft} for rank 3 and holonomy parameter
$\Phi_{4}$ or $\Phi_{3}$. Specifically then, with $\Z/2$ coefficients,
the homology group introduced as $\mathit{FI}^{3}_{*}(K)$ in
\cite[Definition 4.1]{KM-yaft} for oriented classical knots is
isomorphic to $\Lsharp(K)$. Already for elementary cobordisms between
knots however, we may introduce non-orientable surfaces, and $\xi$ may
not exist on the complement of the surface.

When $S\subset X$ is an oriented surface in a closed 4-manifold, the
flat line bundle $\xi$ will not exist if $[S]$ is a primitive class in
homology. It is interesting to note that in this situation, with the
extra hypothesis that $b^{+}(X)>0$, a generic perturbation ensures
that the moduli spaces of \cite{KM-yaft} contain no reducible
solutions. This does not hold for the bifold solutions in
$\bonf(X,S)$. To see this, observe first that we can expect there to
be non-singular $\SO(3)$ solutions on $X$ with Stiefel-Whitney class
$w_{2}$ dual to $[S]$ (by the results of \cite{Taubes-non-empty}), and
that these will continue to exist when $X$ is equipped with a bifold
metric which is singular along $S$ \cite{S-Wang-thesis}. These
$\SO(3)$ connections can be interpreted as bifold $\SU(2)$ connections
with monodromy $-1$ at the singular set. They then become $\SU(3)$
bifold connections via the inclusion of $\SU(2)$ in $\SU(3)$. In this
way, we see that there will always be reducible $\SU(3)$ bifold
solutions in $\bonf(X,S)$, with stabilizer of order $2$ and monodromy
$\SU(2)$. See the remark following the proof of
Lemma~\ref{lem:involution-fixed}.

\appendix

\section{\texorpdfstring{}{Appendix: }%
The second Chern class operator}

\subsection{Background}\label{app:background}

We give a proof here of Lemma~\ref{lem:nu-is-1}, which states that the
operator $\nu$ is $1$ on $\Lsharp(\check Y)$, for any bifold $\check
Y$. As noted there, it is enough to treat the case that $\check Y =
S^{3}$. The definition of $\nu$ and $\Lsharp$ means that the statement
to be proved involves a 4-dimensional moduli space of instantons on
the orbifold $\R\times H_{3}$, where $H_{3}$ is the trifold atom whose
singular locus is a Hopf link $H$. Specifically, there is a unique
critical point $\fa$ in $\bonf^{w}(H_{3})$ and a homotopy class of
paths from $\fa$ to $\fa$ such that the corresponding moduli space of
trajectories has dimension $4$. From the dimension formula, the action
of these solutions is $\kappa=1/3$, so we denote the moduli space by
$M_{1/3}(\fa,\fa)$.

The moduli space is
non-compact first because of the translation action and second because
action can bubble off at a point on the singular locus of the
orbifold. On the trifold, the smallest amount of action that is lost
in bubbling is $1/2$, so the weak limit in any bubbling will be the
 flat connection on the cylinder. So the ideal solutions that occur as
 Uhlenbeck limits have the form $([A_{0}], q)$, where $[A_{0}]$ is the
 flat connection and $q\in \R\times H$ is a point on the singular
 locus of the orbifold cylinder. Let us write
 \begin{equation}\label{eq:M-Uhl}
       M^{\Uhl}_{1/3}(\fa,\fa) =   M_{1/3}(\fa,\fa) \; \cup \;
       (\R\times H)
 \end{equation}
for this Uhlenbeck completion. To describe a compact space, we still
need to account for the action or $\R$ by translations, so we write
\[
          \hat M = M^{\Uhl}_{1/3}(\fa,\fa) \; \cup \; \{ \, -\infty,
          \infty \},
\]
where the points $\pm \infty$ denote the translation-invariant weak
limits as the action slides off to $\pm \infty$ respectively on the
cylinder.

 If $x$ is a basepoint in $\R\times H_{3}$ away from the singular
 locus $\R\times H$, then we have the associated basepoint bundle on
 the moduli space. This lifts to an $\SU(3)$ bundle, and we write this
 bundle as
\[\mathbf{E}_{x} \to M_{1/3}(\fa, \fa).\]
The basepoint bundle extends across points of the Uhlenbeck completion
where bubbling occurs, because the convergence at $x$ is always
strong, so we have a bundle
\[\mathbf{E}^{\Uhl}_{x} \to M^{\Uhl}_{1/3}(\fa, \fa).\]
The basepoint bundle is also trivial on the two ends of the moduli
space corresponding to the translation action so it extends to a
bundle $\hat{\mathbf{E}}_{x}$ on the compact space:
\[
        \hat{\mathbf{E}}_{x} \to \hat M.
\]
Our task is to
compute
\begin{equation}\label{eq:nu-eval}
           \nu = c_{2}(\hat{\mathbf{E}})[\hat M].
\end{equation}
Although
Lemma~\ref{lem:nu-is-1} only describes the second Chern class operator
mod $2$, we work here with integer coefficients, and will show:

\begin{proposition}\label{prop:nu-is-3}
    When the moduli space $M_{1/3}(\fa,\fa)$ is given its complex
    orientation, the value of $\nu$ in \eqref{eq:nu-eval} is $3$.
\end{proposition}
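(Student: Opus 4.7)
The strategy is to pull the problem back to a covering space where the ADHM construction becomes tractable. The orbifold $\R \times H_3$ is conformally equivalent to $(S^4 \setminus \{0, \infty\}) / V_9$, where $S^4 = \C^2 \cup \{\infty\}$ carries the $V_9 = C_3 \times C_3$ action by independent cube roots of unity on the two coordinates, and $\{0, \infty\}$ are the two $V_9$-fixed points. Since the ASD equations are conformally invariant and bifold connections admit removable singularities extensions (by the Uhlenbeck theorem applied orbifold-equivariantly), a finite-action instanton in $M_{1/3}(\fa,\fa)$ lifts to a genuine $V_9$-equivariant $\SU(3)$-instanton on $S^4$ with total Chern-Weil integral $9 \cdot \tfrac{1}{3} = 3$, whose fiber at the two fixed points carries the prescribed representation from $\rho$ in \eqref{eq:rep-rho}. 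This identifies $M_{1/3}(\fa,\fa)$ with a moduli space of $V_9$-equivariant charge-$3$ $\SU(3)$-instantons on $\R^4$, together with their natural compactification, which matches our Uhlenbeck/translation compactification $\hat M$.

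The next step is to write down the corresponding equivariant ADHM data. Charge-$3$ $\SU(3)$ ADHM data on $\C^2$ consists of a quadruple $(B_1, B_2, i, j)$ with $B_1, B_2 \in \End(W)$, $i : V \to W$, $j : W \to V$ on complex vector spaces $V, W$ of dimensions $3$ and $3$, subject to the moment-map equations, modulo $U(W)$. Equivariance under $V_9 \subset U(2)$ (acting on $\C^2$ via the standard diagonal representation) means that we must choose $V_9$-module structures on $V$ and $W$ and impose the usual intertwining relations on $(B_1, B_2, i, j)$. On $V$, the module structure at each fixed point is determined by $\rho$; after accounting for the central extension $G_{\Z}$, the representation $V$ decomposes into the three characters $(\omega, 1)$, $(1,\omega)$, $(\omega^{-1},\omega^{-1})$ of the two generators (up to overall twist). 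The correct module structure on $W$ is forced by requiring that the resulting equivariant moduli space have the predicted formal dimension $4$; a direct dimension count on the intertwining spaces will show that $W$ must also decompose into a specific triple of characters, and only the diagonal blocks of $(B_1, B_2, i, j)$ compatible with these decompositions survive.

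Having identified the equivariant ADHM data, the moduli space $\hat M$ should emerge as a concrete, compact complex surface, with the bubble locus $\R \times H$ and translation limits $\pm\infty$ realized as the boundary of the ADHM cell structure. The basepoint bundle $\hat{\mathbf{E}}_x$ is then the standard tautological bundle of the ADHM construction evaluated at $x \in \C^2 / V_9$, built from the cokernel of the Dirac-type operator determined by $(B_1, B_2, i, j)$, and $c_2$ can be computed from the Chern classes of the universal sheaves on $\hat M$. The expected outcome, consistent with the calculation of the rational version by Xie \cite{Xie}, is that $c_2(\hat{\mathbf{E}}_x)[\hat M] = 3$, once the moduli space is equipped with its complex orientation. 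Matching the sign will be a matter of comparing orientations of the ADHM quotient with the orientation induced on $\hat M$ from the complex structure on the ASD deformation complex.

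The main obstacle will be the bookkeeping of the equivariant ADHM data: the representation $\rho$ takes values in the central extension $G_{\Z}$ rather than $V_9$ itself, so the lift to equivariant data on $\C^2$ requires consistently tracking a projective $V_9$-action and the compensating twist by the line bundle $\xi$ dual to the arc $w$. A secondary subtlety will be verifying that the ADHM compactification agrees with the Uhlenbeck compactification $\hat M$ (so that in particular no spurious boundary contributions occur in the $c_2$ evaluation) and that the basepoint bundle extends across bubble strata with the expected Chern-class contribution; for an interior basepoint $x$ away from the trifold locus these contributions are standard, but the identification of strong vs.\ weak convergence at $x$ near $\R \times H$ should be spelled out explicitly.
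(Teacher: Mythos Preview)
Your overall strategy matches the paper's: pass to $G$-equivariant instantons on $\R^4$ of charge $3$, use the ADHM description, and compute $c_2$ of the basepoint bundle on the compactified moduli space. But what you have written is a plan, not a proof, and the plan contains a genuine confusion at the equivariance step that the paper resolves differently and more simply.

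The problematic step is your claim that ``the representation $V$ decomposes into the three characters $(\omega,1)$, $(1,\omega)$, $(\omega^{-1},\omega^{-1})$ of the two generators (up to overall twist).'' The action of $V_9$ on $E_\infty$ is only projective; equivalently, the honest action is by the order-$27$ Heisenberg-type group $G$ of \eqref{eq:central-extension-3}, and on $E_\infty\cong\C^3$ that representation $\rho$ is \emph{irreducible}. There is no decomposition into $V_9$-characters, and no global ``overall twist'' kills the cocycle. You acknowledge this difficulty at the end, but your outline of the ADHM analysis is organized around a character decomposition that does not exist, so the bookkeeping you anticipate cannot be carried out as stated.

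The paper avoids this entirely by working with $G$-equivariance directly and exploiting irreducibility. Both $E_\infty$ and the ADHM space $\cH$ carry the same irreducible $3$-dimensional $G$-representation $\rho$ (the latter because the central element acts by $\omega$ on $\cH$, which forces it). Since $U=U_1\oplus U_2$ with each $U_i$ a character of $V_9$, one checks $\cH\otimes U_i\cong\cH$ as $G$-modules. Schur's lemma then forces every ADHM map to be a scalar multiple of a fixed intertwiner: $\tau_i=t_iF_i$, $\pi=a\cdot 1$, $\sigma=b\cdot S$. The ADHM equations collapse to $t_1t_2+ab=0$, $|a|^2=|b|^2$, and after the residual $S^1$-action the moduli space is $\C^*\times\C^*$ in coordinates $(t_1,t_2)$, with Uhlenbeck completion $\C^2\setminus\{0\}$ and $\hat M\cong S^4$.

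For the Chern-class computation, the paper introduces the homogeneous ADHM monad on $\CP^4$ with coordinates $[t_1,t_2,a,b,u]$, so that $\mathbb{E}$ sits in $\C^9\ominus(\cO(-1)\oplus\cO(1))\otimes\C^3$ in $K$-theory, giving $c(\mathbb{E})=(1-h^2)^{-3}$ and hence $c_2=3h^2$. A short lemma compares this bundle (pulled back via $\tilde w:\CP^2\to\CP^4$) with $p^*\hat{\mathbf{E}}_{z_*}$ where $p:\CP^2\to S^4$ collapses the line at infinity; this is where your ``ADHM vs.\ Uhlenbeck compactification'' concern is handled. The upshot is $\nu=3$. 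If you rewrite your equivariance step to use $G$ and Schur rather than $V_9$-characters, your sketch becomes the paper's proof.
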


The space $\R\times H_{3}$ is globally a quotient of $\R\times S^{3}$,
which is conformally $\R^{4}\setminus \{0\}$. Understanding this
4-dimensional moduli space explicitly therefore involves understanding
instantons on $\R^{4}$ which are invariant under the action of a
finite group. This we can do using the ADHM construction of
instantons.

In using the ADHM construction in this sort of context, we are
following the strategy employed by Daemi-Scaduto in
\cite{Daemi-Scaduto} and by Austin in \cite{Austin}. See also
\cite{Kronheimer-Nakajima}.

Our calculation of $\nu$ is essentially the same result as is proved
in \cite{Xie}, but using a different atom. The two arguments have
something in common, because while we use the ADHM construction, the
closely-related Fourier-Mukai transform is used in \cite{Xie}. We
present the calculation for $\SU(3)$ only, but it is quite apparent
that the same argument shows that $\nu=N$ for the $\SU(N)$ case, with
a suitable atom, a result also obtained in \cite{Xie}. We discuss this
briefly at the end.

\subsection{The ADHM construction on the orbifold}

Our convention and notations for the ADHM correspondence mostly
follows \cite{Donaldson-Kronheimer}. Let $U$ denote Euclidean $\R^{4}$
or $\C^{2}$ with complex coordinates $z_{1}$ and $z_{2}$. We consider
bundles $(E,A)$ with connection, having rank $N$ and structure group
$\SU(N)$, such that the curvature of $A$ is anti-self-dual with
Chern-Weil integral $k\in \mathbb{N}$. We also consider such
instantons $(E,A)$ equipped with a framing, by which we mean a special
unitary isomorphism $\phi: E_{\infty}\to \C^{N}$, where $E_{\infty}$
is the fiber over infinity for the extension of $(E,A)$ to $S^{4}$
(which is provided by Uhlenbeck's theorem on the removal of
singularities). The ADHM construction provides a one-to-one
correspondence between isomorphism classes of such framed instantons
on $U$ and equivalence classes of ``ADHM data'' $(\cH, \tau_{1},
\tau_{2}, \pi,\sigma)$. Here $\cH$ is a $k$-dimensional complex vector
space with inner product, and the other items are linear maps,
\[
\begin{aligned}
\tau_{i} :\null &\cH \to \cH \\
\pi : \null &\cH \to E_{\infty}\\
\sigma:\null & E_{\infty} \to \cH.
\end{aligned}
\]
These are required to satisfy the ADHM equations,
\[
\begin{aligned}
\relax    [\tau_{1},\tau_{2}] + \sigma\pi &= 0\\
          [\tau_{1},\tau_{1}^{*}] +[\tau_{2},\tau_{2}^{*}]
          + \sigma\sigma^{*}  - \pi^{*} \pi& = 0,
\end{aligned}
\]
and also a non-degeneracy condition (see below). Equivalence for ADHM
data is defined by the action of the unitary group $U(\cH)$.

Given ADHM data and a point $z\in\C^{2}$, let
\[
      \alpha_{z}= 
      \begin{pmatrix}
        \tau_{1} -z_{1} \\ \tau_{2} -z_{2} \\ \pi
      \end{pmatrix} , \qquad\qquad
      \beta_{z}= 
      \begin{pmatrix}
        -\tau_{2} +z_{2} & \tau_{1} -z_{1} & \sigma.
      \end{pmatrix}
\]
The ADHM equations
can be stated as \[ \beta_{0}\alpha_{0}=0, \qquad
\alpha_{0}\alpha_{0}^{*}- \beta_{0}\beta_{0}^{*}
= 0,\] (with the same holding automatically for other $z\ne 0$). The
non-degeneracy condition requires that $\alpha_{z}$ is injective and
$\beta_{z}$ is surjective, for all $z\in \C^{2}$. The instanton bundle
$E$ is recovered from the ADHM data by
\[
   E_{z} = \ker(\beta_{z}) / \im (\alpha_{z}).
\]
In the inverse construction, the inner product space $\cH$ arises from
$(E,A)$ as the $L^{2}$ kernel of the coupled Dirac operator,
$D^{-}_{A}: \Gamma(S^{-}\otimes E) \to \Gamma(S^{+}\otimes E)$.

When considering the naturality of this construction, one should
introduce the 1-dimensional vector space $\Lambda^{2}U$ and write
\[
\begin{aligned}
(\tau_{1}, \tau_{2}):\null \cH &\to \cH \otimes U \\
(- \tau_{2}, \tau_{1}):\null \cH \otimes U &\to \cH \otimes \Lambda^{2}U \\
\pi : \null \cH &\to E_{\infty}\\
\sigma:\null  E_{\infty} &\to \cH \otimes \Lambda^{2} U.
\end{aligned}
\]
With this in mind, we have
\[
\cH \stackrel{\alpha_{z}}\longrightarrow \left(\cH \otimes U\right) \oplus
E_{\infty} \stackrel{\beta_{z}}\longrightarrow \cH \otimes \Lambda^{2}
U.
\]

For now, let us consider the case of rank-3 bundles ($N=3$). Let $G$
be the finite group of order 27 described by
\eqref{eq:central-extension-3} and $V_{9}=C_{3}\times C_{3}$ its
abelian quotient. We seek framed instantons $(E,A,\phi)$ over $U$
which are invariant under an action of $G$, where $G$ acts on
$E_{\infty}=\C^{3}$ by the representation $\rho$ from
section~\ref{subsec:atoms} and acts on $U=\C^{2}$ via its abelian
quotient $V_{9}$ by
\begin{equation}\label{eq:G-on-U}
       \Gg\mapsto 
       \begin{pmatrix}
        \omega & 0 \\ 0 & \omega^{-1}
       \end{pmatrix} ,\qquad 
       \Gh\mapsto 
       \begin{pmatrix}
        \omega & 0 \\ 0 & 1
       \end{pmatrix}.
\end{equation}
Since the action of $G$ is irreducible, we can drop the framing $\phi$
from our discussion, because it is unique up to isomorphism. Let
$M_{k}(U)^{G}$ denote this moduli space of $G$-invariant instantons of
charge $k$.

The quotient of $U\setminus 0$ by the action of $V_{9}$ is conformally
equivalent to the product $\R\times H_{3}$, where $H_{3}$ is the
trifold atom. If $\fa$ denotes the unique flat connection in
$\bonf^{w}(H_{3})$, then the moduli space $M_{k}(U)^{G}$ can be
identified with the moduli space of trajectories $M_{\kappa}(\fa,\fa)$
on $\R\times H_{3}$ with action $\kappa=k/9$. The dimension of this
moduli space is $12\kappa$, and we are concerned with the
4-dimensional moduli space. So the integer $k=\dim\cH$ needs to be
$3$.

So we examine the ADHM data $(\cH, \tau_{1},\tau_{2},\pi,\sigma)$ for
instantons in $M_{3}(U)^{G}$, where the vector space $\cH$ now
acquires an action of $G$. The space $\cH$ is identified with the
$L^{2}$ kernel of the Dirac operator on $U$ coupled to $(E,A)$, and
since the commutator $\bar\gamma=[\Gg, \Gh]$ acts trivially on $U$ and
acts as the scalar $\omega$ on $E_{\infty}$, it follows that the
commutator also acts by $\omega$ on $\cH$. This forces the action of
$G$ on $\cH$ to be by the same representation $\rho$ as on
$E_{\infty}$, given by equation \eqref{eq:rep-rho} in
section~\ref{subsec:atoms}.

Choose a unitary isomorphism between $E_{\infty}$ and $\cH$,
respecting the actions of $G$. Since $\pi: \cH\to E_{\infty}$ is
$G$-equivariant, it is a scalar multiples of the identity as an
endomorphism of $\cH$. The map $\sigma$ is naturally a map
$E_{\infty}\to \cH \otimes \Lambda^{2} U$, and again these are
isomorphic representations, so we choose a linear isomorphism of
$G$-spaces, $S:E_{\infty}\to \cH \otimes \Lambda^{2} U$, and we have
\[
        \pi = a \, 1_{\cH} , \qquad \sigma = b \, S,
\]
for some scalars $a$ and $b$. Similarly, the vector space $U=\C^{2}$
decomposes as $U_{1} \oplus U_{2}$, where these are two characters of
$G$ (via the abelian quotient $V_{9}$) corresponding to the top-left
and bottom-right entries of the matrices \eqref{eq:G-on-U}. The
representations $\cH \otimes U_{i}$ are both isomorphic to $\cH$,
which is irreducible, so let us choose unitary isomorphisms of
$G$-spaces,
\[
            F_{1}:  \cH \to \cH\otimes U_{1}, \qquad  F_{2}:  \cH \to
            \cH \otimes U_{2}.
\]
These also define isomorphisms,
\[
       F_{1} : \cH\otimes U_{2} \to \cH \otimes \Lambda^{2} U ,
       \qquad F_{2} : \cH\otimes U_{1} \to \cH \otimes \Lambda^{2} U .
\]
With this in mind, we have
\[
      \tau_{1} = t_{1} F_{1}, \qquad \tau_{2} = t_{2} F_{2},
\]
for complex scalars $t_{1}$, $t_{2}$.
The ADHM equations now become,
\begin{equation}\label{eq:scalar-ADHM}
        t_{1} t_{2} + C ab = 0, \quad |a|^{2} - |b|^{2} = 0,
\end{equation}
where the constant $C\in\C$ is the scalar $S = C [F_{1}, F_{2}]$. We
are free to scale $S$, so we can take it that $C=1$ to simplify the
exposition. The unitary transformations of $\cH$ that commute with $G$
are just the scalars, so the equivalence classes of ADHM data are the
orbits of the circle $S^{1}$ acting by
\[
       (t_{1}, t_{2}, a, b) \mapsto (t_{1}, t_{2}, \lambda a ,
       \lambda^{-1} b).
\]
The non-degeneracy condition is the condition $t_{1} t_{2}\ne 0$.

To within the action of $S^{1}$, the equations determine $a$ and $b$
in terms of $t_{1}$ and $t_{2}$. So a solution to the ADHM
equation is determined by $(t_{1}, t_{2})$ and we can set $a=a(t_{1},
t_{2})$ and $b=b(t_{1}, t_{2})$, where
\begin{equation}\label{eq:ab-ADHM}
\begin{aligned}
(t_{1}, t_{2}) 
        &= |t_{1} t_{2}|^{1/2} e^{i\theta_{1}} \\
        b(t_{1}, t_{2}) 
        &= |t_{1} t_{2}|^{1/2} e^{i\theta_{2}} \\
        \end{aligned}
\end{equation}
where $t_{j} = |t_{j}| e^{i\theta_{j}}$. We summarize this description
of the moduli space in a proposition.

\begin{proposition}\label{prop:M13}
The moduli space $M_{1/3}(\fa,\fa)
\cong M_{3}(U)^{G}$ is identified with $\C^{*}\times \C^{*}$ via
complex
coordinates $(t_{1}, t_{2})$, so
\begin{equation}\label{eq:M1/3-description}
         M_{1/3}(\fa,\fa) \cong \R \times (S^{3}\setminus H)
\end{equation}
where $H$ is a Hopf link. The solution corresponding to $(t_{1},
t_{2})$ in this description arise from the ADHM matrices $\alpha_{z}$
and $\beta_{z}$ that can be written as
\begin{equation}\label{eq:alpha-beta}
      \alpha_{z}= 
      \begin{pmatrix}
        t_{1}F_{1} - z_{1} \\ t_{2}F_{2} -z_{2} \\ a(t_{1}, t_{2})
      \end{pmatrix} , \qquad
      \beta_{z}= 
      \begin{pmatrix}
        -t_{2}F_{2} +z_{2} & t_{1}F_{1} -z_{1} & b(t_{1}, t_{2}) S
      \end{pmatrix},
\end{equation}
where we interpret $z_{i}$ as elements of $U_{i}$, and $a$ and $b$ are
given by \eqref{eq:ab-ADHM}. \qed
\end{proposition}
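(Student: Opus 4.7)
The plan is to identify $M_{1/3}(\fa,\fa)$ with $M_3(U)^G$ and then reduce the ADHM equations using equivariance. First, conformal equivalence identifies $(U\setminus 0)/V_9$ with the cylinder $\R\times H_3$. A framed instanton $(E,A,\phi)$ on $U$ that is equivariant for the given action of $G$ on $U$ (via $V_9$) and on $E_\infty$ (via $\rho$) descends to a bifold $\SU(3)$ connection on $\R\times H_3$ in the configuration space $\bonf^{w}$, and conversely any such trajectory lifts uniquely once a compatible framing at infinity is chosen. Because the representation $\rho$ of $G$ on $E_\infty$ is irreducible, the framing is unique up to isomorphism, so no data is lost. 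The action $\kappa$ on the cylinder and the instanton number $k$ on $U$ are related by $\kappa=k/9$; setting $k=3$ gives $\kappa=1/3$, matching the $4$-dimensional moduli space we are after.

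Next I would translate the $G$-action onto the ADHM data $(\cH,\tau_1,\tau_2,\pi,\sigma)$, where $\cH$ is the $L^2$-kernel of the coupled Dirac operator, of dimension $k=3$. The central element $\bar\gamma\in G$ acts trivially on $U$ and by $\omega\cdot\mathbf{1}$ on $E_\infty$, so the Dirac operator construction forces $\bar\gamma$ to act as $\omega$ on $\cH$ as well. There is a unique (up to isomorphism) irreducible $3$-dimensional representation of $G$ on which $\bar\gamma$ acts by $\omega$, namely $\rho$ itself. Hence $\cH\cong E_\infty$ as $G$-modules. A direct character check then shows that $\cH\otimes U_1$, $\cH\otimes U_2$, and $\cH\otimes\Lambda^2 U$ are each again isomorphic to the same irreducible. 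Fix equivariant isomorphisms $F_i:\cH\to\cH\otimes U_i$ and $S:E_\infty\to\cH\otimes\Lambda^2U$ once and for all (the remark $S=C[F_1,F_2]$ for a nonzero scalar $C$ that we normalize to $1$). Schur's lemma then forces
\[
\pi = a\,\mathbf{1}_{\cH},\qquad \sigma = b\,S,\qquad \tau_1 = t_1 F_1,\qquad \tau_2 = t_2 F_2,
\]
for complex scalars $a,b,t_1,t_2$.

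Plugging these into the two ADHM equations gives exactly the scalar equations $t_1t_2+ab=0$ and $|a|^2=|b|^2$. The $U(\cH)$ action on ADHM data restricts on the equivariant locus to the stabilizer in $U(\cH)$ of the $G$-representation, which by Schur is a single copy of $S^1$, and this $S^1$ acts trivially on $t_1,t_2$ and by $(a,b)\mapsto(\lambda a,\lambda^{-1}b)$. The non-degeneracy of $\alpha_z,\beta_z$ at $z=0$ amounts to $t_1t_2\neq 0$ (the other values of $z$ are then automatic in our rank-one setting). Solving the phase equations modulo $S^1$ yields the formulas \eqref{eq:ab-ADHM} for $a(t_1,t_2)$ and $b(t_1,t_2)$, so the equivalence classes are in bijection with pairs $(t_1,t_2)\in\C^*\times\C^*$. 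The identification $\C^*\times\C^*\cong\R\times(S^3\setminus H)$ in \eqref{eq:M1/3-description} is just the standard one, sending $(t_1,t_2)$ to $(\log|t_1t_2|,\,(t_1,t_2)/|(t_1,t_2)|)$, with the Hopf link $H$ appearing as the locus $\{t_1=0\}\cup\{t_2=0\}$ that has been removed.

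The main obstacle is really just the bookkeeping in the second step: one must verify that there is, up to scalar, a unique $G$-equivariant map in each slot, which rests on the fact that $\cH$, $E_\infty$, $\cH\otimes U_i$ and $\cH\otimes\Lambda^2 U$ are all isomorphic to the unique $3$-dimensional irreducible $G$-representation with central character $\omega$. Once that uniqueness is in hand, everything else is formal. I would only need to verify transversality (that the moduli space is smooth of the expected dimension $4$ at every point) to conclude that the ADHM description is actually the full moduli space and not just an open subset; this follows because the explicit ADHM parameter space $\C^*\times\C^*$ has the correct (real) dimension $4$ and the ADHM map is known to be a diffeomorphism onto the framed moduli space, which descends to an identification on equivariant loci.
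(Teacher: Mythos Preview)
Your proposal is correct and follows essentially the same route as the paper: the proposition is stated with a \qed because it summarizes the preceding discussion, and your outline reproduces that discussion faithfully (conformal identification with $M_3(U)^G$, the central element forcing $\cH\cong\rho$, Schur's lemma reducing the ADHM data to four scalars, and the $S^1$ quotient). One small correction: non-degeneracy at $z=0$ alone does \emph{not} give $t_1t_2\neq 0$ (when $t_1=0$, $t_2\neq 0$, both $\alpha_0$ and $\beta_0$ remain non-degenerate because $t_2F_2$ is an isomorphism). The failure occurs at nonzero $z$: if $t_1=0$ then $a=b=0$, and $\alpha_z$ loses injectivity at $z=(0,z_2)$ for $z_2$ an eigenvalue of $t_2F_2$. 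The clean argument is that $t_1t_2\neq 0$ forces $a,b\neq 0$, and then the $\pi$- and $\sigma$-blocks alone make $\alpha_z$ injective and $\beta_z$ surjective for every $z$.
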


The ADHM construction also provides a description of the Uhlenbeck
completion of the instanton moduli space. If we drop the condition
that $t_{1}t_{2}$ is non-zero, and ask only that $t_{1}$ and $t_{2}$
are not both zero, we obtain a description of  $M^{\Uhl}_{1/3}(\fa,
\fa)$, as
\[
    M^{\Uhl}_{1/3}(\fa, \fa) \cong \C^{2} \setminus \{0\}
\]
via the same coordinates $t_{1}$, $t_{2}$. The translation action on
$M_{1/3}(\fa, \fa)$ corresponds to the action of positive scalars on
the coordinates $t_{i}$.  Putting in the two limit
points of the translations, we obtain a description of the
compactified moduli space
\[
        \hat M = S^{4}.
\]

\subsection{The basepoint bundle and its second Chern class}

Consider now the basepoint bundle $\mathbf{E}_{z_{*}}$ over the moduli
space, associated with the basepoint $z_{*}=(1,1) \in U$. As noted in
section~\ref{app:background}, the bundle $\mathbf{E}_{z_{*}}$ extends
first as a bundle $\mathbf{E}^{\Uhl}_{z_{*}}$ over the Uhlenbeck
completion $M^{\Uhl}_{1/3}(\fa,\fa)$ and then as a bundle
$\hat{\mathbf{E}}_{z_{*}}$ over the compact space $\hat M$.

The ADHM description of the moduli space also provides a description
of the basepoint bundle. Let us indicate the dependence on $t=(t_{1},
t_{2})$ in the ADHM matrices by writing the matrices in
\eqref{eq:alpha-beta} as $\alpha(t,z)$ and $\beta(t,z)$. With
$z_{*}=(1,1)$, let $\bbE$ be the vector bundle over $\C^{2}$ whose
fiber at $t$ is
\[
          \bbE(t) = \ker(\beta(t,z_{*})) / \im (\alpha(t, z_{*})).
\]
Over $\C^{*}\times\C^{*} = M_{1/3}(\fa,\fa)$, this describes the
basepoint bundle $\mathbf{E}_{z_{*}}$ via the ADHM construction, and
this extends to an isomorphism between $\bbE$ and
$\mathbf{E}^{\Uhl}_{z_{*}}$ over the Uhlenbeck completion
$\C^{2}\setminus \{0\} = M^{\Uhl}_{1/3}(\fa, \fa)$. 
To understand the behavior at the two additional points $\pm\infty$
in the compactification $\hat M$, we need to examine the ends more
carefully.

Let us introduce the projective space $\CP^{4}$ with coordinates
$[t_{1}, t_{2}, a, b, u]$ and the map
\[
\begin{gathered}
w : M_{1/3}(\fa,\fa) \to \CP^{4} \\
    (t_{1}, t_{2}) \mapsto (t_{1}, t_{2}, a(t_{1}, t_{2}), b(t_{1},
    t_{2}), 1),
\end{gathered}
\]
taking $t_{1}, t_{2}$ still to be coordinates on $M_{1/3}(\fa,\fa) =
\C^{*}\times \C^{*} \subset \C^{2}$. This map does not extend
continuously to $\hat M = \C^{2}\cup \{\infty\}$, but it does extend
continuously to a larger compactification $\tilde M = \CP^{2}$, by the
(well-defined) map
\[
        \tilde w:   [t_{1}, t_{2}, u] \mapsto
        [t_{1}, t_{2}, a(t_{1}, t_{2}), b(t_{1},
    t_{2}), u].
\]
Let
\[
      p : \tilde M \to \hat M
\]
be the quotient map which collapses $\CP^{1}$ to a point.

In the coordinates $[t_{1}, t_{2}, a, b, u]$ of $\CP^{4}$, let us
introduce homogeneous versions of the maps $\alpha$, $\beta$ as
      \begin{equation}\label{eq:alpha-beta-hom}
      \alpha = 
      \begin{pmatrix}
        t_{1}F_{1} - u \\ t_{2}F_{2} -u \\ a
      \end{pmatrix} ,\qquad
      \beta= 
      \begin{pmatrix}
        -t_{2}F_{2} +u & t_{1}F_{1} -u & b  S
      \end{pmatrix},
\end{equation}         
which we interpret over projective space as bundle maps,
\[
\cH\times \cO(-1) \stackrel{\alpha}\longrightarrow \cH \otimes U \oplus
E_{\infty} \stackrel{\beta}\longrightarrow \cH \otimes \Lambda^{2}
U \otimes \cO(1).
\]
The composite $\beta\alpha$ is zero only over the locus $Q =
\{t_{1}t_{2}=ab\}$ in $\CP^{4}$. Over $Q$, the map $\alpha$ is
injective and $\beta$ is surjective except at the points $[0,0,1,0]$
and $[0,1,0,0]$ respectively. Let $Q'\subset Q$ denote the complement
of these two points. Over $Q'$, the ADHM description provides us with
a bundle
\[
        \mathbb{E} = \ker(\beta)/\im(\alpha).
\]

\begin{lemma}
    The bundles $p^{*}(\hat {\mathbf{E}}_{z_{*}})$ and $\tilde
    w^{*}(\mathbb{E})$ over $\tilde M = \CP^{2}$ are isomorphic.
\end{lemma}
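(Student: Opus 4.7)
\medskip

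The plan is to construct the isomorphism first on the dense open set where both bundles are manifestly the same, and then extend it by analyzing the behavior on the complement in $\CP^{2}$, which consists of the bubbling locus $\{t_{1}t_{2}=0\}\setminus\{0\}$, the origin $[0{:}0{:}1]$ (which corresponds to the translation limit $-\infty\in\hat M$ under $p$), and the line at infinity $\CP^{1}_{\infty}=\{u=0\}$ (which $p$ collapses to $+\infty\in\hat M$). On the affine open set $M_{1/3}\cong\C^{*}\times\C^{*}$, the map $\tilde w$ is exactly the lift of the quotient map associating a point in $\C^{4}$ to the ADHM data $(t_{1}F_{1},t_{2}F_{2},a(t)1_{\cH},b(t)S)$, and by Proposition~\ref{prop:M13} the fiber of $\mathbb{E}$ at $\tilde w(t)$ is, by definition of the ADHM correspondence, the fiber of the associated instanton at $z_{*}$, which is precisely the fiber of $\hat{\mathbf{E}}_{z_{*}}$. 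So $\tilde w^{*}(\mathbb{E})$ and $p^{*}(\hat{\mathbf{E}}_{z_{*}})$ are canonically isomorphic over $M_{1/3}$.

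First I would verify that $\tilde w$ maps into $Q'$ so that $\tilde w^{*}(\mathbb{E})$ is defined on all of $\tilde M$. In the affine chart $\{u=1\}$, along the bubbling locus (say $t_{1}=0$, $t_{2}\neq 0$) the homogeneous matrix $\alpha(z_{*})$ has top block $-z_{1}^{*}\cdot 1_{\cH}$ with $z_{1}^{*}\neq 0$, so $\alpha(z_{*})$ remains injective; a parallel argument handles surjectivity of $\beta(z_{*})$. At the origin $[0{:}0{:}1]$, $\alpha(z_{*})$ is the standard diagonal embedding determined by $z_{*}$ alone, which is plainly non-degenerate. On $\CP^{1}_{\infty}=\{u=0\}$, the homogeneous matrices read $\alpha=\bigl(t_{1}F_{1},\,t_{2}F_{2},\,0\bigr)$ and $\beta=\bigl(-t_{2}F_{2},\,t_{1}F_{1},\,0\bigr)$, and since $(t_{1},t_{2})\neq(0,0)$ and the $F_{i}$ are invertible, these are again injective and surjective respectively.

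The crux is to analyze behavior at the collapsed $\CP^{1}_{\infty}$. For $p^{*}(\hat{\mathbf{E}}_{z_{*}})$ to descend through $p$, the restriction of $\tilde w^{*}(\mathbb{E})$ to $\CP^{1}_{\infty}$ must be canonically trivial. But on $\{u=0\}$ the $\pi,\sigma$ components of $(\alpha,\beta)$ vanish, so the summand $E_{\infty}\subset \cH\otimes U\oplus E_{\infty}$ lies in $\ker\beta$ and projects injectively to $\ker\beta/\im\alpha$; a dimension count identifies this projection as an isomorphism $E_{\infty}\xrightarrow{\sim}\tilde w^{*}(\mathbb{E})|_{\CP^{1}_{\infty}}$. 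So this restriction is canonically the trivial bundle $E_{\infty}=\C^{3}$. Under Uhlenbeck compactification, the weak limit at $+\infty\in\hat M$ is the (framed) flat connection on $H_{3}$, and evaluation at the basepoint $z_{*}$ canonically identifies $\hat{\mathbf{E}}_{z_{*}}|_{+\infty}$ with $E_{\infty}$ via the framing. The hard part will be verifying that these two canonical trivializations agree, which one checks by taking limits in the ADHM formula: as $|t|\to\infty$ with $t/|t|$ fixed, the ADHM instanton becomes increasingly concentrated at the origin of $U$, so its fiber at $z_{*}\neq 0$ limits strongly to the asymptotic fiber, compatibly with the projection onto $E_{\infty}$.

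The matching at $-\infty\in\hat M$ (the image of the origin $[0{:}0{:}1]\in\tilde M$) is similar but easier: at $t=0$, $\alpha(z_{*})$ is the diagonal embedding by $z_{*}$ and the $E_{\infty}$ summand again maps isomorphically to $\ker\beta/\im\alpha$; on the Uhlenbeck side, the translation limit at $-\infty$ is again the framed flat connection. With both extensions identified on the collapsed loci and the open set, the canonical isomorphism over $M_{1/3}$ extends continuously (in fact algebraically) over the normal crossings complement, giving the stated isomorphism on all of $\tilde M$. The main obstacle, as noted, is the compatibility of the two canonical trivializations on $\CP^{1}_{\infty}$; once this is verified, the rest is bookkeeping.
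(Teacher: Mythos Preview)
There is a genuine error in your analysis of the line at infinity $\CP^{1}_{\infty}=\{u=0\}$. You claim that on $\{u=0\}$ the $\pi,\sigma$ components of $(\alpha,\beta)$ vanish, writing $\alpha=(t_{1}F_{1},\,t_{2}F_{2},\,0)$ and $\beta=(-t_{2}F_{2},\,t_{1}F_{1},\,0)$. But the third components of the homogeneous $\alpha$ and $\beta$ are the coordinates $a$ and $b$ on $\CP^{4}$, which are independent of $u$. When pulled back by $\tilde w$, they become $a(t_{1},t_{2})=|t_{1}t_{2}|^{1/2}e^{i\theta_{1}}$ and $b(t_{1},t_{2})=|t_{1}t_{2}|^{1/2}e^{i\theta_{2}}$, and these are \emph{nonzero} at every point of $\CP^{1}_{\infty}$ where both $t_{i}\ne 0$. (One checks that $a$ and $b$ scale linearly under the diagonal $\C^{*}$ action on $(t_{1},t_{2})$, so $\tilde w$ is well-defined, but this does not make them vanish at $u=0$.) Consequently the summand $E_{\infty}$ is \emph{not} contained in $\ker\beta$ along most of $\CP^{1}_{\infty}$, and your proposed trivialization of $\tilde w^{*}(\mathbb{E})|_{\CP^{1}_{\infty}}$ by $E_{\infty}$ collapses before the compatibility question even arises.

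The paper's argument avoids computing the fiber along $\CP^{1}_{\infty}$ directly. It passes to the real oriented blow-up $M^{+}\cong B^{4}$ of $\hat M$ at $+\infty$ and uses the scaling symmetry of the cylinder: the basepoint bundle $\mathbf{E}_{z_{*}}$ on the sphere $\|t\|=1/\epsilon$ is identified with $\mathbf{E}_{(\epsilon,\epsilon)}$ on $\|t\|=1$, and hence in the limit with $\mathbf{E}_{z=0}$ on the unit sphere. Both $p^{*}(\hat{\mathbf{E}}_{z_{*}})$ and $\tilde w^{*}(\mathbb{E})$ are then obtained from the same bundle on $M^{+}$ by trivializing on $\partial M^{+}$ and collapsing (to a point, respectively along Hopf fibers). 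The decisive point is that any such trivialization must respect the irreducible $G$-action on the fibers, so it is unique up to scalar; hence the two collapses yield isomorphic bundles. Your stratification approach might be repairable, but you would need to correctly identify $\tilde w^{*}(\mathbb{E})|_{\CP^{1}_{\infty}}$ and produce the right trivialization there; the $G$-equivariance is exactly the tool that makes this step clean.
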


\begin{proof}
    Consider the open manifold $\hat M \setminus \{\infty\} = \R^{4}$
    and let $M^{+} \cong B^{4}$ be obtained by attaching a 3-sphere at
    infinity. So $M^{+}$ is the real oriented blow up of $\hat M =
    S^{4}$ at the point at infinity. Invariance under scaling the
    variables, or equivalently invariance under translation of the
    cylinder $\R\times H_{3}$, provides an isomorphism between the
    basepoint bundle $\mathbf{E}_{z=(1,1)}$ on $M = M_{1/3}(\fa,\fa)$
    restricted to the 3-sphere $\| t\| = 1/\epsilon$ and the basepoint
    bundle $\mathbf{E}_{z=(\epsilon,\epsilon)}$ restricted to the
    3-sphere $\|t\|=1$. If we regard $M$ as the moduli space
    $M_{3}(U)^{G}$, then the fiber at $0\in U$ is defined, and we see
    that the basepoint bundle $\mathbf{E}_{z=(1,1)}$ extends over the
    3-sphere $\partial M^{+}$ and can be identified there with the
    restriction of $\mathbf{E}_{z=0}$ to the 3-sphere $\|t\|=1$.

    The bundle $\hat{\mathbf{E}}_{z=(1,1)}$ is obtained from the
    bundle $\mathbf{E}_{z=(1,1)}$ as an identification space, by a
    trivialization of the bundle on $\partial M^{+}$. Equivalently, it
    is determined by a trivialization $\psi$ of $\mathbf{E}_{z=0}$
    over the sphere $\|t\|=1$. The finite group $G$ acts on $U$ and
    acts on the fibers of basepoint bundle $\mathbf{E}_{z=0}$. The
    trivialization $\psi$ respects the action of $G$, and since the
    action is irreducible on the fibers, this condition characterizes
    $\psi$ uniquely.

    The bundle over $\tilde M = \CP^{2}$ is similarly obtained from a
    bundle over the 4-ball $M^{+}$, as $\CP^{2}$ is obtained from
    $B^{4}$ by quotienting by the Hopf fibration on the boundary
    $S^{3}$. The ADHM construction describes the instanton bundles on
    $U$, not just on $U\setminus \{0\}$, so the appropriate bundle on
    $M^{+}$ is as before and carries the same action of $G$ on the
    fibers over the boundary. The bundle $\tilde w^{*}(\mathbb{E})$ is
    described by trivializing the bundle on $\partial M^{+}$ along the
    fibers of the Hopf fibration. Since the trivializations are the
    same in both cases, this identifies $\tilde w^{*}(\mathbb{E})$
    with $p^{*}(\hat {\mathbf{E}}_{z_{*}})$.
\end{proof}

Given the lemma, we can compute $\nu$ as follows. Since the map $p$ is
an isomorphism on fourth homology, we have $\nu = c_{2}(p^{*}(\hat
{\mathbf{E}}_{z_{*}})[\tilde M]$, which is equal to $c_{2}(\tilde
w^{*}( \mathbb{E}))[\tilde M]$ by the lemma. The image of the
fundamental class of $[\tilde M]$ under $\tilde w$ has degree $1$,
because $\tilde w$ is homotopic to the inclusion of a standard copy of
$\CP^{2}$. We can compute the Chern classes of $\mathbb{E}$ from its
definition as
\[
\begin{aligned}
\mathbb{E} &=  \bigl(\cH \otimes U \oplus
E_{\infty}\bigr) \;\ominus\; \bigl( \cH\otimes \cO(-1) \oplus \cH\otimes
\cO(1)\bigr)  \\
&= \C^{9} \;\ominus\;  \bigl(  \cO(-1) \oplus \cO(1) \bigr)\otimes \C^{3}.
\end{aligned}
\]
This description is valid over all of $\CP^{4}$, and gives the total
Chern class of $\bbE$ as
\begin{equation}\label{eq:total-chern}
          c(\bbE) = (1-h^{2})^{-3}.
\end{equation}
This means that $c_{1}=0$ and $c_{2}=3 h^{2}$, where $h$ is the
generator. Hence \[\nu
= 3 h^{2}[ w(\tilde M)]=3\] as required. This completes the proof of
Proposition~\ref{prop:nu-is-3}.

\subsection{The generalization to \texorpdfstring{$\SU(N)$}{SU(N)}}

For every $N\ge 2$ there is an orbifold $H_{N}$ generalizing the
$H_{3}$ that appears here, as the quotient of $S^{3}\subset \C^{2}$ by
the abelian group $C_{N}\times C_{N}$. This product of cyclic groups
has a central extension $G_{N}$ by the group $C_{N}$, and $G_{N}$ has
an irreducible representation in $\SU(N)$ by the generalizations of
the matrices in \eqref{eq:rep-rho}:
\[
\begin{aligned}
\rho(g)=\epsilon&\begin{pmatrix}
1 & 0 & 0 & \cdots & 0 \\
0 & \zeta & 0 & \cdots & 0 \\
0 & 0 & \zeta^2 & \cdots & 0 \\
& & & \ddots & 0 \\
0 & 0 & 0 & 0 & \zeta^{N-1}
\end{pmatrix} \\[1.0ex]
\rho(h)=\epsilon&\begin{pmatrix}
0 & 0 & 0 & \cdots & 1 \\
1 & 0 & 0 & \cdots & 0 \\
0 & 1 & 0 & \cdots & 0 \\
& & & \ddots & 0 \\
0 & 0 & 0 & 1 & 0
\end{pmatrix}\\[1.0ex]
\rho(\gamma)=&\begin{pmatrix}
\zeta & 0 & 0 & \cdots & 0 \\
0 & \zeta & 0 & \cdots & 0 \\
0 & 0 & \zeta & \cdots & 0 \\
& & & \ddots & 0 \\
0 & 0 & 0 & 0 & \zeta
\end{pmatrix}
\end{aligned}
\]
where $\zeta=e^{2\pi i /N}$ and $\epsilon= 1$ if $N$ is odd and an
$N$th root of $-1$ if $N$ is even. See \cite{KM-yaft}. This
representation of $G_{N}$ determines an orbifold $\SU(N)$ bundle on
the complement of an arc $w$ joining the two components of the Hopf
link, just as in the case of $H_{3}$. It is irreducible and defines
the unique critical point $\fa$ in $\bonf^{w}(H_{N})$. There is a
4-dimensional moduli space $M_{\kappa}(\fa, \fa)$ when $\kappa=1/N$,
and these solutions can be interpreted as $G_{N}$-equivariant
instantons with instanton number $N$ on $\R^{4}$.

The basepoint bundle on this moduli space has a second Chern class
that evaluates to an integer $\nu_{N}$. The ADHM description carries
over with essentially no change: the complex vector space $\cH$ is now
$N$-dimensional and carries the action of $G_{N}$ defined by $\rho$,
and the solutions to the ADHM equations are described by
Proposition~\ref{prop:M13} with ``$N$'' replacing ``$3$'' in the
statement. The calculation of $\nu_{N}$ proceeds via the total Chern
class of $\bbE$ on $\CP^{4}$, which is now given by
\[
          c(\bbE) = (1-h^{2})^{-N}.
\]
This gives $\nu_{N}=N$, generalizing the case $N=3$.

\bibliographystyle{abbrv}
\bibliography{SU3}

\begin{thebibliography}{10}

\bibitem{Agol-Krushkal}
I.~Agol and V.~Krushkal.
\newblock Structure of the flow and {Y}amada polynomials of cubic graphs.
\newblock In {\em Breadth in contemporary topology}, volume 102 of {\em Proc.
  Sympos. Pure Math.}, pages 1--20. Amer. Math. Soc., Providence, RI, 2019.

\bibitem{AHS}
M.~F. Atiyah, N.~J. Hitchin, and I.~M. Singer.
\newblock Self-duality in four-dimensional {R}iemannian geometry.
\newblock {\em Proc. Roy. Soc. London Ser. A}, 362(1711):425--461, 1978.

\bibitem{Austin}
D.~M. Austin.
\newblock Equivariant {F}loer groups for binary polyhedral spaces.
\newblock {\em Math. Ann.}, 302(2):295--322, 1995.

\bibitem{Braam-Donaldson}
P.~J. Braam and S.~K. Donaldson.
\newblock Floer's work on instanton homology, knots and surgery.
\newblock In {\em The Floer memorial volume}, volume 133 of {\em Progr. Math.},
  pages 195--256. Birkh\"auser, Basel, 1995.

\bibitem{Daemi-Scaduto}
A.~Daemi and C.~Scaduto.
\newblock Equivariant aspects of singular instanton floer homology, 2019.
\newblock Preprint, \href{https://arxiv.org/abs/1912.08982}{arxiv:1912.08982}.

\bibitem{Daemi-Xie}
A.~Daemi and Y.~Xie.
\newblock Sutured manifolds and polynomial invariants from higher rank bundles.
\newblock {\em Geom. Topol.}, 24(1):49--178, 2020.

\bibitem{Donaldson-Kronheimer}
S.~K. Donaldson and P.~B. Kronheimer.
\newblock {\em The geometry of four-manifolds}.
\newblock Oxford Mathematical Monographs. The Clarendon Press, Oxford
  University Press, New York, 1990.
\newblock Oxford Science Publications.

\bibitem{Khovanov-SL3}
M.~Khovanov.
\newblock sl(3) link homology.
\newblock {\em Algebr. Geom. Topol.}, 4:1045--1081, 2004.

\bibitem{Khovanov-Robert}
M.~Khovanov and L.-H. Robert.
\newblock Foam evaluation and {K}ronheimer-{M}rowka theories.
\newblock {\em Adv. Math.}, 376:Paper No. 107433, 59, 2021.

\bibitem{Kinoshita}
S.~Kinoshita.
\newblock Alexander polynomials as isotopy invariants. {I}.
\newblock {\em Osaka Math. J.}, 10:263--271, 1958.

\bibitem{KMOS}
P.~Kronheimer, T.~Mrowka, P.~Ozsv{\'a}th, and Z.~Szab{\'o}.
\newblock Monopoles and lens space surgeries.
\newblock {\em Ann. of Math. (2)}, 165(2):457--546, 2007.

\bibitem{K-higherrank}
P.~B. Kronheimer.
\newblock Four-manifold invariants from higher-rank bundles.
\newblock {\em J. Differential Geom.}, 70(1):59--112, 2005.

\bibitem{KM-gtes1}
P.~B. Kronheimer and T.~S. Mrowka.
\newblock Gauge theory for embedded surfaces. {I}.
\newblock {\em Topology}, 32(4):773--826, 1993.

\bibitem{KM-yaft}
P.~B. Kronheimer and T.~S. Mrowka.
\newblock Knot homology groups from instantons.
\newblock {\em J. Topol.}, 4(4):835--918, 2011.

\bibitem{KM-unknot}
P.~B. Kronheimer and T.~S. Mrowka.
\newblock Khovanov homology is an unknot-detector.
\newblock {\em Publ. Math. IHES}, 113:97--208, 2012.

\bibitem{KM-triangles}
P.~B. Kronheimer and T.~S. Mrowka.
\newblock Exact triangles for {$SO(3)$} instanton homology of webs.
\newblock {\em J. Topol.}, 9(3):774--796, 2016.

\bibitem{KM-deformation}
P.~B. Kronheimer and T.~S. Mrowka.
\newblock A deformation of instanton homology for webs.
\newblock {\em Geom. Topol.}, 23(3):1491--1547, 2019.

\bibitem{KM-Tait}
P.~B. Kronheimer and T.~S. Mrowka.
\newblock Tait colorings, and an instanton homology for webs and foams.
\newblock {\em J. Eur. Math. Soc. (JEMS)}, 21(1):55--119, 2019.

\bibitem{Kronheimer-Nakajima}
P.~B. Kronheimer and H.~Nakajima.
\newblock Yang-{M}ills instantons on {ALE} gravitational instantons.
\newblock {\em Math. Ann.}, 288(2):263--307, 1990.

\bibitem{Mehta-Seshadri}
V.~B. Mehta and C.~S. Seshadri.
\newblock Moduli of vector bundles on curves with parabolic structures.
\newblock {\em Math. Ann.}, 248(3):205--239, 1980.

\bibitem{OS-double-covers}
P.~Ozsv{\'a}th and Z.~Szab{\'o}.
\newblock On the {H}eegaard {F}loer homology of branched double-covers.
\newblock {\em Adv. Math.}, 194(1):1--33, 2005.

\bibitem{Queffelec-Rose}
H.~Queffelec and D.~E.~V. Rose.
\newblock The {$\mathfrak{sl}_n$} foam 2-category: a combinatorial formulation
  of {K}hovanov-{R}ozansky homology via categorical skew {H}owe duality.
\newblock {\em Adv. Math.}, 302:1251--1339, 2016.

\bibitem{Robert-Wagner}
L.-H. Robert and E.~Wagner.
\newblock A closed formula for the evaluation of foams.
\newblock {\em Quantum Topol.}, 11(3):411--487, 2020.

\bibitem{Taubes-non-empty}
C.~H. Taubes.
\newblock The stable topology of self-dual moduli spaces.
\newblock {\em J. Differential Geom.}, 29(1):163--230, 1989.

\bibitem{Teleman-Woodward}
C.~Teleman and C.~Woodward.
\newblock Parabolic bundles, products of conjugacy classes and
  {G}romov-{W}itten invariants.
\newblock {\em Ann. Inst. Fourier (Grenoble)}, 53(3):713--748, 2003.

\bibitem{Tutte}
W.~T. Tutte.
\newblock {\em Graph theory as {I} have known it}, volume~11 of {\em Oxford
  Lecture Series in Mathematics and its Applications}.
\newblock The Clarendon Press, Oxford University Press, New York, 1998.
\newblock With a foreword by U. S. R. Murty.

\bibitem{S-Wang-thesis}
S.~Wang.
\newblock Moduli spaces over manifolds with involutions.
\newblock {\em Math. Ann.}, 296(1):119--138, 1993.

\bibitem{Xie}
Y.~Xie.
\newblock {\em On the Framed Singular Instanton Floer Homology From Higher Rank
  Bundles}.
\newblock PhD thesis, Harvard University, May 2016.

\bibitem{Yamada}
S.~Yamada.
\newblock An invariant of spatial graphs.
\newblock {\em J. Graph Theory}, 13(5):537--551, 1989.

\end{thebibliography}

\end{document}